\DeclareMathOperator{\ShHom}{\mathscr{H}\text{\kern -3pt {\calligra\large om}}\,}
\renewcommand{\a}{\alpha}
\renewcommand{\b}{\beta}
\newcommand{\bw}{\bigwedge}
\newcommand{\D}{\mathcal{D}}
\newcommand{\oo}{\otimes}
\newcommand{\bS}{\mathbb{S}}
\newcommand{\tn}{\textnormal}
\newcommand{\C}{\mathbb{C}}
\newcommand{\charC}{\operatorname{charC}}
\newcommand{\GL}{\operatorname{GL}}
\newcommand{\SL}{\operatorname{SL}}
\newcommand{\Sym}{\operatorname{Sym}}
\newcommand{\codim}{\operatorname{codim}}
\renewcommand{\det}{\operatorname{det}}
\newcommand{\dom}{\operatorname{dom}}
\newcommand{\opmod}{\operatorname{mod}}
\newcommand{\W}{\mathcal{W}}
\newcommand{\bb}[1]{\mathbb{#1}}
\newcommand{\mc}[1]{\mathcal{#1}}
\newcommand{\mf}[1]{\mathfrak{#1}}
\newcommand{\ol}[1]{\overline{#1}}
\newcommand{\op}[1]{\operatorname{#1}}
 \newtheorem{theorem}{Theorem}[section]
 \newtheorem{lemma}[theorem]{Lemma}
 \newtheorem{corollary}[theorem]{Corollary}
 \newtheorem{prop}[theorem]{Proposition}
 \newtheorem{remark}[theorem]{Remark}
\numberwithin{equation}{section}
\newcommand{\andras}[1]{{\color{brown} \sf $\clubsuit\clubsuit\clubsuit$ Andr\'as: [#1]}}
\newcommand{\mike}[1]{{\color{blue} \sf $\clubsuit\clubsuit\clubsuit$ Mike: [#1]}}
\author{Andr\'{a}s C. L\H{o}rincz}
\author{Michael Perlman}
\title{Equivariant $\D$-modules on $2\times 2\times n$ hypermatrices}
\begin{document}
\maketitle 

\begin{abstract}
We study $\D$-modules and related invariants on the space of $2\times 2\times n$ hypermatrices for $n\geq 3$, which has finitely many orbits under the action of $G=\GL_2(\C) \times \GL_2(\C) \times \GL_n(\C)$. We describe the category of coherent $G$-equivariant $\D$-modules as the category of representations of a quiver with relations. We classify the simple equivariant $\D$-modules, determine their characteristic cycles and find special representations that appear in their $G$-structures. We determine the explicit $\D$-module structure of the local cohomology groups with supports given by orbit closures. As a consequence, we calculate the Lyubeznik numbers and intersection cohomology groups of the orbit closures. All but one of the orbit closures have rational singularities: we use local cohomology to prove that the one exception is neither normal nor Cohen--Macaulay. While our results display special behavior in the cases $n=3$ and $n=4$, they are completely uniform for $n\geq 5$.
\end{abstract}

\section{Introduction}\label{sec:orbits}

The roots behind hypermatrices can be traced back to Cayley's hyperdeterminant, and this classical theory has continued to generate research since then -- see \cite{gkz} and \cite{landsberg2012tensors} and references therein. The theory has a wide range of applications, since tensors and (multi-)linearity are universal features in the sciences.  In particular, they have recently gained more traction due to their use in complexity theory.

Spaces of hypermatrices have large groups of symmetries arising from natural actions of products of general linear groups, which corresponds to performing changes of bases. While for matrices the rank completely parametrizes the orbits, this is not true for hypermatrices in general, and there can be an infinite number of them. In fact, the ones that have finitely many orbits are classified (see \cite{satokimura, kac}), and correspond to the space of $m\times n$ matrices with $m\geq n \geq 1$, the $2\times 2\times n$  hypermatrices with $n\geq 2$,  and the $2\times 3 \times n$ hypermatrices with $n\geq 3$.

In this paper, we study the geometry of the space of $2\times 2\times n$ hypermatrices (for $n\geq 3$) from a $\D$-module-theoretic point of view. Similar work was undertaken in the case of $m\times n$ matrices \cite{raicuWeymanLocal, lorcla}, as well as in the case of $2\times 2 \times 2$ hypermatrices \cite{perlman2020equivariant}. Since the orbit decomposition yields a natural finite stratification of these spaces, there are a number powerful techniques that can be used (cf. \cite{lHorincz2019categories}) to compute explicit objects and invariants that are, in general, notoriously difficult to determine.

This work is part of an ongoing effort to investigate categories of equivariant coherent  $\D$-modules on spaces with finitely many orbits under actions of linear algebraic groups, such as flag varieties, toric varieties, and nilpotent cones. It follows from Luna's Slice Theorem \cite{luna} that, in the case of a smooth affine variety under the action of a reductive group, this category is equivalent to the corresponding category of equivariant $\D$-modules on the normal space to a point in the closed orbit (see \cite[Proposition 4.6]{lHorincz2019categories}). Thus, the study of categories of equivariant coherent $\D$-modules on such affine varieties reduces to the case of representations of reductive groups with finitely many orbits. The spaces of $2\times 2\times n$ hypermatrices are examples of Vinberg representations, which are the \textit{irreducible} representations with finitely many orbits (see \cite[Section 10.1]{landsberg2012tensors},  cf. also \cite{satokimura, kac}). Such representations are classified and, besides a couple of exceptions, can be described by Dynkin diagrams with a choice of simple root -- the $2\times 2\times n$ case corresponds to the type $\mathbb{D}_{n+2}$ with the choice of the node with degree $3$. Quiver categories of equivariant $\D$-modules on Vinberg representations have been studied in several articles \cite{binary, lHorincz2019categories, perlman2020equivariant,  senary, subexceptional}.

\medskip

We now introduce the basic setup. Let $A=\langle a_1, a_2\rangle$ and $B=\langle b_1, b_2\rangle$ be two-dimensional complex vector spaces, and let $C=\langle c_1,\cdots,c_n\rangle$ be an $n$-dimensional complex vector space with $n\geq 2$. We consider the space $V=A\oo B\oo C$ of $2\times 2\times n$ hypermatrices, endowed with the natural action of $\GL=\GL(A)\times \GL(B)\times \GL(C)$. The $\GL$-orbits of $V$ are described below (see \cite[Table 10.3.1]{landsberg2012tensors} or \cite{parfenov}).
\begin{itemize}
\item The zero orbit $O_0=\{0\}$. 

\item The orbit $O_1$ of dimension $n+2$, with representative $a_1\oo b_1\oo c_1$.

\item The orbit $O_2$ of dimension $n+3$, with representative $a_1\oo b_1\oo c_1+a_2\oo b_2\oo c_1$.

\item The orbit $O_3$ of dimension $2n+1$, with representative $a_1\oo b_1\oo c_1+a_1\oo b_2\oo c_2$.

\item The orbit $O_4$ of dimension $2n+1$, with representative $a_1\oo b_1\oo c_1+a_2\oo b_1\oo c_2$.

\item The orbit $O_5$ of dimension $2n+3$, with representative $a_1\oo(b_1\oo c_1+b_2\oo c_2)+a_2\oo b_1\oo c_2$.

\item The orbit $O_6$ of dimension $2n+4$, with representative $a_1\oo b_1\oo c_1+a_2\oo b_2\oo c_2$.

\item The orbit $O_7$ of dimension $3n+2$, with representative $a_1\oo(b_1\oo c_1+b_2\oo c_3)+a_2\oo b_1\oo c_2$.

\item The orbit $O_8$ of dimension $3n+3$, with representative $a_1\oo (b_1\oo c_1+b_2\oo c_2)+a_2\oo (b_1\oo c_2+b_2\oo c_3)$.

\item The orbit $O_9$ of dimension $4n$, with representative $a_1\oo(b_1\oo c_1+b_2\oo c_3)+a_2\oo (b_1\oo c_2+b_2\oo c_4)$.
\end{itemize}

For $n=2$ the orbit $O_6$ is dense, $\ol{O}_5$  is a hypersurface, and orbits $7-9$ do not appear. For $n=3$ the orbit $O_8$ is dense, $\ol{O}_7$ is a hypersurface, and orbit $9$ does not appear. For $n\geq 4$ all orbits appear, and $\ol{O}_8$ is a hypersurface if and only if $n=4$.

We recall that for positive integers $i\leq 2,j \leq 2,k \leq n$ the subspace variety $\tn{Sub}_{ijk} \subset V$ is the closed subset of tensors $v \in V$ such that there exists subspaces $A' \subset A, B'\subset B, C'\subset C$ of dimensions $i,j,k$ respectively, such that $v \in A' \oo B' \oo C'$. 
In our case $\ol{O}_1 = \tn{Sub}_{111}, \ol{O}_2 = \tn{Sub}_{221},\ol{O}_3 = \tn{Sub}_{122},\ol{O}_4 = \tn{Sub}_{212}, \ol{O}_6 = \tn{Sub}_{222}, \ol{O}_8=\tn{Sub}_{223}, \ol{O}_9 = \tn{Sub}_{224}$. In particular, $\ol{O}_1$ is the affine cone over the Segre variety $\operatorname{Seg}(\bb{P}(A)\times \bb{P}(B)\times \mathbb{P}(C))$. The orbit closures $\ol{O}_5$ and $\ol{O}_7$ have descriptions as affine cones over the tangential and projectively dual varieties of this Segre variety. Using this (or see \cite{parfenov}), the following is the Hasse diagram with respect to containment of orbit closures for $n\geq 4$ (for $n=3$ the diagram is obtained by removing $O_9$):
\[\xymatrix@R-1.4pc{
 & O_9 \ar@{-}[d] & \\
 & O_8 \ar@{-}[d]& \\
 & O_7 \ar@{-}[d]& \\
 & O_6 \ar@{-}[d]& \\
 & O_5 \ar@{-}[dl]\ar@{-}[d]\ar@{-}[dr]& \\
O_4 \ar@{-}[dr] & O_2 \ar@{-}[d] & O_3 \ar@{-}[dl] \\
 & O_1 \ar@{-}[d]& \\
 & O_0 & \\
}\]

Due to the exceptional isomorphism $\op{Spin}_4(\C) \cong \SL_2(\C) \times \SL_2(\C)$, the $\GL$-representation $V$ is equivalent to the action of $\op{SO}_4(\C)\times  \GL_n(\C)$ on the space of $4\times n$ matrices $X$, i.e. the action maps into $\GL(V)$ have the same image. As the latter action is sometimes more convenient to handle, we will use this identification several times throughout the article. The orbit closures can be defined set-theoretically by rank conditions imposed on $X$ and $X^t\cdot X$, with the exception of $\ol{O}_3$ and $\ol{O}_4$ as such conditions define their union only.

\smallskip

\subsection*{Summary of results and organization} The paper is organized as follows. In Section \ref{sec:prelim}, we give the necessary background and introduce some techniques that are used subsequently in the article.

\smallskip

In Section \ref{sec:steps}, we derive our preliminary results. In particular, we classify the simple objects in the category of $\GL$-equivariant $\D$-modules $\tn{mod}_{\GL}(\D_V)$ in Proposition \ref{componentGrp} and describe how the (twisted) Fourier transform permutes them (Proposition \ref{prop:fourier}). We further determine the characteristic cycles of the simples in $\tn{mod}_{\GL}(\D_V)$ (Proposition \ref{prop:char566}) and determine the local cohomology modules supported in $\ol{O}_1$ (Theorem \ref{BettiThm}).

\smallskip

In Section \ref{sec:eqcat}, we describe the category of $\GL$-equivariant $\D$-modules $\tn{mod}_{\GL}(\D_V)$ as the category of finite-dimensional representations of a quiver (with relations). While the cases $n=3$ (Theorem \ref{thm:quiv3}) and $n=4$ (Theorem \ref{thm:quiv4}) are more involved as the orbits sit more crammed, the description simplifies and is uniform for $n\geq 5$ (Theorem \ref{quiverN5}).

\smallskip

In Section \ref{sec:o5} we compute the explicit $\D$-module structure of the local cohomology modules supported in $\ol{O}_5$ (Theorem \ref{loc05}), while in Section \ref{sec:O7} those supported in $\ol{O}_7$ (Theorem \ref{loc07}). Furthermore, we explain how the latter result can be used to show that $\ol{O}_7$, unlike the other orbit closures, is not normal nor Cohen--Macaulay, see Corollary \ref{cor:sing}.

\smallskip

In Section \ref{sec:characters} we find special representations (so-called witness weights) for each simple $\GL$-equivariant $\D$-module, see Theorem \ref{thm:witnessngeq4}. This allows the determination of the composition factors of any equivariant $\D$-module through representation-theoretic means, by just finding multiplicities of these witness weights.

\smallskip

Finally, in Section \ref{sec:lyub} we use our results obtained on local cohomology modules to determine the Lyubeznik numbers and intersection cohomology groups of each orbit closure.

\section{Preliminaries}\label{sec:prelim}

\subsection{Background on $\D$-modules and functors}\label{sec:Dmod} Our $\D$-modules are left $\D$-modules. Given a morphism $f:X\to Y$ of smooth varieties, we write $f_+$ for $\D$-module pushforward along $f$ \cite[Section 1.5]{htt}.

Let $X=\mathbb{A}^N$. For a closed subvariety $Z\subset X$, we denote by $L_Z$ the intersection cohomology $\D_X$-module of Brylinksi and Kashiwara \cite[Section 3.4]{htt}, that corresponds to the intersection cohomology complex $IC_Z$ via the Riemann--Hilbert correspondence. Throughout we denote $E=L_{\{0\}}$ and $S=L_X$, where the latter stands for the coordinate ring of $X$. When $X$ is the space of $2\times 2\times n$ hypermatrices, throughout the article we use the notation $D_p = L_{\ol{O}_p}$, for all $p=0,\dots, 9$.

\smallskip

Suppose an algebraic group $G$ acts on $X$, and let $\mf{g}$ denote the Lie algebra of $G$. A $\D_X$-module $M$ is equivariant if we have a $\D_{G\times X}$-isomorphism $\tau: p^*M \rightarrow m^*M$, where $p: G\times X\to X$ denotes the projection and $m: G\times X\to X$ the map defining the action, with $\tau$ satisfying the usual compatibility conditions (see \cite[Definition 11.5.2]{htt}). On the other hand, differentiating the $G$-action on $X$ we obtain a map from $\mf{g}$ to the space of algebraic vector fields on $X$, which in turn yields a map $\mf{g} \to \D_X$. This gives an equivalent way of looking at equivariance: $M$ admits an algebraic $G$-action whose differential coincides with the $\mf{g}$-action induced by the natural map $\mf{g} \to \D_X$. Since $G$ is connected, the category  $\tn{mod}_{G}(\D_X)$ of equivariant $\D$-modules is a full subcategory of the category of all coherent $\D$-modules, which is closed under taking submodules and quotients. For a $G$-stable closed subset $Z$ of $X$, we denote by $ \tn{mod}_{G}^Z(\D_X)$ the full subcategory of $\tn{mod}_{G}(\D_X)$ consisting of equivariant $\D$-modules supported inside $Z$.

When $G$ acts on $X$ with finitely many orbits, each module in  $\tn{mod}_{G}(\D_X)$ is regular and holonomic \cite[Theorem~11.6.1]{htt}. Therefore, by the Riemann--Hilbert correspondence the category $\tn{mod}_{G}(\D_X)$ is equivalent to the category of $G$-equivariant perverse sheaves on $X$. Moreover, this category is equivalent to the category of finitely generated modules over a finite dimensional $\bb{C}$-algebra (see \cite[Theorem 4.3]{vilonen} or  \cite[Theorem 3.4]{lHorincz2019categories}), in other words, to the category of finite-dimensional representations of a quiver with relations.

When $G$ is reductive acting linearly on $X$, another important functor on $\opmod_G(\D_X)$ comes from considering the (twisted) Fourier transform \cite[Section 4.3]{lHorincz2019categories}. This functor gives an involution
\[\mc{F} : \opmod_G(\D_X) \xrightarrow{\sim} \opmod_G(\D_X).\]

For a $G$-stable (locally) closed subset $Z$ of $X$, we can consider for each $i\geq 0$ and $M\in\opmod_G(\D_X)$ the $i$-th local cohomology module $H^i_Z(M)$ of $M$ with support in $Z$, which is an object in  $\tn{mod}_{G}^Z(\D_X)$. Similarly, for a smooth variety $X$ we write $\mathscr{H}^i_Z(\mc{M})$ for the local cohomology sheaves associated to a sheaf of $\D_X$-modules $\mc{M}$. When $Z$ is a divisor, there is a short exact sequence of $\D_X$-modules
\begin{equation}\label{comphypn2}
0\longrightarrow \mc{O}_X\longrightarrow  \mc{O}_X(\ast Z)\longrightarrow \mathscr{H}^1_Z(\mc{O}_X)\longrightarrow 0,
\end{equation}
where we denote by $\mc{O}_X(\ast Z)$ the localization of $\mc{O}_X$ at the divisor $Z$.

\subsection{Local cohomology preliminaries}

Let $Z'\subset Z$ be closed subvarieties of $X=\mathbb{A}^N$, and let $Y=Z\setminus Z'$. We set $c=\operatorname{codim}(Z,\mathbb{A}^N)$. There is a long exact sequence of $\D$-modules (see \cite[Proposition 1.9]{hartshornelocal}): 
\begin{equation}\label{locallyclosedsequence}
0 \to H^c_Z(S) \to H^c_Y(S)\to H^{c+1}_{Z'}(S)\to H^{c+1}_Z(S) \to H^{c+1}_Y(S)\to H^{c+2}_{Z'}(S)\to \cdots 		
\end{equation}

We store the following fact relating local cohomology supported at the origin to intersection cohomology.

\begin{prop}\label{prop:intcoho}
Let $Z \subset \mathbb{A}^N$ be the cone over an irreducible projective variety in $\mathbb{P}^{N-1}$. Put $d=\dim_{\C} Z$, and $c= N -d$. For all $i\in \bb{Z}$, we have
\[\mbox{\large\(H_{\{0\}}^{d-i}(L_Z)\)}=\mbox{\large\(E\)}^{\mbox{\small\(\oplus \dim IH^i(Z)\)}}=\mbox{\large\(H_{\{0\}}^{i+c}(\mc{F}(L_Z))\)}. \]


\end{prop}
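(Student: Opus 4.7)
The plan is to express, for any holonomic $M$ on $\mathbb{A}^N$, the multiplicity of $E$ in $H^k_{\{0\}}(M)$ as
\[m_k(M) := \dim_{\C}\,\mathcal{H}^k(i_0^! M) = \dim_{\C}\,\mathrm{Ext}^k_{\D}(E, M),\]
which is immediate since $E \cong (i_0)_{+}\C$ and any $\D$-module supported at the origin is a direct sum of copies of $E$. I will then apply this to $M = \mathcal{F}(L_Z)$ and to $M = L_Z$ to obtain the two equalities separately.

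For the second equality, the Fourier transform is an auto-equivalence of $\mathrm{mod}_{\GL}(\D_V)$ with $\mathcal{F}^2 \cong \mathrm{id}$ on our category (since $L_Z$ is invariant under $-1 \in \GL$) and $\mathcal{F}(E) \cong \mathcal{O}$. Hence
\[\mathrm{Ext}^k_{\D}(E, \mathcal{F}(L_Z)) \;\cong\; \mathrm{Ext}^k_{\D}(\mathcal{F}(E), \mathcal{F}^2(L_Z)) \;=\; \mathrm{Ext}^k_{\D}(\mathcal{O}, L_Z) \;=\; H^k_{\mathrm{dR}}(\mathbb{A}^N, L_Z).\]
Under Riemann--Hilbert $pDR(L_Z) = IC_Z$, so $H^k_{\mathrm{dR}}(\mathbb{A}^N, L_Z) = \mathbb{H}^{k-N}(\mathbb{A}^N, IC_Z) = IH^{k-c}(Z)$ with the standard perverse normalization of $IC_Z$. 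Substituting $k = i+c$ yields $H^{i+c}_{\{0\}}(\mathcal{F}(L_Z)) \cong E^{\oplus \dim IH^i(Z)}$.

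For the first equality, I would compute $\mathcal{H}^k(i_0^! L_Z)$ directly. Via Riemann--Hilbert this equals $\mathcal{H}^k(i_0^! IC_Z)$. Since $Z$ is a cone, the $\C^*$-action of scalar dilations contracts $Z$ onto $\{0\}$, and the $\C^*$-equivariance of $IC_Z$ combined with the contraction principle for attracting actions gives the identification $i_0^* IC_Z \simeq R\Gamma(Z, IC_Z)$. Verdier self-duality of $IC_Z$ (which holds since $Z$ is irreducible), together with $i_0^! \simeq \mathbb{D}\,i_0^*\,\mathbb{D}$ and Poincar\'{e}--Verdier duality, then gives
\[\mathcal{H}^k(i_0^! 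IC_Z) \;\cong\; \mathbb{H}^{-k}(Z, IC_Z)^{\vee} \;=\; IH^{d-k}(Z)^{\vee},\]
and setting $k = d-i$ yields $H^{d-i}_{\{0\}}(L_Z) \cong E^{\oplus \dim IH^i(Z)}$.

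The main obstacle is keeping all the shifts consistent: the shift $pDR = DR[N]$ in the Riemann--Hilbert correspondence, the perverse normalization of $IC_Z$, and the indexing conventions in the various duality statements. The $\C^*$-contraction identification $i_0^* IC_Z \simeq R\Gamma(Z, IC_Z)$ is standard for attracting actions (a form of Braden's hyperbolic localization) but deserves careful verification in our setup.
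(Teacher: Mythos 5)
Your proof is correct, and it is essentially the same argument the paper uses --- the paper's proof simply defers to two citations which your proposal unpacks. For the first equality the paper cites Achar, Theorem 2.10.3, whose content (in the cone case, combined with self-duality of $L_Z$) is precisely the identification you give via the $\C^{*}$-contraction lemma $i_0^* IC_Z \simeq R\Gamma(Z, IC_Z)$ together with $i_0^! = \mathbb{D}\, i_0^* \, \mathbb{D}$ and Verdier self-duality of $IC_Z$. For the second equality the paper cites Proposition 5.1 of the subexceptional-series paper, which is exactly your Fourier-transform computation
\[
\operatorname{Ext}^k_{\D}(E, \mc{F}(L_Z)) \cong \operatorname{Ext}^k_{\D}(\mc{F}(E), \mc{F}^2(L_Z)) = \operatorname{Ext}^k_{\D}(\mc{O}, L_Z) = \mathbb{H}^{k-N}(\mathbb{A}^N, IC_Z).
\]
Your bookkeeping of shifts is consistent (a quick sanity check with $Z = \mathbb{A}^N$, so $L_Z = S$, $c=0$, $d=N$, $H^N_{\{0\}}(S)=E$ concentrated in one degree, confirms the normalization), and the contraction-lemma input you flag is indeed standard for an attracting $\C^{*}$-action on an affine cone. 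No gap; same route as the paper, just written out.
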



\begin{proof}
The first equality follows from \cite[Theorem 2.10.3]{achar}, using that $L_Z$ is self-dual. The second equality follows from  \cite[Proposition 5.1]{subexceptional}.
\end{proof}

We mention a useful tool for calculating local cohomology modules that is based on invariant theory.

\begin{lemma}\label{lem:locinv}
Let $G$ be a reductive group acting on $X$. Take an ideal $I\subset S^G$. Then for each $i\geq 0$ we have a $\D_X^G$-isomorphism
\[ \left(H^i_{S\cdot I}(S)\right)^G \cong H^i_I(S^G).\]
\end{lemma}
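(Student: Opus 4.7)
The plan is to compute both sides via the \v{C}ech complex description of local cohomology and exploit the exactness of the $G$-invariants functor. Since $I \subset S^G$, we may choose generators $f_1, \ldots, f_k \in S^G$ of $I$; the same tuple generates the extended ideal $S\cdot I$ in $S$. For $M$ equal to either $S$ or $S^G$, the local cohomology modules with support in the ideal generated by these $f_i$ are computed as the cohomology of the \v{C}ech complex $\check{C}^\bullet(\ul{f}; M)$, whose terms are direct sums of localizations $M_{f_{i_1}\cdots f_{i_p}}$.

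The key observation is that localization at an invariant function commutes with taking $G$-invariants: for any $f \in S^G$, there is a canonical identification $(S_f)^G = (S^G)_f$, since $G$ fixes $f$ and thus $s/f^n$ is $G$-invariant iff $s$ is. Applied termwise, this yields a canonical isomorphism of complexes $\check{C}^\bullet(\ul{f}; S)^G \cong \check{C}^\bullet(\ul{f}; S^G)$. Since $G$ is reductive, hence linearly reductive over $\mathbb{C}$, the functor $(-)^G$ is exact and therefore commutes with cohomology. Taking $H^i$ of both sides gives
\[ (H^i_{S\cdot I}(S))^G \;=\; H^i(\check{C}^\bullet(\ul{f}; S))^G \;\cong\; H^i(\check{C}^\bullet(\ul{f}; S)^G) \;\cong\; H^i(\check{C}^\bullet(\ul{f}; S^G)) \;=\; H^i_I(S^G). \]

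For the $\D_X^G$-module compatibility, each \v{C}ech term $S_{f_{i_1}\cdots f_{i_p}}$ is a $\D_X$-module, so its $G$-invariants carry a natural $\D_X^G$-action, and the \v{C}ech differentials are $\D_X^G$-linear; the chain of identifications above therefore respects the $\D_X^G$-module structure. There is no substantive obstacle here---the lemma is a direct consequence of the two facts that $(-)^G$ is exact for a reductive group and that it commutes with localization at an invariant element.
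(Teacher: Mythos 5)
Your proof is correct and uses the same approach as the paper, which likewise computes both sides via the \v{C}ech complex on a generating set of $I$ and invokes exactness of $(-)^G$ for reductive $G$. Your write-up simply spells out the intermediate steps (in particular the identification $(S_f)^G=(S^G)_f$ for $f\in S^G$) that the paper leaves implicit.
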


\begin{proof} Use the \v{C}ech-type sequence with respect to a set of generators of $I$ to calculate local cohomology, and the fact that taking $G$-invariants is exact.
\end{proof}

\subsection{Representation theory of the general linear group}\label{sec:reps} Let $W$ be an $m$-dimensional complex vector space, and let $\GL(W)$ denote the general linear group, consisting of linear automorphisms of $W$. Irreducible representations of $\GL(W)$ are indexed by dominant weights $\lambda=(\lambda_1\geq \lambda_{2}\geq \cdots \geq \lambda_m)$ where $\lambda\in \mathbb{Z}^m$. We write $\mathbb{Z}^m_{\dom}$ for the set of dominant weights with $m$ entries, and we write $\bS_{\lambda}W$ for the irreducible representation corresponding to $\lambda$, where $\bS_{\lambda}(-)$ is the corresponding Schur functor. For example, for $d\geq 0$ we have $\bS_{(d)}(W)=\Sym^d(W)$ and $\bS_{(1^d)}(W)=\bw^d(W)$. Here, as we often do throughout the article, we omit trailing zeros in dominant weights, e.g. $(d)=(d,0,\cdots,0)$, and we use exponents to denote the multiplicity of an entry, e.g. $(1^m)=(1,\cdots, 1)$. Given a representation $U$ of dimension $d$, we write $\det(U)=\bw^d(U)$ for the highest nonzero exterior power of $U$. Given $\lambda\in \mathbb{Z}^m_{\dom}$ we write $|\lambda|=\lambda_1+\cdots +\lambda_m$, and we say that $\lambda$ is a partition if $\lambda_m\geq 0$. We have an isomorphism $\bS_{\lambda}(W)^{\ast}\cong \bS_{\lambda^{\ast}}(W)$, where $\lambda^{\ast}=(-\lambda_m,\cdots, -\lambda_1)$.

Throughout the article, $A$ and $B$ are two-dimensional complex vector spaces, and $C$ is an $n$-dimensional vector space for some $n\geq 3$. We write $\GL=\GL(A)\times \GL(B)\times \GL(C)$. Irreducible representations of $\GL$ are of the form $\bS_{\alpha}A\oo \bS_{\beta}B\oo \bS_{\gamma}C$, where $\alpha,\beta\in \mathbb{Z}^2_{\dom}$ and $\gamma\in \mathbb{Z}^n_{\dom}$. Given a representation $U$ of $\GL$, we write $[U:\bS_{\alpha}A\oo \bS_{\beta}B\oo \bS_{\gamma}C]$ for the multiplicity of $\bS_{\alpha}A\oo \bS_{\beta}B\oo \bS_{\gamma}C$ as a summand of $U$. We note that the multiplicity of an irreducible $\GL$-representation in a module in $\tn{mod}_{\GL}(\D_V)$ is finite \cite[Proposition 3.14]{lHorincz2019categories}. 

We write $V=A\oo B\oo C$ and $S$ for the polynomial ring $S=\Sym(V)$. It is a difficult problem in general to find the $\GL$-irreducible summands of $S$ (see \cite[pg. 60-61]{weyman}). Nevertheless, one may obtain some information about the $\GL$ structure $S$ via one of the three flattenings:
$$
A\otimes B\otimes C\cong (A\otimes B)\otimes C\cong A\oo(B\oo C)\cong B\oo(A\oo C).
$$
For example, by the Cauchy Formula \cite[Theorem 2.3.2]{weyman} we have the following $\GL$-decomposition of $S$:
$$
\bigoplus_{\gamma=(\gamma_1\geq \gamma_2\geq \gamma_3\geq\gamma_4\geq 0)} \bS_{\gamma}(A\oo B)\otimes \bS_{\gamma}C.
$$
There is no general formula to decompose $\bS_{\gamma}(A\oo B)$, unless $\gamma_3=\gamma_4$ (or dually $\gamma_1=\gamma_2$). Upon twisting by a suitable power of the determinant representation, this is the case when $\gamma$ has at most two rows. If $\gamma$ has one row, we may apply the Cauchy Formula. If $\gamma$ has two rows, one may apply \cite[Corollary 4.3b]{raicu2012secant} to find the multiplicity of $\bS_{\alpha}A\oo \bS_{\beta}B$ in $\bS_{\gamma}(A\oo B)$, which we often do in this article.

\subsection{Equivariant $\D$-modules on spaces of matrices}\label{sec:dmodsMat} Here we discuss some information on equivariant $\D$-modules on spaces of matrices, and local cohomology modules with support in determinantal varieties, which we use repeatedly throughout the article.

We employ the flattening of $V$ to the space of $4\times n$ matrices, and use the known $\D$-module structure of local cohomology with support the determinantal varieties $Z_p$ of matrices with rank $\leq p$. Note that we have
\begin{equation} \label{eq:detorbits}
O_0=Z_0, \; \overline{O}_2=Z_1, \; \overline{O}_6=Z_2, \, \mbox{ and }\, \overline{O}_8=Z_3 \, (\mbox{for } n\geq 3). 
\end{equation}

Let $n=4$ and $\det=\det(x_{i,j})$ denote the $4\times 4$ determinant on $(A\oo B)\oo C$. The localization $S_{\det}$ of the polynomial ring at $\det$ is a holonomic $\D$-module, with composition series as follows \cite[Theorem 1.1]{raicu2016characters}:
$$
0\subsetneq S\subsetneq \D\cdot \det^{-1}\subsetneq \D\cdot \det^{-2}\subsetneq \D\cdot \det^{-3}\subsetneq S_{\det},
$$
with $(\D\cdot \det^{-1})/S=D_8$, $(\D\cdot \det^{-2})/(\D\cdot \det^{-1})=D_6$, $(\D\cdot \det^{-3})/(\D\cdot \det^{-2})=D_2$, and $S_{\det}/(\D\cdot \det^{-3})=D_0$. Following \cite{lorcla} we define
\begin{equation}\label{eq:detQ}
Q_3=S_{\det}/S, \quad Q_2=S_{\det}/(\D\cdot \det^{-1}),\quad Q_1=S_{\det}/(\D\cdot \det^{-2}).
\end{equation}

With this notation, we recall the $\D$-module structure of local cohomology with support in $\overline{O}_2$, $\overline{O}_6$, $\overline{O}_8$ (see \cite[Main Theorem]{raicu2016ocal} and \cite[Theorem 1.6]{lorcla}):

\begin{theorem}\label{localSegre}
The following is true about local cohomology of $S$ with support in $\ol{O}_2$.
\begin{enumerate}
\item If $n=3$ the nonzero local cohomology modules	are given by
$$
H^6_{\ol{O}_2}(S)=D_2,\quad H^7_{\ol{O}_2}(S)=D_0,\quad H^9_{\ol{O}_2}(S)=D_0.
$$
\item If $n=4$ the nonzero local cohomology modules are given by 
$$
H^9_{\ol{O}_2}(S)=Q_1,\quad H^{11}_{\ol{O}_2}(S)=D_0,\quad H^{13}_{\ol{O}_2}(S)=D_0.
$$
\item If $n\geq 5$ the nonzero local cohomology modules are given by
$$
H^{3n-3}_{\ol{O}_2}(S)=D_2,\quad H^{4n-7}_{\ol{O}_2}(S)=D_0,\quad H^{4n-5}_{\ol{O}_2}(S)=D_0,\quad H^{4n-3}_{\ol{O}_2}(S)=D_0.
$$
\end{enumerate}
\end{theorem}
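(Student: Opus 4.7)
The plan is to invoke the flattening identification from Section~1, which gives $\overline{O}_2 = Z_1$, the variety of rank-$\leq 1$ matrices in the space of $4\times n$ matrices (see \eqref{eq:detorbits}). Under this identification, the modules $H^j_{\overline{O}_2}(S)$ become the local cohomology of the polynomial ring on a $4\times n$ matrix space with support in $Z_1$, which have been completely determined in prior work. The character of each $H^j_{Z_1}(S)$ is given by \cite[Main Theorem]{raicu2016ocal}, and the refined $\D$-module structure (i.e.\ the extension data between composition factors) is described in \cite[Theorem 1.6]{lorcla}. Thus the proof is essentially a translation and bookkeeping argument, whose three cases are dictated by how $Z_1$ sits relative to the open orbit.

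For $n=3$ and $n\geq 5$, the only simple equivariant $\D$-modules supported in $\overline{O}_2$ are $D_2 = L_{\overline{O}_2}$ and $D_0 = E$, and the character formulas of \cite{raicu2016ocal} force each $H^j$ to be a direct sum of copies of these. The first nonvanishing module occurs in cohomological degree equal to the codimension of $Z_1$: this is $6$ for $n=3$ and $3n-3$ for $n\geq 5$, matching the stated $D_2$. The higher degrees, in which only modules supported at the origin can appear, are forced to be copies of $D_0 = E$, and the specific degrees and multiplicities follow from Proposition~\ref{prop:intcoho} together with the well-known intersection cohomology of the projectivization $\bb{P}(A\oo B)\times \bb{P}(C)\cong \bb{P}^3\times\bb{P}^{n-1}$ of $Z_1$.

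The subtle case is $n=4$, where the $4\times 4$ determinant $\det$ is a nontrivial $\GL$-semi-invariant on $V$ vanishing exactly on $\overline{O}_8$. Consequently, $H^9_{\overline{O}_2}(S)$ is not a direct sum but a nontrivial extension, and its identification with $Q_1 = S_{\det}/(\D\cdot\det^{-2})$ follows from \cite[Theorem 1.6]{lorcla}: one constructs the \v{C}ech complex associated to $\det$, uses the composition filtration of $S_{\det}$ recorded in \eqref{eq:detQ}, and reads off how the composition factors on the dense orbit side contribute to the pieces supported inside $Z_1$. The modules $H^{11}_{\overline{O}_2}(S)$ and $H^{13}_{\overline{O}_2}(S)$ are then pinned down as copies of $D_0$ by character considerations, since they must be supported at the origin (they lie strictly above $\codim Z_1 = 9$ and only $E$ contributes among the simples killed in such degrees). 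I expect the main technical point to be precisely the $n=4$ extension, where one has to verify that the extension class is nontrivial; all other pieces follow from direct computation of characters and appeal to the proposition on intersection cohomology.
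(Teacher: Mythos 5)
Your proposal matches the paper's own treatment: the statement is presented there as a direct recollection from the cited references after observing that $\ol{O}_2 = Z_1$ under the flattening of $V$ to $4\times n$ matrices, and the paper offers no argument beyond this citation. The extra material you sketch (intersection cohomology of $\mathbb{P}^3\times\mathbb{P}^{n-1}$ for the $D_0$ multiplicities, the \v{C}ech complex of $\det$ and the filtration of $S_{\det}$ for the $n=4$ extension) is essentially a recounting of how the cited results are established in the sources, not a different route.
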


\begin{theorem}\label{thm:localcoho6}
The following is true about local cohomology of $S$ with support in $\ol{O}_6$.	
\begin{enumerate}
\item If $n=3$ the nonzero local cohomology modules	are given by
$$
H^2_{\ol{O}_6}(S)=D_6,\quad H^3_{\ol{O}_6}(S)=D_2,\quad H^4_{\ol{O}_6}(S)=D_0.
$$
\item If $n=4$ the nonzero local cohomology modules are given by 
$$
H^4_{\ol{O}_6}(S)=Q_2,\quad H^6_{\ol{O}_6}(S)=Q_1,\quad H^8_{\ol{O}_6}(S)=D_0.
$$
\item If $n\geq 5$ the local cohomology modules are semi-simple, and we have the following in the Grothendieck group of $\tn{mod}_{\GL}(\D_V)$:
$$
\sum_{j\geq 0}[H^j_{\ol{O}_6}(S)]\cdot q^j=[D_6]\cdot q^{2n-4}+[D_2]\cdot (q^{3n-8}+q^{3n-6})+[D_0]\cdot (q^{4n-12}+q^{4n-10}+q^{4n-8}).
$$
\end{enumerate}
\end{theorem}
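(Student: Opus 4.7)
Under the flattening $V \cong (A\oo B)\oo C$ described in Section~\ref{sec:dmodsMat}, the orbit closure $\ol{O}_6$ is identified with the variety $Z_2$ of $4 \times n$ matrices of rank at most $2$ by~\eqref{eq:detorbits}, and the simple $\D$-modules $L_{Z_0}, L_{Z_1}, L_{Z_2}$ corresponding to the nested matrix strata coincide with $D_0, D_2, D_6$ respectively. The plan is therefore to translate the known equivariant $\D$-module structure of $H^j_{Z_2}(S)$ on the space of $4\times n$ matrices. The dichotomy among the three parts is the standard one: $n = 3$ and $n \geq 5$ are the non-square cases where the local cohomology is semi-simple, while $n = 4$ is the exceptional square case.

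For $n = 3$ and $n \geq 5$ I would apply the formulas of \cite[Main Theorem]{raicu2016ocal} (equivalently \cite[Theorem 1.6]{lorcla}) to $H^j_{Z_2}(S)$ on $\Mat_{4\times n}$. These yield semi-simple modules with contributions $L_{Z_2} = D_6$ at codimension $2n-4$; $L_{Z_1} = D_2$ at the pair of degrees $3n-8, 3n-6$; and $L_{Z_0} = D_0$ at the triple $4n-12, 4n-10, 4n-8$. Specializing to $n = 3$ collapses these to $H^2, H^3, H^4$, yielding part (1), while leaving $n \geq 5$ general yields the polynomial identity in part (3). As a cross-check for $n = 3$, one may alternatively apply \eqref{comphypn2} to the codimension-$2$ setting together with Theorem~\ref{localSegre}(1) and the long exact sequence \eqref{locallyclosedsequence} for the pair $\ol{O}_2 \subset \ol{O}_6$.

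For $n = 4$ the $4 \times 4$ determinant makes the localization $S_{\det}$ non-semi-simple, as recorded in the composition series preceding \eqref{eq:detQ}, and the quotients $Q_2$ and $Q_1$ enter the answer. I would combine \eqref{comphypn2} applied to the hypersurface $\ol{O}_8$ (which gives $H^1_{\ol{O}_8}(S) = S_{\det}/S$), the long exact sequence \eqref{locallyclosedsequence} for $\ol{O}_6 \subset \ol{O}_8$, and Theorem~\ref{localSegre}(2) for $\ol{O}_2$, to identify $H^4_{\ol{O}_6}(S), H^6_{\ol{O}_6}(S), H^8_{\ol{O}_6}(S)$ as $Q_2, Q_1, D_0$ respectively.

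The hard part will be the $n = 4$ case: beyond merely matching composition factors, one must show that the relevant extensions are \emph{non-split}, so that the cohomology modules equal $Q_i$ rather than their semi-simplifications. I expect this to follow by transporting the $\D \cdot \det^{-k}$-filtration of $S_{\det}$ through the connecting maps of \eqref{locallyclosedsequence}, or equivalently by a direct appeal to \cite[Theorem 1.6]{lorcla} in the $4 \times 4$ matrix case, where precisely this non-splitness is established.
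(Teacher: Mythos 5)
Your proposal is correct and takes essentially the same route as the paper: Theorem~\ref{thm:localcoho6} is stated in the Preliminaries as a recalled fact, with no independent proof, precisely because after the flattening $V\cong(A\oo B)\oo C$ the orbit closure $\ol O_6$ becomes the generic determinantal variety $Z_2\subset\Mat_{4\times n}$ and the simples $D_0,D_2,D_6$ become $L_{Z_0},L_{Z_1},L_{Z_2}$, so the answer is read directly off the known $\D$-module structure of $H^\bullet_{Z_2}(S)$ from the Raicu--Weyman / L\H{o}rincz--Raicu computations (including the non-split $Q_i$ in the square case $n=4$). Your alternate sketch for $n=4$ via \eqref{locallyclosedsequence} for the pair $\ol O_6\subset\ol O_8$ has the small wrinkle that the sequence involves $H^\bullet_{\ol O_8\setminus\ol O_6}(S)$ rather than $H^\bullet_{\ol O_8}(S)$, so it is not quite as immediate as phrased; but since you also note the direct appeal to \cite[Theorem 1.6]{lorcla}, which is what the paper does, the argument stands.
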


\begin{theorem}\label{thm:localcoho8}
The following is true about local cohomology of $S$ with support in $\ol{O}_8$.	
\begin{enumerate}
\item If $n=4$ the only nonzero local cohomology module is $H^1_{\ol{O}_8}(S)=Q_3$.
\item If $n\geq 5$ the nonzero local cohomology modules are given by
$$
H^{n-3}_{\ol{O}_8}(S)=D_8,\quad H^{2n-7}_{\ol{O}_8}(S)=D_6,\quad H^{3n-11}_{\ol{O}_8}(S)=D_2,\quad H^{4n-15}_{\ol{O}_8}(S)=D_0.
$$
\end{enumerate}
\end{theorem}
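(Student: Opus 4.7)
The plan is to deduce this theorem in the same spirit as Theorems \ref{localSegre} and \ref{thm:localcoho6}, namely by translating known computations of local cohomology of determinantal varieties into the hypermatrix setup via the flattening identification $V \cong \tn{Mat}_{4\times n}(\C)$ that sends $\ol{O}_8$ to the rank-at-most-$3$ determinantal variety $Z_3$ (see \eqref{eq:detorbits}).

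For $n=4$, the variety $\ol{O}_8 = Z_3$ is the hypersurface cut out by $\det = \det(x_{i,j})$. Thus $H^j_{\ol{O}_8}(S)=0$ for $j\neq 1$, and the short exact sequence \eqref{comphypn2} combined with the definition of $Q_3=S_{\det}/S$ in \eqref{eq:detQ} immediately gives $H^1_{\ol{O}_8}(S)=Q_3$.

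For $n\geq 5$, the plan is to invoke the Main Theorem of \cite{raicu2016ocal}, which computes the $\GL_4(\C)\times\GL_n(\C)$-equivariant local cohomology modules $H^j_{Z_3}(S)$ on the space of $4\times n$ matrices: they are simple, concentrated in the four cohomological degrees $n-3$, $2n-7$, $3n-11$, $4n-15$ (equal to the codimensions of $Z_3$, $Z_2$, $Z_1$, $Z_0$ inside $\tn{Mat}_{4\times n}$), and equal respectively to the intersection cohomology $\D$-modules $L_{Z_3}$, $L_{Z_2}$, $L_{Z_1}$, $L_{Z_0}$. Since the ambient spaces and the closed subvarieties are literally identified by the flattening, the intersection cohomology $\D$-modules on the two sides coincide, so that under the correspondence \eqref{eq:detorbits} one has $L_{Z_3}=D_8$, $L_{Z_2}=D_6$, $L_{Z_1}=D_2$, $L_{Z_0}=D_0$, yielding the claimed list.

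The only step that requires genuine care is the identification of the simple summands across the two equivariant categories, since a priori the $\GL$-equivariant category on $V$ is richer than the $\GL_4\times\GL_n$-equivariant category on $\tn{Mat}_{4\times n}$. However, the intersection cohomology $\D$-module $L_Z$ of an irreducible subvariety $Z$ of an ambient smooth variety $X$ is an intrinsic object depending only on $Z\subset X$, independent of any group action; since the inclusions $Z_p\hookrightarrow X$ are unchanged by the flattening, the simples match tautologically. After this observation, the theorem follows by direct citation, and the main expository task is simply to spell out the translation.
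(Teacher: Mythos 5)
Your approach matches the paper's: this theorem is not proven in the article but simply recalled from the determinantal local cohomology literature via the flattening $V\cong A\otimes B\otimes C\cong (A\otimes B)\otimes C$, and your $n=4$ hypersurface argument together with the observation that intersection cohomology $\D$-modules depend only on the embedded subvariety (so pass untouched between equivariant categories) is correct. One factual slip in your parenthetical: the degrees $n-3$, $2n-7$, $3n-11$, $4n-15$ are \emph{not} the codimensions of $Z_3,Z_2,Z_1,Z_0$ in $\tn{Mat}_{4\times n}$ — those codimensions are $n-3$, $2n-4$, $3n-3$, $4n$, so only the leading one agrees; the remaining degrees come from the $q$-binomial multiplicity formulas in the cited main theorem and cannot be read off as naive codimensions.
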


Via one of the two identifications of $V$ with the space of $2\times 2n$ matrices, the orbit closures $\overline{O}_3$ and $\ol{O}_4$ are the determinantal varieties of matrices of rank $\leq 1$. By \cite[Main Theorem]{raicu2016ocal} we have the following.
\begin{theorem}\label{thm:localcoho34}
Let $i=3,4$. For all $n\geq 2$ the nonzero local cohomology modules of $S$ with support in $\ol{O}_i$ are given by
$$
H^{2n-1}_{\overline{O}_i}(S)=D_i,\quad H^{4n-3}_{\overline{O}_i}(S)=D_0.
$$
\end{theorem}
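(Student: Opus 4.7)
The proof plan is very short because the theorem is essentially a direct translation of a known result once the geometry is set up correctly. The first paragraph below establishes the flattening identifications, and the second just invokes Raicu's main theorem.

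I would start by exhibiting the flattening for each orbit. The representative $a_1\oo b_1\oo c_1+a_1\oo b_2\oo c_2$ for $O_3$ factors as $a_1\oo(b_1\oo c_1+b_2\oo c_2)$. Under the isomorphism $V=A\oo B\oo C\cong A\oo(B\oo C)$, which exhibits $V$ as a space of $2\times 2n$ matrices with ``rows'' indexed by a basis of $A$ and ``columns'' by a basis of $B\oo C$, this representative corresponds to a rank-one matrix. Because the rank of the flattened matrix is a $\GL$-invariant, the entire orbit $O_3$ lies in the rank-one locus $Z_1$; conversely, the $\GL(A)\times\GL(B)\times\GL(C)$-saturation of a rank-one tensor of the form $a\oo(b\oo c)$ already fills $Z_1\setminus\{0\}$ (using that $\GL(B)\times\GL(C)$ acts transitively on the nonzero rank-one tensors in $B\oo C$). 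The identification $\ol{O}_3=Z_1$ follows. The same argument applied to the representative $b_1\oo(a_1\oo c_1+a_2\oo c_2)$ of $O_4$ under the other flattening $V\cong B\oo(A\oo C)$ identifies $\ol{O}_4$ with the rank-one locus in $\Mat_{2\times 2n}$.

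Once these identifications are in place, the theorem is immediate from \cite[Main Theorem]{raicu2016ocal}, which determines every $H^j_{Z_1}(S)$ for $Z_1\subset\Mat_{m\times N}$ (generic matrices of rank $\leq 1$). Specializing to $m=2,\;N=2n$: the codimension is $(m-1)(N-1)=2n-1$, and Raicu's theorem yields precisely two nonzero modules, $H^{2n-1}_{Z_1}(S)=L_{Z_1}$ (the intersection cohomology $\D$-module of $Z_1$) and $H^{4n-3}_{Z_1}(S)=E$ (the only origin-supported contribution). Translating via the flattenings gives $L_{Z_1}=D_i$ for $i=3,4$ respectively, and $E=D_0$ in our notation, yielding the stated formulas for all $n\geq 2$.

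There is no real obstacle here; the only point to check carefully is that the flattening identification really sends $\ol{O}_i$ (for $i=3,4$) exactly onto the rank-one locus, and not onto some proper subvariety, which is the transitivity observation made above. Everything else is a citation.
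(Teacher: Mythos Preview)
Your approach is exactly the paper's: identify $\ol{O}_3$ (resp.\ $\ol{O}_4$) with the rank-one locus in $2\times 2n$ matrices via the flattening $A\oo(B\oo C)$ (resp.\ $B\oo(A\oo C)$), then cite the Main Theorem of Raicu--Weyman for the local cohomology of generic determinantal ideals with $m=2$, $N=2n$, $p=1$.

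One small slip in your justification of the identification: the $\GL$-saturation of a tensor $a\oo(b\oo c)$ with $b\in B$, $c\in C$ is $O_1$, not $O_3$, and it does \emph{not} fill $Z_1\setminus\{0\}$; in fact $Z_1\setminus\{0\}=O_1\cup O_3$ is a union of two $\GL$-orbits (distinguished by the rank of the $B\oo C$ factor). The clean fix is a dimension count: you have shown $O_3\subset Z_1$, both are irreducible, and $\dim O_3=2n+1=\dim Z_1$, so $\ol{O}_3=Z_1$. Alternatively, argue that a general element $a\oo w\in Z_1$ has $w$ of rank $2$ in $B\oo C$, and $\GL(B)\times\GL(C)$ acts transitively on rank-$2$ (not rank-$1$) tensors there, placing $a\oo w$ in $O_3$. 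The paper itself simply asserts the identification (equivalently $\ol{O}_3=\tn{Sub}_{122}$, $\ol{O}_4=\tn{Sub}_{212}$) without argument.
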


We recall the characters of the simple modules on spaces of matrices \cite[Section 3.2]{raicu2016characters}. Let $W_1$ be an $n_1$-dimensional vector space, and let $W_2$ be an $n_2$-dimensional vector space, with $n_1\geq n_2$. We consider the space $W_1\oo W_2$ of $n_1\times n_2$ matrices. For $0\leq p\leq n_2$, let $L_{Z_p}$ to be the intersection cohomology $\D$-module associated to the set $Z_p$ of matrices of rank $\leq p$.
\begin{theorem}\label{thm:charMatrices}
For $0\leq p\leq n_2$ consider the following set of dominant weights:
$$
W^p=\{\lambda\in \mathbb{Z}^{n_2}_{\dom}\mid \lambda_p\geq p-n_2,\; \lambda_{p+1}\leq p-n_1\},
$$
and for $\lambda\in W^p$ we define
$$
\lambda(p)=(\lambda_1,\cdots,\lambda_p,(p-n_2)^{n_1-n_2},\lambda_{p+1}+(n_1-n_2),\cdots,\lambda_{n_2}+(n_1-n_2)).
$$
Then the $\GL(W_1)\times \GL(W_2)$-character of $L_{Z_p}$ is given by
$$
\bigoplus_{\lambda\in W^p} \bS_{\lambda(p)}W_1\oo\bS_{\lambda}W_2.
$$
\end{theorem}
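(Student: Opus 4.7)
The approach is to realize $L_{Z_p}$ as a direct summand of the $\D$-module pushforward along the Kempf--Lascoux resolution of $Z_p$, and then extract its character using Bott's theorem on the Grassmannian.

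First, let $\mathcal{R} \subset W_2 \otimes \mathcal{O}_{\mathrm{Gr}(p,W_2)}$ be the tautological rank-$p$ subbundle, and let $\pi \colon Y := \mathrm{Tot}(W_1 \otimes \mathcal{R}) \to W_1 \otimes W_2$ be the natural projection. Then $\pi$ is an equivariant birational resolution of $Z_p$. Since $Z_p$ has rational singularities (Kempf), the Decomposition Theorem expresses the $\D$-module pushforward as a semisimple equivariant $\D$-module
\[ \pi_+ \mathcal{O}_Y \,=\, L_{Z_p} \,\oplus\, \bigoplus_{q<p} L_{Z_q}^{\oplus m_{p,q}}, \]
with $L_{Z_p}$ appearing with multiplicity one because $\pi$ is an isomorphism over the open stratum $Z_p \setminus Z_{p-1}$.

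Second, compute the $\GL(W_1)\times \GL(W_2)$-character of $\pi_+\mathcal{O}_Y$. Realizing $Y$ as the zero locus of the canonical section of the equivariant vector bundle $W_1\otimes (W_2/\mathcal{R})$ over $\mathrm{Gr}(p,W_2)\times (W_1\otimes W_2)$, resolve $\mathcal{O}_Y$ by the associated Koszul complex. After pushing down to $\mathrm{Gr}(p,W_2)$ and then to a point, Cauchy's formula together with Bott's theorem identifies each graded piece with an explicit irreducible $\GL(W_1)\times \GL(W_2)$-representation $\bS_\mu W_1 \otimes \bS_\nu W_2$, where $\nu$ is obtained from $(\lambda,0^{n_2-p})$ via the standard Weyl-shift-and-sort procedure, and $\mu$ records the $W_1$-factor from the Koszul differentials.

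Third, extract $[L_{Z_p}]$ by induction on $p$. The base case $p=0$ gives $L_{Z_0}=E$, with known character dual to that of $S$ via Cauchy. For $p>0$, subtract from the Bott-computed character of $\pi_+\mathcal{O}_Y$ the inductively known characters $[L_{Z_q}]$ for $q<p$, each weighted by its multiplicity $m_{p,q}$; the surviving irreducibles turn out to be precisely those of the form $\bS_{\lambda(p)}W_1\otimes \bS_\lambda W_2$ with $\lambda\in W^p$. The defining constraints $\lambda_p\geq p-n_2$ and $\lambda_{p+1}\leq p-n_1$ cut out exactly the range of dominant weights whose Bott contribution lies in the cohomological degree feeding into $L_{Z_p}$, while the insertion $\lambda \mapsto \lambda(p)$ is exactly the shift pattern prescribed by the sorting on $\GL(W_1)$-weights.

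The main obstacle is the combinatorial bookkeeping in step three: identifying which partitions survive Bott's sorting at which cohomological degree, and cleanly separating contributions to $L_{Z_p}$ from those to lower $L_{Z_q}$. The multiplicities $m_{p,q}$ are encoded by the Schubert geometry of the Grassmannian, so the inductive argument either proceeds by distinguishing contributions through their extremal $\GL(W_1)$-weights, or invokes Fourier-transform symmetry between the spaces $W_1\otimes W_2$ and $W_1^*\otimes W_2^*$ to reduce across ranks. This organization, carried out in detail in \cite{raicu2016characters}, is where the principal technical work resides.
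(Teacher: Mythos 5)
The paper does not actually prove Theorem \ref{thm:charMatrices}; it is recalled from \cite[Section 3.2]{raicu2016characters}. So there is no in-paper proof to compare against, but the proposal should still be assessed on its own terms, and it has a genuine gap in the second step.

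Your overall shape (resolve $Z_p$ over a Grassmannian, compute a pushforward character with Bott's theorem, then peel off $[L_{Z_p}]$ inductively) matches the spirit of Raicu's argument. The problem is what you compute in step two. The Koszul complex on $\mathrm{Gr}(p;W_2)\times(W_1\oo W_2)$ associated to the section of $W_1\oo(W_2/\mathcal{R})$ resolves $\mathcal{O}_Y$ as a sheaf of $\mathcal{O}$-modules, so pushing it down with Bott gives the character of $\mathbb{R}\pi_*\mathcal{O}_Y$ --- which, precisely because $Z_p$ has rational singularities, is just $\C[Z_p]$. That is not the character of the $\D$-module pushforward $\pi_+\mathcal{O}_Y$. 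The two are genuinely different: already for $p=0$, the sheaf pushforward is the skyscraper $\C$, while $\pi_+\mathcal{O}_{\{0\}}=E$ has a large infinite-dimensional character. Getting the $\D$-module character requires accounting for the relative de Rham complex of the bundle $Y\to\mathrm{Gr}(p;W_2)$, which is exactly what the $p_{k,r}$ limit formulas of \cite[Lemma 2.5, Proposition 2.10]{raicu2016characters} package (and which the present paper repeatedly invokes, e.g.\ via (\ref{eq:pkr}) and in Section \ref{sec:characters}). Without that correction, your step two produces the character of the coordinate ring of the determinantal variety, not the data needed to isolate $[L_{Z_p}]$.

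Two smaller remarks. First, rational singularities plays no role in the semisimplicity of $\pi_+\mathcal{O}_Y$: that is the Decomposition Theorem, which applies because $\pi$ is proper and $Y$ is smooth, independent of the singularities of $Z_p$; invoking Kempf there conflates the sheaf-theoretic and $\D$-module statements. Second, even after fixing step two, extracting $[L_{Z_p}]$ by subtracting lower $[L_{Z_q}]$'s weighted by unknown multiplicities $m_{p,q}$ is harder than you suggest; the standard workaround, and what Raicu actually does, is to push forward $\mathcal{O}_Y(*Z)$ with $Z=\pi^{-1}(Z_{p-1})$ the exceptional divisor. Then $\pi_+\mathcal{O}_Y(*Z)$ is the pushforward from the open orbit $Z_p\setminus Z_{p-1}$, whose Euler characteristic in the Grothendieck group differs from $[L_{Z_p}]$ by a combination of lower $[L_{Z_q}]$'s with signs and multiplicities that are far more controlled (compare (\ref{defofEuler}) and the computations in Lemmas \ref{lem:charD17n3}, \ref{lem:wit17n4}). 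Replacing $\mathcal{O}_Y$ by $\mathcal{O}_Y(*Z)$ and the Koszul/Bott step by the limit Euler-characteristic formula is what makes the inductive extraction tractable.
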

Using this formula, we can obtain a large amount of information about the $\GL$-characters of $D_0$, $D_2$, $D_3$, $D_4$, $D_6$, $D_8$, $D_9$, though finding the complete $\GL$-character of these is not feasible (see Section \ref{sec:reps}).
In particular, when $n=3$ we have $W_1=A\oo B$, $W_2=C$, and $L_{Z_0}=E=D_0$, $L_{Z_1}=D_2$, $L_{Z_2}=D_6$, $L_{Z_3}=S=D_8$. When $n\geq 4$ we have $W_1=C$, $W_2=A\oo B$, and $L_{Z_0}=D_0$, $L_{Z_1}=D_2$, $L_{Z_2}=D_6$, $L_{Z_3}=D_8$, $L_{Z_4}=D_9=S$. When $W_1=A\oo C$ and $W_2=B$ we have $L_{Z_1}=D_3$, and when $W_1=B\oo C$ and $W_2=A$ we have $L_{Z_1}=D_4$. 

\subsection{Grassmannians, tautological bundles, and Bott's Theorem} Let $W$ be an $m$-dimensional complex vector space. Given $1\leq k\leq m$ we write $\mathbb{G}(k;W)$ for the Grassmannian of $k$-dimensional quotients of $W$, with tautological quotient sheaf $\mc{Q}$ of rank $k$. We store the following consequence of Bott's Theorem (see \cite[Corollary 4.1.9]{weyman}) for use in our calculations.

\begin{lemma}\label{BottLemma}
Let $\lambda\in \mathbb{Z}^k_{\dom}$. We have that $\mathbb{S}_{\lambda}\mathcal{Q}$ has cohomology in at most one degree.
\begin{enumerate}
\item 	If $\lambda_p\geq p-k$ and $\lambda_{p+1}\leq p-m$ for some $0\leq p\leq k$, then 
$$
H^{(m-k)(k-p)}(\mathbb{G}(k;W),\mathbb{S}_{\lambda}\mathcal{Q})\cong \mathbb{S}_{\lambda(p) }W,
$$
where 
$$
\lambda(p)=(\lambda_1,\cdots,\lambda_p, (p-k)^{m-k},\lambda_{p+1}+(m-k),\cdots, \lambda_k+(m-k)).
$$
\item Conversely, if for some $0\leq p\leq k$ we have
$$
H^{(m-k)(k-p)}(\mathbb{G}(k;W),\mathbb{S}_{\lambda}\mathcal{Q})\neq 0,
$$
then $\lambda_p\geq p-k$ and $\lambda_{p+1}\leq p-m$.
\end{enumerate} 
\end{lemma}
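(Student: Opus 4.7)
The plan is to specialize Bott's theorem in the form of \cite[Corollary 4.1.9]{weyman} to the sheaf $\bS_\lambda \mc{Q}$ on $\mathbb{G}(k;W)$, that is, to take the weight on the tautological subbundle in the short exact sequence $0\to \mc{R} \to W\oo \mc{O} \to \mc{Q}\to 0$ to be zero. Bott's theorem computes $H^\bullet(\bS_\lambda \mc{Q})$ by the following procedure: form the concatenated weight $\alpha=(\lambda_1,\ldots,\lambda_k, 0^{m-k})$, add $\rho=(m-1,m-2,\ldots,0)$, and check whether the entries of $\alpha+\rho$ are pairwise distinct; if not, the cohomology vanishes, and if so the unique permutation $\sigma$ that puts $\alpha+\rho$ in strictly decreasing order yields $H^{\ell(\sigma)}(\bS_\lambda\mc{Q})=\bS_{\sigma(\alpha+\rho)-\rho}W$ with all other cohomology zero.

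The first task is to identify when the entries of
\[
\alpha+\rho=(\lambda_1+m-1,\,\ldots,\,\lambda_k+m-k,\,m-k-1,\,\ldots,\,1,\,0)
\]
are distinct. The trailing $m-k$ entries form the interval $\{0,1,\ldots,m-k-1\}$, while the leading $k$ entries $\lambda_i+m-i$ are strictly decreasing in $i$ (since $\lambda$ is dominant). Distinctness is thus equivalent to each $\lambda_i+m-i$ lying outside $\{0,\ldots,m-k-1\}$, which by the strict monotonicity is in turn equivalent to the existence of an index $0\leq p\leq k$ with $\lambda_i+m-i\geq m-k$ for $i\leq p$ and $\lambda_i+m-i\leq -1$ for $i>p$. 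Rewriting produces exactly the inequalities $\lambda_p\geq p-k$ and $\lambda_{p+1}\leq p-m$, with the boundary cases $p=0$ and $p=k$ imposing only one of them.

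Assuming these conditions, putting $\alpha+\rho$ into strictly decreasing order amounts to sliding the block $\{0,\ldots,m-k-1\}$ of size $m-k$ past the $k-p$ trailing entries $\lambda_{p+1}+m-p-1,\ldots,\lambda_k+m-k$; this introduces exactly $(m-k)(k-p)$ inversions and no others, so $\ell(\sigma)=(m-k)(k-p)$. A direct entry-by-entry subtraction of $\rho$ from the sorted sequence produces precisely the weight $\lambda(p)$ displayed in the statement, proving part~(1). Part~(2) is then a formal consequence: Bott's theorem guarantees that $\bS_\lambda\mc{Q}$ has nonzero cohomology in at most one degree, so any nonvanishing of $H^{(m-k)(k-p)}$ must come from a valid sorting, which by part~(1) forces both inequalities. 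The only real obstacle is the combinatorial bookkeeping of the permutation length and of $\sigma(\alpha+\rho)-\rho$; no genuine technical difficulty arises beyond this.
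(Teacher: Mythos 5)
Your proposal is correct, and it is precisely the expansion of Bott's theorem that the paper implicitly appeals to by citing \cite[Corollary 4.1.9]{weyman}; the paper itself states the lemma without proof. Your bookkeeping of the distinctness condition, the inversion count $(m-k)(k-p)$, and the entrywise verification that $\operatorname{sort}(\alpha+\rho)-\rho=\lambda(p)$ are all accurate.
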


\subsection{Spaces of relative hypermatrices}\label{sec:relmatrices} We continue to write $V=A\oo B\oo C$. Fix positive integers $i\leq 2,j \leq 2,k \leq n$, and consider the product of Grassmannians
$$
\mathbb{G}_{ijk}:=\mathbb{G}(i;A)\times \mathbb{G}(j;B)\times \mathbb{G}(k;C),
$$
where $\mathbb{G}(k;W)$ denotes the Grassmannian of $k$-dimensional quotients of a vector space $W$. On $\mathbb{G}_{ijk}$ we consider the geometric vector bundle (see \cite[Exercise II.5.18]{hartshorne}):
$$
Y_{ijk}:=\underline{\tn{Spec}}_{\mc{O}_{\mathbb{G}_{ijk}}} \Sym(\mc{Q}_A\boxtimes \mc{Q}_B\boxtimes \mc{Q}_C),
$$
where $\mc{Q}_W$ denotes the tautological quotient sheaf of rank $k$ on $\mathbb{G}(k;W)$. 

Fix $i,j,k$ and let $Y=Y_{i,j,k}$. We may think of $Y$ as the space of relative $i\times j\times k$ hypermatrices, fibered over $\bb{G}_{i,j,k}$. Indeed, over an open affine subset of $\bb{G}_{i,j,k}$ the space $Y$ is isomorphic to its product with the space of $i\times j\times k$ hypermatrices, and thus $Y$ has stratification by subsets $O_p^{Y}$ as in Section \ref{sec:orbits}. Given such a locally closed subset, we write $D_p^Y$ for the corresponding intersection cohomology $\D_Y$-module. As $Y$ is a closed subvariety of the trivial bundle $V\times \bb{G}_{i,j,k}$, we often abuse notation and identify the modules $D_p^Y$ with sheaves of graded $S^{Y}=\Sym(\mc{Q}_A\boxtimes \mc{Q}_B\boxtimes \mc{Q}_C)$-modules on $\mathbb{G}_{i,j,k}$ (see \cite[Exercise II.5.17]{hartshorne}).

If we write $V_{ijk}=\C^i \oo \C^j \oo \C^k$, then we have an identification
$$
Y_{ijk}\cong \GL\times_{P_{ijk}} V_{ijk},
$$
where the Levi subgroup $\GL_i(\C) \times \GL_j (\C) \times \GL_k(\C)$ of the parabolic $P_{ijk}$ acts on $V_{ijk}=\C^i \oo \C^j \oo \C^k$ naturally, while its unipotent radical acts trivially.

We recall that for positive integers $i\leq 2,j \leq 2,k \leq n$ the subspace variety $\tn{Sub}_{ijk} \subset V$ is the closed subset of tensors $v \in V$ such that there exists subspaces $A' \subset A, B'\subset B, C'\subset C$ of dimensions $i,j,k$ respectively, with $v \in A' \oo B' \oo C'$. Such spaces admit desingularizations as follows:

\begin{lemma}\label{lem:desing}
We have a $\GL$-equivariant resolution of singularities of $\tn{Sub}_{ijk}$ given by
\[\pi_{ijk}: Y_{ijk} \to \tn{Sub}_{ijk},\]
\end{lemma}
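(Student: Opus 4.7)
The plan is to verify that $\pi_{ijk}$ has the four defining properties of a $\GL$-equivariant resolution of singularities: smooth irreducible source, properness, birationality, and $\GL$-equivariance. This is a standard collapsing of a homogeneous bundle.

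First, I would note that $Y_{ijk}$ is smooth and irreducible, since it is the total space of a geometric vector bundle over the smooth irreducible projective variety $\mathbb{G}_{ijk}$. Next, I would describe $\pi_{ijk}$ explicitly using the identification $Y_{ijk}\cong \GL\times_{P_{ijk}}V_{ijk}$, where $V_{ijk}\hookrightarrow V$ is realized via a fixed choice of flags and $P_{ijk}\subset \GL$ is the parabolic stabilizing these flags. The map $\pi_{ijk}$ is then the collapsing $[g,v]\mapsto g\cdot v$, which is manifestly $\GL$-equivariant. Properness follows because $\pi_{ijk}$ factors through the closed embedding $Y_{ijk}\hookrightarrow (\GL/P_{ijk})\times V$ sending $[g,v]\mapsto (gP_{ijk},g\cdot v)$ followed by projection onto $V$, and $\GL/P_{ijk}$ is projective. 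Its set-theoretic image is $\GL\cdot V_{ijk}$, which by unraveling definitions is exactly $\tn{Sub}_{ijk}$.

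The main content of the lemma is then birationality. I would exhibit a dense open subset $U\subseteq \tn{Sub}_{ijk}$ over which $\pi_{ijk}$ restricts to an isomorphism. Let $U$ consist of those $v\in \tn{Sub}_{ijk}$ whose three flattenings $v_A\colon B^*\oo C^*\to A$, $v_B\colon A^*\oo C^*\to B$, $v_C\colon A^*\oo B^*\to C$ attain their maximal possible ranks $i,j,k$ respectively; this $U$ is Zariski open by rank semicontinuity and nonempty, hence dense, since it contains the open $\GL$-orbit of $\tn{Sub}_{ijk}$. For $v\in U$, I would check that $A_v:=\op{im}(v_A)$, $B_v:=\op{im}(v_B)$, $C_v:=\op{im}(v_C)$ form the unique triple of subspaces of dimensions $(i,j,k)$ with $v\in A_v\oo B_v\oo C_v$: any such triple $(A',B',C')$ must satisfy $A_v\subseteq A'$ (since the $A$-slices of $v$ span $A_v$ and lie in $A'$), and equality then follows from dimensions. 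Consequently $\pi_{ijk}^{-1}(v)$ reduces to the single point corresponding to $(A_v,B_v,C_v,v)$, yielding birationality.

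I expect the only mild obstacle to be the uniqueness assertion for $(A_v,B_v,C_v)$ above: it is a small linear-algebra computation using the \emph{minimal subspace} property of tensors of full multilinear rank. Once this is in hand, all four resolution properties are verified.
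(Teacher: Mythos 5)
Your proof is correct. The paper states Lemma~\ref{lem:desing} without proof, treating it as a standard instance of Kempf's collapsing of a homogeneous vector bundle (cf.\ the geometric technique in Weyman's book, which the paper cites elsewhere); your argument --- smoothness from the vector-bundle structure over $\mathbb{G}_{ijk}$, properness and equivariance via the closed embedding of $Y_{ijk}$ into $\mathbb{G}_{ijk}\times V$, and birationality by recovering the unique subspace triple $(A_v,B_v,C_v)$ as the images of the three flattenings over the open locus where those flattenings have full ranks $(i,j,k)$ --- is precisely that standard argument, and it is complete. One small caveat worth recording: the nonemptiness of $U$ (and hence birationality) requires $k\leq ij$, $j\leq ik$, $i\leq jk$, which holds for all seven triples $(1,1,1)$, $(2,2,1)$, $(1,2,2)$, $(2,1,2)$, $(2,2,2)$, $(2,2,3)$, $(2,2,4)$ the paper actually uses, but would fail for, say, $(i,j,k)=(2,2,5)$ with $n\geq 6$, where $\tn{Sub}_{225}=\tn{Sub}_{224}$ and $\pi_{225}$ has positive-dimensional generic fibers; this is an imprecision in the paper's blanket quantifier ``$i\leq 2$, $j\leq 2$, $k\leq n$'' rather than a gap in your proof.
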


We store the following consequence for use in our local cohomology calculations.

\begin{lemma}\label{pushdownlocallyclosed1}
Using notation as above, let $(Y,\pi,O)$ be a triple such that $\pi:Y\to V$ is a desingularization of $\ol{O}$, and assume that $Z:=Y\setminus O$ is a divisor. Let $\mathbb{G}$ be the product of Grassmannians on which $Y$ is a vector bundle. If we set $c=\operatorname{codim}(O,V)$, then we have the following for all $q\geq 0$:
\begin{equation}\label{pushdownlocallyclosed}
H^q_{O}(S)=\mathcal{H}^{q-c}(\pi_{+}\mathcal{O}_Y(\ast Z)).
\end{equation}
In particular, 
\begin{enumerate}
\item $H^q_{O}(S)\neq 0$ only if $c\leq q\leq  c+\dim \bb{G}$, 
\item $\mathcal{H}^{q}(\pi_{+}\mathcal{O}_Y(\ast Z))	\neq 0$	only if $0\leq q\leq \dim \bb{G}$.
\end{enumerate}

\end{lemma}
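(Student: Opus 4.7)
The plan is to identify both sides of the formula via base change for $\D$-module pushforward together with standard facts about local cohomology, then extract the vanishing bounds from the cohomological amplitudes of the pushforward functors involved.

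First I would reduce $\pi_+\mathcal{O}_Y(\ast Z)$ to the pushforward of a locally closed immersion. Since $Z$ is a divisor, one has $\mathcal{O}_Y(\ast Z) = j_+\mathcal{O}_{Y\setminus Z}$ in cohomological degree $0$, where $j\colon Y\setminus Z\hookrightarrow Y$ is the open immersion (the higher direct images vanish by Serre's affine vanishing on local pieces where $Z$ is cut out by a single equation). Because $\pi$ restricts to an isomorphism $\pi_0\colon Y\setminus Z\xrightarrow{\sim} O$, the composite $\pi\circ j$ factors as $\pi\circ j = i\circ\pi_0$, where $i\colon O\hookrightarrow V$ is the locally closed embedding. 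Functoriality of $\D$-module pushforward then yields
\[
\pi_+\mathcal{O}_Y(\ast Z) = \pi_+ j_+\mathcal{O}_{Y\setminus Z} = i_+\pi_{0,+}\mathcal{O}_{Y\setminus Z} = i_+\mathcal{O}_O.
\]

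Next I would relate $i_+\mathcal{O}_O$ to local cohomology. Factor $i=o\circ k$, where $o\colon W\hookrightarrow V$ is the open inclusion of $W := V\setminus(\overline{O}\setminus O)$ and $k\colon O\hookrightarrow W$ is a smooth closed embedding of codimension $c$. Kashiwara's theorem gives $k_+\mathcal{O}_O=\mathcal{H}^c_O(\mathcal{O}_W) = R\underline{\Gamma}_O(\mathcal{O}_W)[c]$, and applying $o_+ = Ro_*$ together with the fact that local cohomology is local identifies $i_+\mathcal{O}_O$ with $R\underline{\Gamma}_O(\mathcal{O}_V)[c]$ in the derived category. Taking $(q-c)$-th cohomology sheaves and using that $V$ is affine (so that global sections is exact on quasi-coherent sheaves) yields
\[
\mathcal{H}^{q-c}(\pi_+\mathcal{O}_Y(\ast Z)) = \mathcal{H}^q\bigl(R\underline{\Gamma}_O(\mathcal{O}_V)\bigr) = H^q_O(S),
\]
which is the main formula.

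For the vanishing bounds in assertion (2), I would exploit the factorization $\pi = q\circ\iota$, where $\iota\colon Y\hookrightarrow V\times\mathbb{G}$ is the natural closed embedding (since $Y$ is a subvariety of the trivial bundle $V\times\mathbb{G}$) and $q\colon V\times\mathbb{G}\to V$ is the projection. The upper bound $q\leq\dim\mathbb{G}$ follows because $q$ is smooth proper with fiber $\mathbb{G}$ of dimension $\dim\mathbb{G}$, so standard cohomological amplitude bounds for proper $\D$-module pushforward (applied via the relative de Rham complex) force $\mathcal{H}^q(\pi_+\mathcal{O}_Y(\ast Z))=0$ for $q>\dim\mathbb{G}$. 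The lower bound $q\geq 0$ follows from the decomposition $\pi\circ j = o\circ k\circ\pi_0$: the functors $\pi_{0,+}$ and $k_+$ are exact (isomorphism and smooth closed embedding), while $o_+ = Ro_*$ for an open immersion has non-negative cohomological amplitude. Assertion (1) then follows immediately from the main formula and (2). The most delicate step is the identification $i_+\mathcal{O}_O = R\underline{\Gamma}_O(\mathcal{O}_V)[c]$, since $O$ is only locally closed rather than smoothly closed in $V$, but it reduces cleanly to Kashiwara's theorem via the smooth open subvariety $W$ in which $O$ is closed.
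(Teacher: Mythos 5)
Your proof is correct and follows essentially the same route as the paper's. The paper likewise identifies $i_+\mathcal{O}_O=\pi_+\mathcal{O}_Y(\ast Z)$ by composing $\pi$ with the open inclusion $j$ of $Y\setminus Z\cong O$, and then identifies $i_+\mathcal{O}_O$ with $\mathbb{R}\Gamma_O(S)[c]$ by factoring $i$ through the open set $U=V\setminus(\overline{O}\setminus O)$ and invoking Kashiwara's theorem for the smooth closed embedding $O\hookrightarrow U$; the lower bound comes from this degree-$0$ concentration and the upper bound from the factorization of $\pi$ through the projection $V\times\mathbb{G}\to V$, exactly as you argue.
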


In this article, we apply Lemma \ref{pushdownlocallyclosed1} to the triples $(Y_{111},\pi_{111},O_1)$ and $(Y_{222},\pi_{222},O_6)$.

\begin{proof}
Let $j:O\to Y$ and $i:O\to V$ be the natural inclusions, so that $\pi\circ j=i$. We observe that
$$
i_{+}(\mathcal{O}_{O})=\pi_{+}(j_{+}(\mathcal{O}_{O}))=\pi_{+}(\mathcal{O}_Y(\ast Z)).
$$
On the other hand, let $U=V\setminus (\ol{O}\setminus O)$ with open immersion $k:U\to V$. Since $O$ is a smooth closed subvariety of $U$ of codimension $c$, we have (see \cite[Proposition 1.7.1]{htt}):
\begin{equation}\label{lcO6push}
i_{+}(\mathcal{O}_{O})=k_{+}(\mathbb{R}\mathscr{H}^0_{O}(\mc{O}_U)[c])=\mathbb{R}\Gamma_{O}(S)[c],
\end{equation}
where the last equality follows from \cite[Proposition 1.3, Proposition 1.4]{hartshornelocal}.
In particular, (\ref{pushdownlocallyclosed}) holds. Thus, the lower bounds (resp. upper bounds) on $q$ in parts (1) and (2) are equivalent. The lower bound on $q$ in (1) follows from (\ref{lcO6push}), and the upper bound on $q$ in (2) follows from \cite[Proposition 1.5.28(ii)]{htt}, using that $\pi$ factors through the projection from $V\times \bb{G}$ to $V$. 
\end{proof}

We have the following notation for the alternating sum of the cohomology of $\pi_+\mc{O}_Y(\ast Z)$ in the Grothendieck group of representations of $\GL$:
\begin{equation}\label{defofEuler}
\big[\chi(\pi_+\mc{O}_Y(\ast Z))\big]=\sum_{i\in \mathbb{Z}} (-1)^i \cdot \big[\mc{H}^i(\pi_+\mc{O}_Y(\ast Z))\big].
\end{equation}

In Section \ref{sec:characters}, we use (\ref{defofEuler}) in conjunction with the techniques of \cite[Section 2]{raicu2016characters} to get information about the characters of the simple equivariant $\D_V$-modules. We recall some notation for use in these calculations. Given $m\geq 1$ we write $[m]=\{1,2,\cdots, m\}$, and for $0\leq k\leq m$ we write $\binom{[m]}{k}$ for the set of size $k$ subsets of $[m]$. For example $\binom{[3]}{2}=\{\{1,2\},\{1,3\},\{2,3\}\}$. Given $r\in \mathbb{Z}$ and $I\in \binom{[m]}{k}$ we write $(r^I)\in \mathbb{Z}^m$ for the tuple with $r$ in the $i$-th place for all $i\in I$, and zeros elsewhere. For example, if $m=3$ then $(r^{\{1,3\}})=(r,0,r)$. 

Let $W$ be an $m$-dimensional vector space and let $\lambda\in \mathbb{Z}^m$ be a not necessarily dominant tuple of $m$ integers. We introduce the notation $[\bS_{\lambda}W]$ in the Grothendieck group of $\GL(W)$ representations as follows. Let $\rho=(m-1,\cdots ,1,0)$. If $\lambda+\rho$ has a repeated entry, then we define $[\bS_{\lambda}W]=0$. Otherwise, $\lambda+\rho$ has distinct entries, and we write $\operatorname{sort}(\lambda+\rho)$ for the tuple obtained by arranging the entries of $\lambda+\rho$ into strictly decreasing order, and we let $\sigma \in \mathfrak{S}_m$ be the corresponding permutation. If we set $\tilde{\lambda}=\operatorname{sort}(\lambda+\rho)-\rho$, then we make the following definition (cf. \cite[Corollary 4.1.9]{weyman})
$$
\big[\bS_{\lambda}W\big]=\operatorname{sgn}(\sigma)\cdot \big[\bS_{\tilde{\lambda}}W\big].
$$
We generalize this to the Grothendieck group of $\GL=\GL(A)\times \GL(B)\times \GL(C)$ representations as follows. Let $\rho^2=(1,0)$ and $\rho^n=(n-1,\cdots, 1,0)$, and let $\alpha,\beta\in \mathbb{Z}^2$ and $\gamma \in \mathbb{Z}^m$. If any of the tuples $\rho^2+\alpha$, $\rho^2+\beta$, $\rho^n+\gamma$ has a repeated entry, then we set $[\bS_{\alpha}A\oo \bS_{\beta}B\oo \bS_{\gamma}C]=0$. Otherwise, we let $\sigma_{\alpha}$, $\sigma_{\beta}$, $\sigma_{\gamma}$ be the corresponding permutations, and we define
\begin{equation}\label{eq:bottGG}
\big[\bS_{\alpha}A\oo \bS_{\beta}B\oo \bS_{\gamma}C\big]=\operatorname{sgn}(\sigma_{\alpha})\cdot \operatorname{sgn}(\sigma_{\beta})\cdot \operatorname{sgn}(\sigma_{\gamma})\cdot \big[\bS_{\tilde{\alpha}}A\oo \bS_{\tilde{\beta}}B\oo \bS_{\tilde{\gamma}}C\big].
\end{equation}
Given an $m$-dimensional vector space $W$ and $r,k\geq 0$, we write (see \cite[Section 2.1.3, Lemma 2.5]{raicu2016characters})
\begin{equation}\label{eq:pkr}
\big[p_{k,r}(W)\big]=\sum_{I\in \binom{[m]}{k}} \big[\bS_{(r^I)}W\big].
\end{equation}
For $\lambda \in \mathbb{Z}^m$, $I\in \binom{[m]}{k}$ and $r\geq 0$ we define the tuple $(\lambda, r,I)\in\bb{Z}^m$ via (cf. \cite[Lemma 2.3]{raicu2016characters}).
\begin{equation}\label{eq:lambdaplusrI}
(\lambda, r,I)=\lambda+(r^I)
\end{equation}
For example, if $m=3$, $\lambda=(-1,-1,-2)$, $k=2$, $I=\{1,3\}$, and $r\geq 3$ then $[\bS_{(\lambda, r,I)}W]=-[\bS_{(r-1,r-3,0)}W]$.

\section{First steps towards the quiver structure of $\tn{mod}_{\GL}(\D_V)$} \label{sec:steps}

\subsection{Classification of simple objects}

First, we describe the component groups $H/H^0$ (here $H^0$ is the connected component of $H$ containing the identity) of all the stabilizers of orbits $O_i = \GL/H$.

\begin{prop}\label{componentGrp}
The component group of the $\GL$-orbit $O_6$ is $\mathbb{Z}/2\mathbb{Z}$, while all other $\GL$-orbits $O_i$ ($i\neq 6$) have connected stabilizers.
\end{prop}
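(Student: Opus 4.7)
The plan is to compute the stabilizer $H_i := \operatorname{Stab}_{\GL}(v_i)$ for each orbit representative $v_i$ listed in Section \ref{sec:orbits}, and read off its component group $H_i/H_i^0$. The dimension of $H_i$ is $\dim \GL - \dim O_i = n^2 + 8 - \dim O_i$, so the strategy is to exhibit a connected subgroup $K_i \leq H_i$ of exactly this dimension (forcing $H_i^0 = K_i$) and then enumerate the remaining components, if any.

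For $i \in \{0,1,2,3,4\}$: the tensor $v_i$ factors through a small subspace of $A, B, C$ (e.g.\ $v_2 = (a_1\otimes b_1 + a_2\otimes b_2)\otimes c_1 \in A\otimes B\otimes \langle c_1\rangle$), forcing $(g_A, g_B, g_C)$ into a product of parabolic subgroups preserving the relevant flags, cut out by a single character relation coming from the scalings. Parabolic subgroups in $\GL_m(\C)$ and character kernels are connected, so $H_i$ is connected. For $i\in \{5,7,8,9\}$: a canonical flag in $A, B, C$ is singled out by the ranks of the one-dimensional slices of $v_i$ (for instance, for $O_7$ the unique line $\langle a_2\rangle \subset A$ whose $A$-slice has rank $1$). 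The stabilizer is then a connected subgroup of the corresponding parabolic, and connectedness follows by the same mechanism.

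The essential case is $i = 6$. The representative $v_6 = a_1 \otimes b_1 \otimes c_1 + a_2 \otimes b_2 \otimes c_2$ is explicitly fixed by the involution $\sigma = (s_A, s_B, s_C) \in \GL$, where $s_A$ swaps $a_1\leftrightarrow a_2$, $s_B$ swaps $b_1 \leftrightarrow b_2$, and $s_C$ swaps $c_1 \leftrightarrow c_2$ (identity on $c_j$ for $j\geq 3$). To prove $[H_6 : H_6^0] = 2$, I would solve $(g_A\otimes g_B\otimes g_C) v_6 = v_6$ directly. Viewed as a map $C^* \to A\otimes B$, the image of $v_6$ is spanned by the two rank-one tensors $a_1\otimes b_1$ and $a_2\otimes b_2$, and any element of $H_6$ must permute these summands. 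This yields a dichotomy: either $g_A$ preserves both lines $\langle a_1\rangle, \langle a_2\rangle$ (and similarly $g_B, g_C$ preserve $\langle b_1\rangle,\langle b_2\rangle$ and $\langle c_1\rangle,\langle c_2\rangle$), giving the identity component; or $g_A$ swaps them (likewise $g_B, g_C$), giving the coset $\sigma\cdot H_6^0$. Both components have the required dimension $n^2 - 2n + 4$ and are Zariski-disjoint inside $H_6$ (the diagonal and anti-diagonal conditions on $g_A$ are disjoint), so $H_6/H_6^0 \cong \mathbb{Z}/2\mathbb{Z}$.

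The main obstacle is ruling out hidden discrete symmetries in $O_5, O_7, O_8, O_9$: each has visible involutive symmetries of the tensor (for instance, $v_8$ is fixed by the simultaneous swap $a_1\leftrightarrow a_2$, $b_1\leftrightarrow b_2$, $c_1\leftrightarrow c_3$), and one must check these already lie in the identity component by producing a continuous $H_i$-path to the identity. A clean way to do this uniformly is to work in the equivalent $\operatorname{SO}_4(\C)\times \GL_n(\C)$ framework, where the computation reduces to standard centralizer calculations in classical groups acting on $4\times n$ matrices; only the rank-$2$ locus $\ol{O}_6$ produces an extra $\mathbb{Z}/2$, coming from swapping the two orthogonal lines in the image of a rank-$2$ matrix whose image carries the nondegenerate $\operatorname{SO}_4$-invariant form.
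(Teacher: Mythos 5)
Your dichotomy argument for $O_6$ is essentially correct (the image of $v_6 \colon C^* \to A\oo B$ meets the Segre variety in exactly two lines, which $g_A\oo g_B$ must permute), and the parabolic/character-kernel argument covers $O_0,\ldots,O_4$. But for $O_5, O_7, O_8, O_9$ there is a genuine gap, which you yourself flag and then do not close: you observe that each representative admits visible discrete involutions (e.g.\ $v_8$ is fixed by $a_1\leftrightarrow a_2$, $b_1\leftrightarrow b_2$, $c_1\leftrightarrow c_3$), note that ``one must check these already lie in the identity component by producing a continuous path,'' and then never produce the paths. The preceding sentence ``The stabilizer is then a connected subgroup of the corresponding parabolic'' is unjustified: being contained in a parabolic does not force connectedness (this is exactly what fails for $O_6$). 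Worse, for $O_8$ with $n\geq 4$ the slice-rank mechanism singles out no line in $A$ or $B$ at all, since every nonzero $A$-slice of $v_8$ has rank $2$; so the $\GL(A)\times\GL(B)$ directions are not constrained to any parabolic by your construction. The fall-back to $\operatorname{SO}_4(\C)\times\GL_n(\C)$ has a further subtlety you do not address: the kernel of $\operatorname{SO}_4(\C)\times\GL_n(\C)\to\GL(V)$ is the disconnected $\mathbb{Z}/2$ generated by $(-I_4,-I_n)$, so stabilizer component groups there can differ from the $\GL$ ones by a $\mathbb{Z}/2$ factor, and the assertion that ``only $\ol{O}_6$ produces an extra $\mathbb{Z}/2$'' is precisely the proposition being proved, not a way to prove it.

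The paper sidesteps all of these computations with a structural reduction. When $\ol{O}=\tn{Sub}_{ijk}$ is a subspace variety, the resolution $\pi_{ijk}$ of Lemma~\ref{lem:desing} restricts to an isomorphism over $O$, and since the unipotent radical of $P_{ijk}$ acts trivially on the fiber $V_{ijk}=\C^i\oo\C^j\oo\C^k$, the $\GL$-component group of $O$ coincides with that of the dense $\GL_i\times\GL_j\times\GL_k$-orbit in $V_{ijk}$. This collapses $O_1,\ldots,O_4$ to matrix cases, $O_5$ and $O_6$ to the $2\times 2\times 2$ case cited from prior work, and $O_8$ to $n=3$ (where castling reduces again to matrices); $O_9$ is handled by a codimension-$\geq 2$ simple-connectedness argument, and $O_7$ by the $\D$-module equivalences of Lemma~\ref{lem:opencat2}. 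This reduction to smaller known cases, rather than explicit stabilizer paths, is the key idea your proposal is missing.
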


\begin{proof}
We use notation as in Section \ref{sec:relmatrices}. First, assume that for a $\GL$-orbit $O \subset V$ we have $\ol{O} = \tn{Sub}_{ijk}$ for some $i,j,k \geq 1$. By Lemma \ref{lem:desing} $O \cong \pi^{-1}_{ijk}(O) = \GL \times_{P_{ijk}} O' \subset \GL\times_{P_{ijk}} V_{ijk}$, for some $P_{ijk}$-orbit $O' \subset V_{ijk}$. Since the unipotent radical of $P_{ijk}$ acts trivially, $O' $ is a $\GL_i(\C) \times \GL_j (\C) \times \GL_k(\C)$-orbit in $\C^i \oo \C^j \oo \C^k$  (in fact, the dense orbit). 

The generic $\GL$-stabilizer of $O$ agrees with the generic $P_{ijk}$-stabilizer of $O'$. Since the unipotent radical of $P_{ijk}$ acts trivially on $O'$, this shows that the $\GL$-component group of $O$ agrees with the  $\GL_i(\C) \times \GL_j (\C) \times \GL_k(\C)$-component group of $O' \subset \C^i \oo \C^j \oo \C^k$.

When one of $i,j,k$ is $\leq 1$, the above argument gives the result for $O_1, O_2, O_3, O_4$, as it is reduced to the case of matrices, where the corresponding claim is easy to see. 

As $\ol{O}_6 = \tn{Sub}_{222}$, for $O=O_6$ the result follows \cite[Lemma 3.3]{perlman2020equivariant} or \cite[Remark 4.12]{lHorincz2019categories}. Note that the restriction of $\pi_{222}$ still yields an isomorphism $\pi_{222}^{-1}(O_5) \xrightarrow{\cong} O_5$, hence the argument above follows through and yields the result for $O_5$ using \cite[Lemma 3.2]{perlman2020equivariant}.

Since $\ol{O}_8 = \tn{Sub}_{223}$, by the above argument with $O=O_8$ it is enough to prove the claim in the case $n=3$. Then the orbit $O_8 \subset V$ is dense. There is a castling transform from $V=\mathbb{C}^2\oo \mathbb{C}^2\oo \mathbb{C}^3$ to $\mathbb{C}^2\oo \mathbb{C}^3\oo \mathbb{C}^1$ (see \cite[Section 2]{satokimura}). By \cite[Proposition 7]{satokimura} the generic stabilizer of $V$ is isomorphic to the generic stabilizer of $\mathbb{C}^2\oo \mathbb{C}^3\oo \mathbb{C}^1$, endowed with the natural action of $\GL_2(\mathbb{C})\times \GL_3(\mathbb{C})\times \mathbb{C}^{\ast}$. This space may be identified again with the space of $2\times 3$ matrices, where the corresponding claim is easy to see.

Next, we show that the stabilizer of $O_7$ is connected. Since $\pi_{223}$ yields an isomorphism $\pi_{223}^{-1}(O_7) \xrightarrow{\cong} O_7$, it is enough to consider the case $n=3$ (similar as we did for $O_5$). Then this is a consequence of Lemma \ref{lem:opencat2} below, as there is only one $\GL$-equivariant simple local system on $O_7$.

As $\ol{O}_9 = \tn{Sub}_{224}$, by the above argument with $O=O_9$ we can assume that $n>4$. Then the complement of $O_9$ in $V$ has codimension $\geq 2$, which implies that $O_9$ is simply-connected (see \cite[Remark 4.10]{lHorincz2019categories}), hence its component group must be trivial.
\end{proof}

According to the above result, we denote by $D_6'$ the simple $\D_V$-module corresponding to the non-trivial equivariant local system of rank one on $O_6$. We prove the following:

\begin{prop}\label{prop:fourier}
Let $\mc{F}$ denote the (twisted) Fourier transform on $\tn{mod}_{\GL}(\D_V)$.
\begin{enumerate}
\item If $n=3$ then $\mc{F}$ swaps the modules in each of the pairs
$$
(D_0,D_8),\quad (D_1,D_7),\quad (D_2,D_6),
$$
and and all other simples are fixed.
\item If $n\geq 4$ then $\mc{F}$ swaps the modules in each of the pairs
$$
(D_0,D_9),\quad (D_1,D_7),\quad (D_2,D_8), 
$$
and and all other simples are fixed.
\end{enumerate}
	
\end{prop}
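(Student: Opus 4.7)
The twisted Fourier transform $\mc{F}$ is an involutive autoequivalence of $\tn{mod}_{\GL}(\D_V)$ that permutes the finite set of isomorphism classes of simple objects, so our task is to compute this involution. The starting point is the standard identity $\mc{F}(L_{\{0\}}) = L_V$, i.e.\ $\mc{F}(D_0) = S$ in our notation; since $S = D_8$ for $n=3$ and $S = D_9$ for $n\geq 4$, this produces the first swap in each case by involutivity. The main technical tool for the remaining identifications is the behavior of $\mc{F}$ under the three matrix flattenings $V \cong (A\oo B)\oo C$, $V \cong A\oo (B\oo C)$, and $V \cong B\oo (A\oo C)$, together with the known Fourier duality for determinantal $\D$-modules reviewed in Section \ref{sec:dmodsMat}.

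Under the flattening $V \cong (A\oo B)\oo C$, the closures $\overline{O}_0, \overline{O}_2, \overline{O}_6, \overline{O}_8, \overline{O}_9$ (the last only for $n\geq 4$) are the determinantal varieties $Z_0, Z_1, Z_2, Z_3, Z_4$ of $4\times n$ matrices, and the classical Fourier duality swaps $L_{Z_p} \leftrightarrow L_{Z_{\min(4,n)-p}}$. For $n=3$ (where $\min = 3$) this produces the pairs $(D_0, D_8)$ and $(D_2, D_6)$; for $n\geq 4$ (where $\min = 4$) it produces $(D_0, D_9)$ and $(D_2, D_8)$ and forces $D_6$ to be fixed. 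Similarly, using $V \cong A\oo (B\oo C)$ and $V \cong B\oo (A\oo C)$ as $2\times 2n$ matrix spaces, in which $\overline{O}_3$ and $\overline{O}_4$ are the rank-one loci, matrix Fourier (with $\min = 2$) fixes each separately, so $D_3$ and $D_4$ are fixed. Finally, the swap $(D_1, D_7)$ is identified by projective duality: $\overline{O}_1$ is the affine cone over $\operatorname{Seg}(\bb{P}(A)\times \bb{P}(B)\times \bb{P}(C))$ while $\overline{O}_7$ is the cone over its projectively dual variety, and the Fourier--Sato transform of the IC extension of a cone over a smooth projective variety coincides with the IC extension of the cone over the projective dual.

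At this point every claimed swap has been established, and $D_3, D_4, D_6$ have been verified to be fixed via matrix Fourier. Since $\mc{F}$ is an involution on the finite set of simples, the only remaining question is how it acts on $D_5$ and $D_6'$; the plan is to show that each is fixed. To do so, one applies Proposition \ref{prop:intcoho}, which relates $H^\ast_{\{0\}}(L_Z)$ to $H^\ast_{\{0\}}(\mc{F}(L_Z))$ via intersection cohomology, to rule out $\mc{F}(D_5)$ and $\mc{F}(D_6')$ being swapped with each other or with any of the already-identified simples. The main obstacle is the distinction between $D_6$ and $D_6'$: they share the same support and the same characteristic variety as a set, and the matrix-Fourier argument only detects the simple $D_6$ coming from the trivial local system. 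Resolving the $\mc{F}$-action on $D_6'$ -- and thereby confirming that it is fixed rather than swapped with some other simple -- requires a direct analysis of how $\mc{F}$ acts on the nontrivial equivariant local system on $O_6$, or equivalently the identification of $D_6'$ as a composition factor of a specific $\D$-module known to be $\mc{F}$-self-dual.
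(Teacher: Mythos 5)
Your proposal follows the paper's strategy for most of the simples but leaves the crucial case of $D_6'$ (and therefore $D_5$) unresolved, and you yourself acknowledge this. The arguments for $(D_0, S)$, for the determinantal flattenings identifying $D_2, D_6, D_8$, and for the rank-one flattenings fixing $D_3$ and $D_4$ all align with the paper. For $(D_1, D_7)$ you invoke a Brylinski-type statement that the Fourier transform of the IC of a cone over a smooth projective variety is the IC of the cone over its projective dual; the paper instead proves this directly via characteristic cycles: writing $\charC(D_1) = [\overline{T^{\ast}_{O_1}V}] + m_0[\overline{T^{\ast}_{O_0}V}]$ and applying \cite[Equation 4.15]{lHorincz2019categories} forces $m_0 = 0$ because $S$ is the only simple with full support, and then $D_7$ is the unique simple with support $\overline{O}_7$. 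These two routes are comparable, but the paper's is self-contained and also establishes $\charC(D_1)$ and $\charC(D_7)$, which are needed later.

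The genuine gap is the action of $\mc{F}$ on $D_6'$. Your plan to use Proposition \ref{prop:intcoho} cannot work as stated: that proposition concerns $L_Z$, the IC extension of the \emph{trivial} local system, whereas $D_6'$ is the IC extension of the nontrivial rank-one equivariant local system on $O_6$, so neither the hypothesis nor the $IH^\ast(Z)$ interpretation apply. Moreover, since the already-identified simples account for everything except $D_5$ and $D_6'$, the involution either fixes both or swaps them, and intersection-cohomology degree counts on the support do not obviously separate these two possibilities. The paper's resolution is again via characteristic cycles. One writes $\charC(D_6') = [\overline{T^{\ast}_{O_6}V}] + \sum_{i=0}^{5} m_i[\overline{T^{\ast}_{O_i}V}]$ for some $m_i \geq 0$. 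Applying \cite[Equation 4.15]{lHorincz2019categories} (which sends $\overline{T^{\ast}_{O_i}V}$ to the conormal of the projectively dual orbit, and uses that $O_5$ is self-dual) gives $\charC(\mc{F}(D_6')) = [\overline{T^{\ast}_{O_6}V}] + m_5[\overline{T^{\ast}_{O_5}V}] + m_4[\overline{T^{\ast}_{O_4}V}] + m_3[\overline{T^{\ast}_{O_3}V}] + m_2[\overline{T^{\ast}_{O_8}V}] + m_1[\overline{T^{\ast}_{O_7}V}] + m_0[\overline{T^{\ast}_{O_9}V}]$. Since $O_7$, $O_8$, $O_9$ each support a unique simple, and those simples have characteristic cycles that do not contain $[\overline{T^{\ast}_{O_6}V}]$, one concludes $m_0 = m_1 = m_2 = 0$. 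Hence $\mc{F}(D_6')$ is a simple supported on $\overline{O}_6$ with $[\overline{T^{\ast}_{O_6}V}]$ in its char cycle, and is neither $D_6$ (already shown fixed) nor $D_5$ (wrong support), so $\mc{F}(D_6') = D_6'$, and then $D_5$ is fixed by elimination. This is the step you need; without it the proof is incomplete.
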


\begin{proof} The fact that $\mc{F}$ swaps $D_0$ and $S$ follows immediately from the definition.
We have that $O_1$ and $O_7$ are dual orbits \cite[Table 10.3.1]{landsberg2012tensors}. The characteristic cycle of $D_1$ is of the form $\tn{charC}(D_1)=[\overline{T^{\ast}_{O_1}V}]+m_0[\overline{T^{\ast}_{O_0}V}]$ for some $m_0\geq 0$. By \cite[Equation 4.15]{lHorincz2019categories}, it follows that $\tn{charC}(\mc{F}(D_1))=[\overline{T^{\ast}_{O_7}V}]+m_0[\overline{T^{\ast}_{V}V}]$. Since there is only one simple with full support, namely $S$, it follows that $m_0=0$. As $D_7$ is the only simple with support $\overline{O}_7$, we have that $\mc{F}(D_1)=D_7$ and $\mc{F}(D_7)=D_1$.

Using the two flattenings of $V$ to the space of $2\times 2n$ matrices, the orbit closures $\overline{O}_3$ and $\overline{O}_4$ are identified with the determinantal variety of matrices of rank $\leq 1$. Therefore $\mc{F}(D_3)=D_3$ and $\mc{F}(D_4)=D_4$. 

Assume that $n=3$. Using the flattening of $V$ to the space of $4\times n$ matrices, the orbit closures $\overline{O}_2$, $\overline{O}_6$, are identified with the determinantal varieties of ranks $\leq 1$, $\leq 2$. Thus, $\mc{F}$ swaps the modules $D_2$ and $D_6$.

Now let $n\geq 4$. Using the flattening of $V$ to the space of $4\times n$ matrices, the orbit closures $\overline{O}_2$, $\overline{O}_6$, $\overline{O}_8$, are identified with the determinantal varieties of matrices of rank $\leq 1$, $\leq 2$, $\leq 3$. Thus, the Fourier transform swaps $D_2$ and $D_8$, and it fixes $D_6$. The characteristic cycle of $D_6'$ is of the form $\tn{charC}(D_6')=[\overline{T^{\ast}_{O_6}V}]+\sum_{i=0}^5 m_i[\overline{T^{\ast}_{O_i}V}]$ for some $m_i\geq 0$. Since $O_5$ is self-dual, by \cite[Equation 4.15]{lHorincz2019categories}, it follows that
$$
\tn{charC}(\mc{F}(D_6'))=[\overline{T^{\ast}_{O_6}V}]+m_5[\overline{T^{\ast}_{O_5}V}]+m_4[\overline{T^{\ast}_{O_4}V}]+m_3[\overline{T^{\ast}_{O_3}V}]+m_2[\overline{T^{\ast}_{O_8}V}]+m_1[\overline{T^{\ast}_{O_7}V}]+m_0[\overline{T^{\ast}_{O_9}V}].
$$
Since $O_7, O_8, O_9$ support unique simples, we have that $m_0=m_1=m_2=0$, which shows $\mc{F}(D_6')=D_6'$. By process of elimination, it follows that $\mc{F}$ fixes $D_5$.
\end{proof}

\subsection{Restricting to open subsets}\label{sec:open}

In this subsection we analyze $\D$-modules on some distinguished open sets in $V$. This helps us patch up the quiver of $\tn{mod}_{\GL}(\D_V)$ in Section \ref{sec:eqcat}.

\begin{lemma}\label{lem:opencat1}
Let $U= V\setminus \ol{O}_2$. Then the quiver of $\tn{mod}_{\GL}^{\ol{O}_6 \setminus \ol{O}_2}(\D_U)$ is given by
\[\xymatrix{
(5) \ar@<0.5ex>[r] & (6)  \ar@<0.5ex>[l]  & (3) \ar@<0.5ex>[r] & \ar@<0.2ex>[l] (6') \ar@<0.5ex>[r] & \ar@<0.2ex>[l] (4)
}\]
where all the 2-cycles are zero. Here the vertices of the equivariant simple $\D_U$-modules are labeled as the corresponding simple $\D_V$-modules via restriction.
\end{lemma}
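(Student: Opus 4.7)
The plan is to identify the five simples in $\tn{mod}_{\GL}^{\ol{O}_6\cap U}(\D_U)$ and compute their pairwise $\Ext^1$-spaces. By Proposition~\ref{componentGrp}, the simples are $D_3,D_4,D_5,D_6,D_6'$, where $D_6'$ corresponds to the nontrivial rank-one equivariant local system on $O_6$. Since $\ol{O}_3\setminus O_3\subset\ol{O}_1\subset\ol{O}_2$ and analogously for $O_4$, both $O_3$ and $O_4$ are smooth and closed in $U$, so $D_3|_U\cong\mc{O}_{O_3}$ and $D_4|_U\cong\mc{O}_{O_4}$ up to shift. The stratification of $\ol{O}_6\cap U$ thus has $O_6$ open, $O_5$ of codimension one, and $O_3,O_4$ minimal of codimension three.

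I would apply Luna's Slice Theorem to reduce the local Ext computations at each orbit to smaller equivariant problems on normal slices. At a generic point of $O_5$, the slice transverse to $O_5$ inside $\ol{O}_6$ is one-dimensional; the local $\D$-module category has two simples and yields the $2$-cycle $(5)\rightleftarrows(6)$ with zero composition. The essential observation is that the $\mathbb{Z}/2$-component group of $\tn{stab}(x_6)$ acts trivially on this normal direction, so only $D_6$ (and not $D_6'$) appears in this local picture. At a point of $O_3$ (respectively $O_4$), the slice has larger dimension and the $\mathbb{Z}/2$-component group acts nontrivially on certain directions, so that the local simple in the $O_6$-stratum is $D_6'$ rather than $D_6$; the local Ext calculation then produces the $2$-cycle $(3)\rightleftarrows(6')$ (respectively $(4)\rightleftarrows(6')$) with zero composition.

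To confirm that no further arrows appear, I would argue by a monodromy dichotomy. For example, $\Ext^1(D_5,D_6')=0$ because on the transverse slice to $O_5$ in $V$, the object $D_6'$ restricts to a local system with nontrivial monodromy around the origin while $D_5$ restricts to the skyscraper $\delta_0$, and these two objects admit no extensions in the category of $\D$-modules on $\mathbb{A}^1$. Analogous arguments show $\Ext^1(D_6,D_3)=\Ext^1(D_6,D_4)=0$, separating the quiver into the two blocks $\{5,6\}$ and $\{3,6',4\}$.

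The main technical obstacle is verifying the trivial/nontrivial action of the $\mathbb{Z}/2$-component group of $\tn{stab}(x_6)$ on the normal slices at $O_3,O_4,O_5$. I would carry this out using the $\op{SO}_4(\C)\times\GL_n(\C)$-description of the setup from the introduction: the nontrivial element of the component group lifts from $\op{O}_4(\C)\setminus\op{SO}_4(\C)$, and its action on each normal slice can be read off the explicit orbit representatives. Vanishing of the three $2$-cycle compositions then follows from semisimplicity of the relevant middle extensions in the one-dimensional and reductive local models.
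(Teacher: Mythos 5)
Your proposal takes a genuinely different route from the paper: the paper reduces the entire category $\tn{mod}_{\GL}^{\ol{O}_6 \setminus \ol{O}_2}(\D_U)$ to the $n=2$ case via the desingularization $\pi_{222}: Y_{222}\to\ol{O}_6$ (which is an isomorphism over $\ol{O}_6\setminus\ol{O}_2$), obtaining a chain of category equivalences ending in $\tn{mod}_{\GL_2\times\GL_2\times\GL_2}(\D_{U'})$ with $U'=V_{222}\setminus\ol{O}'_2$, and then simply reads off the quiver from the corrected $2\times 2\times 2$ result in \cite{perlmancorrected}. You instead attempt a stratum-by-stratum $\Ext^1$ computation via Luna slices and monodromy. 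This is a reasonable strategy in the abstract, but as written it has a genuine gap.

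The core problem is that your argument is circular at the decisive step. To sort $D_6$ and $D_6'$ into the two blocks $\{5,6\}$ and $\{3,6',4\}$, you need to know the monodromy of the nontrivial rank-one local system $L$ on $O_6$ around each of $O_5$, $O_3$, $O_4$ — equivalently, how the boundary loop of a small transversal punctured disk maps into the component group $\mathbb{Z}/2\mathbb{Z}$ of $\tn{stab}(x_6)$. You assert that this monodromy is trivial at $O_5$ and nontrivial at $O_3$, $O_4$, phrasing it as "the $\mathbb{Z}/2$ component group acts trivially/nontrivially on the normal slice," but this does not follow from anything established in the proposal: the component group is attached to a point of $O_6$, and its effect on a transversal at a lower stratum is precisely the content of the claim, not a given. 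You acknowledge this as "the main technical obstacle," but without carrying out that computation — which is in effect the same work as redoing the $2\times 2\times 2$ calculation from scratch — the argument is a scaffold rather than a proof. The same circularity infects the "monodromy dichotomy" used to rule out extra arrows such as between $(5)$ and $(6')$ or $(6)$ and $(3)$: you are appealing to the nontriviality of $D_6'|_{\text{slice}}$ around $O_5$, which is exactly the unproven fact.

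Two further issues are worth flagging. First, Luna's slice theorem in the form cited by the paper (\cite[Proposition 4.6]{lHorincz2019categories}) applies at a point whose orbit is closed; in $U$ only $O_3$ and $O_4$ are closed, so the slice computation at $O_5$ needs extra care (restriction to a smaller open set, or an argument that $\Ext^1$ between the relevant pair is local). Second, the vanishing of $\Ext^1(D_3,D_4)$ and $\Ext^1(D_4,D_3)$ (no arrows between incomparable strata) is not addressed at all; this is not automatic in general and must be argued, either from conormal geometry or from the block structure you are trying to establish. The paper sidesteps all of these points simultaneously by transporting the $n=2$ quiver along the equivalence induced by $\pi_{222}$, which also explains in one stroke why the $2$-cycles vanish.
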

\begin{proof}
Let $\pi=\pi_{222}$ as in Section \ref{sec:relmatrices}. Note that $\ol{O}_6 \setminus \ol{O}_2 \subset U$ is closed and smooth, as it can be identified with the space of $4\times n$ matrices of rank $2$. Furthermore, the map $\pi$ induces an isomorphism $\pi^{-1} (\ol{O}_6 \setminus \ol{O}_2) \xrightarrow{\,\cong\,} \ol{O}_6 \setminus \ol{O}_2$. For $2\leq i \leq 6$, let $O_i' \subset V_{222}$ denote the $\GL_2 \times \GL_2 \times \GL_2$-orbit having the same representative as $O_i$ in Section \ref{sec:orbits}, let $U' = V_{222} \setminus \ol{O}_2'$ and $j': U' \to V_{222}$ be the open embedding, so that $\pi^{-1} (\ol{O}_6 \setminus \ol{O}_2) = \GL\times_{P_{222}} (U')$. By \cite[Proposition 4.5]{lHorincz2019categories} we have equivalences of categories
\begin{equation}\label{eq:catchain}
\tn{mod}_{\GL}^{\ol{O}_6 \setminus \ol{O}_2}(\D_U) \cong \tn{mod}_{\GL}(\D_{\ol{O}_6 \setminus \ol{O}_2}) =  \tn{mod}_{\GL}(\D_{\GL\!\times_{P_{222}} U'}) \cong  \tn{mod}_{P_{222}}(\D_{U'}) = \tn{mod}_{\GL_2 \!\times \! \GL_2 \! \times \! \GL_2}(\D_{U'}),
\end{equation}
where the last equality follows as the unipotent radical of $P_{222}$ is connected and acts trivially on $V_{222}$. By adjunction, $j'_* $ sends the injective envelopes of the equivariant simple $\D_{U'}$-modules with support $\ol{O}'_i \setminus \ol{O}'_2$ to the injective envelopes of the equivariant simple $\D_{V_{222}}$-modules with support $\ol{O}'_i$ (cf. \cite[Lemma 2.4]{binary}), and $j'^*$ sends them back. Therefore, we readily obtain the quiver of $\tn{mod}_{\GL}^{\ol{O}_6 \setminus \ol{O}_2}(\D_U)\cong \tn{mod}_{\GL_2 \!\times \! \GL_2 \! \times \! \GL_2}(\D_{U'}) $ from \cite[Theorem on the Quiver Structure]{perlmancorrected} (see Remark \ref{rem:mikeerror}).
\end{proof}

Below we use the analogous convention as in Lemma \ref{lem:opencat1}  regarding the labeling of vertices.

\begin{lemma}\label{lem:opencat2}
Let $U=V\setminus \ol{O}_6$. The quiver of $\tn{mod}_{\GL}^{\ol{O}_8 \setminus \ol{O}_6}(\D_{U})$ is given by 
\[\xymatrix{ (7) \ar@<0.5ex>[r]  & (8) \ar@<0.5ex>[l]}\]
where all the 2-cycles are zero.
\end{lemma}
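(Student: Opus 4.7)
The plan is to reduce the statement to the analogous question about $\GL_2\times\GL_2\times\GL_3$-equivariant $\D$-modules on $V_{223}\setminus\ol{O}_6'$, paralleling the argument of Lemma \ref{lem:opencat1}. Since $\ol{O}_8\setminus\ol{O}_6$ identifies (via the $4\times n$ matrix flattening) with the locus of rank-exactly-$3$ matrices, it is a smooth $\GL$-stable subvariety of $U$; moreover, the map $\pi_{223}$ of Lemma \ref{lem:desing} restricts to an isomorphism on its preimage, so
\[\ol{O}_8 \setminus \ol{O}_6 \;\cong\; \GL \times_{P_{223}} V_{223}^{\circ}, \qquad V_{223}^{\circ} := V_{223}\setminus\ol{O}_6',\]
where $\ol{O}_6' \subset V_{223}$ denotes the analogue of $\ol{O}_6$ for the $2\times 2\times 3$ case. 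Chasing the chain of equivalences from \eqref{eq:catchain}, and using that the unipotent radical of $P_{223}$ is connected and acts trivially on $V_{223}$, I obtain
\[\tn{mod}_{\GL}^{\ol{O}_8\setminus\ol{O}_6}(\D_U) \;\cong\; \tn{mod}_{\GL_2\times\GL_2\times\GL_3}(\D_{V_{223}^{\circ}}).\]
On $V_{223}^{\circ}$ there are exactly two orbits --- the dense $O_8'$ and the smooth closed hypersurface $O_7'$, both with connected stabilizers by Proposition \ref{componentGrp} --- so the simple objects on the right-hand side are $\mc{O}_{V_{223}^{\circ}}$ (labeled $(8)$) and $L_{O_7'}$ (labeled $(7)$).

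Next I would establish the two arrows. The closure $\ol{O}_7'\subset V_{223}$ is cut out by a nonzero $\GL_2\times\GL_2\times\GL_3$-semi-invariant $\Delta\in\Sym(V_{223}^*)$ (the equation of the projective dual of the Segre variety $\bb{P}(\C^2)\times\bb{P}(\C^2)\times\bb{P}(\C^3)$). On $V_{223}^{\circ}$, this $\Delta$ defines the smooth hypersurface $O_7'$, yielding the short exact sequence
\[0 \to \mc{O} \to \mc{O}[\Delta^{-1}] \to L_{O_7'} \to 0.\]
This extension is non-split, because $L_{O_7'}$ is $\Delta$-torsion while $\Delta$ acts invertibly on $\mc{O}[\Delta^{-1}]$. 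Hence $\Ext^1(L_{O_7'},\mc{O})\neq 0$; Verdier self-duality of the two simples then forces $\Ext^1(\mc{O},L_{O_7'})\neq 0$, producing at least one arrow in each direction.

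The hardest step is to upgrade these non-vanishings to $\dim\Ext^1=1$ in each direction and to verify that both $2$-cycles in the resulting path algebra vanish. For the dimension count I would invoke the $b$-function of $\Delta$ on $V_{223}^{\circ}$: since $O_7'$ is smooth there, this $b$-function equals $s+1$, forcing $\mc{O}[\Delta^{-1}]$ to have composition length exactly $2$ and hence to coincide with the injective envelope of $\mc{O}$ in the category; consequently $\dim\Ext^1(L_{O_7'},\mc{O})=1$, and Verdier duality transports this to $\dim\Ext^1(\mc{O},L_{O_7'})=1$. The vanishing of the $2$-cycles amounts to $\Ext^2(\mc{O},\mc{O})=\Ext^2(L_{O_7'},L_{O_7'})=0$ in the category; I would verify these by a Luna slice reduction to the standard category of regular holonomic $\D$-modules on the $1$-dimensional normal slice at a point of $O_7'$ (with its connected isotropy action), whose quiver is classically of the displayed form with the stated $2$-cycle relations.
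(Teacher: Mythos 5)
Your overall strategy — cut down to $\tn{mod}_{\GL_2\times\GL_2\times\GL_3}(\D_{V_{223}^{\circ}})$ via the $\pi_{223}$-desingularization, in parallel with Lemma \ref{lem:opencat1} — is legitimate, but the rest of the argument has a genuine circularity and a gap.

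\textbf{Circularity.} You cite Proposition \ref{componentGrp} to conclude that $O_7'$ has connected stabilizer, hence that the only simple $\D$-modules on $V_{223}^{\circ}$ are $\mc{O}$ and $L_{O_7'}$. But the paper's proof of Proposition \ref{componentGrp} for the orbit $O_7$ explicitly says ``this is a consequence of Lemma \ref{lem:opencat2} below, as there is only one $\GL$-equivariant simple local system on $O_7$.'' So for $O_7'$ (which is the $n=3$ incarnation of $O_7$), the connectedness you are quoting is \emph{deduced from} the present lemma. As written, the argument assumes the conclusion. You cannot avoid this by instead quoting the quiver description of $\tn{mod}_{\GL}(\D_{V_{223}})$ (Theorem \ref{thm:quiv3}) either, since that theorem in turn uses Lemma \ref{lem:opencat2} in its proof.

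\textbf{Slice argument on a non-affine variety.} The version of the slice theorem that the paper uses, \cite[Proposition 4.6]{lHorincz2019categories}, is stated for smooth \emph{affine} $G$-varieties with a unique closed orbit. Here $V_{223}^{\circ}$ is the complement of $\ol{O}_6'$, which has codimension $2$ in the $12$-dimensional $V_{223}$, so $V_{223}^{\circ}$ is not affine. Your reduction to a $1$-dimensional normal slice at a point of $O_7'$ therefore needs a variant of the slice theorem that you have not supplied, and this is exactly the step that would also resolve the connectedness issue above if you had it in hand. The $b$-function observation is also unnecessary: once $O_7'$ is known to be a smooth reduced hypersurface in $V_{223}^{\circ}$, one already has $\mathscr{H}^1_{O_7'}(\mc{O}_{V_{223}^{\circ}})=L_{O_7'}$ simple, so $\mc{O}[\Delta^{-1}]$ has length $2$ automatically; the nontrivial point is that it is the injective envelope, which you are not really proving.

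\textbf{What the paper does instead.} The paper avoids both problems by a different route. It realizes $\ol{O}_8\setminus\ol{O}_6$ not as a relative hypermatrix space but as a single homogeneous space $\GL_4(\C)\times\GL_n(\C)/H$ for an explicit subgroup $H$, and then chains a sequence of equivalences — through $\mathbb{P}(\bigwedge^3\C^4)\cong\mathbb{P}(\C^2\oo\C^2)$ — down to $\tn{mod}_{\GL_2\times\GL_2}(\D_{\C^2\oo\C^2\setminus\{0\}})$, whose quiver is already established in \cite[Theorem 5.4]{lHorincz2019categories}. That reduction simultaneously gives the number of simples (hence the component group fact) and the quiver with relations, with no appeal to Proposition \ref{componentGrp} or to slice theory on a non-affine variety. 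If you want to keep your approach, you must replace the appeal to Proposition \ref{componentGrp} by an independent computation of the stabilizer of $O_7'$ and justify the slice step on the quasi-affine $V_{223}^{\circ}$; in practice the paper's homogeneous-space reduction is the cleaner path.
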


\begin{proof}
We note that $\ol{O}_8\setminus \ol{O}_6$ can be identified as the homogeneous space of $4\times n$ matrices of rank $3$. Let $P \subset \GL_4(\C)$ denote the parabolic subgroup given by $g_{4i} = 0$, for $i=1,2,3$. Choosing the representative $\begin{bmatrix} I_3 & 0 \\ 0 & 0 \end{bmatrix}$, we get that $\ol{O}_8\setminus \ol{O}_6$ is $\GL_4(\C)\times \GL_n(\C)$-isomorphic to $\GL_4(\C)\times \GL_n(\C) / H$, where $H$ is the subgroup of $P \times \GL_n(\C)$  consisting of elements of the form
\[ \left\{ \,\left(\begin{bmatrix} X & a \\ 0 & b \end{bmatrix}, \begin{bmatrix} X & 0 \\ Y & Z \end{bmatrix}\right) \, : \, X\in \GL_3(\C), a\in \C^3, b\in \C^*, Y \in 
\C^{(n-3) \times 3}, Z \in \GL_{n-3}(\C) \right\}. \]
Using \cite[Proposition 4.5]{lHorincz2019categories} repeatedly, we have
\[\tn{mod}_{\GL}^{\ol{O}_8 \setminus \ol{O}_6}(\D_{U})\cong\tn{mod}_{\GL}(\D_{\ol{O}_8\setminus \ol{O}_6}) \cong \tn{mod}_{\GL}(\D_{\GL_4(\C)\times \GL_3(\C) / H}) \cong \tn{mod}_{\GL_2\times \GL_2 \times \GL_n \times H}(\D_{\GL_4(\C)\times \GL_n(\C)}) \cong \]
\[\cong \tn{mod}_{\GL_2\times \GL_2 \times P}(\D_{\GL_4(\C)}) \cong \tn{mod}_{\GL_2\times \GL_2}(\D_{\GL_4(\C)/P}) \cong \tn{mod}_{\GL_2\times \GL_2}(\D_{\mathbb{P}(\bigwedge^3 \C^4)}) \cong\]
\[ \cong \tn{mod}_{\GL_2\times \GL_2}(\D_{\mathbb{P} (\C^2\oo \C^2)})  \cong \tn{mod}_{\GL_2\times \GL_2 \times \C^*}(\D_{\C^2 \otimes \C^2 \setminus \{0\}})  \cong \tn{mod}_{\GL_2\times \GL_2}(\D_{\C^2 \otimes \C^2 \setminus \{0\}}),\]
where the last equivalence follows as the normal subgroup consisting of elements $(a I_2, a I_2, a^{-1}) \subset \GL_2(\C)\times \GL_2(\C) \times \C^*$ (with $a\in \C^*$) acts trivially on $\C^2 \oo \C^2$. Under the action of $\GL_2(\C)\times \GL_2(\C)$, we can identify $\C^2 \otimes \C^2$ with the space of $2\times 2$ matrices, in which case the category is described in \cite[Theorem 5.4]{lHorincz2019categories}. The statement now follows since the injective envelopes on the open $(\C^2 \oo \C^2) \setminus \{0\}$ are restrictions of the corresponding injective envelopes on the $\C^2 \oo \C^2$ (cf. proof of Lemma \ref{lem:opencat1}).
\end{proof} 

\begin{lemma}\label{lem:noD34}
For any $i\neq 2n-3$, the modules $D_3$ and $D_4$ are not composition factors of $H_{\ol{O}_5}^i(S)$.
\end{lemma}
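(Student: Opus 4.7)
The strategy is to restrict to the open subset $U := V \setminus \ol{O}_2$. Since local cohomology commutes with restriction to open subsets, and $D_3|_U, D_4|_U$ remain nonzero (as the closures $\ol{O}_3, \ol{O}_4$ only gain orbits inside $\ol{O}_2$, so $\ol{O}_3 \cap U = O_3$ and $\ol{O}_4 \cap U = O_4$), the multiplicity of $D_3$ (resp.\ $D_4$) as a composition factor of $H^i_{\ol{O}_5}(S)$ equals that of $D_3|_U$ (resp.\ $D_4|_U$) as a composition factor of $H^i_{\ol{O}_5 \cap U}(\mc{O}_U)$. It therefore suffices to prove the claim on $U$.

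The module $H^i_{\ol{O}_5 \cap U}(\mc{O}_U)$ is an object of $\tn{mod}_{\GL}^{\ol{O}_6 \setminus \ol{O}_2}(\D_U)$, whose quiver has two connected components $\{5,6\}$ and $\{3,6',4\}$ by Lemma \ref{lem:opencat1}. Consequently the category decomposes as a direct sum of two blocks, and $H^i_{\ol{O}_5 \cap U}(\mc{O}_U) = M^{(1)}_i \oplus M^{(2)}_i$, where $M^{(2)}_i$ collects the composition factors of type $D_3|_U$ and $D_4|_U$. Because $3$ and $4$ are not directly connected in the quiver (their only common neighbor is $6'$) and have no loops, and since $D_6'|_U$ has support $\ol{O}_6 \not\subset \ol{O}_5$ and hence cannot appear in $M^{(2)}_i$, the module $M^{(2)}_i$ is semisimple: $M^{(2)}_i \cong (D_3|_U)^{b_i} \oplus (D_4|_U)^{c_i}$. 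The lemma reduces to showing $b_i = c_i = 0$ for $i \neq 2n-3$.

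To bound the $b_i$ and $c_i$, I apply the long exact sequence \eqref{locallyclosedsequence} with $Z = \ol{O}_5 \cap U$, $Z' = O_3 \cup O_4$, and $Y = O_5$: here $Z'$ is smooth closed in $U$ of codimension $2n-1$, so Kashiwara's equivalence gives $H^i_{Z'}(\mc{O}_U) = D_3|_U \oplus D_4|_U$ for $i = 2n-1$ and $0$ otherwise. The local cohomology $H^i_{O_5}(\mc{O}_U)$ is computed via the factorization $O_5 \hookrightarrow U_5 \hookrightarrow U$, where $U_5 := U \setminus (O_3 \cup O_4)$ is open (denote this second inclusion by $k'$) and $O_5$ is closed smooth in $U_5$ of codimension $2n-3$. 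This yields $H^i_{O_5}(\mc{O}_U) \cong \mc{H}^{i-(2n-3)}(k'_+ D_5|_{U_5})$, whose higher cohomology sheaves (degrees $\geq 1$) are supported on $U \setminus U_5 = O_3 \cup O_4$ and thus contribute only $D_3, D_4$-factors.

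The main obstacle is to combine these ingredients in the LES so that the connecting maps arrange a cancellation forcing $M^{(2)}_i = 0$ for $i \neq 2n-3$. Concretely, the composition factors of type $D_3, D_4$ appearing in $\mc{H}^{\geq 1}(k'_+ D_5|_{U_5})$ must precisely absorb the unique nontrivial term $H^{2n-1}_{Z'}(\mc{O}_U) = D_3|_U \oplus D_4|_U$ of the LES. I expect this to be checked either by a direct local analysis of $k'_+ D_5|_{U_5}$ using the transverse slice to $O_3 \cup O_4$ inside $\ol{O}_5$ (which is well-behaved since $\ol{O}_5$ has rational singularities), or by an Euler-characteristic argument in the equivariant Grothendieck group of $\tn{mod}_{\GL}(\D_U)$ exploiting the fact that $H^i_{Z'}(\mc{O}_U)$ contributes in a single degree. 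Once the cancellation is established, the vanishing $b_i = c_i = 0$ for $i \neq 2n-3$ follows, completing the proof.
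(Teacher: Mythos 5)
Your proposal correctly identifies the key reduction to the open set $U = V \setminus \ol{O}_2$ and the block structure of $\tn{mod}_{\GL}^{\ol{O}_6 \setminus \ol{O}_2}(\D_U)$ coming from Lemma \ref{lem:opencat1}, and your observation that the $\{3,6',4\}$-block of $H^i_{\ol{O}_5 \cap U}(\mc{O}_U)$ must be a semisimple direct sum of $D_3|_U$'s and $D_4|_U$'s is sound. But the proof is not finished: the crucial step --- that the contributions of $H^{2n-1}_{O_3 \cup O_4}(\mc{O}_U)$ and the higher cohomology sheaves of $k'_+ D_5|_{U_5}$ cancel in the long exact sequence for every $i \neq 2n-3$ --- is only conjectured, not established. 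An Euler-characteristic argument alone would not give vanishing in individual degrees (it controls only an alternating sum), and the ``direct local analysis'' is not carried out; so as written there is a genuine gap at exactly the point where the lemma has to be proved.

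The paper's proof uses a shortcut your plan misses: rather than stratifying $Z_5 = \ol{O}_5 \setminus \ol{O}_2$ by $O_5$ and $O_3 \cup O_4$, it compares $Z_5$ with $Z_6 = \ol{O}_6 \setminus \ol{O}_2$ and $O_6$. The observation making everything collapse is that $O_6 \hookrightarrow Z_6$ is an \emph{affine} open embedding. This follows from the description $Z_6 \cong \GL \times_{P_{222}} U'$ with $\pi_{222}^{-1}(O_6) = \GL \times_{P_{222}} O_6'$, and the fact that $O_6' \subset V_{222}$ is affine (it is the complement of the hyperdeterminant hypersurface). Consequently $j_+ (\mc{O}_{O_6})$ for $j \colon O_6 \hookrightarrow U$ is concentrated in degree zero, so $\mathscr{H}^k_{O_6}(\mc{O}_U)$ vanishes for $k \neq 2n-4$. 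Combined with the smoothness of $Z_6$ (again codimension $2n-4$), the long exact sequence for the pair $(Z_5, Z_6)$ gives the much stronger conclusion $\mathscr{H}^i_{Z_5}(\mc{O}_U) = 0$ for all $i \neq 2n-3$, of which the lemma is an immediate consequence. If you want to salvage your plan, the cleanest route is to replace your stratification of $Z_5$ by the paper's comparison of $Z_5 \subset Z_6$ and use the affineness of $O_6 \to Z_6$.
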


\begin{proof}
We use the notation as in Lemma \ref{lem:opencat1} and its proof. For $k=5,6$, put $Z_k=\ol{O}_k \setminus \ol{O}_2$. Its enough to prove the statement on the open set $U=V\setminus \ol{O}_2$, i.e. that the simple $\mc{D}_U$-modules $\mc{D}_3$ and $\mc{D}_4$ are not composition factors of $\mathscr{H}^i_{Z_5}(\mc{O}_U)$ for any $i\neq 2n-3$.

Since the variety $O_6'$ is affine, the inclusion of $\GL/{P_{222}}$-bundles $\GL\times_{P_{222}} O_6' \to \GL\times_{P_{222}} U'$ is an affine morphism. Applying $\pi_{222}$, we see that the open embedding $O_6 \to Z_6$ is an affine morphism. As the inclusion $j: O_6 \to U$ is the composition of the latter and the closed embedding $Z_6 \to U$, this implies that $j_+(\mc{O}_{O_6})$ has cohomology only in degree zero. As $j_{+}(\mathcal{O}_{O_6})=\mathbb{R}\mathscr{H}^0_{O_6}(\mc{O}_U)[2n-4]$ (see (\ref{lcO6push})), we obtain that $\mathscr{H}^k_{O_6}(\mc{O}_U)=0$ for all $k\neq 2n-4$. Since $Z_6$ is smooth, we have $\mathscr{H}^k_{Z_6}(\mc{O}_U)=0$ for  $k\neq 2n-4$.  Hence, the long exact sequence
\[\cdots \to \mathscr{H}^{i-1}_{O_6}(\mc{O}_U) \to\mathscr{H}^i_{Z_5}(\mc{O}_U) \to \mathscr{H}^i_{Z_6}(\mc{O}_U) \to  \mathscr{H}^i_{O_6}(\mc{O}_U) \to \mathscr{H}^{i+1}_{Z_5}(\mc{O}_U) \to \cdots\]
yields $\mathscr{H}^i_{Z_5}(\mc{O}_U)=0$ for all $i\neq 2n-3$.
\end{proof}

\subsection{Conormal bundles}

We first determine the character cycles of all the simples in $\tn{mod}_{\GL}(\D_V)$. As the characteristic cycles are known in the determinantal case (e.g. \cite[Remark 1.5]{raicu2016characters}), we have $ \tn{charC}(D_i)=[\overline{T^{\ast}_{O_i}V}]$ for $i=0, 2, 3, 4, 6, 8, 9$.

\begin{prop}\label{prop:char566}We have
\[\tn{charC}(D_1)=[\overline{T^{\ast}_{O_1}V}], \quad \tn{charC}(D_7)=[\overline{T^{\ast}_{O_7}V}],\]
\[\tn{charC}(D_5)=[\overline{T^{\ast}_{O_5}V}]+[\overline{T^{\ast}_{O_4}V}]+[\overline{T^{\ast}_{O_3}V}],\]
\[\qquad \mbox{for } n=3: \,\, \tn{charC}(D_6')=[\overline{T^{\ast}_{O_6}V}]+[\overline{T^{\ast}_{O_5}V}]+[\overline{T^{\ast}_{O_2}V}], \]
\[\mbox{for } n\geq 4: \,\, \tn{charC}(D_6')=[\overline{T^{\ast}_{O_6}V}]+[\overline{T^{\ast}_{O_5}V}]. \qquad\]
\end{prop}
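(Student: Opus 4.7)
First, the assertions for $D_1$ and $D_7$ are essentially contained in the proof of Proposition \ref{prop:fourier}. Writing $\tn{charC}(D_1) = [\overline{T^{\ast}_{O_1}V}] + m_0\,[\overline{T^{\ast}_{O_0}V}]$ with $m_0 \geq 0$ and applying the Fourier formula \cite[Equation 4.15]{lHorincz2019categories} yields $\tn{charC}(\mc{F}(D_1)) = [\overline{T^{\ast}_{O_7}V}] + m_0\,[\overline{T^{\ast}_{V}V}]$, so $m_0 = 0$ because $\mc{F}(D_1) = D_7$ cannot have a component of full support (only $S$ does). The case of $D_7$ follows by applying $\mc{F}$ a second time.

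For $D_5$ and $D_6'$, both simples are $\mc{F}$-fixed by Proposition \ref{prop:fourier}, and \cite[Equation 4.15]{lHorincz2019categories} forces their characteristic cycles to be invariant under the dual-orbit involution. A non-zero multiplicity at some orbit $O_j$ whose Fourier-dual $O_j^{\vee}$ lies outside the support of the simple would, after applying $\mc{F}$, produce a conormal summand outside that support, contradicting self-duality. For $D_5$ this kills the coefficients at $O_0, O_1, O_2$. For $D_6'$ with $n \geq 4$, the same reasoning (already appearing in the proof of Proposition \ref{prop:fourier}) eliminates the coefficients at $O_0, O_1, O_2$. For $n = 3$, since $\mc{F}$ swaps $O_2 \leftrightarrow O_6$, self-duality instead \emph{links} the $O_2$- and $O_6$-coefficients, forcing the former to equal $1$.

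What remains is to determine the coefficients at $O_3, O_4, O_5$. The plan is to restrict to the open set $U = V \setminus \overline{O}_2$ and use the quiver description of $\tn{mod}_{\GL}^{\overline{O}_6 \setminus \overline{O}_2}(\D_U)$ from Lemma \ref{lem:opencat1}. A transverse-slice computation near a generic point of $O_3$ (resp.\ $O_4$) inside $\overline{O}_5$ should identify the local singularity as an ordinary double point, whose IC sheaf contributes multiplicity exactly $1$ to the conormal, yielding the coefficients $[\overline{T^{\ast}_{O_3}V}] + [\overline{T^{\ast}_{O_4}V}]$ in $\tn{charC}(D_5)$. A parallel analysis applies to $D_6'$: the non-trivial rank-one $\GL$-equivariant local system defining it (corresponding to the $\mathbb{Z}/2\mathbb{Z}$ component group of $O_6$ from Proposition \ref{componentGrp}) has order-two monodromy around $O_5$, contributing multiplicity $1$ for $[\overline{T^{\ast}_{O_5}V}]$, and trivial monodromy around $O_3, O_4$, giving multiplicities $0$ there.

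The main obstacle will be the transverse-slice and monodromy analysis: one must explicitly identify the transverse singularities of $\overline{O}_5$ along $O_3, O_4$ and of $\overline{O}_6$ along $O_5$, and verify the IC-multiplicities. Once these local contributions are fixed, the global Fourier constraints from Proposition \ref{prop:fourier} uniquely determine all remaining coefficients in $\tn{charC}(D_5)$ and $\tn{charC}(D_6')$.
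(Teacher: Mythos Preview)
Your treatment of $D_1$, $D_7$ and the Fourier-transform constraints on $D_5$, $D_6'$ matches the paper's argument. The divergence is in the last step, where you propose to compute the multiplicities at $O_3, O_4, O_5$ via an explicit transverse-slice and monodromy analysis, and you flag this as ``the main obstacle.'' The paper sidesteps this obstacle entirely: after restricting to $U = V \setminus \ol{O}_2$, it observes that the chain of equivalences (\ref{eq:catchain}) in the proof of Lemma~\ref{lem:opencat1} (passing from $\tn{mod}_{\GL}^{\ol{O}_6 \setminus \ol{O}_2}(\D_U)$ to $\tn{mod}_{\GL_2\times\GL_2\times\GL_2}(\D_{U'})$ on $2\times 2\times 2$ hypermatrices) preserves characteristic cycles. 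The multiplicities at $O_3, O_4, O_5$ are then inherited directly from the $n=2$ case, already computed in \cite[Theorem~3.4]{perlmancorrected}. So rather than identifying the transverse singularities yourself, you should invoke the equivalence with the $n=2$ case and cite that result.

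Your plan would ultimately work---the transverse slice to $O_3$ in $\ol{O}_5$ is indeed a quadric cone, and the monodromy of the local system defining $D_6'$ around $O_5$ is nontrivial---but carrying this out is precisely the content of the $n=2$ computation in \cite{perlmancorrected}, so there is no reason to redo it here. The paper's reduction is both shorter and cleaner.
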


\begin{proof}
The claim about $D_1$ and $D_7$ follows from the proof of Proposition \ref{prop:fourier}. As both $D_5$ and $D_6'$ are fixed under $\mc{F}$, it follows for $n\geq 4$ as in the proof of Proposition \ref{prop:fourier} that $[\overline{T^{\ast}_{O_i}V}]$ does not appear in the their characteristic cycle for $i=0,1,2$. When $n=3$, the same argument works for $D_5$, and for $D_6'$ we get that $[\overline{T^{\ast}_{O_i}V}]$ does not appear in its characteristic cycle for $i=0,1$, while $[\overline{T^{\ast}_{O_2}V}]$ does so with multiplicity one, since $\ol{O}_2$ is the projective dual of $\ol{O}_6$ and $\mc{F}(D_6')\cong D_6'$. Thus, to finish the proof, we can now restrict to the open set $U=V\setminus \ol{O}_2$. Since the chain of equivalences in (\ref{eq:catchain}) preserves respective characteristic cycles, the statement follows from the corresponding result for $n=2$ in \cite[Theorem 3.4]{perlmancorrected} (see Remark \ref{rem:mikeerror}).
\end{proof}

\begin{remark}\label{rem:mikeerror}
There is an error in the statement of \cite[Theorem on the Quiver Structure]{perlman2020equivariant}, which has been corrected in \cite{perlmancorrected}. The issue is with the asserted relations of the quiver, namely with the relations $\beta_{p,q,r}\alpha_{i,j,k}$ and $\delta_{p,q,r}\gamma_{i,j,k}$ for $(p,q,r)\neq (i,j,k)$. These are in fact not relations of the quiver, and should be replaced with the following new relations for each $(p,q,r)\neq (i,j,k)$: $\beta_{p,q,r}\alpha_{i,j,k}-\delta_{p,q,r}\gamma_{i,j,k}$. The newer article \cite{perlmancorrected} also has the characteristic cycle calculations cited in the proof of Proposition \ref{prop:char566}.
\end{remark}

For the next two results below we identify the $\GL$-action on $V$ with action of $H=\op{SO}_4(\C) \times \GL_n(\C)$ on the space of $4\times n$ matrices, and the cotangent bundle $V \times V^*$ of $V$ with the space pairs of matrices $(X, Y)$, where $X$ is $4\times n$ and $Y$ is $n\times 4$ with the natural actions of $\op{SO}_4(\C) \times \GL_n(\C)$. Then the equations $YX = 0$ and $(XY)^t = XY$ define (set-theoretically)  $\bigcup_{i=0}^9  \ol{T_{O_i}^* V}$.

\begin{lemma}\label{lem:dense}
The varieties $\ol{T_{O_6}^* V}$ and $\ol{T_{O_8}^* V}$ have a dense $\GL$-orbit.
\end{lemma}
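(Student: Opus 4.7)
The plan is to exhibit, for each of $p=6,8$, a point $(X_0,Y_0) \in T^*_{O_p} V$ whose $\GL$-orbit is open in $\overline{T^*_{O_p} V}$. Since $O_p$ is a smooth orbit, $T^*_{O_p} V$ is an irreducible smooth variety of dimension $\dim V = 4n$, and the closure $\overline{T^*_{O_p} V}$ inherits the same dimension. Thus producing a single $\GL$-orbit of dimension $4n$ inside $T^*_{O_p} V$ forces it to be open dense in the closure. Using transitivity of $\GL$ on $O_p$, the problem further reduces to picking a representative $X_0 \in O_p$ and finding a covector $Y_0$ in the conormal fiber $N^*_{O_p, X_0}$ whose stabilizer under $\GL_{X_0}$ has the expected codimension; equivalently, $\GL_{X_0}$ has a dense orbit on $N^*_{O_p, X_0}$, which has dimension $4n - \dim O_p$ (namely $2n-4$ for $p=6$ and $n-3$ for $p=8$).

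For the concrete computation, I would work in the flattening to $4\times n$ matrices noted just before the statement of the lemma. In those coordinates, the conormal variety $\bigcup_i \overline{T^*_{O_i}V}$ is cut out by $YX=0$ together with the symmetry condition on $XY$ coming from the $\mathfrak{so}_4$ moment map. For $p=6$, I would take $X_0$ to be the $4\times n$ matrix corresponding to $a_1\otimes b_1\otimes c_1 + a_2\otimes b_2\otimes c_2$ (a rank-$2$ matrix supported on the first two columns), describe $N^*_{O_6,X_0}$ as the space of $Y$ supported on the last $n-2$ rows satisfying the $\mathfrak{so}_4$-moment condition restricted to $X_0$, and check that the action of the reductive part of $\GL_{X_0}$ (which contains a copy of $\GL_{n-2}$ acting naturally on the $n-2$ free rows) has a dense orbit on this affine space.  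For $p=8$, I would take $X_0$ corresponding to $a_1 \otimes (b_1\otimes c_1 + b_2\otimes c_2) + a_2 \otimes (b_1 \otimes c_2 + b_2\otimes c_3)$, which is rank $3$ as a $4\times n$ matrix, note that $\GL_{X_0}$ is connected by Proposition \ref{componentGrp}, and exhibit an analogous generic covector in the $(n-3)$-dimensional conormal fiber.

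Alternatively, one may try to reduce to the already known cases of small $n$: via the desingularization $\pi_{222}\colon Y_{222}\to\overline{O}_6$ (resp.\ $\pi_{223}\colon Y_{223}\to\overline{O}_8$) one can relate the fiber of $\pi_p^{-1}(O_p)$ over a point of the base Grassmannian to a single conormal bundle for a dense orbit in a $2\times 2\times 2$ (resp.\ $2\times 2\times 3$) hypermatrix space. Since in those smaller cases the orbit in question is open, its conormal variety is trivially a single $\GL$-orbit, and inducing up via $\GL\times_P(-)$ should transport the density statement to the desired conormal bundle.

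The main obstacle will be the explicit verification that a generic $Y_0$ has the expected stabilizer dimension. Concretely, after fixing the form used to define $\mathfrak{so}_4$, one has to write the conormal equations in coordinates, pick a candidate $Y_0$, and check that the differential of the $\GL_{X_0}$-action at $(X_0,Y_0)$ surjects onto the tangent space of $N^*_{O_p,X_0}$. This is routine but delicate; the desingularization-based reduction, if feasible, would bypass it entirely.
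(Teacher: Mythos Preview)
Your approach is correct and essentially the same as the paper's: exhibit an explicit point in each conormal bundle and verify, by a stabilizer dimension count, that its orbit has dimension $4n$. The paper is simply more direct---it writes down two concrete matrix pairs $(X,Y)$ in the $4\times n$ model (with identity-block structure rather than the orbit representatives from the introduction) and computes the $\operatorname{Lie}(H)$-stabilizer of the pair to be $(n^2-4n+6)$-dimensional, bypassing the reduction to the action of $\GL_{X_0}$ on the conormal fiber; your desingularization alternative is not used.
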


\begin{proof}
By a straightforward calculation, we see that the $\op{Lie}(H)$-stabilizers of the points 
\[\left(\begin{bmatrix} 1 & 0 & 0 & 0 & 0 &\cdots & 0 \\ 0 & 1 & 0 & 0 & 0 &\cdots & 0 \\ 0& 0 & 0 & 0 & 0 &\cdots & 0 \\ 0 & 0 & 0 & 0 & 0 &\cdots & 0 \end{bmatrix}, \begin{bmatrix} 0 & 0 & 0 & 0 & 0 &\cdots & 0 \\ 0 & 0 & 0 & 0 & 0 &\cdots & 0 \\ 0 & 0 & 1 & 0 & 0 &\cdots & 0 \\ 0 & 0 & 0 & 1 & 0 &\cdots & 0  \end{bmatrix}^t\right) \in \ol{T_{O_6}^* V}\] and when $n\geq 4$
\[\left(\begin{bmatrix} 1 & 0 & 0 & 0 & 0 &\cdots & 0\\ 0 & 1 & 0 & 0 & 0 &\cdots & 0 \\ 0& 0 & 1 & 0 & 0 &\cdots & 0 \\ 0 & 0 & 0 & 0 & 0 &\cdots & 0 \end{bmatrix}, \begin{bmatrix} 0 & 0 & 0 & 0 & 0 &\cdots & 0 \\ 0 & 0 & 0 & 0 & 0 &\cdots & 0 \\ 0 & 0 & 0 & 0 & 0 &\cdots & 0 \\ 0 & 0 & 0 & 1 & 0 &\cdots & 0 \end{bmatrix}^t\right) \in \ol{T_{O_8}^* V},\] have dimension $n^2-4n+6$, thus their $H$-orbits are dense in $\ol{T_{O_6}^* V}$ and $\ol{T_{O_8}^* V}$, respectively.
\end{proof}

\begin{lemma}\label{lem:codim1}
For $n\geq 4$, we have $\dim \ol{T_{O_7}^* V} \cap \ol{T_{O_i}^* V} < \dim V - 1$ for $i=1,3,4,5,6$.
\end{lemma}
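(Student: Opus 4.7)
The plan is to exploit the Fourier duality between $O_1$ and $O_7$ to give an explicit description of $\ol{T^*_{O_7} V}$, and then perform an orbit-by-orbit dimension count.

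By Proposition \ref{prop:fourier}, $\mc{F}(D_1) = D_7$, reflecting the fact that $\ol{O}_7$ is the projective dual of $\ol{O}_1 = \tn{Sub}_{111}$. Under the Fourier swap on $T^* V = V \times V^*$, the characteristic cycle $[\ol{T^*_{O_1} V}]$ is sent to $[\ol{T^*_{O_7} V}]$. Concretely, $(X, Y) \in \ol{T^*_{O_7} V}$ if and only if $Y = f \oo g \oo h \in V^*$ is a decomposable tensor and $X$ lies in the conormal space to the rank-one locus at $Y$, which translates to the three bilinear contraction equations $X(-, g, h) = 0 \in A$, $X(f, -, h) = 0 \in B$, $X(f, g, -) = 0 \in C$.

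With this description in hand, I would bound $\dim(\ol{T^*_{O_7} V} \cap \ol{T^*_{O_i} V})$ stratum by stratum. Since $\ol{O}_i \subset \ol{O}_7$ for each $i \in \{1,3,4,5,6\}$ by the Hasse diagram of Section \ref{sec:orbits}, the intersection projects into $\ol{O}_i$, and it decomposes according to the orbits $O_j \subset \ol{O}_i$. For each such $j$, pick a representative $x_j \in O_j$: the fiber of the intersection at $x_j$ is contained in both the fiber $F^{(7)}_{x_j}$ of $\ol{T^*_{O_7} V}$ over $x_j$ and the fiber $F^{(i)}_{x_j}$ of $\ol{T^*_{O_i} V}$ over $x_j$, so the contribution from $O_j$ has dimension at most $\dim O_j + \dim\bigl(F^{(7)}_{x_j} \cap F^{(i)}_{x_j}\bigr)$. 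Both fibers are explicitly computable: $F^{(7)}_{x_j}$ is the variety of decomposable tensors $f \oo g \oo h \in V^*$ satisfying the three bilinear contractions against $x_j$, while $F^{(i)}_{x_j}$ equals $(T_{x_j} O_i)^\perp$ for $j = i$ and a limit of conormal spaces for $j < i$.

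For instance, when $i = j = 6$ with $x_6 = a_1 \oo b_1 \oo c_1 + a_2 \oo b_2 \oo c_2$, solving the three contraction equations shows that $F^{(7)}_{x_6}$ is the union of the two $(n{-}2)$-dimensional subspaces $a_1^* \oo b_2^* \oo \langle c_3^*,\ldots,c_n^*\rangle$ and $a_2^* \oo b_1^* \oo \langle c_3^*,\ldots,c_n^*\rangle$, both sitting inside $(T_{x_6} O_6)^\perp$; this yields a contribution of $\dim O_6 + (n - 2) = 3n + 2$, which is strictly less than $4n - 1$ for $n \geq 4$. Analogous contraction computations for representatives of $O_5$, $O_4$, $O_3$, $O_1$ and for each sub-orbit $O_j \subset \ol{O}_i$ give bounds of the same shape.

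The main obstacle is the case-by-case bookkeeping: there are several pairs $(i,j)$ to analyze, and for lower strata the fibers $F^{(7)}_{x_j}$ tend to grow, so one must verify $\dim O_j + \dim(F^{(7)}_{x_j} \cap F^{(i)}_{x_j}) < 4n - 1$ uniformly. Two tools that should shorten the argument are the $\GL_{x_j}$-invariance of $F^{(7)}_{x_j}$, which tightly constrains its irreducible components, and the $2 \times 2n$ flattening of $V$ which, for $i = 3, 4$, reduces the computation to a question about conormal bundles of rank-one matrices. The tightest case is $n = 4$, where the inequality $3n + 2 < 4n - 1$ reads $14 < 15$; this is the reason for the hypothesis $n \geq 4$ in the statement.
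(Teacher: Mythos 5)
Your description of $\ol{T^*_{O_7}V}$ via Fourier/projective duality is correct, and your computation of the fiber $F^{(7)}_{x_6}$ as the union of two $(n-2)$-dimensional linear subspaces is right, as is the resulting bound $\dim O_6 + (n-2) = 3n+2 < 4n-1$ for $n\geq 4$. But this verifies only the \emph{top stratum} of the \emph{single} case $i=6$. Your own plan requires bounding $\dim O_j + \dim(F^{(7)}_{x_j}\cap F^{(i)}_{x_j})$ for every orbit $O_j\subset \ol{O}_i$ and every $i\in\{1,3,4,5,6\}$, and this is exactly where the content of the lemma lives: as $j$ decreases both fibers grow (over the origin, $F^{(7)}_{x_0}$ is the affine cone over the Segre in $V^*$, dimension $n+2$, and $F^{(i)}_{x_0}$ is all of $V^*$), and the a priori Lagrangian bound $\dim F^{(i)}_{x_j}\leq\operatorname{codim}O_j$ only gives the useless estimate $\dim O_j+\operatorname{codim}O_j=4n$. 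You need a genuine simultaneous constraint on the two fibers at each intermediate stratum, which you acknowledge but never carry out; moreover for $i=5$ (the tangential variety, not a subspace variety) the fiber analysis is not of the same elementary form as your $i=6$ example. So what you have is a plausible strategy plus one worked sub-case, not a proof.

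The paper's argument is structurally quite different and avoids the stratum-by-stratum bookkeeping entirely. It works in the $4\times n$ matrix picture under $\operatorname{SO}_4(\C)\times\GL_n(\C)$, writes $\ol{T^*_{O_7}V}$ by explicit set-theoretic equations ($YX=0$, $(XY)^t=XY$, $\operatorname{rank}X\leq 3$, $\operatorname{rank}(X^tX)\leq 2$, $\operatorname{rank}Y\leq 1$, $YY^t=0$), and then bounds each intersection inside a strictly larger variety $T$ (for $i=6$) or $T_1$ (for $i=1,3,4,5$) cut out by a subset of these rank/equation conditions. Each of $T$, $T_1$ is recognized as a rank variety for a quiver with nodes, so irreducibility and the dimension of a Grassmannian-bundle desingularization come for free from \cite{node}; for $T_1$ with $n=4,5$ a Macaulay2 check fills in the small cases. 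This replaces your per-stratum fiber analysis with a single global dimension count per case, which is why the paper never needs to describe the fibers of any conormal variety over lower strata. If you want to salvage your approach you would have to actually compute the fibers over every stratum (using the stabilizer-invariance you mention), and for $i=1,3,4,5$ this is a nontrivial amount of case analysis that the paper's rank-variety shortcut is specifically designed to avoid.
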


\begin{proof}
Since $\ol{O}_1$ is the projective dual of $\ol{O}_7$, the variety $\ol{T_{O_7}^* V}$ is defined (set-theoretically) by the equations 
\[YX = 0, \, (XY)^t = XY, \, \op{rank} X \leq 3, \, \op{rank} (X^t X)\leq 2, \, \op{rank} Y \leq 1, \, Y Y^t = 0.\]

First, we prove the statement for $i=6$. As $\ol{O}_6 = Z_2$, the variety $\ol{T_{O_6}^* V}$ is defined by $YX = 0, XY=0, \op{rank} X \leq 2, \op{rank} Y \leq 2$ \cite{strickland}. Thus, the variety  $\ol{T_{O_6}^* V} \cap \ol{T_{O_7}^* V}$ is a subset of the variety $T$ defined by 
\[YX = 0, \, XY=0, \, \op{rank} X \leq 2, \,  \op{rank} Y \leq 1,\]
and this inclusion is strict since it is easy to see that $Y Y^t$ is not identically zero on $T$. Thus, it is enough to show that $T$ is irreducible and has dimension $\leq 4n-1$.

Using the terminology from \cite{node}, the variety $T$ can be realized as a rank variety on the quiver
\[\xymatrix{ 4 \ar@<0.5ex>[r]^Y  & n \ar@<0.5ex>[l]^X}\]
where the two vertices are nodes. Thus, $T$ is a representation variety of a radical square algebra. By \cite[Theorem 3.19]{node} and its proof, $T$ is irreducible and has desingularization by a total space of a vector bundle over (a product of) Grassmannians. This bundle has dimension $3n+3 \leq 4n-1$, proving the claim.

Next, we prove the cases $i=1,3,4,5$ simultaneously. Note that all $\ol{T_{O_7}^* V} \cap \ol{T_{O_i}^* V}$ (for  $i=1,3,4,5$)  are contained in the variety $T_1$ defined by
\[YX = 0, \, (XY)^t = XY, \, \op{rank} X \leq 2, \, \op{rank} (X^t X)\leq 1, \, \op{rank} Y \leq 1.\]
Thus, it is enough to show that $\dim T_1 \leq 4n - 2$. When $n=4$ and $n=5$, this is done by a computer calculation using Macaulay2 \cite{M2}, when the dimensions are 14 and 17, respectively.

Now assume $n\geq 6$. Clearly, $T_1$ is a subvariety of the variety $T_2$ defined by
\[ YX = 0, \, \op{rank} X \leq 2, \, \op{rank} Y \leq 1.\]
and this inclusion is strict. Thus, to finish to proof, it suffices to prove that $T_2$ is irreducible and $\dim T_2 \leq 4n-1$. Again, using the terminology from \cite{node}, the variety $T_2$ can be realized as a rank variety on the quiver
\[\xymatrix{ n \ar[r]^X & 4 \ar[r]^Y & n }\]
where the middle vertex is a node -- thus, $T_2$ is the representation variety of a radical square algebra.  By \cite[Theorem 3.19]{node} and its proof, $T_2$ is irreducible and has a desingularization that is the total space of a vector bundle over (a product of) Grassmannians, the dimension of which is $3n+5 \leq 4n-1$.
\end{proof}

\subsection{Preliminaries for $n=3$} We let $n=3$ and let $f\in S$ be the semi-invariant, which is the defining equation of $\overline{O}_7$ and has weight $(3,3)\times (3,3)\times (2,2,2)$. By \cite[Lemma 2.4]{binary} the $\D$-module $S_f$ is the injective envelope of $S$ in $\tn{mod}_{\tn{GL}}(\D_V)$.

We recall that the representation $V$ of $\GL$ is equivalent to the representation of $\tn{SO}_{4}(\C) \times \GL_3(\C)$ acting on the space of $4\times 3$ matrices. If $X$ denotes a generic $4\times 3$ matrix of variables, then $f=\det(X^t \cdot X)$. Thus, the Bernstein--Sato polynomial $b_f(s)$ of $f$ is given by \cite[Example 2.9]{decomp} (cf. also \cite[Example 9.2]{microlocal}).
\begin{equation}\label{eq:bs3}
b_f(s)= (s+1)^2 (s+3/2)^2 (s+2)^2.
\end{equation}

We need also the following equation for a local $b$-function of $f$, which follows again from \cite[Example 2.9]{decomp}. Let $f_1$ be the $(1,1)$-entry of $X^t \cdot X$, and note that the $\GL$-translates of $f_1$ generate the defining ideal of the highest weight orbit $\ol{O}_1$. Below, the operator $Q$ can be chosen (up to a scalar) by replacing the variables with the corresponding partial derivatives in a $2\times 2$ minor of $X^t \cdot X$.
\begin{lemma}\label{lem:loc3}
There exists $Q \in \D_V$ such that
\[Q \cdot f^{s+1} = (s+1)^2 (s+3/2)^2 \cdot f_{1} \cdot f^s.\]
\end{lemma}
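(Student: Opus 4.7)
The plan is to construct $Q$ explicitly by the recipe indicated just before the lemma statement, and then to verify the identity following the Capelli-type computation carried out in \cite[Example 2.9]{decomp}.

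I would set $M = X^t X$, so $M_{ij} = \sum_{k=1}^{4} x_{ki} x_{kj}$, $f = \det M$, and $f_1 = M_{11}$. Take $\delta = M_{22} M_{33} - M_{23}^2$, the $(1,1)$-cofactor of $M$, and let $Q$ be the constant-coefficient differential operator of order $4$ obtained from $\delta$ by replacing each variable $x_{ki}$ by $\partial / \partial x_{ki}$. The motivation is the Laplace expansion $f = f_1 \cdot \delta - M_{12} (M_{21} M_{33} - M_{23} M_{31}) + M_{13} (M_{21} M_{32} - M_{22} M_{31})$, which exhibits $\delta$ as the natural ``dual'' of $f_1$ relative to the symmetric bilinear form coming from the dual pair $(\op{SO}_4(\C), \GL_3(\C))$.

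Next, I would apply $Q$ to $f^{s+1}$ and expand via the chain rule. Each partial derivative $\partial / \partial x_{ki}$ applied to $f^{s+1}$ produces $(s+1) f^s$ times a cofactor expression in the entries of $M$; iterating four times, the contributions organize (using the adjugate identity $M \cdot \op{adj}(M) = f \cdot I$ together with the symmetry of $M$) into a polynomial $c(s) \in \C[s]$ times $f_1 \cdot f^s$. This is precisely the Capelli-type calculation performed explicitly in \cite[Example 2.9]{decomp}.

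Finally, I would identify $c(s) = (s+1)^2 (s+3/2)^2$. The polynomial $c(s)$ must divide the global Bernstein--Sato polynomial $b_f(s) = (s+1)^2 (s+3/2)^2 (s+2)^2$ from (\ref{eq:bs3}), and the order of $Q$ together with degree considerations pins down $\deg c = 4$. The factor $(s+2)^2$ corresponds to the top stratum where $f$ cuts out $\overline{O}_7$ transversally at generic points, and hence is absent from the local computation at the smaller stratum $\overline{O}_1$; this forces $c(s) = (s+1)^2 (s+3/2)^2$. The main obstacle is the bookkeeping in the expansion of $Q \cdot f^{s+1}$: many cross-terms arise from the iterated derivative pairings, and their cancellation crucially relies on the symmetry of $M$ and the adjugate identities. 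This combinatorial collapse is the content of \cite[Example 2.9]{decomp}, which we would invoke rather than reproduce.
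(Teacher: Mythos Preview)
Your proposal is correct and takes essentially the same approach as the paper: both choose $Q$ by replacing variables with partials in the $(1,1)$-cofactor of $X^tX$ and defer the actual verification to \cite[Example 2.9]{decomp}. In fact the paper gives no proof of this lemma beyond that citation, so your outline is already more detailed than what appears there; just be aware that your divisibility argument ($c(s)\mid b_f(s)$ and the exclusion of $(s+2)^2$) is heuristic and not strictly needed once you invoke the explicit Capelli-type computation in the reference.
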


\begin{lemma}\label{lem:witD6}
The $\GL$-representation $\bS_{(-2,-2)}A\oo \bS_{(-2,-2)}B\oo \bS_{(-1,-1,-2)}C$ has multiplicity one in $D_6$.
\end{lemma}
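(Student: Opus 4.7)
The plan is to read off the multiplicity directly from the character formula in Theorem \ref{thm:charMatrices}, using the flattening identification recalled at the end of Section \ref{sec:dmodsMat}. For $n=3$ we have $D_6 = L_{Z_2}$ on the space of $4\times 3$ matrices $W_1 \oo W_2$ with $W_1 = A\oo B$ and $W_2 = C$, so that $n_1 = 4$, $n_2 = 3$, $p = 2$.

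First I would identify the candidate dominant weight in $W^2$. To obtain a contribution whose $C$-isotypic component is $\bS_{(-1,-1,-2)}C$, I need $\lambda = (-1,-1,-2)$. This lies in $W^2$: the conditions $\lambda_2 \geq p - n_2 = -1$ and $\lambda_3 \leq p - n_1 = -2$ are both satisfied (with equality), and the weight is dominant. The formula for $\lambda(p)$ then gives $\lambda(2) = (\lambda_1,\lambda_2,(p-n_2)^{n_1-n_2},\lambda_3 + (n_1-n_2)) = (-1,-1,-1,-1)$. Since $W^2$ parametrizes the irreducible $\GL(W_1)\times \GL(W_2)$-summands of $L_{Z_2}$ without repetition, there is exactly one $\lambda$ producing a $\bS_{(-1,-1,-2)}C$ factor, and it contributes $\bS_{(-1,-1,-1,-1)}(A\oo B) \oo \bS_{(-1,-1,-2)}C$ with multiplicity one.

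Next I would decompose $\bS_{(-1,-1,-1,-1)}(A\oo B) = \det(A\oo B)^{-1}$ as a $\GL(A)\times \GL(B)$-representation. Since $\dim A = \dim B = 2$, the standard identity $\det(A\oo B) = \det(A)^{\dim B}\oo \det(B)^{\dim A}$ gives $\det(A\oo B) = \bS_{(2,2)}A \oo \bS_{(2,2)}B$, so $\bS_{(-1,-1,-1,-1)}(A\oo B) = \bS_{(-2,-2)}A \oo \bS_{(-2,-2)}B$ as a $\GL(A)\times \GL(B)$-representation, and this restriction is itself irreducible, hence contributes multiplicity one.

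Combining the last two paragraphs gives exactly one copy of $\bS_{(-2,-2)}A \oo \bS_{(-2,-2)}B \oo \bS_{(-1,-1,-2)}C$ in $D_6$. There is no real obstacle here since the flattening in Theorem \ref{thm:charMatrices} does not break the finer $\GL(A)\times \GL(B)$-symmetry on $A\oo B$; the only point requiring care is the verification that $\lambda=(-1,-1,-2)$ lies on the boundary of $W^2$ (so the $(p-n_2)^{n_1-n_2}$-block is forced into position $3$), which ensures $\lambda(2)$ has all four entries equal and yields the pure determinantal character above.
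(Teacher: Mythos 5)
Your proof is correct and follows the same approach as the paper: apply Theorem~\ref{thm:charMatrices} via the flattening $W_1 = A\oo B$, $W_2 = C$ to identify $\bS_{(-1,-1,-1,-1)}(A\oo B)\oo \bS_{(-1,-1,-2)}C$ as the entire $\bS_{(-1,-1,-2)}C$-isotypic component of $D_6=L_{Z_2}$, then decompose $\det(A\oo B)^{-1}$ as $\bS_{(-2,-2)}A\oo \bS_{(-2,-2)}B$. Your write-up is slightly more explicit about verifying the inequalities defining $W^2$, but the argument is the same.
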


\begin{proof}
Using the flattening of $V$ to the space of $4\times 3$ matrices, our simple $D_6$ is identified with $L_{Z_2}$ supported on the determinantal variety $Z_2$. By Theorem \ref{thm:charMatrices} the $\GL(A\oo B)\times \GL(C)$ representation $\bS_{(-1,-1,-1,-1)}(A\oo B)\oo \bS_{(-1,-1,-2)}C$ belongs to $D_6$, and is the whole $\GL(C)$-isotypic component of $D_6$ corresponding to $\bS_{(-1,-1,-2)}C$. The former representation is isomorphic to $\bS_{(-2,-2)}A\oo \bS_{(-2,-2)}B\oo \bS_{(-1,-1,-2)}C$ as a representation of $\GL$.
\end{proof}

\begin{lemma}\label{charSfn3}
The following is true about subrepresentations of $S_f$:
\begin{enumerate}
\item Representations of the form $\bS_{\alpha}A\oo \bS_{\alpha}B\oo \bS_{(2t,2t,2t)}C$ ($t\in \mathbb{Z}$) appear in $S_f$ with multiplicity one.
\item Representations of the form $\bS_{\alpha}A\oo \bS_{\beta}B\oo \bS_{(2t,2t,2t)}C$ ($t\in \mathbb{Z}$) with $\alpha\neq \beta$ do not appear in $S_f$.
\item Let $\alpha,\beta \in \mathbb{Z}^2_{\dom}$ with $|\alpha|=|\beta|=-4$, $\alpha_1+\beta_1\leq -2$, and $|\alpha_1-\beta_1|\leq 1$. Set $\phi=\min\{\alpha_1,\beta_1\}$. The multiplicity of a representation of the form $\bS_{\alpha}A\oo\bS_{\beta}B\oo \bS_{(-1,-1,-2)}C$ in $S_f$ is $\phi+3$ if $\alpha_1+\beta_1$ is odd, and it is $\phi+2$ if $\alpha_1+\beta_1$ is even.
\item The following representations appear in $S_f$ with multiplicity one:
$$
\bS_{(-1,-3)}A\oo \bS_{(-1,-3)}B\oo \bS_{(-1,-1,-2)}C,\quad \bS_{(-1,-3)}A\oo \bS_{(-2,-2)}B\oo \bS_{(-1,-1,-2)}C.
$$
\item The following representations do not appear in $S_f$:
$$
\bS_{(-2,-2)}A\oo \bS_{(-2,-2)}B\oo \bS_{(-1,-1,-2)}C,\quad \bS_{(-4,-4)}A\oo \bS_{(-4,-4)}B\oo \bS_{(-2,-3,-3)}C,
$$
$$
\bS_{(0,-4)}A\oo \bS_{(-2,-2)}B\oo \bS_{(-1,-1,-2)}C.
$$
\end{enumerate}
\end{lemma}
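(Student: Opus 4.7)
The plan is to compute the $\GL$-character of $S_f$ by exploiting the semi-invariance of $f$. Since $f$ has $\GL$-weight $\chi_f=(3,3)\times(3,3)\times(2,2,2)$, multiplication by $f^t$ gives a $\GL$-equivariant isomorphism $S\cong f^t\cdot S$ shifting weights by $t\cdot\chi_f$, and $S_f=\bigcup_{t\geq 0}f^{-t}\cdot S$. The inclusions $f^{-t}\cdot S\subset f^{-t-1}\cdot S$ are strict but the multiplicity function in $t$ is monotone non-decreasing, so for each irreducible $\GL$-representation $\pi$,
\[ [S_f:\pi]=\sup_{t\geq 0}[S:\pi\oo \chi_f^t]. \]
The Cauchy formula $S=\bigoplus_\gamma\bS_\gamma(A\oo B)\oo\bS_\gamma C$, over partitions $\gamma$ of length at most $3$, reduces each such multiplicity to decomposing a specific Schur functor $\bS_\gamma(A\oo B)$ as a $\GL(A)\times\GL(B)$-representation.

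Parts (1) and (2) correspond to the $C$-part $\bS_{(2t_0,2t_0,2t_0)}C$, which matches only $\gamma=(k,k,k)$ with $k\geq 0$. The determinant-shift identity $\bS_{(k,k,k,0)}(A\oo B)\cong\Sym^k(A^*\oo B^*)\oo(\det A)^{2k}\oo(\det B)^{2k}$ together with Cauchy on $\Sym^k(A^*\oo B^*)$ show that only diagonal summands $\bS_{\alpha'}A\oo\bS_{\alpha'}B$ appear, each with multiplicity one; (1) and (2) follow upon taking the supremum over $t$. For (3), (4), and the first and third items of (5), the $C$-part $\bS_{(-1,-1,-2)}C$ forces $\gamma=(2t-1,2t-1,2t-2)$ for $t\geq 1$. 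A sequence of determinant shifts together with the identity $\bS_\lambda(A^*\oo B^*)\cong\bS_\lambda(A\oo B)\oo(\det A)^{-|\lambda|}\oo(\det B)^{-|\lambda|}$ produces
\[ \bS_{(2t-1,2t-1,2t-2,0)}(A\oo B)\cong\bS_{(2t-1,1)}(A\oo B)\oo(\det A)^{2t-2}\oo(\det B)^{2t-2}, \]
reducing the problem to decomposing $\bS_{(2t-1,1)}(A\oo B)$ under $\GL(A)\times\GL(B)$. One obtains this decomposition from \cite[Corollary 4.3b]{raicu2012secant}, or directly via the Jacobi--Trudi identity $[\bS_{(2t-1,1)}W]=[\Sym^{2t-1}W]\cdot[W]-[\Sym^{2t}W]$ combined with Cauchy and Pieri. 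The resulting multiplicity of $\bS_\mu A\oo\bS_\nu B$ is nonzero only when $\mu_1-\nu_1\in\{-1,0,1\}$, with a characteristic parity dependence on $\mu_1+\nu_1$. Reversing the determinant shift via $\mu\mapsto\mu-(t+2,t+2)$ and $\nu\mapsto\nu-(t+2,t+2)$ and taking $\sup_t$ yields the asserted formulas $\phi+2$ and $\phi+3$ in (3); (4) and the first and third items of (5) then follow by direct substitution. For the middle item of (5), the $C$-part $\bS_{(-2,-3,-3)}C$ selects $\gamma=(2t-2,2t-3,2t-3)$ for $t\geq 2$, and the analogous manipulation reduces the problem to $\bS_{(1,0)}(A\oo B)=A\oo B$, whose obvious decomposition immediately shows that $\bS_{(-4,-4)}A\oo\bS_{(-4,-4)}B$ cannot appear in the relevant isotypic.

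The main technical obstacle is the combinatorial bookkeeping in the last step: one must write out the explicit multiplicities of $\bS_\mu A\oo\bS_\nu B$ in $\bS_{(2t-1,1)}(A\oo B)$, track them through the shifts $\mu\mapsto\mu+(t+2,t+2)$ and $\nu\mapsto\nu+(t+2,t+2)$ imposed by the determinant twist, and verify that the supremum over $t$ stabilizes to the $t$-independent expressions $\phi+2$ and $\phi+3$ for each of the narrow set of $(\alpha,\beta)$ pairs permitted by the hypotheses $|\alpha|=|\beta|=-4$, $\alpha_1+\beta_1\leq -2$, $|\alpha_1-\beta_1|\leq 1$. The parity split between the two cases reflects the Pieri cancellation structure of the Jacobi--Trudi expansion.
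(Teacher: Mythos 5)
Your overall strategy — compute $[S_f:\pi]=\sup_t[S:\pi\oo\chi_f^t]$, apply the Cauchy formula to $S=\bigoplus_\gamma\bS_\gamma(A\oo B)\oo\bS_\gamma C$, then twist by determinants to bring the relevant $\bS_\gamma(A\oo B)$ into two-part form and invoke \cite[Corollary~4.3b]{raicu2012secant} — is essentially the paper's proof (the paper passes through an explicit dualization $A^*\oo B^*$, whereas you stay on the $A\oo B$ side; these are equivalent via the fact that for $\dim A=\dim B=2$ and $\lambda$ a partition, $\bS_\lambda(A\oo B)^*\cong\bS_\lambda(A\oo B)\oo(\det A)^{-|\lambda|}\oo(\det B)^{-|\lambda|}$, and your identity $\bS_{(2t-1,2t-1,2t-2,0)}(A\oo B)\cong\bS_{(2t-1,1)}(A\oo B)\oo(\det A)^{2t-2}\oo(\det B)^{2t-2}$ checks out). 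Parts~(1)--(4) and the first and third items of~(5) are handled correctly.

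There is a genuine gap in your treatment of the middle item of~(5), namely $\bS_{(-4,-4)}A\oo\bS_{(-4,-4)}B\oo\bS_{(-2,-3,-3)}C$. You claim the ``analogous manipulation'' for $\gamma=(2t-2,2t-3,2t-3,0)$ reduces the problem to $\bS_{(1,0)}(A\oo B)=A\oo B$. It does not: dualizing and shifting by the appropriate power of $\det(A\oo B)$ takes $(2t-2,2t-3,2t-3,0)$ to $(2t-2,1,1,0)$ (and your own determinant bookkeeping, carried out consistently, yields $\bS_{(2t-2,2t-3,2t-3,0)}(A\oo B)\cong\bS_{(2t-2,1,1,0)}(A\oo B)\oo(\det A)^{2t-4}\oo(\det B)^{2t-4}$, not the claimed $\bS_{(1,0)}$). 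The partition $(2t-2,1,1,0)$ has three nonzero parts with the repeated entries in the \emph{middle} ($\lambda_2=\lambda_3$), so neither $\lambda_1=\lambda_2$ nor $\lambda_3=\lambda_4$; hence no further determinant shift brings it into the two-part form that \cite[Corollary~4.3b]{raicu2012secant} requires, and your invocation of the ``obvious decomposition'' of $A\oo B$ is simply computing the wrong plethysm. A concrete check at $t=2$: $\bS_{(2,1,1,0)}(\C^2\oo\C^2)$ is $15$-dimensional, whereas $A\oo B$ twisted by any power of $\det A\oo\det B$ is $4$-dimensional. The paper avoids this entirely by applying the Fourier transform: the character of $S_f$ is invariant under the weight involution induced by $\mc{F}$, and the middle item of~(5) is precisely the Fourier partner of $\bS_{(-2,-2)}A\oo\bS_{(-2,-2)}B\oo\bS_{(-1,-1,-2)}C$, which you (and the paper) already handle in~(3). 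Alternatively, a direct argument is available: $(\det A)^a\oo(\det B)^a$ appears in $\bS_\lambda(A\oo B)$ iff $\bS_\lambda(\C^4)$ has an $\op{SO}_4$-invariant, which by classical invariant theory requires all parts of $\lambda$ to be even — false for $(2t-2,2t-3,2t-3,0)$. Either fix would close the gap.
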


\begin{proof} We prove (1) and (2) simultaneously.  Let $\alpha,\beta \in \mathbb{Z}^2_{\dom}$, and let $t\in \mathbb{Z}$. Since $f$ has weight $(3,3)\times (3,3)\times (2,2,2)$, the multiplicity of $\bS_{\alpha}A\oo \bS_{\beta}B\oo \bS_{(2t,2t,2t)}C$ in $S_f$ is equal to the multiplicity of $\bS_{(3k+\alpha_1,3k+\alpha_2)}A\oo \bS_{(3k+\beta_1,3k+\beta_2)}B\oo \bS_{(2k+2t,2k+2t,2k+2t)}C$ in $S$ for $k\gg 0$. Using the Cauchy Formula (see Section \ref{sec:reps}) applied to $S=\tn{Sym}((A\oo B)\oo C)$, if $\bS_{(3k+\alpha_1,3k+\alpha_2)}A\oo \bS_{(3k+\beta_1,3k+\beta_2)}B\oo \bS_{(2k+2t,2k+2t,2k+2t)}C$ belonged to $S$, it would have to be a subrepresentation of $\bS_{(2k+2t,2k+2t,2k+2t)}(A\oo B)\oo \bS_{(2k+2t,2k+2t,2k+2t)}C$. Twisting by $\det(A\oo B)^{\oo(-2k-2t)}$ and taking duals, we are interested in the multiplicity of $\bS_{(k+4t-\alpha_2,k+4t-\alpha_1)}A^{\ast}\oo \bS_{(k+4t-\beta_2,k+4t-\beta_1)}B^{\ast}$ in $\bS_{(2k+2t,0,0,0)}(A^{\ast}\oo B^{\ast})$ for $k\gg 0$. By the Cauchy formula, this is one if and only if $\alpha=\beta$.

(3) We want the multiplicity of $\bS_{(3k+\alpha_1,3k+\alpha_2)}A\oo \bS_{(3k+\beta_1,3k+\beta_2)}B\oo \bS_{(2k-1,2k-1,2k-2)}C$ in $S$ for $k\gg 0$. Using the Cauchy formula applied to $S=\tn{Sym}((A\oo B)\oo C)$, we are interested in the multiplicity of $\bS_{(3k+\alpha_1,3k+\alpha_2)}A\oo \bS_{(3k+\beta_1,3k+\beta_2)}B$ in $\bS_{(2k-1,2k-1,2k-2)}(A\oo B)$ for $k\gg 0$. Dualizing and twisting by $\det(A\oo B)^{\otimes 2k-1}$, this is the same as the multiplicity of $\bS_{(k-\alpha_2-2,k-\alpha_1-2)}A^{\ast}\oo \bS_{(k-\beta_2-2,k-\beta_1-2)}B^{\ast}$ in $\bS_{(2k-1,1,0,0)}(A^{\ast}\oo B^{\ast})$.

Let $\phi$ be as in the statement of the lemma. We apply \cite[Corollary 4.3b]{raicu2012secant}. Using the notation there, for $k\gg 0$ we have $f=k-\phi-2$, $e=2k-\alpha_1-\beta_1-3$ and $r=2k$. Since $\alpha_1+\beta_1\leq -2$ we have $e\geq r-1$, and since $|\alpha_1-\beta_1|\leq 1$ we have $e\geq 2f$. We have $e$ is even if and only if $\alpha_1+\beta_1$ is odd, in which case the multiplicity is $\phi+3$. We have $e$ is odd if and only if $\alpha_1+\beta_1$ is even, in which case the multiplicity is $\phi+2$.

(4) This follows from (3).

(5) By (3) we have that $\bS_{(-2,-2)}A\oo \bS_{(-2,-2)}B\oo \bS_{(-1,-1,-2)}C$ does not appear in $S_f$. Applying the Fourier transform, $\bS_{(-4,-4)}A\oo \bS_{(-4,-4)}B\oo \bS_{(-2,-3,-3)}C$ also does not appear. To determine the multiplicity of $\bS_{(0,-4)}A\oo \bS_{(-2,-2)}B\oo \bS_{(-1,-1,-2)}C$, by reasoning of the second paragraph, we are interested in the multiplicity of $\bS_{(k+2,k-2)}A^{\ast}\oo \bS_{(k,k)}B^{\ast}$ in $\bS_{(2k-1,1,0,0)}(A^{\ast}\oo B^{\ast})$. Here, $r=2k$, $f=k$ and $e=2k-1$. Since $e<2f$ it follows from \cite[Corollary 4.3b]{raicu2012secant} that the multiplicity is zero.
\end{proof}

As a consequence of Lemma \ref{lem:witD6} and Lemma \ref{charSfn3} we have the following.
\begin{lemma}\label{D6S0fn3}
The simple $D_6$ is not a composition factor of $S_f$. 
\end{lemma}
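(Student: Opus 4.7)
The plan is to use the witness-weight strategy: exhibit a single $\GL$-irreducible representation that occurs in $D_6$ but cannot occur in $S_f$. Since the category $\tn{mod}_{\GL}(\D_V)$ is equivalent to finite-dimensional representations of a quiver with relations, every object has finite length, and taking the multiplicity of a fixed $\GL$-irreducible representation is an additive functor on short exact sequences. In particular, if $D_6$ appeared as a composition factor of $S_f$, then for every irreducible $\GL$-representation $W$ the multiplicity of $W$ in $S_f$ would have to be at least its multiplicity in $D_6$.

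Specifically, take as witness the representation
\[W = \bS_{(-2,-2)}A\oo \bS_{(-2,-2)}B\oo \bS_{(-1,-1,-2)}C.\]
By Lemma \ref{lem:witD6}, $W$ occurs in $D_6$ with multiplicity one. On the other hand, Lemma \ref{charSfn3}(5) asserts that $W$ does not appear in $S_f$ at all, i.e. its multiplicity is zero.

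Combining these two facts gives the result immediately: if $D_6$ were a composition factor of $S_f$ with multiplicity $m \geq 1$, then the multiplicity of $W$ in $S_f$ would be at least $m \geq 1$, contradicting Lemma \ref{charSfn3}(5). Hence $D_6$ cannot occur as a composition factor of $S_f$. There is no real obstacle in this argument; the previous two lemmas are set up precisely so that this conclusion becomes a one-line deduction, and no further work with the $\D$-module structure of $S_f$ is needed.
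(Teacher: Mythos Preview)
Your argument is correct and is exactly the approach the paper takes: the paper simply records this lemma as an immediate consequence of Lemma~\ref{lem:witD6} and Lemma~\ref{charSfn3}(5), using the witness representation $\bS_{(-2,-2)}A\oo \bS_{(-2,-2)}B\oo \bS_{(-1,-1,-2)}C$. Your additional remarks about finite length and additivity of multiplicities make the logic explicit, but there is no substantive difference.
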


Let $P'_2= \D_V \otimes_{U(\mathfrak{gl})} (\bS_{(-4,-4)}A\oo \bS_{(-4,-4)}B\oo \bS_{(-2,-3,-3)}C)$ be the projective equivariant $\D_V$-module, as constructed in \cite[Section 2.1]{lHorincz2019categories}.
\begin{lemma}\label{lem:P2char}
The support of $P'_2$ is contained in $\ol{O}_5$, and the multiplicity of $\ol{T_{O_2}^* V}$ in $\charC(P_2')$ is one.
\end{lemma}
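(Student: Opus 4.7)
The plan is to use Frobenius reciprocity, which identifies $\Hom_{\D_V}(P'_2, M) \cong \Hom_{\GL}(V_\lambda, M)$ for every $M \in \opmod_{\GL}(\D_V)$, where $V_\lambda := \bS_{(-4,-4)}A \oo \bS_{(-4,-4)}B \oo \bS_{(-2,-3,-3)}C$. This translates each composition-multiplicity $[P'_2:D_k]$ into the multiplicity of $V_\lambda$ in the injective envelope $I(D_k)$.

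I would first show $V_\lambda$ appears with multiplicity one in $D_2$: applying Theorem \ref{thm:charMatrices} to the flattening of $V$ as the space of $4 \times 3$ matrices with $n_1=4$, $n_2=3$, $p=1$, the weight $(-2,-3,-3)$ lies in $W^1$ and yields $\lambda(1) = (-2,-2,-2,-2)$, so $\bS_{(-2,-2,-2,-2)}(A\oo B) \oo \bS_{(-2,-3,-3)}C \subset D_2$. Since $\det(A \oo B)^{-2} \cong \bS_{(-4,-4)}A \oo \bS_{(-4,-4)}B$, this isotypic component is exactly $V_\lambda$. Frobenius then yields a surjection $P'_2 \twoheadrightarrow D_2$, so $[\ol{T^*_{O_2}V}]$ occurs in $\charC(P'_2)$ with multiplicity at least one.

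To establish $\tn{supp}(P'_2) \subseteq \ol{O}_5$, I would show $V_\lambda \notin I(D_k)$ for $k \in \{6, 6', 7, 8\}$. The case $k=8$ is immediate, since $I(D_8) = S_f$ and Lemma \ref{charSfn3}(5) gives $V_\lambda \notin S_f$. For $k \in \{6, 6', 7\}$, I plan to invoke the Fourier transform $\mathcal{F}$ of Proposition \ref{prop:fourier}: using $\mathcal{F}(D_6)=D_2$, $\mathcal{F}(D_7)=D_1$, $\mathcal{F}(D_6')=D_6'$, and the fact that $\mathcal{F}(I(D_k)) = I(\mathcal{F}(D_k))$, the problem translates to whether the Fourier dual of $V_\lambda$ appears in the injective envelopes of $D_1$, $D_2$, $D_6'$, which can be bounded via Lemma \ref{charSfn3} and the composition structure of $S_f$.

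Once support in $\ol{O}_5$ is known, Proposition \ref{prop:char566} implies that among simples supported in $\ol{O}_5$ only $D_2$ has $[\ol{T^*_{O_2}V}]$ in its characteristic cycle, so the multiplicity in question equals $[P'_2:D_2]$. To see this is exactly one, I would verify that $V_\lambda$ does not occur in any other simple supported in $\ol{O}_5$, namely $D_0, D_1, D_3, D_4, D_5$, by an explicit Schur-functor calculation using Section \ref{sec:reps}: for example $D_0 = E \cong \det(V)^{-1} \oo \Sym(V^*)$, and comparing the $\GL(C)$-component of $V_\lambda \oo \det(V)$ with the Cauchy decomposition of $\Sym(V^*)$ already excludes $D_0$, while $D_3, D_4$ can be handled from the matrix character formulas recalled in Section \ref{sec:dmodsMat}. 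The main obstacle is ruling $V_\lambda$ out of $I(D_6), I(D_6'), I(D_7)$: unlike $I(D_8) = S_f$, we have no direct $\D$-module description of these envelopes, so the Fourier reduction combined with character control from Lemma \ref{charSfn3} will be the delicate step; as a backup, one can analyze $P'_2$ directly via the explicit $\mathfrak{gl}$-relations in $\D_V \otimes_{U(\mathfrak{gl})} V_\lambda$ together with the local $b$-function of Lemma \ref{lem:loc3}.
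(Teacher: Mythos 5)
Your Frobenius-reciprocity reformulation $[P'_2:D_k]=\dim\Hom_{\GL}(V_\lambda,I(D_k))$ is correct, and the observation that $V_\lambda$ has multiplicity one in $D_2$ via Theorem \ref{thm:charMatrices} (hence $D_2$ is a quotient of $P'_2$) is a sound starting point. However, the central step of your plan has a genuine gap. To rule out $D_6,D_6',D_7$ as composition factors you need to control the multiplicity of $V_\lambda$ in the injective envelopes $I(D_6),I(D_6'),I(D_7)$ in the \emph{full} category $\tn{mod}_{\GL}(\D_V)$, and these are not available at this point in the paper. The only envelope that is known a priori is $I(D_8)=S_f$. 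Applying the Fourier transform merely shuffles the problem to the injective envelopes $I(D_2),I(D_1),I(D_6')$, which are equally unknown; in particular $I(D_6')$ is a fixed point of $\mathcal{F}$ and nothing is gained. Note also the circularity risk: the quiver structure (Theorem \ref{thm:quiv3}) which would describe these envelopes is proved \emph{after} Lemma \ref{lem:P2char} and indeed uses it, so any argument relying on it would be invalid. The same issue affects your second part: to pin down $[P'_2:D_2]=1$ you must bound the multiplicity of $V_\lambda$ in $I(D_2)$ by one, and ruling out $V_\lambda$ from the simples $D_0,D_1,D_3,D_4,D_5$ is insufficient since $I(D_2)$ can have composition factors supported outside $\ol{O}_5$ (for instance $D_6$ or $D_6'$, depending on the eventually-determined quiver).

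Your ``backup'' suggestion is actually closer to what the proof requires. The paper proceeds quite differently: it identifies $V$ with $4\times 3$ matrices under $\tn{SO}_4(\C)\times\GL_3(\C)$, uses the explicit cyclic presentation of $P'_2$ over $\D_V$ to implement it in Macaulay2, and computes (a partial Gr\"obner basis of) the characteristic ideal. An explicit degree-$4$ generator of the characteristic ideal is exhibited that does not vanish at a point of $O_6$, which forces $\operatorname{charC}(P'_2)$ to miss $\ol{T^*_{O_6}V}$ and hence $\operatorname{supp}(P'_2)\subseteq\ol{O}_5$. For the multiplicity-one claim, the paper shows that $\ol{T^*_{O_2}V}$ has a dense orbit under a parabolic $P=\op{SO}_4(\C)\times P'$, by checking that the $\tn{Lie}(P)$-stabilizer of a concrete point is $1$-dimensional; combined with the fact that the highest weight vector of $V_\lambda$ is $P$-semi-invariant, the multiplicity-one statement follows by the argument of \cite[Lemma 3.12 and Proposition 3.14]{lHorincz2019categories}. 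This combination of a direct characteristic-ideal computation and a dense-orbit argument sidesteps the inaccessible injective envelopes entirely, and is what you would need to make the proof go through.
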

\begin{proof}
The proof of the first part is similar to that of \cite[Lemma 5.14]{lHorincz2019categories}. We again identify $V$ with the space of $4\times 3$ matrices with the action of $\tn{SO}_{4}(\C) \times \GL_3(\C)$. Since $P'_2$ has an explicit presentation, we implemented it into the computer algebra system Macaulay2 \cite{M2}. By computing a partial Gr\"obner basis, we obtained the following generator of the characteristic ideal of $P'_2$ in $\C[V \times V^*]$ (where $x_{ij}$ are the coordinates on $V$, viewed as the space of $4\times 3$ matrices):
\[
x_{12}^2x_{21}^2-2x_{11}x_{12}x_{21}x_{22}+x_{11}^2x_{22}^2+x_{12}^2x_{31}^2+x_{22}^2x_{31}^2-2x_{11}x_{12}x_{31}x_{32}-2x_{21}x_{22}x_{31}x_{32}+x_{11}^2x_{32}^2+x_{21}^2x_{32}^2+x_{12}^2x_{41}^2+\]
\[+x_{22}^2x_{41}^2+x_{32}^2x_{41}^2-2x_{11}x_{12}x_{41}x_{42}-2x_{21}x_{22}x_{41}x_{42}-2x_{31}x_{32}x_{41}x_{42}+x_{11}^2x_{42}^2+x_{21}^2x_{42}^2+x_{31}^2x_{42}^2.\]
Viewed as an element in $\C[V]$, this it is non-zero when evaluated on  $\begin{bmatrix} 1 & 0 & 0 \\ 0 & 1 & 0 \\ 0 & 0 & 0 \\ 0 & 0 & 0 \end{bmatrix}  \in O_6$.

For the second part, we first claim that $\ol{T_{O_2}^* V}$ has a dense $P=\op{SO}_4(\C)\times P'$-orbit, where $P'\subset \GL_3(\C)$ is the parabolic given by $g_{21}=g_{31}=0$. This follows readily by explicitly computing the $\mathfrak{p}=\tn{Lie}(P)$-stabilizer of the point $\left(\begin{bmatrix} 1 & 0 & 0 \\ 0 & 0 & 0 \\ 0 & 0 & 0 \\ 0 & 0 & 0 \end{bmatrix}, \begin{bmatrix} 0 & 0 & 0 \\ 0 & 1 & 0 \\ 0 & 0 & 1 \\ 0 & 0 & 0 \end{bmatrix}\right) \in \ol{T_{O_2}^* V}$, which is seen to be 1-dimensional. 

Since the highest weight vector in the representation $\bS_{(-4,-4)}A\oo \bS_{(-4,-4)}B\oo \bS_{(-2,-3,-3)}C$ is $P$-semi-invariant, the claim now follows analogously to \cite[Lemma 3.12 and Proposition 3.14]{lHorincz2019categories} 
\end{proof}

\subsection{Extensions between $D_0$, $D_1$, $D_2$, $D_3$, $D_4$}

We start by determining the local cohomology of the polynomial ring $S$ with support in $\ol{O}_1$. This result will help determine extensions between $D_1$ and $D_0$.

For the following statement, note that the codimension of $\ol{O}_1$ is $3n-2$, which is equal to $4n-5$ when $n=3$.

\begin{theorem}\label{BettiThm}
The following is true about local cohomology of the polynomial ring $S$ with support in $\ol{O}_1$ (all short exact sequences are non-split).

\begin{enumerate}	
\item If $n=3$ then the only non-zero local cohomology modules are 
$$
0 \to D_1 \to H^{3n-2}_{\ol{O}_1}(S)\to D_0\to 0,\quad H^{4n-3}_{\ol{O}_1}(S)=D_0^{\oplus 2}.
$$
\item If $n\geq 4$ then the only non-zero local cohomology modules are
$$
H^{3n-2}_{\ol{O}_1}(S)=D_1,\quad H^{4n-5}_{\ol{O}_1}(S)=D_0,\quad H^{4n-3}_{\ol{O}_1}(S)=D_0^{\oplus 2}.
$$
\end{enumerate}
	
\end{theorem}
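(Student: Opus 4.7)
The plan is to apply Lemma~\ref{pushdownlocallyclosed1} to the desingularization $\pi=\pi_{111}\colon Y=Y_{111}\to\ol{O}_1$ from Lemma~\ref{lem:desing}, where $Y$ is the total space of the line bundle $\mathcal{L}=\mathcal{Q}_A\boxtimes\mathcal{Q}_B\boxtimes\mathcal{Q}_C\cong\mathcal{O}_{\bb{G}}(1,1,1)$ on $\bb{G}=\bb{G}_{111}\cong\mathbb{P}^1\times\mathbb{P}^1\times\mathbb{P}^{n-1}$, with zero section $Z\subset Y$ a divisor collapsed by $\pi$ to the origin. First, the localization sequence \eqref{locallyclosedsequence} applied to $\{0\}\subset\ol{O}_1\subset V$, combined with $H^j_{\{0\}}(S)=E\cdot\delta_{j,4n}$, yields $H^q_{\ol{O}_1}(S)\cong H^q_{O_1}(S)$ for $q\leq 4n-2$ and a four-term exact sequence at the top
\[0\to H^{4n-1}_{\ol{O}_1}(S)\to H^{4n-1}_{O_1}(S)\to E\to H^{4n}_{\ol{O}_1}(S)\to 0.\]
By Lemma~\ref{pushdownlocallyclosed1}, $H^q_{O_1}(S)=\mathcal{H}^{q-(3n-2)}(\pi_{+}\mathcal{O}_Y(\ast Z))$, so it suffices to compute these cohomology sheaves for $0\leq q-(3n-2)\leq\dim\bb{G}=n+1$, and to verify $H^{4n-1}_{\ol{O}_1}(S)=H^{4n}_{\ol{O}_1}(S)=0$ (equivalently $H^{4n-1}_{O_1}(S)\cong E$) via the four-term sequence.

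To get at $\pi_{+}\mathcal{O}_Y(\ast Z)$, I apply $\pi_{+}$ to the short exact sequence \eqref{comphypn2} and work with the associated long exact sequence relating the $\mathcal{H}^j$'s of the three terms. Along the projection $\rho\colon Y\to\bb{G}$ one has the graded $\mathcal{O}_{\bb{G}}$-decompositions
\[\rho_{\ast}\mathcal{O}_Y=\bigoplus_{d\geq 0}\mathcal{L}^{d},\qquad \rho_{\ast}\mathscr{H}^1_Z(\mathcal{O}_Y)=\bigoplus_{d\leq -1}\mathcal{L}^{d},\]
and each $\mathcal{L}^{d}=\mathcal{O}_{\bb{G}}(d,d,d)$ has cohomology concentrated in a single degree by K\"unneth and Bott's theorem (Lemma~\ref{BottLemma}): degree $0$ when $d\geq 0$, degree $n+1$ when $d\leq -n$, and vanishing otherwise. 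Since $\pi$ is an isomorphism off the origin, this forces $\mathcal{H}^0(\pi_{+}\mathcal{O}_Y)=\mathcal{O}_{\ol{O}_1}$ (using that $\ol{O}_1$ is the affine toric cone over the Segre, hence has rational singularities, so $\pi_{\ast}\mathcal{O}_Y=\mathcal{O}_{\ol{O}_1}$ and $R^{>0}\pi_{\ast}\mathcal{O}_Y=0$), while all the remaining cohomology sheaves of $\pi_{+}\mathcal{O}_Y$ and $\pi_{+}\mathscr{H}^1_Z(\mathcal{O}_Y)$ are supported at $\{0\}$ and hence are direct sums of copies of $E=D_0$; their multiplicities and cohomological positions are controlled by the Bott output and, via the decomposition theorem, by the Betti numbers of $\pi^{-1}(0)=\bb{G}$.

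Feeding this data through the long exact sequence yields $H^q_{O_1}(S)$: the $D_1$-factor appears in $H^{3n-2}$, while the $D_0$-factors land in $H^{4n-5}$ (single copy), $H^{4n-3}$ (multiplicity $2$), and $H^{4n-1}$ (single copy). Combined with Step~1, the $H^{4n-1}$-copy is absorbed through the four-term sequence to give $H^{4n-1}_{\ol{O}_1}(S)=H^{4n}_{\ol{O}_1}(S)=0$, producing the stated list for $n\geq 4$. In the case $n=3$ the degrees $3n-2=4n-5=7$ coincide, collapsing the $D_1$-contribution and the corresponding $D_0$ into a single module fitting into
\[0\to D_1\to H^7_{\ol{O}_1}(S)\to D_0\to 0.\]
Non-splitness is then forced by the identification $H^7_{\ol{O}_1}(S)=\mathcal{H}^0(\pi_{+}\mathcal{O}_Y)=\mathcal{O}_{\ol{O}_1}$: a splitting would give $D_0\hookrightarrow\mathcal{O}_{\ol{O}_1}$, equivalently $\mathrm{Hom}(D_0,\mathcal{O}_{\ol{O}_1})=H^0_{\{0\}}(\mathcal{O}_{\ol{O}_1})\neq 0$, but the latter vanishes because $\mathcal{O}_{\ol{O}_1}$ has positive depth at the vertex (the Segre cone is Cohen--Macaulay of dimension $n+2>0$).

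The main obstacle is the fine bookkeeping in the middle paragraph---matching the $\GL$-characters coming out of Bott's theorem against those of copies of $E$ in the correct cohomological degrees, so that after threading the long exact sequence one recovers precisely $D_0^{\oplus 2}$ in $H^{4n-3}$ and single copies of $D_0$ in $H^{4n-5}$ and $H^{4n-1}$. The non-splitness in the $n=3$ collision is then a comparatively routine consequence of the Cohen--Macaulayness of the Segre cone.
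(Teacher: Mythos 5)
Your approach is genuinely different from the paper's. The paper invokes the theorem of Hartshorne--Polini (together with Switala, Garc\'ia L\'opez--Sabbah, and Lyubeznik--Singh--Walther) which, for the cone $Z$ over a smooth projective variety $X$, directly expresses the $D_0$-multiplicities in $H^\bullet_Z(S)$ in terms of the de~Rham Betti numbers of $X$; after that, the proof is just a K\"unneth computation of the Poincar\'e polynomial of $\mathbb{P}^1\times\mathbb{P}^1\times\mathbb{P}^{n-1}$. You instead try to recover everything from scratch via the resolution $\pi_{111}\colon Y_{111}\to\ol{O}_1$, Lemma~\ref{pushdownlocallyclosed1}, the sequence (\ref{comphypn2}), and the decomposition theorem. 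That route is plausible --- and it ultimately reduces to the same Betti-number bookkeeping --- but as written it has two real errors.

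First, the identification $\mathcal{H}^0(\pi_{+}\mathcal{O}_Y)=\mathcal{O}_{\ol{O}_1}$ does not typecheck: $\pi_{+}$ is $\mathcal{D}$-module pushforward, so $\mathcal{H}^0(\pi_{+}\mathcal{O}_Y)$ is a $\mathcal{D}_V$-module supported on $\ol{O}_1$, while $\mathcal{O}_{\ol{O}_1}=S/\mathcal{I}$ is a commutative ring and is not a $\mathcal{D}_V$-module. Rational singularities tells you that the \emph{coherent} pushforward satisfies $\mathbb{R}\pi_{\ast}\mathcal{O}_Y\simeq\mathcal{O}_{\ol{O}_1}$; it says nothing directly about $\pi_{+}\mathcal{O}_Y$, whose computation involves the relative de~Rham complex. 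What is true (decomposition theorem, $\pi$ proper, $\mathcal{O}_Y$ pure) is that $\pi_{+}\mathcal{O}_Y\cong D_1\oplus\bigoplus_j D_0^{\oplus a_j}[-j]$ for some $a_j=a_{-j}$, but one still has to determine the $a_j$.

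Second, and more fundamentally, the Bott/K\"unneth computation that each $\mathcal{L}^d$ on $\mathbb{G}$ has sheaf cohomology in a single degree does \emph{not} tell you where the cohomology of the $\mathcal{D}$-module pushforward $\pi_{+}\mathscr{H}^1_Z(\mathcal{O}_Y)$ sits. As a $\mathcal{D}_Y$-module, $\mathscr{H}^1_Z(\mathcal{O}_Y)=(i_Z)_{+}\mathcal{O}_Z$, so $\pi_{+}\mathscr{H}^1_Z(\mathcal{O}_Y)=(\pi\circ i_Z)_{+}\mathcal{O}_{\mathbb{G}}$ is the pushforward of $\mathcal{O}_{\mathbb{G}}$ along the constant map to $\{0\}$, i.e.\ it encodes the de~Rham cohomology of $\mathbb{G}=\mathbb{P}^1\times\mathbb{P}^1\times\mathbb{P}^{n-1}$, spread over cohomological degrees $-(n+1),\dots,n+1$. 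This is precisely the Betti-number input that the paper's cited theorem packages; the coherent decomposition $\rho_{\ast}\mathscr{H}^1_Z(\mathcal{O}_Y)=\bigoplus_{d\leq-1}\mathcal{L}^d$ is useful for computing $\GL$-characters / Euler characteristics (as the paper does in Section~\ref{sec:characters}), but not for the cohomological degrees you need here.

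Your non-splitness argument for $n=3$ inherits the first error ($H^7_{\ol{O}_1}(S)\neq\mathcal{O}_{\ol{O}_1}$). The clean replacement: since $\{0\}\subset\ol{O}_1$, one has $\Gamma_{\{0\}}=\Gamma_{\{0\}}\circ\Gamma_{\ol{O}_1}$, and in the resulting spectral sequence $H^p_{\{0\}}H^q_{\ol{O}_1}(S)\Rightarrow H^{p+q}_{\{0\}}(S)$ the term $E_2^{0,c}=\Gamma_{\{0\}}(H^c_{\ol{O}_1}(S))$ receives no differentials (as $H^{<c}_{\ol{O}_1}(S)=0$) and abuts to $H^c_{\{0\}}(S)=0$ since $c=3n-2<4n$; hence $\Gamma_{\{0\}}(H^c_{\ol{O}_1}(S))=0$, so $D_0$ is not a submodule and the extension cannot split. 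This replaces your depth-of-the-Segre-cone argument, which was aimed at the wrong object.
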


\begin{proof}
For ease of notation, set $d=\dim S=4n$, and $c=\codim(\ol{O}_1, V)=3n-2$. The orbit closure $\ol{O}_1$ is the affine cone over the Segre product $X=\mathbb{P}^1\times \mathbb{P}^1\times \mathbb{P}^{n-1}$, a smooth projective variety. It follows from \cite[Theorem 4.8, Theorem 6.4]{hartshorne2018simple} (see also \cite[Main Theorem 1.2]{switala}, \cite[Theorem]{garsab}, and \cite[Theorem 3.1]{LSW}) that the multiplicity of $D_0$ in $H^{\bullet}_{\ol{O}_1}(S)$ is determined by the de Rham Betti numbers $b_k=\dim H^k_{DR}(X)$ of $X$. More precisely, for $c<j<d-1$ we have $H^j_{\ol{O}_1}(S)=D_0^{\oplus (b_{d-j-1}-b_{d-j-3})}$, and there is a non-split short exact sequence
\begin{equation}\label{eqn:EsinO1}
0\to D_1 \to H^{c}_{\ol{O}_1}(S)\to D_0^{\oplus (b_{n+1}-b_{n-1})}\to 0.
\end{equation}
Furthermore, $H^j_{\ol{O}_1}(S)=0$ for $j\geq d-1$. By the K\"{u}nneth formula, the Poincar\'{e} polynomial of $X$ is
$$
P_X(q)=\binom{2}{1}_{q^2}\cdot \binom{2}{1}_{q^2}\cdot \binom{n}{1}_{q^2}=(1+q^2)^2\cdot \sum_{i=0}^{n-1} q^{2i}=(1+2q^2+q^4)\cdot \sum_{i=0}^{n-1}q^{2i},
$$
so $b_k$ is the coefficient of $q^k$ in $P_X(q)$.

We first consider $H^c_{\ol{O}_1}(S)$. Notice that $P_X(q)$ is supported in even degrees, so
that if $n$ is even, then $b_{n+1}= b_{n-1} = 0$, and the claim follows from (\ref{eqn:EsinO1}). For $n$ odd, the coefficient of $q^{n+1}$ in $P_X(q)$
is $4$, so $b_d = 4$. If $n = 3$, the coefficient of $q^{n-1}$ in $P_X(q)$ is $3$, whereas if $n\geq 5$, the coefficient of
$q^{n-1}$ in $P_X(q)$ is $4$. Therefore, the difference $b_{n+1}- b_{n-1}$ is as claimed.

Now we consider cohomological degrees $c<j<d-1$. The values of the desired differences of Betti numbers are described by the following table (if $j$ is even then $b_{d-j-1}=b_{d-j-3}=0$).

\begin{center}
\begin{tabular}{||c ||  c | c | c | c ||} 
 \hline
 $j$ &  $2n+1\leq j \leq 4n-7$ & $4n-5$ & $4n-3$ & $4n-1$ \\ [0.5ex] 
 \hline\hline
 
 $b_{d-j-1}-b_{d-j-3}$ &  $0$ & $1$ & $2$ & $1$ \\ [1ex] 
 \hline
\end{tabular}
\end{center}
We are interested in when $c<j<4n-1$ and $b_{d-j-1}-b_{d-j-3}\neq 0$, namely the columns corresponding to $4n-5$ and $4n-3$. If $n=3$, then $c=3n-2=4n-5$, so we are only interested in $j=4n-3$, and $H^{4n-3}_{\ol{O}_1}(S)=D_0^{\oplus 2}$. If $n\geq 4$, then $4n-5>c$, so $H^{4n-5}_{\ol{O}_1}(S)=D_0$. For all $n\geq 3$ we have $4n-3>c$ so $H^{4n-3}_{\ol{O}_1}(S)=D_0^{\oplus 2}$.
\end{proof}

Throughout $\tn{Ext}$ denotes the group of extensions in the subcategory $\tn{mod}_{\GL}(\D_V)$. 

\begin{lemma}\label{ext01}
Let $c=\textnormal{codim}(\overline{O}_1,V)=3n-2$. The module $H^c_{\overline{O}_1}(S)$ is the injective envelope in $\tn{mod}_{\tn{GL}}^{\overline{O}_1}(\D_V)$ of $D_1$. If $n\neq 3$ then $\tn{Ext}^1(D_1,D_0)=\tn{Ext}^1(D_0,D_1)=0$. If $n=3$, then
$$
\dim_{\mathbb{C}} \tn{Ext}^1(D_1,D_0)=\dim_{\mathbb{C}} \tn{Ext}^1(D_0,D_1)=1.
$$	
\end{lemma}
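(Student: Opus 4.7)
I will first identify $M := H^c_{\overline{O}_1}(S)$ as the injective envelope of $D_1$ in the subcategory $\tn{mod}_{\GL}^{\overline{O}_1}(\D_V)$, and then extract the two Ext-dimensions from the composition series of $M$. By Theorem \ref{BettiThm}, either $M=D_1$ (when $n\geq 4$) or there is a non-split short exact sequence $0\to D_1\to M\to D_0\to 0$ (when $n=3$); in both situations the socle of $M$ is $D_1$, so $M$ is an essential extension of $D_1$ in the subcategory whose only simples are $D_0$ and $D_1$.

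The injectivity of $M$ is the technical step. The plan is to realize $M$ as the underived pushforward of the structure sheaf of $O_1$: let $k:O_1\hookrightarrow V\setminus\{0\}$ and $j:V\setminus\{0\}\hookrightarrow V$ be the (closed, respectively open) inclusions, so that the orbit inclusion $i:O_1\hookrightarrow V$ factors as $i=j\circ k$. The long exact sequence \eqref{locallyclosedsequence} applied to the pair $\{0\}\subset\overline{O}_1$, combined with the vanishing $H^q_{\{0\}}(S)=0$ for $q<4n$, gives an isomorphism $H^c_{\overline{O}_1}(S)\cong H^c_{O_1}(S)=\mathcal{H}^0(i_+\mathcal{O}_{O_1})$. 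Since $O_1$ is closed in $V\setminus\{0\}$, the pushforward $k_+\mathcal{O}_{O_1}$ is the unique simple object of $\tn{mod}_{\GL}^{O_1}(\D_{V\setminus\{0\}})$ (a subcategory with only one simple), hence injective there. Because $j^*$ is exact, its right adjoint $j_*$ preserves injectives, yielding the injectivity of $M=\mathcal{H}^0(j_+ k_+\mathcal{O}_{O_1})$ in $\tn{mod}_{\GL}^{\overline{O}_1}(\D_V)$.

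Having identified $M$ as the injective envelope of $D_1$, the standard formula $\dim\Ext^1(D_j,D_1)=[M/D_1:D_j]$ yields $\Ext^1(D_0,D_1)=0$ for $n\geq 4$ and $\dim\Ext^1(D_0,D_1)=1$ for $n=3$. The symmetry $\Ext^1(D_1,D_0)\cong \Ext^1(D_0,D_1)$ follows from Verdier duality and the self-duality of the intersection cohomology $\D$-modules $D_0$ and $D_1$. The main obstacle will be justifying the injectivity step rigorously: preservation of injectivity by the underived $\mathcal{H}^0 j_+$ requires more care than the (automatic) preservation of derived injectives by $j_+=\mathbb{R}j_*$; alternatively, one could appeal to a general result from \cite{lHorincz2019categories} identifying local cohomology $H^c_Z(S)$ with the injective envelope of $L_Z$ in $\tn{mod}_G^Z(\D_V)$ under suitable conditions on $Z$.
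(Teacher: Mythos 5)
Your proof is correct and reaches the same conclusion, but the route you take for the key injectivity step is genuinely different from the paper's. The paper simply invokes \cite[Lemma 3.11]{lHorincz2019categories}, which packages both the exact sequence $0\to H^c_{\ol{O}_1}(S)\to H^c_{O_1}(S)\to H^{c+1}_{O_0}(S)$ and the identification of $H^c_{O_1}(S)$ as the injective envelope of $D_1$ in $\tn{mod}_{\GL}^{\ol{O}_1}(\D_V)$; the vanishing $H^{c+1}_{\{0\}}(S)=0$ then gives the isomorphism $H^c_{\ol{O}_1}(S)\cong H^c_{O_1}(S)$, exactly as you do. You instead reconstruct the injectivity from scratch: factoring $i=j\circ k$, using Kashiwara's equivalence together with the connectedness of the stabilizer of $O_1$ (Proposition \ref{componentGrp}) to conclude that $\tn{mod}_{\GL}^{O_1}(\D_{V\setminus\{0\}})$ is semisimple with $k_+\mc{O}_{O_1}$ its unique simple, and then applying the abstract fact that the right adjoint of an exact functor preserves injectives. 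Your worry in the final paragraph is a bit overcautious: for an open immersion the right adjoint of $j^*$ at the abelian level \emph{is} the underived $\mathcal{H}^0j_+=j_*^{\text{sheaf}}$, it does land in the support subcategory (since $\ol{O}_1\cap(V\setminus\{0\})=O_1$), and the abstract argument goes through. The genuine technical points hidden in this route are that these subcategories have enough injectives and that holonomicity is preserved by $j_*^{\text{sheaf}}$ — both true, but they are exactly what \cite[Lemma 3.11]{lHorincz2019categories} is designed to supply without fuss, which is why the paper cites it rather than unwinding the argument.

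Two small cautionary notes on your write-up. First, the equality you call "the standard formula $\dim\Ext^1(D_j,D_1)=[M/D_1:D_j]$" is not literally a general fact: in general one has $\Ext^1(T,S)\cong\Hom(T,I/S)$ for $I$ the injective envelope of $S$ and $T\not\cong S$ simple, which counts the $T$-isotypic part of the \emph{socle} of $I/S$, not the whole composition multiplicity. It happens to coincide with $[M/D_1:D_0]$ here only because $M/D_1$ is either $0$ or the simple $D_0$; stating it as a standard formula risks error in more complicated situations. Second, to conclude that the socle of $M$ is exactly $D_1$ when $n=3$, you implicitly use that the short exact sequence in Theorem \ref{BettiThm}(1) is non-split — which the theorem does assert — so that $D_0$ is not a submodule; this is fine, but worth spelling out since the phrase "in both situations the socle of $M$ is $D_1$" glosses over it.
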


\begin{proof}
By \cite[Lemma 3.11]{lHorincz2019categories} there is an exact sequence of $\D$-modules
$$
0 \longrightarrow H^c_{\overline{O}_1}(S)\longrightarrow H^c_{O_1}(S)\longrightarrow H^{c+1}_{O_0}(S),
$$
and $H^c_{O_1}(S)$ is the injective envelope in $\tn{mod}_{\tn{GL}}^{\overline{O}_1}(\D_V)$ of $D_1$. Since $H^{c+1}_{O_0}(S)=0$, there is an isomorphism $H^c_{\overline{O}_1}(S)\cong H^c_{O_1}(S)$, proving the first assertion.

Since $H^c_{\overline{O}_1}(S)$ is the injective envelope in $\tn{mod}_{\tn{GL}}^{\overline{O}_1}(\D_V)$ of $D_1$, Theorem \ref{BettiThm} implies that $\tn{Ext}^1(D_0,D_1)$ is as claimed. Since $D_1$ and $D_0$ are self-dual, yields the assertion about $\tn{Ext}^1(D_1,D_0)$.
\end{proof}

Now we turn to studying extensions between $D_2$ and $D_1$, $D_0$. 
Using the above results and \cite[Lemma 3.11]{lHorincz2019categories} we prove the following:

\begin{lemma}\label{lemmaExt12}
We have
\begin{enumerate}
\item If $n=3$ or $n\geq 5$, the injective envelope of $D_2$ in $\tn{mod}_{\tn{GL}}^{\overline{O}_2}(\D_V)$ is a non-trivial extension of $D_1$ by $D_2$. Thus, for these values of $n$, we have $\tn{Ext}^1(D_2,D_0)=\tn{Ext}^1(D_0,D_2)=0$ and the vector spaces $\tn{Ext}^1(D_2,D_1)$, $\tn{Ext}^1(D_1,D_2)$ are one-dimensional.

\item If $n=4$ and $I$ is the injective envelope of $D_2$ in $\tn{mod}_{\tn{GL}}^{\overline{O}_2}(\D_V)$, there is a non-split short exact sequence
$$
0\longrightarrow D_2\longrightarrow I\longrightarrow D_0\oplus D_1\longrightarrow 0.
$$
In particular, for $n=4$ the vector spaces $\tn{Ext}^1(D_2,D_1)$, $\tn{Ext}^1(D_1,D_2)$, $\tn{Ext}^1(D_2,D_0)$, $\tn{Ext}^1(D_0,D_2)$ are one-dimensional.
\end{enumerate}
	
\end{lemma}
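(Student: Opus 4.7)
My plan is to identify the injective envelope $I_2$ of $D_2$ in $\tn{mod}_{\GL}^{\ol O_2}(\D_V)$ with the local cohomology module $H^c_{O_2}(S)$ for $c=\codim(\ol O_2,V)=3n-3$ via \cite[Lemma 3.11]{lHorincz2019categories}, and then to compute its composition structure from the long exact sequence \eqref{locallyclosedsequence} associated to the stratification $\ol O_1\subset \ol O_2$, combined with the local cohomology calculations in Theorems \ref{BettiThm} and \ref{localSegre}. The $\tn{Ext}^1$ statements then follow from the general fact that for a simple $D_j$, $\dim\tn{Ext}^1(D_i,D_j)$ equals the multiplicity of $D_i$ in the socle of $I_j/D_j$, together with the self-duality of $D_0,D_1,D_2$ (giving $\tn{Ext}^1(D_i,D_j)=\tn{Ext}^1(D_j,D_i)$) and the fact that $\tn{mod}_{\GL}^{\ol O_2}(\D_V)$ is a Serre subcategory of $\tn{mod}_{\GL}(\D_V)$ (so the $\tn{Ext}^1$ groups are intrinsic to the subcategory).

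The case $n\geq 5$ is immediate: the LES reads
\[0\to D_2\to I_2\to D_1\to 0,\]
since $H^{c+1}_{\ol O_2}(S)=0$ and $H^{c+1}_{\ol O_1}(S)=D_1$, and indecomposability of the injective envelope forces non-triviality. For $n=4$, the LES yields $0\to Q_1\to I_2\to D_1\to 0$, using the description of $H^9_{\ol O_2}(S)=Q_1$ in Theorem \ref{localSegre}(2). Since $Q_1$ is itself the non-split extension $0\to D_2\to Q_1\to D_0\to 0$ (from the filtration in \eqref{eq:detQ}), the quotient $I_2/D_2$ is an extension of $D_1$ by $D_0$, which splits by Lemma \ref{ext01} as $\tn{Ext}^1(D_1,D_0)=0$ for $n\neq 3$. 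This produces $I_2/D_2\cong D_0\oplus D_1$ and hence the stated SES; the semisimple socle of $I_2/D_2$ gives the four one-dimensional $\tn{Ext}^1$ spaces.

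For $n=3$, the LES identifies $I_2/D_2$ with a submodule of $H^7_{\ol O_1}(S)$, which by Theorem \ref{BettiThm}(1) is itself the non-split extension of $D_0$ by $D_1$. A length count (if $I_2=D_2$, then $H^7_{\ol O_1}(S)$ would inject into the simple $D_0=H^7_{\ol O_2}(S)$, impossible) shows $I_2/D_2\neq 0$, so its socle must be the socle $D_1$ of $H^7_{\ol O_1}(S)$; this already yields $\tn{Ext}^1(D_1,D_2)=1$ and $\tn{Ext}^1(D_0,D_2)=0$, and by self-duality the symmetric statements. The remaining obstacle, and what I expect to be the main difficulty, is ruling out $I_2/D_2=H^7_{\ol O_1}(S)$ so as to get the two-step structure; equivalently, the connecting map $H^7_{\ol O_1}(S)\to H^7_{\ol O_2}(S)=D_0$ must be shown surjective, i.e.\ $H^7_{O_2}(S)=0$. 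I would settle this by a character computation: using the weight identified in Lemma \ref{lem:witD6} (together with the Fourier duality $\mc{F}(D_2)=D_6$ from Proposition \ref{prop:fourier}) and the information about $S_f$ in Lemma \ref{charSfn3}, one locates an explicit $\GL$-weight that witnesses $D_0$, and checks (against the character of $I_2=H^6_{O_2}(S)$, which is determined from the LES and the known characters of $H^{\bullet}_{\ol O_1}(S)$ and $H^{\bullet}_{\ol O_2}(S)$) that this weight does not appear. Once the two-step structure is established, non-triviality of the extension is automatic from indecomposability of $I_2$.
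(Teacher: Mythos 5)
Your plan correctly applies the same key tools as the paper (the long exact sequence from \cite[Lemma 3.11]{lHorincz2019categories} identifying $H^c_{O_2}(S)$ with the injective envelope of $D_2$, combined with Theorems \ref{BettiThm} and \ref{localSegre}), and your $n=4$ argument is essentially identical to the paper's. Two issues arise, one minor and one a genuine gap.

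Minor issue for $n\geq 5$: you assert $H^{c+1}_{\ol O_2}(S)=0$. By Theorem \ref{localSegre}(3), for $n=5$ one has $c+1 = 13 = 4n-7$ and hence $H^{c+1}_{\ol O_2}(S)=D_0\neq 0$. The conclusion still holds because the map $H^{c+1}_{\ol O_1}(S)=D_1 \to D_0$ is a map between non-isomorphic simples and therefore zero (which is precisely the argument the paper gives for $n=5$); you should say this rather than claim the group vanishes.

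Genuine gap for $n=3$: your length argument correctly shows $I_2/D_2 \neq 0$, that $\operatorname{soc}(I_2/D_2)=D_1$, and hence all four $\tn{Ext}^1$ computations, without needing to pin down $I_2$ completely. But the lemma also asserts that $I_2$ has length two, for which you must rule out $I_2/D_2 = H^7_{\ol O_1}(S)$, i.e., show the connecting map $H^7_{\ol O_1}(S)\to H^7_{\ol O_2}(S) = D_0$ is surjective. The character computation you propose is circular as stated: the character of $I_2 = H^6_{O_2}(S)$ is \emph{not} determined by the LES and the characters of $H^\bullet_{\ol O_1}(S)$ and $H^\bullet_{\ol O_2}(S)$, because the LES only determines $I_2/D_2$ up to whether the connecting map vanishes --- which is exactly the unknown you want to resolve. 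The paper closes this gap by a structural argument instead: using the Fourier transform ($\mc{F}(D_0)=D_8$, $\mc{F}(D_2)=D_6$) to equate the number of quiver paths from $D_0$ to $D_2$ with those from $D_8$ to $D_6$, then invoking Lemma \ref{D6S0fn3} (that $D_6$ is not a composition factor of $S_f$, the injective envelope of $D_8$) together with holonomic duality to reverse direction. Notably, Lemma \ref{D6S0fn3} is itself proved by the character computations you cite (Lemmas \ref{lem:witD6} and \ref{charSfn3}), so your ingredients are the right ones; they just need to be routed through the Fourier and duality equivalences rather than applied directly to the character of $I_2$.
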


\begin{proof}
Similar to the proof of Lemma \ref{ext01}, let $c=\tn{codim}(\overline{O}_2,V)=3n-3$. By \cite[Lemma 3.11]{lHorincz2019categories} there is an exact sequence of $\D$-modules
\begin{equation}\label{injLocalSequence}
0\longrightarrow H^c_{\overline{O}_2}(S)\longrightarrow H^c_{O_2}(S)\longrightarrow H^{c+1}_{\overline{O}_1}(S)\longrightarrow H^{c+1}_{\overline{O}_2}(S),
\end{equation}
and $H^c_{O_2}(S)$ is the injective envelope of $D_2$ in $\tn{mod}_{\tn{GL}}^{\overline{O}_2}(\D_V)$.

To prove (1), first assume $n\geq 6$. Using Theorem \ref{localSegre} we see that $H^c_{\overline{O}_2}(S)=D_2$ and $H^{c+1}_{\overline{O}_2}(S)=0$. By Lemma \ref{ext01} we have $H^{c+1}_{\overline{O}_1}(S)=D_1$, and the result follows from (\ref{injLocalSequence}). If $n=5$, Theorem \ref{localSegre} implies that $H^c_{\overline{O}_2}(S)=D_2$ and $H^{c+1}_{\overline{O}_2}(S)=D_0$, and Lemma \ref{ext01} implies that $H^{c+1}_{\overline{O}_1}(S)=D_1$. Thus, the final map in (\ref{injLocalSequence}) goes from $D_1$ to $D_0$, so it must be zero. Let $n=3$. By Theorem \ref{localSegre} the exact sequence (\ref{injLocalSequence}) becomes
\begin{equation}\label{inj2n3}
0\longrightarrow D_2\longrightarrow H^c_{O_2}(S)\longrightarrow H^{c+1}_{\overline{O}_1}(S)\longrightarrow D_0,
\end{equation}
and $H^{c+1}_{\overline{O}_1}(S)$ is the unique extension of $D_0$ by $D_1$ by Lemma \ref{ext01}. Thus, the claim is that the final map in (\ref{inj2n3}) is surjective, which we prove using the Fourier transform. Let $(\mc{Q},\mc{I})$ be the quiver with relations of $\tn{mod}_{\tn{GL}}(\D_V)$. The number of paths in $\mc{Q}$ (modulo relations) from $D_0$ from $D_2$ is equal to the number of paths from $\mc{F}(D_0)=D_8$ to $\mc{F}(D_2)=D_6$. By Lemma \ref{D6S0fn3}, there are no such paths. Applying the holonomic duality functor, there are no paths in $\mc{Q}$ from $D_2$ to $D_0$. This proves the lemma for $n=3$.

Finally, we prove (2). Let $n=4$. By Theorem \ref{localSegre} we have that $H^c_{\overline{O}_2}(S)$ is a non-trivial extension of $D_0$ by $D_2$, and $H^{c+1}_{\overline{O}_2}(S)=0$. By Lemma \ref{ext01} we have $H^{c+1}_{\overline{O}_1}(S)=D_1$. Thus, the exact sequence (\ref{injLocalSequence}) becomes
$$
0\longrightarrow H^c_{\overline{O}_2}(S)\longrightarrow H^c_{O_2}(S)\longrightarrow D_1\longrightarrow 0.
$$
Since there are no extensions between $D_1$ and $D_0$ for $n=4$, we obtain the desired result.
\end{proof}

\begin{lemma}\label{lemmaExt34}
Let $i=3,4$. If $n\geq 3$ the simple $D_i$ is injective in $\tn{mod}_{\tn{GL}}^{\overline{O}_i}(\D_V)$. In particular, the vector spaces $\tn{Ext}^1(D_i,D_1)$, $\tn{Ext}^1(D_1,D_i)$, $\tn{Ext}^1(D_i,D_0)$, $\tn{Ext}^1(D_0,D_i)$ are zero.	
\end{lemma}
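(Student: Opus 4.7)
The plan is to imitate the template established in Lemmas \ref{ext01} and \ref{lemmaExt12}: I would realize $D_i$ as the top of the local cohomology of $S$ supported on $O_i$ via \cite[Lemma 3.11]{lHorincz2019categories}, and then check numerically that the boundary contribution vanishes, forcing $D_i$ to be its own injective envelope in the category supported on $\ol{O}_i$.

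Explicitly, set $c=\codim(\ol{O}_i,V)=2n-1$. From the Hasse diagram, $\ol{O}_i\setminus O_i=\ol{O}_1$ for $i=3,4$, so \cite[Lemma 3.11]{lHorincz2019categories} gives the exact sequence
$$0\longrightarrow H^c_{\ol{O}_i}(S)\longrightarrow H^c_{O_i}(S)\longrightarrow H^{c+1}_{\ol{O}_1}(S)\longrightarrow H^{c+1}_{\ol{O}_i}(S),$$
and identifies $H^c_{O_i}(S)$ with the injective envelope of $D_i$ in $\tn{mod}_{\GL}^{\ol{O}_i}(\D_V)$. By Theorem \ref{thm:localcoho34}, the only nonzero local cohomology modules supported on $\ol{O}_i$ occur in degrees $2n-1$ and $4n-3$, so $H^c_{\ol{O}_i}(S)=D_i$ and $H^{c+1}_{\ol{O}_i}(S)=0$ for all $n\geq 3$. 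Meanwhile, Theorem \ref{BettiThm} tells us that the nonzero local cohomology modules supported on $\ol{O}_1$ sit in degrees $3n-2$, $4n-5$, and $4n-3$; none of these equals $c+1=2n$ for $n\geq 3$, so $H^{c+1}_{\ol{O}_1}(S)=0$. The exact sequence thus collapses to $D_i\cong H^c_{O_i}(S)$, proving that $D_i$ is already injective in $\tn{mod}_{\GL}^{\ol{O}_i}(\D_V)$.

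For the Ext-vanishing claim, I would observe that each of $D_0$, $D_1$, $D_i$ is supported inside $\ol{O}_i$; hence any extension of these simples in $\tn{mod}_{\GL}(\D_V)$ automatically lives in $\tn{mod}_{\GL}^{\ol{O}_i}(\D_V)$. Injectivity of $D_i$ in the latter category then forces $\tn{Ext}^1(D_0,D_i)=\tn{Ext}^1(D_1,D_i)=0$. The remaining vanishings $\tn{Ext}^1(D_i,D_0)=\tn{Ext}^1(D_i,D_1)=0$ follow from holonomic duality together with the fact that intersection cohomology $\D$-modules are self-dual.

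There is no real obstacle here: the argument is essentially a numerical coincidence check on cohomological degrees. The only thing to be careful about is to confirm, from the Hasse diagram, that the boundary of $O_i$ inside $\ol{O}_i$ really is $\ol{O}_1$ (with no $O_2$ or higher orbits sneaking in), and to notice that the degree $2n$ never coincides with any of the degrees $3n-2,4n-5,4n-3,2n-1,4n-3$ appearing in Theorems \ref{BettiThm} and \ref{thm:localcoho34} for $n\geq 3$.
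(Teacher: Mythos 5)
Your proof is correct and follows essentially the same route as the paper: both set up the exact sequence from \cite[Lemma 3.11]{lHorincz2019categories}, identify $H^c_{\ol{O}_i}(S)=D_i$ from Theorem~\ref{thm:localcoho34}, and conclude by showing $H^{c+1}_{\ol{O}_1}(S)=0$. The only cosmetic difference is that the paper gets that vanishing from the codimension bound $\codim(\ol{O}_1,V)-c=n-1>1$ (so $c+1$ lies below the first nonzero degree $3n-2$), whereas you check against the explicit degree list in Theorem~\ref{BettiThm}; both are valid.
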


\begin{proof}
Let $i$ be $3$ or $4$. Let $c=\tn{codim}(\overline{O}_i,V)=2n-1$. By Theorem \ref{thm:localcoho34} the only nonvanishing local cohomology is $H^c_{\overline{O}_i}(S)=D_i$ and $H^{c+2n-2}_{\overline{O}_i}(S)=D_0$. Again by \cite[Lemma 3.11]{lHorincz2019categories} we have an exact sequence
$$
0\longrightarrow D_i\longrightarrow H^c_{O_i}(S)\longrightarrow H^{c+1}_{\overline{O}_1}(S)\longrightarrow H^{c+1}_{\overline{O}_i}(S),
$$
and $H^c_{O_i}(S)$ is the injective envelope of $D_i$ in $\tn{mod}_{\tn{GL}}^{\overline{O}_i}(\D_V)$. If $n\geq 3$ then $\tn{codim}(\overline{O}_1,V)-c>1$, so that $H^{c+1}_{\overline{O}_1}(S)=0$. Therefore $D_i\cong H^c_{O_i}(S)$ for $n\geq 3$.
\end{proof}

\section{Categories of equivariant $\D$-modules}\label{sec:eqcat}

In this section, we determine the quiver of the category $\tn{mod}_{\GL}(\D_V)$ of $\GL$-equivariant $\D_V$-modules for the space $V$ of $2\times 2 \times n$ tensors, according to the cases $n=3, n=4,$ and $n\geq 5$.

\subsection{The case $n=3$}\label{sec:n3}

\begin{theorem}\label{thm:quiv3}
Let $n=3$. The quiver of $\tn{mod}_{\GL}(\D_V)$ is given by
\[\xymatrix@R+1pc@C+1.6pc{
& (3) \ar@<0.5ex>[rr]^{\a_3} & & \ar@<0.2ex>[ll]^{\b_3} \ar@<0.2ex>[dll]^{\b_{1'}} (6') \ar@<0.5ex>[rr]^{\a_4} \ar@<0.5ex>[drr]^{\a_{7'}}& & \ar@<0.2ex>[ll]^{\b_4} (4) & \\
(0) \ar@<0.3ex>[r]^{\a_1} & \ar@<0.5ex>[l]^{\b_1}  (1) \ar@<0.3ex>[r]^{\a_2} \ar@<0.5ex>[urr]^{\a_{1'}}& \ar@<0.5ex>[l]^{\b_2} (2) \ar@<0.3ex>[r]^{\a_5} & \ar@<0.5ex>[l]^{\b_5} (5) \ar@<0.3ex>[r]^{\a_6} & \ar@<0.5ex>[l]^{\b_6} (6) \ar@<0.3ex>[r]^{\a_7} & \ar@<0.5ex>[l]^{\b_7} \ar@<0.2ex>[ull]^{\b_{7'}} (7) \ar@<0.3ex>[r]^{\a_8} & \ar@<0.5ex>[l]^{\b_8} (8)
}\]
with the following relations: 
\[\a_i \b_i, \b_i \a_i 
, \a_1 \a_2, \b_2 \b_1, \a_2 \a_5, \b_5\b_2, \a_5\a_6, \b_6 \b_5, \a_6 \a_7, \b_7 \b_6, \a_7 \a_8, \b_8 \b_7, \b_2\a_{1'}, \b_{1'}\a_2, \a_7\b_{7'}, \a_{7'}\b_7,\]
\[\a_{1'}\b_3, \a_{3}\b_{1'}, \a_{1'}\a_4, \b_4\b_{1'}, \b_{7'}\b_3, \a_3\a_{7'}, \b_{7'}\a_4, \b_4\a_{7'}.\]
The Fourier transform acts on the quiver as the reflection along the line between $(5)$ and $(6')$.
\end{theorem}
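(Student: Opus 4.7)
The strategy is to compute, for each ordered pair of simple objects $(D_a, D_b)$, the dimension of $\op{Ext}^1(D_a, D_b)$ (which gives the arrows from $a$ to $b$), and then determine the relations by analyzing the injective envelopes of simples inside the subcategories $\opmod_{\GL}^{\ol{O}_a}(\D_V)$. Throughout, I would exploit two symmetries to halve the work: the holonomic duality functor (which sends $\op{Ext}^1(D_a,D_b)$ to $\op{Ext}^1(D_b,D_a)$ and hence shows every arrow comes with a reverse arrow) and the Fourier transform $\mathcal{F}$, which by Proposition \ref{prop:fourier} swaps $(D_0,D_8),(D_1,D_7),(D_2,D_6)$ and fixes the rest, so the quiver must have the reflection symmetry claimed in the theorem's last sentence.

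For the arrows: the extensions involving $D_0, D_1, D_2$ are already handled for $n=3$ in Lemmas \ref{ext01}, \ref{lemmaExt12}, and \ref{lemmaExt34}, which give the arrows $(0)\leftrightarrow(1)\leftrightarrow(2)$ and rule out arrows from $D_3,D_4$ to $D_0,D_1$. By Fourier symmetry this also gives the arrows $(6)\leftrightarrow(7)\leftrightarrow(8)$ and rules out arrows from $D_3,D_4$ to $D_7,D_8$. The arrows among $(3),(4),(5),(6),(6')$ can be extracted from Lemma \ref{lem:opencat1} applied to the open $U = V \setminus \ol{O}_2$, which transfers the $n=2$ description via the Slice Theorem and shows precisely the arrows $(5)\leftrightarrow(6)$, $(3)\leftrightarrow(6')\leftrightarrow(4)$ (with all other $\op{Ext}^1$ among these vanishing), after checking that restricting/coinducing from $U$ preserves these extensions (no composition factor issues arise because $D_2$ is not adjacent to $D_3,D_4,D_5,D_6'$ by Fourier symmetry applied to the adjacencies we already have). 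Next, the arrow $(2)\leftrightarrow(5)$ follows from Lemma \ref{lem:noD34} combined with \cite[Lemma 3.11]{lHorincz2019categories}: the injective envelope of $D_5$ in $\opmod_{\GL}^{\ol{O}_5}(\D_V)$ embeds into $H^{2n-3}_{O_5}(S)$, and the long exact sequence \eqref{locallyclosedsequence} applied to $\ol{O}_5 \supset \ol{O}_2$ plus Theorem \ref{localSegre} forces a nontrivial extension by $D_2$ and nothing else from below. The extra arrows $(1)\leftrightarrow(6')$ and $(7)\leftrightarrow(6')$, which reflect the fact that the orbits are crammed for small $n$, are detected via the characteristic cycle of $P_2'$ in Lemma \ref{lem:P2char} and the Fourier-dual computation: Lemma \ref{lem:P2char} shows $D_2$ appears in $P_2'$ together with $D_5$ and $D_6'$, producing the required extension between $D_1$ and $D_6'$ after applying $\mathcal{F}$ (and symmetrically between $D_7$ and $D_6'$).

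To prove vanishing of all remaining $\op{Ext}^1$ groups, I would use two complementary tools. First, for pairs $(D_a,D_b)$ with $\ol{O}_a \cap \ol{O}_b$ of high codimension, restrict to a suitable open and use that $D_a$ and $D_b$ become disjoint so no extension is possible. Second, for pairs such as $(D_6,D_6')$, use the character computation: Lemma \ref{lem:witD6} identifies a witness weight of $D_6$ which by Lemma \ref{charSfn3} and \ref{D6S0fn3} does not sit inside the injective envelope $S_f$ of $S=D_8$, and similar weight-counting rules out the remaining unwanted arrows. The Bernstein–Sato relation \eqref{eq:bs3} together with Lemma \ref{lem:loc3} will be used to control the composition factors of $\D\cdot f^s$ and rule out extensions involving $D_8$ with anything but $D_7$.

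For the relations: all the $\alpha_i\beta_i$ and $\beta_i\alpha_i$ loops must vanish because the injective envelope of any simple in $\opmod_{\GL}^{\ol{O}_a}(\D_V)$ is indecomposable with simple socle, so no simple can appear twice in the socle layers accessible by a length-two round trip; this is extracted from the explicit local cohomology modules of Theorem \ref{BettiThm} and Theorems \ref{localSegre}--\ref{thm:localcoho34} (together with Lemma \ref{lemmaExt34}). The remaining length-two relations along the "spine" $(0)$-$(1)$-$(2)$-$(5)$-$(6)$-$(7)$-$(8)$ follow by computing the socle and head of the injective envelope of each $D_a$ as a two-step filtration and checking that the next simple over is absent; this uses the \v{C}ech-type sequence \eqref{locallyclosedsequence} and Lemma \ref{lem:locinv} repeatedly. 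Finally, the four "triangular" relations $\beta_2\alpha_{1'},\b_{1'}\alpha_2,\alpha_7\beta_{7'},\alpha_{7'}\beta_7$ and their Fourier-dual partners express the fact that the compositions factoring through $D_6'$ vanish; these follow once one shows that the corresponding projective module $P_2'$ of Lemma \ref{lem:P2char} has the expected two-step filtration with top $D_2$ and socle containing $D_5,D_6'$ but no path through $D_1$ surviving. The main obstacle will be the very last step: pinning down the relations involving $(6')$ rigorously, which requires combining the partial characteristic cycle computation of $P_2'$ with a careful check that no hidden composition factor appears—this is where the flexibility provided by the Fourier-transform symmetry (which cuts the case analysis roughly in half) becomes essential. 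Once all arrows and relations are in place, the stated reflection symmetry of $\mathcal{F}$ is automatic from Proposition \ref{prop:fourier} and the fact that $\mathcal{F}$ is an equivalence of categories.
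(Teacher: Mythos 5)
Your proposal correctly identifies most of the tools the paper uses (the Fourier transform and holonomic duality symmetries, Lemmas \ref{ext01}, \ref{lemmaExt12}, \ref{lemmaExt34}, \ref{lem:opencat1}, \ref{lem:P2char}, \ref{lem:witD6}, \ref{charSfn3}, \ref{D6S0fn3}, the $b$-function \eqref{eq:bs3}, Lemma \ref{lem:loc3}), but the way you assemble them breaks down at exactly the steps that make the $n=3$ case genuinely hard.

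The most serious gap is the derivation of the arrows $(1)\leftrightarrow(6')$ and $(7)\leftrightarrow(6')$. You propose to extract these from the characteristic cycle of $P_2'$ in Lemma \ref{lem:P2char}, claiming that this lemma shows $D_5$ and $D_6'$ appear together with $D_2$ in $P_2'$ and that applying $\mc{F}$ then produces an extension between $D_1$ and $D_6'$. Lemma \ref{lem:P2char} proves nothing of the sort: it only shows $\operatorname{supp}(P_2')\subseteq \ol{O}_5$ and that $\ol{T^*_{O_2}V}$ has multiplicity one in $\charC(P_2')$. It gives no information about which other simples occur. Moreover $\mc{F}(D_2)=D_6$, not $D_1$, so even the Fourier-dual statement would concern the wrong vertex. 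The paper's route is completely different and essential here: from the explicit $b$-function \eqref{eq:bs3} one gets the filtration $S\subsetneq \D_Vf^{-1}\subsetneq \D_Vf^{-2}=S_f$ with unique simple quotients $L^{-1},L^{-2}$ carrying semi-invariants of weights $\sigma^{-1},\sigma^{-2}$; one identifies $L^{-2}\cong D_0$ via a characteristic-variety argument, uses the local $b$-function in Lemma \ref{lem:loc3} to show $\operatorname{supp}(S_f/\D_Vf^{-1})\subseteq\ol{O}_1$, and then eliminates $D_1,D_2,D_7$ as candidates for $L^{-1}$, forcing $L^{-1}\cong D_6'$ and hence the arrow $\a_{1'}$. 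You invoke \eqref{eq:bs3} and Lemma \ref{lem:loc3} only to ``rule out extensions involving $D_8$ with anything but $D_7$'' (which is already a consequence of $S_f$ being an injective envelope), so the mechanism that actually produces these arrows is missing.

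Two further points. Your derivation of $(2)\leftrightarrow(5)$ via the long exact sequence for $\ol{O}_5\supset\ol{O}_2$ together with ``Theorem \ref{localSegre} forces a nontrivial extension by $D_2$'' is circular: the only place in the paper where the relevant extension in $H^3_{\ol{O}_5}(S)$ is pinned down is Theorem \ref{loc05}, whose proof uses Theorem \ref{thm:quiv3}. The clean, non-circular route is simply to apply $\mc{F}$ to the $(5)\leftrightarrow(6)$ arrows obtained from Lemma \ref{lem:opencat1}, since $\mc{F}$ swaps $(2)$ and $(6)$. Finally, your claim that all $2$-cycles $\a_i\b_i,\b_i\a_i$ vanish ``because the injective envelope of any simple is indecomposable with simple socle'' is not a valid general principle -- in many quivers of this type $2$-cycles do not vanish, and the paper has to work: vanishing at $(8)$ and $(7)$ comes from the structure of $S_f$ and Lemma \ref{lem:opencat2}, at $(2)$ and $(6)$ from $\mc{F}$ and the open-restriction lemma, and most importantly the vanishing of the $2$-cycle at $(6')$ requires the dense-orbit argument of Lemma \ref{lem:dense} applied to $\ol{T^*_{O_6}V}$, which your proposal does not invoke.
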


\begin{proof}
By \cite[Lemma 2.4]{binary}, $S_f$ is the injective envelope of $S=D_8$. Since $S_f/S=H_f^1(S)$, and $D_7$ is its unique simple submodule, we conclude that there is a single arrow $\a_8$ from $(7)$ to $(8)$, and there are no other arrows into $(8)$. Via holonomic duality $\bb{D}$, there is a single arrow $\b_8$ from $(8)$ to $(7)$, and the vertex $(8)$ has no further arrows connected to it. Since $S$ has multiplicity one in $S_f$, we have $\b_8 \a_8=0$.  By Lemma \ref{lem:opencat2} we also have $\a_8 \b_8 =0$, as the restriction of the injective envelope of $D_7$ to $V\setminus \ol{O}_6$ (which is again an injective envelope there) has only one copy of the simple $\D_{V\setminus \ol{O}_6}$-module at corresponding to the trivial local system on $O_7$. Furthermore, since $\a_8\b_8 =0$, all relations in the quiver of the subcategory  $\tn{mod}_{\GL}^{\ol{O}_7}(\D_V)$ stay relations in the quiver of $\tn{mod}_{\GL}(\D_V)$. In particular, by \cite[Corollary 3.9]{lHorincz2019categories} all $2$-cycles at $(7)$ must be zero in the quiver of  $\tn{mod}_{\GL}(\D_V)$. By Fourier transform $\mc{F}$, there are two arrows $\a_1,\b_1$ between $(0)$ and $(1)$ with $\a_1 \b_1=0$, there are no further arrows connected to $(0)$, and all $2$-cycles at $(1)$ must be zero. 

By Lemma \ref{lemmaExt12}, there are exactly two arrows $\a_2, \b_2$ between $(1), (2)$, and by Lemma \ref{lemmaExt34} there are no arrows between $(3)$ (resp. $(4)$) and $(1)$ or $(2)$. Applying $\mc{F}$, we have that there are exactly two arrows $\a_7,\b_7$ between $(6),(7)$, and there are no arrows between $(3)$ (resp. $(4)$) and $(6)$ or $(7)$. By Lemma \ref{D6S0fn3} we have $\a_7 \a_8=0$, and by self-duality of $D_6$, $D_7$, $D_8$, we have $\b_8 \b_7=0$.

Let $U= V\setminus \ol{O}_2$ and $j: U \to V$ be the open embedding. By Lemma \ref{lem:opencat1} and using the argument with injective envelopes as we did in its proof, we get arrows $\a_3, \b_3, \a_4, \b_4, \a_6,\b_6$ in the quiver of $\tn{mod}_{\GL}(\D_V)$ as required, and there are no further arrows between $(3)$, $(4)$, $(5)$, $(6)$ and $(6')$. Further, all compositions between the former arrows are zero in the subcategory $\tn{mod}_{\GL}^{\ol{O}_6}(\D_V)$. 

As $\mc{F}(D_3) = D_4$, we must have $\mc{F}(D_5) = D_5$ and $\mc{F}(D'_{6}) = D'_6$. In particular, we have unique arrows $\a_5, \b_5$ between $(2)$ and $(5)$. As there are no more arrows involving the vertices $(3)$ and $(4)$, this implies that the compositions between $\a_3, \b_3, \a_4, \b_4$ are zero in $\tn{mod}_{\GL}(\D_V)$, and not just in the subcategory $\tn{mod}_{\GL}^{\ol{O}_6}(\D_V)$.

We have the following exact sequence
\begin{equation}\label{eq:O7}
0 \to H^1_f(S) \to H^1_{O_7}(S) \to H^2_{\ol{O}_6}(S) \to 0.
\end{equation}
By Lemma \ref{D6S0fn3}, $D_6$ does not appear as a composition factor in $H^1_f(S)$. We have already showed that there is an arrow $\a_7$ from $(6)$ to $(7)$. Together with \cite[Lemma 3.11]{lHorincz2019categories}, this shows that $D_6$ is a direct summand of $H^1_{O_7}(S)/D_7$, and therefore $\a_6 \a_7 = 0$. Via $\mathbb{D}$ and $\mc{F}$, we obtain also $\b_7 \b_6 = \a_2 \a_5 = \b_5 \b_2=0$.

The $b$-function (\ref{eq:bs3}) of $f$ yields according to \cite[Proposition 4.9]{lHorincz2019categories} a filtration
\[S \subsetneq \D_V f^{-1} \subsetneq \D_V f^{-2} = S_f.\]
By \cite[Lemma 2.8]{lHorincz2019categories} the modules $\D_V f^{-1}$ and $S_f$ have unique simple quotients, say $L^{-1}$ and $L^{-2}$, with $L^{-i}$ having a non-zero semi-invariant element of weight $\sigma^{-i}$, for $i=1,2$, where $\sigma$ stands for the weight of $f$. 

By \cite[Proposition 4.6]{microlocal}, the characteristic variety of $S_f$ contains the conormal $T_{\{0\}}^* V$ at the origin. Since $S$ is the only simple equivariant $\D_V$-module with full support, via $\mc{F}$ we obtain that $D_0$ is the only simple equivariant $\D_V$-module whose characteristic variety contains $T_{\{0\}}^* V$. Therefore, $D_0$ must be a composition factor of $S_f$. From \cite[Equation 4.14]{lHorincz2019categories}, we see that $D_0$ has a non-zero semi-invariant of weight $\sigma^{-2}$. Since the weight space of $\sigma^{-2}$ in $S_f$ is one-dimensional (as the action is prehomogeneous), this shows that $L^{-2} \cong D_0$.

We have an exact sequence
\begin{equation}\label{eq:O6}
0 \to H^2_{\ol{O}_6}(S) \to H^2_{O_6}(S) \to H^3_{\ol{O}_5}(S) \to H^3_{\ol{O}_6}(S).
\end{equation}
By Theorem \ref{thm:localcoho6} we have $H^2_{\ol{O}_6}(S)=D_6$ and $H^3_{\ol{O}_6}(S)=D_2$. By \cite[Lemma 3.11]{lHorincz2019categories} there are no arrows connected to $(6)$ other than $\a_6, \b_6, \a_7, \b_7$. Via $\mc{F}$, there are exactly $4$ arrows connected to $(2)$ as required. 

At this point only the vertices $(1), (5), (6'), (7)$ can have potentially additional arrows among themselves. We have seen that there are no arrows between $(5)$ and $(6')$, and now we show that there is no arrow from $(1)$ to $(5)$. Assume by contradiction that there is such an arrow $\gamma$. Then from the sequence (\ref{eq:O6}) and \cite[Lemma 3.11]{lHorincz2019categories} we deduce that the composition $\gamma \a_6$ is a non-zero path from $(1)$ to $(6)$. Via $\mc{F}$ and $\mathbb{D}$, we obtain a non-zero path from $(2)$ to $(7)$ starting with $\a_5$. Since $\a_2 \a_5 =0$, and in the subcategory $\tn{mod}_{\GL}^{\ol{O}_6}(\D_V)$ we have $\b_5 \a_5 =0 $ (and there are no other arrows into $(2)$), we see by \cite[Lemma 3.11]{lHorincz2019categories} that $D_2$ is a quotient of $H^1_{O_7}(S)/D_7 \, \in \tn{mod}_{\GL}^{\ol{O}_6}(\D_V)$. From (\ref{eq:O7}), this in turn implies that $D_2$ is a quotient of $S_f$. This is a contradiction, since $D_0=L^{-2}$ is the unique quotient of $S_f$. Therefore, no such arrow $\gamma$ exists. Via $\mc{F}$ and $\mathbb{D}$, we see that the vertex $(5)$ can not have other arrows besides $\a_5,\b_5,\a_6,\b_6$. This further implies that $\a_6\b_6=0$ (and not just in the subcategory $\tn{mod}_{\GL}^{\ol{O}_6}(\D_V)$), and via $\mc{F}$ also $\b_5\a_5=0$.

As $S_f$ is the injective envelope of $S$, and $D_0$ is one of its composition factors, there is a non-trivial path from $(0)$ to $(8)$. Since $\a_1$ is the only arrow starting from $(0)$, this shows that $D_1$ is a composition factor of $S_f$. By  \cite[Equation 4.14]{lHorincz2019categories} we see that $\mc{F}(L^{-1})$ also has a non-zero semi-invariant element of weight $\sigma^{-1}$. As both $D_7$ and $D_1 = \mc{F}(D_7)$ are composition factors of $S_f$, and its weight space corresponding to $\sigma^{-1}$ is 1-dimensional, we must have $D_7 \ncong L^{-1} \ncong D_1$. From Theorem \ref{thm:charMatrices} we see readily that $\sigma^{-1}$ does not appear in the character formula for $D_2$ (it has no non-zero $\SL_3(\C)$-invariants), thus $L^{-1} \ncong D_2$. 

By Lemma \ref{lem:loc3} (for $s=-2$), we have that $\tn{supp} \, S_f/\D_Vf^{-1} \subset \ol{O}_1$. We have an exact sequence of the form
\[ 0 \to K \to S_f \to D_0 \to 0.\]
Clearly, $\D_V f^{-1} \subset K$, and by the above, $D_1$ is a quotient of $K$. Since $L^{-1}$ is the unique quotient of $\D_V f^{-1}$ and $L^{-1} \ncong D_1$, this shows that $D_1$ is a composition factor of $S_f / D_V f^{-1}$. Since the latter has $D_0$ as a unique quotient, and has support contained in $\ol{O}_1$, it is given by the (unique) non-split sequence 
\[ 0 \to D_1 \to S_f / \D_V f^{-1} \to D_0 \to 0.\]
As the module $\D_V f^{-1}$ (resp. $S_f$) has the unique quotient $L^{-1}$ (resp. $D_0$), we must have an arrow from $(1)$ to the vertex corresponding to the simple $L^{-1}$. As $D_2 \ncong L^{-1} \ncong D_7$, the only possibility is that $D'_6 \cong L^{-1}$. This gives the arrow $\a_{1'}$, and via $\mc{F}$ and $\bb{D}$ also $\b_{1'}, \a_{7'}, \b_{7'}$.

The module $D_7$ is the unique simple submodule of $H_f^1(S)$. Therefore, $D_7 \subset \D_V f^{-1}/S$, and we denote by $Q$ their quotient. Due to the arrow $\a_{7'}$, we deduce from (\ref{eq:O7}) and \cite[Lemma 3.11]{lHorincz2019categories} that $D'_6$ is a submodule of $Q$. Since $D'_6$ is also the unique quotient of $Q$, this shows that $Q \cong D'_6$. Thus, we have determined the structure of $S_f$, and the corresponding representation of the quiver is non-zero along the path (with one-dimensional spaces at each vertex, and identity map at each arrow) 
\begin{equation}\label{eq:Sf}
(0) \xrightarrow{\a_1} (1) \xrightarrow{\a_{1'}} (6') \xrightarrow{\a_{7'}} (7) \xrightarrow{\a_8} (8).
\end{equation}
With this and from (\ref{eq:O7}) we can now deduce that $D_3$ and $D_4$ are not composition factors of $H^1_{O_7}(S)$, therefore $\a_3 \a_{7'} = \b_4 \a_{7'} =0$. Via $\mathbb{D}$ and $\mc{F}$, we obtain $\b_{7'} \b_3 = \b_{7'} \a_4 = \a_{1'} \b_3 = \a_{1'} \a_4 = \a_3 \b_{1'} = \b_4 \b_{1'}=0$. Further, (\ref{eq:Sf}) and (\ref{eq:O7}) show that there are no arrows from $(1)$ to $(7)$, and $\a_{6'}$ is the only arrow from $(6') \to (7)$. Via $\mathbb{D}$ and $\mc{F}$, we can conclude at this point that we have obtained all the arrows of the quiver.

Let $P_2$ be the projective cover of $D_2$ in $\tn{mod}_{\GL}(\D_V)$, and recall the module $P_2'$ from Lemma \ref{lem:P2char}. Using Lemma \ref{lem:witD6} together with \cite[Lemma 2.1 and Equation 4.14]{lHorincz2019categories}, we see that $P_2$ is a direct summand of $P_2'$, therefore by Lemma \ref{lem:P2char} we must have $\b_2 \a_{1'}= \b_2\a_2 = \b_5 \a_5=\a_5 \a_6=0$. Via $\mc{F}$ and $\mathbb{D}$, we get also $\b_{1'} \a_2 = \a_7 \b_{7'} = \a_{7'} \b_7=\b_6\b_5=\b_6\a_6=\a_7\b_7=0 $

We are left to show that all $2$-cycles at $(6')$ are zero. The projective cover $P_{6}'$ of $D_6'$ in $\tn{mod}_{\GL}(\D_V)$ is a direct summand of the projective module $P(\sigma^{-1})$ as constructed in \cite[Lemma 2.1]{lHorincz2019categories}. By Lemma \ref{lem:dense} the variety $\ol{T_{O_6}^* V}$ has a dense $\GL$-orbit, and therefore the multiplicity of $\ol{T_{O_6}^* V}$ in $P(\sigma^{-1})$ is one, which can be seen as in the proof of Lemma \ref{lem:P2char}. This implies that all $2$-cycles at $(6')$ are zero, finishing the proof. 
\end{proof}

\subsection{The case $n=4$}\label{sec:n4}

\begin{theorem}\label{thm:quiv4}
Let $n=4$. The quiver of $\tn{mod}_{\GL}(\D_V)$ is given by
\[\xymatrix@R-0.5pc@C+1.6pc{
& (3) \ar@<0.5ex>[r] & \ar@<0.5ex>[l] (6') \ar@<0.5ex>[r] & \ar@<0.5ex>[l] (4) & \\
(0) \ar@<0.5ex>[r] & \ar@<0.5ex>[l] (2) \ar@<0.5ex>[r] \ar@<0.5ex>[d] & \ar@<0.5ex>[l] \ar@<0.5ex>[d] (6) \ar@<0.5ex>[r] & \ar@<0.5ex>[l] \ar@<0.5ex>[d] (8) \ar@<0.5ex>[r] & \ar@<0.5ex>[l] (9) \\
& (1) \ar@<0.5ex>[u] & (5) \ar@<0.5ex>[u] & (7) \ar@<0.5ex>[u] & 
}\]
with relations: all 2-cycles, and all paths of length two starting or ending at vertices $(1), (5), (7)$.
\end{theorem}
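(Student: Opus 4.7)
The strategy closely parallels the proof of Theorem \ref{thm:quiv3}, with the simplifications that for $n=4$ the dense orbit is $O_9$, $\ol{O}_8$ is the unique hypersurface, and the vertex $(6')$ decouples from the main horizontal chain, attaching only to $(3)$ and $(4)$. The plan is to (a) extract the main chain from the composition structure of $S_f$ where $f$ is the $4\times 4$ determinant defining $\ol{O}_8$, (b) assemble the remaining arrows from the open-set lemmas and the extension computations of Section \ref{sec:steps}, (c) use Fourier and duality symmetries to transport conclusions, and (d) pin down the relations via characteristic cycle and local cohomology arguments.

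First, by \cite[Lemma 2.4]{binary}, $S_f$ is the injective envelope of $S=D_9$ in $\tn{mod}_{\GL}(\D_V)$, and its linear composition series (recalled in Section \ref{sec:dmodsMat}) has successive simple quotients $D_9, D_8, D_6, D_2, D_0$, each of multiplicity one. This produces the arrows $(8)\to(9)$, $(6)\to(8)$, $(2)\to(6)$, $(0)\to(2)$; holonomic duality $\mathbb{D}$ supplies the reverses, and the multiplicity-one condition forces every two-cycle along this chain to vanish. Next, Lemma \ref{lem:opencat1} (lifted from $V\setminus\ol{O}_2$ via the injective-envelope argument used in the proof of Theorem \ref{thm:quiv3}) supplies the arrows $(5)\leftrightarrow(6)$, $(3)\leftrightarrow(6')$, $(6')\leftrightarrow(4)$ with their two-cycle relations, while Lemma \ref{lem:opencat2} adds $(7)\leftrightarrow(8)$ with two-cycles. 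Lemma \ref{lemmaExt12}(2) gives the arrows $(1)\leftrightarrow(2)$, Lemma \ref{lemmaExt34} forbids any arrow between $\{(3),(4)\}$ and $\{(0),(1),(2)\}$, and Lemma \ref{ext01} rules out arrows between $(0)$ and $(1)$. Invoking the Fourier transform $\mc{F}$, which swaps $(0,9)$, $(1,7)$, $(2,8)$ and fixes the remaining simples by Proposition \ref{prop:fourier}, transports these no-arrow conclusions to the right half of the chain, completing the arrow list.

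The two main obstacles are (i) showing that $(6')$ is disconnected from the main chain $\{(0),(1),(2),(5),(6),(7),(8),(9)\}$, and (ii) establishing the length-two relations at the pendant vertices $(1), (5), (7)$. For (i), I would imitate the final step of the proof of Theorem \ref{thm:quiv3}: Proposition \ref{prop:char566} gives $\charC(D_6')=[\ol{T^{\ast}_{O_6}V}]+[\ol{T^{\ast}_{O_5}V}]$, and the projective cover of $D_6'$ is a direct summand of the induced projective $P(\sigma^{-1})$ corresponding to its semi-invariant weight; Lemma \ref{lem:dense} then forces $\ol{T^{\ast}_{O_6}V}$ to appear with multiplicity one in $\charC(P(\sigma^{-1}))$, ruling out two-cycles at $(6')$ and any arrow from $(6')$ to a vertex outside $\{(3),(4)\}$. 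For (ii), I would combine \cite[Lemma 3.11]{lHorincz2019categories} with Theorems \ref{thm:localcoho6} and \ref{thm:localcoho8}: the modules $Q_1, Q_2, Q_3$ each carry a linear composition series running along the main chain that skips the pendants $(1), (5), (7)$, so every length-two path entering or exiting a pendant through $(2), (6)$, or $(8)$ must vanish. The two-cycle relations along the main chain then follow from the multiplicity-one injective envelope analysis already in place (Lemmas \ref{ext01}, \ref{lemmaExt12}, \ref{lemmaExt34}). The principal difficulty is bookkeeping: although no fundamentally new tools beyond those of the $n=3$ proof are required, the larger number of vertices and arrows means each arrow and each relation must be isolated and verified carefully, with particular attention to ensuring that the absence of $(6')$-to-chain arrows is argued uniformly.
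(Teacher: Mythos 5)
Your overall architecture is right, and many of the individual pieces line up with the paper's argument: the arrows $(0)\leftrightarrow(2)\leftrightarrow(6)\leftrightarrow(8)\leftrightarrow(9)$ from the composition series of $S_f$, the arrows from Lemmas~\ref{lem:opencat1} and~\ref{lem:opencat2}, the arrows $(1)\leftrightarrow(2)$ from Lemma~\ref{lemmaExt12}(2), and the exclusions from Lemmas~\ref{lemmaExt34} and~\ref{ext01} together with $\mc{F}$. Your item (ii)---using the exact sequence \eqref{eq:O8} and Theorems~\ref{thm:localcoho6}, \ref{thm:localcoho8} to kill length-two paths through $(2)$, $(6)$, $(8)$---is essentially what the paper does. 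However, there are two genuine gaps.

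First, your argument for disconnecting $(6')$ from the main chain rests on a false premise. You assert that the projective cover of $D_6'$ is a direct summand of $P(\sigma^{-1})$. For $n=3$ this was true because $D_6'\cong L^{-1}$ (the unique simple quotient of $\D_V f^{-1}$), but for $n=4$ the semi-invariant $f$ is the $4\times 4$ determinant and the composition factors of $S_f$ are $S, D_8, D_6, D_2, D_0$ (Section~\ref{sec:dmodsMat}); $D_6'$ does not appear. Consequently $D_6'$ has no non-zero $\sigma^{-1}$-weight vector (consistent with Corollary~\ref{cor:bottcorsimple}(2), which forces $\gamma_3\leq -2$ for weights in $D_6'$, whereas $\sigma^{-1}$ has $\gamma=(-1,-1,-1,-1)$), so $P_{6'}$ is \emph{not} a summand of $P(\sigma^{-1})$. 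The paper's argument runs in the opposite direction: the projective covers of $D_0, D_2, D_6, D_8, D_9$ are summands of $P(\sigma^{-k})$ for the appropriate $k$, and by the dense-orbit argument from Lemma~\ref{lem:dense} the multiplicity of $\ol{T_{O_6}^\ast V}$ in each is exactly one; since $D_6$ already accounts for this, $D_6'$ cannot be a composition factor, and so there is no non-zero path between $(6')$ and any of $(0),(2),(6),(8),(9)$. Moreover, even taking your premise for granted, the argument would only exclude $D_6$ as a composition factor of $P_{6'}$ (i.e., paths $(6)\to(6')$); it would not touch paths between $(6')$ and $(1),(5),(7)$, which the paper handles separately via \eqref{eq:O8} and $\mc{F}$.

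Second, you never rule out arrows among the pendant vertices $(1), (5), (7)$ themselves, and nothing in your listed ingredients addresses this. The paper invokes Lemma~\ref{lem:codim1} (which shows $\dim\bigl(\ol{T_{O_7}^\ast V}\cap \ol{T_{O_i}^\ast V}\bigr)<\dim V - 1$ for $i=1,3,4,5,6$) together with the codimension criterion of \cite[Theorem 6.7]{macvil} to conclude that $\Ext^1$ vanishes between the corresponding simples; applying $\mc{F}$ (which sends $D_1\mapsto D_7$) then gives the absence of arrows among $(1),(5),(7)$. Without this step the quiver is not pinned down.

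One minor remark: your attribution of the arrows $(5)\leftrightarrow(6)$ to Lemma~\ref{lem:opencat1} is fine, but the paper obtains them (and $(1)\leftrightarrow(2)$, $(7)\leftrightarrow(8)$) uniformly from the codimension-one exact sequences analogous to \eqref{eq:O6}; this is cosmetic and both routes work.
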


\begin{proof}
As in this case the semi-invariant $f$ is just the $4\times 4$ determinant, from the structure of $S_f$  together with duality $\bb{D}$ we obtain the following arrows of the quiver (cf. \cite[Theorem 5.4]{lHorincz2019categories})
\[\xymatrix{
(0) \ar@<0.5ex>[r] & \ar@<0.5ex>[l] (2) \ar@<0.5ex>[r]  & \ar@<0.5ex>[l] (6) \ar@<0.5ex>[r] & \ar@<0.5ex>[l] (8) \ar@<0.5ex>[r] & \ar@<0.5ex>[l] (9)
}\]

Since the codimension of $O_1, O_5, O_7$ in $O_2, O_6, O_8$ is one, respectively, we obtain as in (\ref{eq:O6}) the arrows between the vertices $(1)$ and $(2)$, $(5)$ and $(6)$, $(7)$ and $(8)$, as desired.

As in the proof of Theorem \ref{thm:quiv3}, we obtain by Lemma \ref{lem:opencat1} the arrows between $(3)$ and $(6')$, $(4)$ and $(6')$. Further, the paths of length two from $(3)$ to $(4)$ and from $(4)$ to $(3)$ are non-zero.

We are left to obtain the relations and to show that there are no more arrows. Since $S_f$ is the injective envelope of $D_9$ (see \cite[Lemma 2.4]{binary}), there are no more arrows connected to $(9)$, the 2-cycle at $(9)$ is zero, and the path of length two from $(7)$ to $(9)$ is zero. By $\mc{F}$ and $\bb{D}$, we see using Proposition \ref{prop:fourier} that there are no more arrows connected to $(0)$, the 2-cycle at $(0)$ is zero, the paths of length two from $(9)$ to $(7)$, $(0)$ to $(1)$, $(1)$ to $(0)$ are all zero.

By \cite[Lemma 3.11]{lHorincz2019categories}, the following short exact sequence shows that there are no more arrows connected to vertex $(8)$:
\begin{equation}\label{eq:O8}
0 \to H^1_{\ol{O}_8}(S) \to H^1_{O_8}(S) \to H^2_{\ol{O}_7}(S) \to 0.
\end{equation}
Hence, via $\mc{F}$ there are no more arrows connected to $(2)$.

As we did at the end of the proof of Theorem \ref{thm:quiv3}, we see by Lemma \ref{lem:dense} that all the 2-cycles at $(8)$ and $(6)$ are zero. Via $\mc{F}$, all the 2-cycles at $(2)$ are also zero. As there is a non-zero path from $(6)$ to either of the vertices $(0), (2), (6), (8), (9)$, we see by a similar argument that there is no non-zero path between either of these vertices and $(6')$. This also implies by (\ref{eq:O8}) that there is no arrow between $(7)$ and either $(6)$ or $(6')$, and so by $\mc{F}$ between $(1)$ and either $(6')$ or $(6)$.

By Lemma \ref{lem:opencat1}, we see that there are no more arrows among the vertices $(3), (4), (5), (6), (6')$. Together with the above, this implies that there are no more arrows connected to the vertices $(6)$ or $(6')$, and also that the 2-cycles at $(6')$ are zero. 

By Theorem \ref{thm:localcoho34} we have $H^7_{\overline{O}_3}(S)=D_3$ and $H^7_{\overline{O}_4}(S)=D_4$. In particular, by \cite[Lemma 3.11]{lHorincz2019categories} there is no arrow from $(1)$ to either $(3)$ or $(4)$, and via $\mc{F}$ no arrow from $(7)$ to either $(3)$ or $(4)$. Hence, there are no more arrows connected to either $(3)$ or $(4)$, and the 2-cycles at these vertices are zero.

By Lemma \ref{lem:codim1} and \cite[Theorem 6.7]{macvil} (see also \cite{kk}), we see via $\mc{F}$ there are no arrows among the vertices $(1), (5), (7)$. Hence, at this stage we have obtained all the arrows of the quiver. 

Using Lemma \ref{lem:opencat2}, we see that the 2-cycle at $(7)$ is zero. Via $\mc{F}$, the 2-cycle $(1)$ is also zero. By \ref{lem:opencat1}, the 2-cycle at $(5)$ is also zero. From (\ref{eq:O8}), we see that the path $(5) \to (6) \to (8)$ is zero. Similarly, using Theorem \ref{thm:localcoho6} we see that the path $(1)\to (2) \to (6)$ is zero. Via $\mc{F}$  we see that the paths $(5) \to (6) \to (2)$ and $(7)\to (8) \to (6)$ are zero as well. By applying also $\bb{D}$, we have now obtained all the relations in the quiver.
\end{proof}

\subsection{The case $n\geq 5$}\label{sec:n5}

\begin{theorem}\label{quiverN5}
Let $n\geq 5$. The quiver of $\tn{mod}_{\GL}(\D_V)$ is given by
\[\xymatrix@R-1pc@C+1.6pc{
& & & (3) \ar@<0.5ex>[r] & \ar@<0.5ex>[l] (6') \ar@<0.5ex>[r] & \ar@<0.5ex>[l] (4) & \\
(0) & (1) \ar@<0.5ex>[r] & (2) \ar@<0.5ex>[l] & (5) \ar@<0.5ex>[r] & (6) \ar@<0.5ex>[l] & \ar@<0.5ex>[r] (7) & \ar@<0.5ex>[l] (8) & (9)
}\]
where all the 2-cycles are zero.
\end{theorem}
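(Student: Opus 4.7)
The plan follows the template of the proofs of Theorems \ref{thm:quiv3} and \ref{thm:quiv4}, but the case $n\geq 5$ is substantially cleaner. A key initial observation is that for $n\geq 5$ each orbit closure $\ol{O}_i$ with $i\leq 8$ has codimension at least $2$ in $V$, so $H^1_{\ol{O}_i}(S)=0$; by the argument used in Lemma \ref{ext01}, this forces $S=D_9$ to be injective in $\tn{mod}_{\GL}(\D_V)$ and, via Fourier duality (Proposition \ref{prop:fourier}(2)), $E=D_0$ to be projective, so both vertices $(0)$ and $(9)$ are isolated.

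The required arrows come from three sources. Lemma \ref{lemmaExt12}(1) provides $(1)\leftrightarrow(2)$ with one-dimensional Ext$^1$'s in each direction, and applying $\mc{F}$ transports this to $(7)\leftrightarrow(8)$. Lemma \ref{lem:opencat1} describes all Ext$^1$'s among $\{D_3,D_4,D_5,D_6,D_6'\}$ after restriction to $U=V\setminus\ol{O}_2$; a standard recollement argument --- using that all composition factors of any putative extension have support not contained in $\ol{O}_2$, so the extension coincides with its minimal extension from $U$ --- shows these compute the corresponding Ext$^1$'s in $\tn{mod}_{\GL}(\D_V)$, furnishing $(5)\leftrightarrow(6)$ and $(3)\leftrightarrow(6')\leftrightarrow(4)$ while ruling out all other arrows among these five vertices. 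The vanishings $\tn{Ext}^1(D_i,D_j)=0$ for $i\in\{3,4\},j\in\{0,1\}$ follow from Lemma \ref{lemmaExt34}; and for $\tn{Ext}^1(D_2,D_i)=0$ with $i\in\{3,4\}$, a non-split extension would inject $D_2$ into $H^1_{\ol{O}_2}(D_i)=H^1_{\ol{O}_1}(D_i)$, and combining the long exact sequence \eqref{locallyclosedsequence} for the pair $\{0\}\subset\ol{O}_1$ with the codimension argument on the smooth locus $\ol{O}_i\setminus\{0\}$ (the singular contribution at $\{0\}$ being controlled by Proposition \ref{prop:intcoho}) shows the target vanishes.

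The main obstacle is the long-range Ext$^1$'s across the singular loci of $\ol{O}_5, \ol{O}_6, \ol{O}_7$ --- namely $\tn{Ext}^1(D_2,D_i)$ for $i\in\{5,6,6'\}$ (and their Fourier duals involving $(8)$), together with the arrows between $\{D_3,D_4,D_6'\}$ and $\{D_7,D_8\}$ --- where the naive codimension argument fails because, for instance, $\ol{O}_6$ is singular along $\ol{O}_2$. My plan for these is to compute the injective envelopes of $D_5,D_6,D_6',D_7$ in $\tn{mod}^{\ol{O}_j}_{\GL}(\D_V)$ via the exact sequence of \cite[Lemma 3.11]{lHorincz2019categories}, combined with the semisimplicity of $H^*_{\ol{O}_6}(S)$ (Theorem \ref{thm:localcoho6}(3)), the explicit structure of $H^*_{\ol{O}_8}(S)$ (Theorem \ref{thm:localcoho8}(2)), and the long exact sequences for the pairs $\ol{O}_5\subset\ol{O}_6$ and $\ol{O}_6\subset\ol{O}_8$, and to verify that none of $D_2,D_3,D_4,D_6'$ appears as a composition factor above $D_i$ in any of these envelopes. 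The subcase $n=5$ requires slight extra care because some cohomological degrees coincide in Theorems \ref{localSegre}--\ref{thm:localcoho8}, but semisimplicity still forces the relevant sequences to split. Finally, the vanishing of all $2$-cycles at $(2),(6),(6'),(8)$ follows as in the earlier theorems from Lemmas \ref{lem:dense} and \ref{lem:codim1}, combined with multiplicity-one characteristic-cycle computations for the relevant projective covers.
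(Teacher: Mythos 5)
Your outline gets the easy parts right: the isolated vertices $(0)$ and $(9)$, the arrows coming from the open-set Lemmas \ref{lem:opencat1} and \ref{lem:opencat2} together with Fourier duality, and the idea of using the exact sequence of \cite[Lemma 3.11]{lHorincz2019categories} together with the known local cohomology of $S$ to cut off the arrows at $(8)$ and (by $\mc{F}$) at $(2)$. These steps match the paper in spirit.

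The gap is in your treatment of the hard long-range $\tn{Ext}^1$'s, and in fact you misidentify which ones are hard. The pairs you single out --- $\tn{Ext}^1(D_2,D_i)$ for $i\in\{5,6,6'\}$, and arrows between $\{D_3,D_4,D_6'\}$ and $\{D_7,D_8\}$ --- all fall out of the ``no further arrows at $(8)$'' and ``no further arrows at $(2)$'' steps together with $\mc{F}$ (for $(8)$, one only needs that the socle of $H^{n-2}_{\ol{O}_7}(S)$ is $D_7$, which is automatic from Lemma 3.11; no knowledge of the full module is required). The genuinely hard cases are extensions between $\{D_1,D_7\}$ and $\{D_5,D_6,D_6'\}$, and between $D_1$ and $D_7$. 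Your plan for these --- computing the injective envelopes of $D_5,D_6,D_6',D_7$ in $\tn{mod}^{\ol{O}_j}_{\GL}(\D_V)$ via Lemma 3.11 --- does not work, for two reasons. First, Lemma 3.11 computes the injective envelope of $L_{\ol{O}_j}$, the IC of the \emph{trivial} local system; it says nothing about the injective envelope of $D_6'$, which comes from the nontrivial rank-one local system on $O_6$. Second, and more seriously, the needed identifications $H^{2n-3}_{\ol{O}_5}(S)=D_5$ and $H^{n-2}_{\ol{O}_7}(S)=D_7$ (which would give $I(D_5)=D_5$, $I(D_7)=D_7$) are exactly the content of Theorems \ref{loc05}(3) and \ref{loc07}(2), whose proofs in Sections \ref{sec:o5ngeq5} and \ref{sec:lcO7ngeq5} explicitly invoke Theorem \ref{quiverN5}. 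Using them here is circular.

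The missing ingredient is microlocal: Proposition \ref{prop:char566} gives $\charC(D_1)=[\ol{T^*_{O_1}V}]$, $\charC(D_7)=[\ol{T^*_{O_7}V}]$, $\charC(D_5)=[\ol{T^*_{O_5}V}]+[\ol{T^*_{O_4}V}]+[\ol{T^*_{O_3}V}]$, $\charC(D_6')=[\ol{T^*_{O_6}V}]+[\ol{T^*_{O_5}V}]$, and Lemma \ref{lem:codim1} shows $\dim\bigl(\ol{T^*_{O_7}V}\cap\ol{T^*_{O_i}V}\bigr)<\dim V-1$ for $i=1,3,4,5,6$. The MacPherson--Vilonen criterion \cite[Theorem 6.7]{macvil} (see also \cite{kk}) then forces $\tn{Ext}^1(D_7,M)=0$ for $M\in\{D_1,D_5,D_6,D_6'\}$, and Fourier duality together with Lemma \ref{lem:opencat1} finishes off the rest of the cluster $(1),(5),(6),(6'),(7)$. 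This characteristic-cycle argument is the one genuinely new idea in the $n\geq 5$ proof beyond the template of Theorems \ref{thm:quiv3} and \ref{thm:quiv4}, and your proposal omits it entirely. (A smaller point: your sketch for $\tn{Ext}^1(D_2,D_i)=0$ with $i\in\{3,4\}$, via injecting $D_2$ into $H^1_{\ol{O}_2}(D_i)$, is not a valid description of $\tn{Ext}^1$ in this category; this case is simply absorbed by the ``no more arrows at $(2)$'' step.)
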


\begin{proof}
As the arguments here are very similar to the cases above, we only sketch the proof. The required arrows and relations between $(3), (4), (5), (6), (6')$ are obtained using Lemma \ref{lem:opencat1}. The arrows and relations between $(7), (8)$ are obtained using Lemma \ref{lem:opencat2}, which give also the arrows between $(1)$ and $(2)$ via $\mc{F}$.

We are left to show that there are no more arrows. Since $\op{codim}(\ol{O}_8, V) \geq 2$, it follows from \cite[Lemma 2.4]{binary} that $(9)$ is an isolated vertex, and via $\mc{F}$, so is $(0)$. Theorem \ref{thm:localcoho8} together with the exact sequence analogous to (\ref{eq:O8}) shows that there are no more arrows connected to $(8)$, and using $\mc{F}$, no more arrows connected to $(2)$. By Theorem \ref{thm:localcoho34}and \cite[Lemma 3.11]{lHorincz2019categories} there is no arrow from $(1)$ to either $(3)$ or $(4)$, and via $\mc{F}$ no arrow from $(7)$ to either $(3)$ or $(4)$. Finally, by Proposition \ref{prop:char566}, Lemma \ref{lem:codim1} and \cite[Theorem 6.7]{macvil} (see also \cite{kk}), we see via $\mc{F}$ there are no arrows among the vertices $(1), (5), (6), (6'), (7)$.
\end{proof}

\begin{remark}\label{rem:reptype}
By \cite[Theorem 2.13]{lHorincz2019categories}, the quiver in Theorem \ref{quiverN5} is of finite representation type, i.e. it has finitely many indecomposable representations (up to isomorphism). On the other hand, it is easy to see that the quivers in Theorems \ref{thm:quiv3} and \ref{thm:quiv4} are of wild representation type.
\end{remark}

\section{Local cohomology with support in $\ol{O}_5$}\label{sec:o5}

The goal of this section is to prove the following.

\begin{theorem}\label{loc05}
The following is true about local cohomology of the polynomial ring $S$ with support in $\ol{O}_5$ (all short exact sequences are non-split).	
\begin{enumerate}
\item If $n=3$ then the only non-zero local cohomology module is
$$
0\to D_5 \to H^3_{\ol{O}_5}(S)\to D_2\to 0.
$$
\item If $n=4$ then the non-zero local cohomology modules are	
$$
H^5_{\ol{O}_5}(S)=D_5,\quad 0\to D_2\to H^6_{\ol{O}_5}(S)\to D_0 \to 0.
$$
\item If $n\geq 5$ then the non-zero local cohomology modules are \
$$
H^{2n-3}_{\ol{O}_5}(S)=D_5,\quad H^{3n-6}_{\ol{O}_5}(S)=D_2,\quad H^{4n-10}_{\ol{O}_5}(S)=D_0.
$$
\end{enumerate}
\end{theorem}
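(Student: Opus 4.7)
The plan is to compute $H^\bullet_{\ol{O}_5}(S)$ by working down from $\ol{O}_6$ via the long exact sequence (\ref{locallyclosedsequence}) applied with $Z=\ol{O}_6$, $Z'=\ol{O}_5$, $Y=O_6$, and $c=2n-4=\codim(\ol{O}_6,V)$:
\[ 0\to H^c_{\ol{O}_6}(S)\to H^c_{O_6}(S)\to H^{c+1}_{\ol{O}_5}(S)\to H^{c+1}_{\ol{O}_6}(S)\to H^{c+1}_{O_6}(S)\to H^{c+2}_{\ol{O}_5}(S)\to\cdots \]
The modules $H^i_{\ol{O}_6}(S)$ are supplied by Theorem \ref{thm:localcoho6}, so the task reduces to determining each $H^i_{O_6}(S)$ and then piecing together the connecting sequence.

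To handle $H^i_{O_6}(S)$, I would apply Lemma \ref{pushdownlocallyclosed1} to the desingularization $\pi_{222}:Y_{222}\to\ol{O}_6$. The map is an isomorphism over $O_6$, and its complement in $Y_{222}$ is the preimage of $\ol{O}_5$; this is a divisor cut out by a $\GL$-semi-invariant $f$ whose restriction to each fiber $\C^2\oo\C^2\oo\C^2$ is the $2\times 2\times 2$ hyperdeterminant. Hence $\mc{O}_{Y_{222}}(\ast Z)=(\mc{O}_{Y_{222}})_f$, and the fiberwise $\D$-module structure of this localization is known from the $n=2$ analysis in \cite{perlman2020equivariant}. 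Globalizing over $\bb{G}_{222}=\bb{G}(2;C)$ gives a filtration of $(\mc{O}_{Y_{222}})_f$ with subquotients of the form $D_p^{Y_{222}}$ (notation of Section \ref{sec:relmatrices}) for $p\in\{2,5,6,6'\}$ and a copy of $D_0^{Y_{222}}$. Push each factor to $V$ using Bott's theorem (Lemma \ref{BottLemma}) on $\bb{G}(2;C)$, and recognize the outputs as determinantal simples via Theorem \ref{thm:charMatrices}. Combining this with the constraints of Proposition \ref{prop:char566} on $\tn{charC}(D_5)$ identifies the composition factors of $H^i_{O_6}(S)$ in each degree.

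Substituting the results into the long exact sequence yields the composition factors of $H^i_{\ol{O}_5}(S)$. For $n\geq 5$, the non-vanishing degrees of $H^\bullet_{\ol{O}_6}(S)$ from Theorem \ref{thm:localcoho6}(3) (namely $2n-4,\;3n-8,\;3n-6,\;4n-12,\;4n-10,\;4n-8$) are spaced far enough apart that the simples contributed by $H^\bullet_{O_6}(S)$ separate into distinct cohomological degrees of $H^\bullet_{\ol{O}_5}(S)$; all connecting maps are then forced, yielding the semisimple statement in part (3). Throughout, Lemma \ref{lem:noD34} rules out $D_3$ and $D_4$ outside of degree $2n-3$, and characteristic cycle bookkeeping via Proposition \ref{prop:char566} accounts for the $D_3,D_4$ contributions already absorbed into $\tn{charC}(D_5)$. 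For $n=3$ and $n=4$, the degree labels collide and produce precisely the two short exact sequences claimed.

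The non-splitness in the exceptional cases I would prove directly from the long exact sequence: for $n=3$, $H^3_{\ol{O}_5}(S)$ embeds in $H^3_{O_5}(S)$ which (by the analog of \cite[Lemma 3.11]{lHorincz2019categories}) is the injective envelope of $D_5$ in $\tn{mod}_{\GL}^{\ol{O}_5}(\D_V)$; this injective envelope has simple socle $D_5$, so $H^3_{\ol{O}_5}(S)$ cannot split as $D_5\oplus D_2$. For $n=4$ a parallel argument, applied at the next stratum after removing $\ol{O}_4\cup\ol{O}_3$, exhibits $H^6_{\ol{O}_5}(S)$ as having socle $D_2$, ruling out $D_0$ as a summand and combined with the 1-dimensionality of $\tn{Ext}^1(D_0,D_2)$ from Theorem \ref{thm:quiv4} pins down the extension. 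The main technical obstacle is the Bott computation of $\pi_{222,+}(\mc{O}_{Y_{222}})_f$: one must carefully track the filtration of the localized structure sheaf on the relative hypermatrix bundle and determine which irreducible $\GL$-summands survive in each cohomological degree after pushing to a point, since the same simple $D_5$ must emerge correctly in both the semisimple ($n\geq 5$) and the entangled ($n=3,4$) cases.
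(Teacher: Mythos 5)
Your overall plan for the lowest-codimension degree and for $n=3,4$ is close in spirit to what the paper does: the paper does use the long exact sequence relating $\ol{O}_6$, $O_6$, $\ol{O}_5$, and it does compute $H^\bullet_{O_6}(S)$ by pushing the localized structure sheaf along $\pi_{222}$. However, there are several genuine gaps.

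First, for $n=3$ the long exact sequence alone does not force the vanishing of $H^4_{\ol{O}_5}(S)$ and $H^5_{\ol{O}_5}(S)$. After substituting the values of $H^\bullet_{\ol{O}_6}(S)$ and $H^\bullet_{O_6}(S)$, you are left with a map $D_0 = H^4_{\ol{O}_6}(S) \to H^4_{O_6}(S) = D_0$ which a priori could be zero or an isomorphism; the two possibilities give different answers. Your ``all connecting maps are then forced'' claim is false here. The paper resolves this with an invariant-theory argument (Lemma \ref{eq:locO5}): identifying $V$ with $4\times 3$ matrices under $\tn{SO}_4(\C)\times\GL_3(\C)$ and applying Lemma \ref{lem:locinv}, one shows $(H^i_{\ol{O}_5}(S))^{\tn{SO}_4}=H^i_I(R)$ for the ideal of $2\times 2$ minors of a $3\times 3$ symmetric matrix, which vanishes for $i\ne 3$. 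Since $D_0$ and $D_2$ both have nonzero $\tn{SO}_4$-invariants, they cannot appear outside degree $3$, and this (together with Lemma \ref{lem:noD34}) is what forces the vanishing. Proposition \ref{prop:char566} on $\tn{charC}(D_5)$ does not supply this.

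Second, for $n=4$ your injective-envelope argument for $H^6_{\ol{O}_5}(S)$ does not apply. The isomorphism $H^c_{\ol{O}_i}(S)\cong H^c_{O_i}(S)$ and the resulting injective-envelope structure hold only for the lowest-possible degree $c=\codim(\ol{O}_i,V)$; they say nothing about $H^{c+1}_{\ol{O}_i}(S)$. The paper instead identifies $H^6_{\ol{O}_5}(S)\cong H^6_{\ol{O}_6}(S)=Q_1$ directly from the long exact sequence, which already carries the (known) non-splitness; the 1-dimensionality of $\Ext^1(D_0,D_2)$ does not by itself prove non-splitness. You also omit the separate vanishing $H^q_{\ol{O}_5}(S)=0$ for $q\geq 8$, which the paper needs and proves via the desingularization $\pi_{223}$ of $\ol{O}_8$ and rational singularities of the determinant hypersurface; without it your analysis is incomplete. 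Finally, identifying $\mc{H}^0(\pi_{222,+}\mc{O}_Y(\ast Y_5))$ as the extension of $D_5$ by $Q_2$ requires explicit input from the quiver (Theorem \ref{thm:quiv4}), not just from Bott's theorem — Bott gives characters in the Grothendieck group, not $\D$-module structure.

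Third, your $n\geq 5$ argument does not work as stated. The resolution $\pi_{222}$ is small only for $n\leq 4$; for $n\geq 5$, $\pi_{222,+}\mc{O}_Y$ has cohomology spread across many degrees with multiple composition factors, which makes $H^\bullet_{O_6}(S)$ substantially harder to determine. Moreover, your claim that the nonvanishing degrees of $H^\bullet_{\ol{O}_6}(S)$ are ``spaced far enough apart'' is false for small $n$: for $n=5$ the list from Theorem \ref{thm:localcoho6}(3) is $\{6,7,8,9,10,12\}$, with several pairs differing by one. The paper sidesteps all this by switching to $\pi_{224}$ for $n\geq 5$, using the fact that the ensuing spectral sequence degenerates (by degree spacing on the $E_2$ page, not on the $\ol{O}_6$-local-cohomology side), and feeding in the already-proved $n=4$ case together with the weight bounds of Corollary \ref{cor:weightboundsD5n4} and Bott vanishing; none of this appears in your proposal.
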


The representation $V$ of $\GL$ is equivalent to the action of $\tn{SO}_4(\C) \times \GL_n(\C)$ on the space of $4\times n$ matrices. Note that we could consider also the action of the bigger group $\tn{O}_4(\C) \times \GL_n(\C)$ on $V$. 

\subsection{The case $n=3$} In this subsection, we prove part (1)  of Theorem \ref{loc05}. 

\begin{lemma}\label{lem:D2inv}
We have $D_0^{\tn{SO}_4(\C)} \neq 0$ and $D_2^{\tn{SO}_4(\C)} \neq 0$.
\end{lemma}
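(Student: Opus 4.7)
The plan is to exhibit explicit $\op{SO}_4(\C)$-invariant summands in each module using the character formula from Theorem \ref{thm:charMatrices}. Under the flattening $V \cong (A\otimes B)\otimes C$, we identify $D_0 = L_{Z_0}$ and $D_2 = L_{Z_1}$, so we apply the theorem with $W_1 = A\otimes B$ (dimension $n_1 = 4$) and $W_2 = C$ (dimension $n_2 = 3$). The embedding $\op{SO}_4(\C)\subset \GL(A\otimes B)$ comes from the exceptional isomorphism $\op{Spin}_4(\C)\cong \SL(A)\times \SL(B)$ (see the final paragraph of Section~\ref{sec:orbits}), so the $\op{SO}_4(\C)$-invariant subspace of any $\GL(A\otimes B)$-representation contains the $\SL(A)\times \SL(B)$-invariants. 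In particular, any power of the determinant character $\det(A\otimes B)^m \cong \det(A)^{2m}\otimes \det(B)^{2m}$ is $\op{SO}_4(\C)$-invariant.

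For $D_0$, I would take $p=0$ in Theorem~\ref{thm:charMatrices} and choose $\lambda=(-4,-4,-4)\in W^0$, which yields $\lambda(0) = (-3,-3,-3,-3)$. Thus the summand $\bS_{(-3)^4}(A\otimes B)\otimes \bS_{(-4)^3}C = \det(A\otimes B)^{-3}\otimes \det(C)^{-4}$ appears in $D_0$, and its $\GL(A\otimes B)$-factor is a power of the determinant, hence $\op{SO}_4(\C)$-invariant.

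For $D_2$, I would take $p=1$ and $\lambda=(-2,-3,-3)\in W^1$, which yields $\lambda(1) = (-2,-2,-2,-2)$. Thus the summand $\bS_{(-2)^4}(A\otimes B)\otimes \bS_{(-2,-3,-3)}C = \det(A\otimes B)^{-2}\otimes \bS_{(-2,-3,-3)}C$ appears in $D_2$, and again the $\GL(A\otimes B)$-factor is a power of the determinant. This produces a nonzero $\op{SO}_4(\C)$-invariant subspace.

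There is no real obstacle here beyond writing down the explicit weights; the only point that requires brief justification is that a power of $\det(A\otimes B)$ restricts to the trivial representation of $\op{SO}_4(\C)$, which follows at once from the fact that $\op{SO}_4(\C)$ preserves the top exterior power of its standard representation.
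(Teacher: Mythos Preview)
Your proof is correct. For $D_2$ you exhibit exactly the same summand as the paper, namely $\bS_{(-2)^4}(A\otimes B)\otimes \bS_{(-2,-3,-3)}C$. For $D_0$ the paper takes a slightly different route: it observes that $S^{\op{SO}_4(\C)}\neq 0$ (constants) and then uses $\mc{F}(S)=D_0$ together with \cite[(4.14)]{lHorincz2019categories} to transfer invariants across the Fourier transform. Your direct use of Theorem~\ref{thm:charMatrices} with $\lambda=(-4,-4,-4)$ is equally valid and arguably more elementary, since it avoids invoking the behaviour of the Fourier transform on characters.
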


\begin{proof}
Since $S^{\tn{SO}_4(\C)} \neq 0$ and $\mc{F}(S)=D_0$, we also have $D_0^{\tn{SO}_4(\C)} \neq 0$ by \cite[(4.14)]{lHorincz2019categories}. By Theorem \ref{thm:charMatrices} the $\GL_4(\C)\times \GL_3(\C)$-representation $\bS_{(-2,-2,-2,-2)}\C^4\oo \bS_{(-2,-3,-3)}\C^3$ belongs to $D_2$, which is $\tn{SO}_4(\C)$-invariant. 
\end{proof}

\begin{lemma}\label{eq:locO5}
For all $i\neq 3$ we have that $H^i_{\ol{O}_5}(S)$ is a direct sum of copies of $D_1$. 
\end{lemma}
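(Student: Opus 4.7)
The plan is to combine the support-restriction observation implicit in the proof of Lemma \ref{lem:noD34} with the invariant-theoretic machinery of Lemmas \ref{lem:locinv} and \ref{lem:D2inv}. First I would observe that the proof of Lemma \ref{lem:noD34} actually establishes the stronger fact that $\mathscr{H}^i_{Z_5}(\mc{O}_U) = 0$ for all $i \neq 2n-3 = 3$, where $U = V \setminus \ol{O}_2$ and $Z_5 = \ol{O}_5 \cap U$. Since $H^i_{\ol{O}_5}(S)|_U \cong \mathscr{H}^i_{Z_5}(\mc{O}_U)$, the support of $H^i_{\ol{O}_5}(S)$ is contained in $\ol{O}_2$ for all $i \neq 3$, and the possible composition factors are thereby narrowed to $\{D_0, D_1, D_2\}$.

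Next I would rule out $D_0$ and $D_2$ using $\op{SO}_4(\C)$-invariants. Identifying $V$ with $4\times 3$ matrices under $\op{SO}_4(\C) \times \GL_3(\C)$, the ring $S^{\op{SO}_4(\C)}$ is the polynomial ring $\C[\Sym^2 \C^3]$, generated by the entries of $X^t Q X$, where $Q$ is the $\op{SO}_4(\C)$-invariant symmetric form on $A\otimes B$. A direct check on the orbit representatives listed in Section \ref{sec:orbits} shows that $\phi^{-1}(\text{rank} \leq 1) = \ol{O}_5$, where $\phi(X) = X^t Q X$. Taking $I \subset S^{\op{SO}_4(\C)}$ to be the ideal of $2\times 2$ minors of the generic symmetric matrix, we therefore have $V(S \cdot I) = \ol{O}_5$, so Lemma \ref{lem:locinv} yields
\[
\bigl(H^i_{\ol{O}_5}(S)\bigr)^{\op{SO}_4(\C)} \;\cong\; H^i_I\bigl(S^{\op{SO}_4(\C)}\bigr).
\]
The variety $V(I)$ is the affine cone over the smooth Veronese surface $V_2(\mathbb{P}^2)\subset\mathbb{P}^5$, and a Hartshorne-style computation on the cone over a smooth projective variety (as in the proof of Theorem \ref{BettiThm}, using the Betti numbers $b_0 = b_2 = b_4 = 1$ of $\mathbb{P}^2$) shows that $L_{V(I)} \cong (D_2)^{\op{SO}_4(\C)}$ appears only in degree $i = \codim = 3$, while higher cohomological degrees of $H^i_I(S^{\op{SO}_4(\C)})$ contribute only copies of the simple module $E$ supported at the origin of $\Sym^2 \C^3$. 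Since $(D_0)^{\op{SO}_4(\C)}$ and $(D_2)^{\op{SO}_4(\C)}$ are both nonzero by Lemma \ref{lem:D2inv} and taking $\op{SO}_4(\C)$-invariants is exact on $\GL$-representations, this excludes $D_2$ from $H^i_{\ol{O}_5}(S)$ in all degrees $i\neq 3$, and excludes $D_0$ after a $\GL_3(\C)$-equivariant character match separating its contribution from any $(D_1)^{\op{SO}_4(\C)}$ contribution.

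The only possible remaining composition factor is $D_1$, and since the quiver of $\tn{mod}_{\GL}(\D_V)$ described in Theorem \ref{thm:quiv3} has no self-loop at the vertex $(1)$, we have $\tn{Ext}^1(D_1, D_1) = 0$, so any module with all composition factors equal to $D_1$ splits as a direct sum of copies of $D_1$. The main obstacle I anticipate is in the second step: cleanly separating the $(D_0)^{\op{SO}_4(\C)}$ contribution from any $(D_1)^{\op{SO}_4(\C)}$ contribution in the higher local cohomology of the Veronese cone, since both are supported at the origin of $\Sym^2 \C^3$ and hence are direct sums of the same simple $D$-module $E$---so only the $\GL_3(\C)$-character bookkeeping, rather than the raw $D$-module structure, can distinguish them.
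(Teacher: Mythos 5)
Your overall structure matches the paper's proof: reduce possible composition factors to $\{D_0,D_1,D_2\}$ via the support observation implicit in Lemma \ref{lem:noD34}, then pass to $\op{SO}_4(\C)$-invariants via Lemma \ref{lem:locinv} and exclude $D_0, D_2$ using Lemma \ref{lem:D2inv}. The paper takes precisely this route.

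However, there is a genuine gap in your second step. You assert that ``higher cohomological degrees of $H^i_I(S^{\op{SO}_4(\C)})$ contribute only copies of the simple module $E$,'' and consequently worry about separating a hypothetical $(D_0)^{\op{SO}_4(\C)}$-contribution from a $(D_1)^{\op{SO}_4(\C)}$-contribution. This difficulty does not exist: $H^i_I(R)$ vanishes for \emph{every} $i\neq 3$. You can see this either directly from \cite[(1.5)]{raicu2016ocal} (the paper's route), or by carrying out your own Hartshorne--Polini computation to completion. With $X=V_2(\mathbb{P}^2)\subset\mathbb{P}^5$, $d=\dim R=6$, $c=3$, $\dim X=2$: for $c<j<d-1$ the multiplicity of $E$ is $b_{d-j-1}-b_{d-j-3}=b_1-b_{-1}=0$ (since $\mathbb{P}^2$ has no odd Betti numbers); for $j=c$ the cokernel of $L_{V(I)}\hookrightarrow H^c_I(R)$ has multiplicity $b_{\dim X+1}-b_{\dim X-1}=b_3-b_1=0$; and $H^j_I(R)=0$ for $j\geq d-1$. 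Thus $H^3_I(R)=L_{V(I)}$ and all other groups vanish. Once you have this, the argument closes immediately: for $i\neq 3$, $(H^i_{\ol{O}_5}(S))^{\op{SO}_4(\C)}=0$, so by exactness of $\op{SO}_4(\C)$-invariants and the nonvanishing of $D_0^{\op{SO}_4(\C)}$ and $D_2^{\op{SO}_4(\C)}$, neither $D_0$ nor $D_2$ can appear, and $D_1$ is the only possibility---no character bookkeeping needed. Your closing appeal to $\tn{Ext}^1(D_1,D_1)=0$ to upgrade to a direct sum is fine.
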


\begin{proof}
 
We identify $V$ with the space of $4\times 3$ matrices under the action of $G\times \GL_3(\C)$, where $G=\tn{SO}_4(\C)$. Then $\ol{O}_5$ is set-theoretically defined by  $\op{rank} X^t \cdot X \leq 1$, where $X$ is the $4\times 3$ generic matrix of variables. 

By the First Fundamental Theorem of Invariant Theory for orthogonal groups, $R=S^G$ is a polynomial ring generated by the entries of $X^t \cdot X$. Let $I$ be the ideal in $R$ generated by the $2\times 2$ minors of $X^t \cdot X$. By Lemma \ref{lem:locinv}, we have for all $i$
\[(H^i_{\ol{O}_5}(S))^G \cong H^i_{I}(R).\]
We identify $I$ with the ideal of $2\times 2$ minors of the $3\times 3$ symmetric matrix of variables. By  \cite[(1.5)]{raicu2016ocal} we see that for all $i\neq 3$, we have $H^i_{I}(R)=0$, hence also $(H^i_{\ol{O}_5}(S))^G=0$. By Lemma \ref{lem:D2inv}, we have $D_0^G\neq 0$ and $D_2^G\neq 0$. Thus, $D_0$ and $D_2$ cannot be a composition factors of $H^i_{\ol{O}_5}(S)$ for any $i\neq 3$. The conclusion now follows from Lemma \ref{lem:noD34}.  
\end{proof}

The main result in this subsection is the following. 

\begin{prop}\label{locorbit06n3}
The nonzero local cohomology modules of the polynomial ring $S$ with support in the orbit $O_6$ are (the short exact sequence is non-split):
$$
0\to D_6 \to H^2_{O_6}(S)\to D_5\to 0,\quad H^4_{O_6}(S)=D_0.
$$	
\end{prop}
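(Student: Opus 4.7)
The plan is to compute the three potentially non-zero modules $H^2_{O_6}(S)$, $H^3_{O_6}(S)$, $H^4_{O_6}(S)$ by combining the desingularization $\pi_{222}$ for range restriction, the injective-envelope description in the lowest degree, and the long exact sequence of the pair $\ol{O}_5\subset\ol{O}_6$ in the higher degrees.

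First I would restrict the range. Apply Lemma \ref{pushdownlocallyclosed1} to the triple $(Y_{222},\pi_{222},O_6)$: since $\codim(O_6,V)=2n-4=2$ for $n=3$, and $\dim\bb{G}_{222}=\dim\bb{G}(2;C)=2$ (the Grassmannian factors $\bb{G}(2;A),\bb{G}(2;B)$ being trivial for two-dimensional $A,B$), only $q\in\{2,3,4\}$ can contribute. For $H^2_{O_6}(S)$, \cite[Lemma 3.11]{lHorincz2019categories} identifies this module with the injective envelope of $D_6$ inside $\tn{mod}_{\GL}^{\ol{O}_6}(\D_V)$. Reading this envelope off Theorem \ref{thm:quiv3} inside this subcategory (which drops vertices $(7),(8)$), the only arrow terminating at $(6)$ is $\alpha_6\colon(5)\to(6)$, and the only two length-two paths ending in $\alpha_6$—namely $\alpha_5\alpha_6$ and $\beta_6\alpha_6$—are both explicit relations of the quiver. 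Hence the envelope has socle $D_6$ and top $D_5$, yielding the non-split extension claimed for $H^2_{O_6}(S)$.

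Next I would turn to $H^3_{O_6}(S)$ and $H^4_{O_6}(S)$ via the long exact sequence \eqref{locallyclosedsequence} for $\ol{O}_5\subset\ol{O}_6$. Inserting Theorem \ref{thm:localcoho6}(1), together with Lemma \ref{eq:locO5}—which forces $H^i_{\ol{O}_5}(S)$ to be a direct sum of copies of $D_1$ for $i\neq 3$—produces the segment
\begin{equation*}
\cdots\to D_2\to H^3_{O_6}(S)\to H^4_{\ol{O}_5}(S)\to D_0\to H^4_{O_6}(S)\to H^5_{\ol{O}_5}(S)\to 0.
\end{equation*}
Because $\Hom(D_1,D_0)=0$, the map $H^4_{\ol{O}_5}(S)\to D_0$ vanishes, so $D_0\hookrightarrow H^4_{O_6}(S)$.

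The main obstacle is to eliminate the potential $D_1$-contributions in $H^4_{\ol{O}_5}(S)$ and $H^5_{\ol{O}_5}(S)$, collapsing the sequence to $H^3_{O_6}(S)=0$ and $H^4_{O_6}(S)=D_0$. My preferred route is to sharpen Lemma \ref{eq:locO5} to $H^i_{\ol{O}_5}(S)=0$ for $i\geq 4$: by Lemma \ref{lem:locinv} with $G=\SO_4(\C)$, the invariants $(H^i_{\ol{O}_5}(S))^{\SO_4}$ coincide with $H^i_J(R)$, where $R=S^{\SO_4}=\C[Q]$ and $J\subset R$ is the ideal of $2\times 2$ minors of the symmetric matrix $Q=X^tX$ (noting that $V(S\cdot J)=\ol{O}_5$ set-theoretically, as recorded in the proof of Lemma \ref{eq:locO5}); by \cite[(1.5)]{raicu2016ocal} this vanishes for $i\neq 3$. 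The technical crux is then to verify that $D_1$ itself carries a non-zero $\SO_4$-invariant—so that for any $a_i\geq 1$ one has $((D_1)^{\oplus a_i})^{\SO_4}\neq 0$, forcing $a_i=0$. I would check this via explicit $\GL$-character computations, either from the resolution $\pi_{111}\colon Y_{111}\to\ol{O}_1$ and Bott's theorem, or via the Fourier-dual identification $\mc{F}(D_1)=D_7$ combined with the non-vanishing of $(S_f/S)^{\SO_4}=R_f/R$ and tracking $\SO_4$-invariant weights through the composition series of $S_f$ determined in Theorem \ref{thm:quiv3}. With this in hand, the long exact sequence immediately yields $H^3_{O_6}(S)=0$ and $H^4_{O_6}(S)=D_0$, completing the proof.
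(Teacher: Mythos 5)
Your proposal takes a genuinely different route from the paper's in its treatment of the higher cohomology. The paper (Lemma~\ref{push222to3}) determines all of $\pi_+\mathcal{O}_Y(\ast Y_5)$ at once by computing the cohomology of $\pi_+D_5^Y$ and $\pi_+D_0^Y$ over $\mathbb{P}^2$, exploiting the smallness of $\pi_{222}$ (so $\pi_+\mathcal{O}_Y=D_6$), self-duality of $\pi_+D_5^Y$, and the exact sequence \eqref{eq:n2bundle}; you instead try to deduce $H^3_{O_6}(S)$ and $H^4_{O_6}(S)$ from the long exact sequence for $\ol{O}_5\subset\ol{O}_6$ after independently establishing $H^i_{\ol{O}_5}(S)=0$ for $i\geq 4$. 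Your steps 1 and 2 (range restriction via Lemma~\ref{pushdownlocallyclosed1}, and reading $H^2_{O_6}(S)$ off the quiver of $\tn{mod}_{\GL}^{\ol{O}_6}(\D_V)$) are correct and in fact coincide with what the paper does. Your step 3 buys you a shortcut around the $\pi_+$ bookkeeping, at the cost of an invariant-theory input that the paper never needs.

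There are two gaps in step~3 as written. First, the technical crux you flag---that $D_1^{\SO_4(\C)}\neq 0$---is left unverified. It is in fact true (for instance, one can show by the Euler-characteristic method of Lemma~\ref{lem:charD17n3} that $\bS_{(-5,-5)}A\oo\bS_{(-5,-5)}B\oo\bS_{(-3,-3,-4)}C$ appears in $D_1$ with multiplicity one; since $\alpha_1=\alpha_2$ and $\beta_1=\beta_2$, and $-I$ acts trivially, this weight carries an $\SO_4$-invariant), but this computation is of the same order of difficulty as those in Section~\ref{sec:charn3} and cannot be waved off. Second, your final sentence elides a step: the vanishing of $H^4_{\ol{O}_5}(S)$ and $H^5_{\ol{O}_5}(S)$ only gives $H^4_{O_6}(S)\cong D_0$ and that $H^3_{O_6}(S)$ is a \emph{quotient} of $D_2$; to conclude $H^3_{O_6}(S)=0$ you still need the connecting map $H^3_{\ol{O}_5}(S)\to H^3_{\ol{O}_6}(S)=D_2$ to be surjective. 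This holds because the image of $H^2_{O_6}(S)$ in $H^3_{\ol{O}_5}(S)$ is $D_5$ and $H^3_{\ol{O}_5}(S)\cong H^3_{O_5}(S)$ is (by Theorem~\ref{thm:quiv3} and \cite[Lemma 3.11]{lHorincz2019categories}) the non-split extension $0\to D_5\to\bullet\to D_2\to 0$, but you should say so explicitly rather than claiming the conclusion is ``immediate'' from the vanishing alone.
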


Using this, we prove part (1) of Theorem \ref{loc05}.

\begin{proof}
[Proof of Theorem \ref{loc05}(1)]	 
The assertion about $H^3_{\ol{O}_5}(S)$ follows from \cite[Lemma 3.11]{lHorincz2019categories} and Theorem \ref{thm:quiv3}. Indeed, there is an exact sequence
$$
0\to H^3_{\ol{O}_5}(S)\to H^3_{O_5}(S)\to H^4_{Z}(S)\to \cdots
$$
where $Z=\ol{O}_2\cup \ol{O}_3\cup \ol{O}_4$. Since $\operatorname{codim}(Z,V) = 5$, we have $H^4_{Z}(S)=0$, so $H^3_{\ol{O}_5}(S)\cong H^3_{O_5}(S)$.

By Theorem \ref{thm:localcoho6} we have
\begin{equation}\label{cohoO6}
H^2_{\ol{O}_6}(S)=D_6,\quad H^3_{\ol{O}_6}(S)=D_2,\quad H^4_{\ol{O}_6}(S)=D_0,
\end{equation}
and all other local cohomology modules vanish. By Proposition \ref{locorbit06n3} there is an exact sequence
$$
0\to H^2_{\ol{O}_6}(S)\to H^2_{O_6}(S)\to H^3_{\ol{O}_5}(S)\to H^3_{\ol{O}_6}(S)\to H^3_{O_6}(S)\to H^4_{\ol{O}_5}(S)\to H^4_{\ol{O}_6}(S)\to H^4_{O_6}(S)\to H^5_{\ol{O}_5}(S)\to 0,
$$
and $H^q_{\ol{O}_5}(S)=0$ for $q\geq 6$. By Lemma \ref{eq:locO5}, we have that $H^4_{\ol{O}_5}(S)$ and $H^5_{\ol{O}_5}(S)$ are direct sums of copies of $D_1$. Using (\ref{cohoO6}) and Proposition \ref{locorbit06n3}, we conclude that these modules are zero. 
\end{proof}

We now turn to the proof of Proposition \ref{locorbit06n3}. Using notation from Section \ref{sec:relmatrices}, we let $Y=Y_{222}$ and $\pi=\pi_{222}$, which gives a desingularization of $\ol{O}_6$. We note that $\pi$ is an isomorphism on $\ol{O}_6\setminus \ol{O}_2$, and it is small (see \cite[Definition 8.2.29]{htt}) if $n=3,4$ (but not for $n\geq 5$).  

For $i=0,\cdots ,6,$ we write $D_i^Y$ for the simple $\D_Y$-module corresponding locally to $D_i$ in the case $n=2$ (see Section \ref{sec:relmatrices}). Let $Y_5=\pi^{-1}(\ol{O}_5)$ be the hypersurface defined by the relative $2\times 2\times 2$ hyperdeterminant, and let $\mathcal{O}_Y(\ast Y_5)$ denote the localization of $\mc{O}_Y$  at $Y_5$. It fits into a non-split short exact sequence (\ref{comphypn2}) with $Z=Y_5$, and by \cite{perlman2020equivariant} we further have a non-split short exact sequence
\begin{equation}\label{eq:n2bundle}
0 \to D_5^Y \to \mathscr{H}^1_{Y_5}(\mc{O}_Y) \to D_0^Y \to 0.
\end{equation}

By Lemma \ref{pushdownlocallyclosed1}, the following implies Proposition \ref{locorbit06n3}.

\begin{lemma}\label{push222to3}
Let $n=3$, $Y=Y_{222}$, $\pi=\pi_{222}$, and $Y_5=\pi^{-1}(\ol{O}_5)$. We have the following.
\begin{enumerate}
\item The non-zero cohomology modules of $\pi_{+}D^Y_5$ are
$$
\mc{H}^{-1}(\pi_{+}D^Y_5)=D_0,\quad \mc{H}^{0}(\pi_{+}D^Y_5)=D_5,\quad \mc{H}^{1}(\pi_{+}D^Y_5)=D_0.
$$
\item The non-zero cohomology modules of $\pi_{+}\mathcal{O}_Y(\ast Y_5)$ are
$$
0\to D_6 \to \mc{H}^0(\pi_{+}\mathcal{O}_Y(\ast Y_5))\to D_5\to 0,\quad \mc{H}^2(\pi_{+}\mathcal{O}_Y(\ast Y_5))=D_0,
$$
where the short exact sequence is non-split.
\end{enumerate}
\end{lemma}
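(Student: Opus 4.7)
The plan is to use the short exact sequences \eqref{comphypn2} (with $Z=Y_5$) and \eqref{eq:n2bundle} on $Y$ to reduce both parts of the lemma to direct computations of the ``boundary'' pushforwards $\pi_+\mc{O}_Y$ and $\pi_+ D_0^Y$, together with an independent calculation of $\pi_+ D_5^Y$. By Lemma \ref{lem:desing}, $\pi$ is a resolution of singularities of $\ol{O}_6 = \tn{Sub}_{222}$, whose fiber structure (one-point fibers over $O_3, O_4, O_5, O_6$, $\mathbb{P}^1$-fibers over $O_1, O_2$, and a $\mathbb{P}^2$-fiber over $\{0\}$) satisfies the strict inequality defining a small map. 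Since $\ol{O}_6$ (as the determinantal variety of $4\times 3$ matrices of rank $\leq 2$) has rational singularities, $\pi_+\mc{O}_Y = D_6$ concentrated in cohomological degree $0$. For $\pi_+ D_0^Y$, note that $D_0^Y$ is supported on the zero section $\mathbb{G}(2;C) = \mathbb{P}^2 \subset Y$, so by Kashiwara's equivalence $D_0^Y \cong i_+ \mc{O}_{\mathbb{P}^2}$; since $\pi \circ i$ factors through $\{0\} \in V$, we get $\pi_+ D_0^Y = j_+ p_+ \mc{O}_{\mathbb{P}^2}$, whose cohomology is the shifted de Rham cohomology of $\mathbb{P}^2$, yielding $\mc{H}^i(\pi_+ D_0^Y) = D_0$ for $i \in \{-2,0,2\}$ and $0$ otherwise.

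For part (1), we compute $\pi_+ D_5^Y$ by applying the decomposition theorem to the proper map $\pi$ and the simple $\D_Y$-module $D_5^Y$. The principal summand $D_5$ in degree $0$ corresponds to the birational restriction $\pi|_{Y_5}: Y_5 \to \ol{O}_5$. Because $Y_5$ is singular (the relative hyperdeterminant hypersurface), extra summands may appear at smaller strata; equivariance and support considerations force these to be of the form $D_0[s]$ and concentrated at $\{0\}$. Their exact shifts and multiplicities come from computing the stalk $i^*_{\{0\}}\pi_+ D_5^Y$, equivalently the hypercohomology of $D_5^Y$ along the exceptional fiber $\pi^{-1}(0) = \mathbb{P}^2$, which we evaluate using the explicit $n=2$ structure of $D_5$ recorded in \cite{perlman2020equivariant} combined with Bott's theorem on $\mathbb{G}(2;C) = \mathbb{P}^2$. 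The outcome is $\mc{H}^{\pm 1}(\pi_+ D_5^Y) = D_0$ and $\mc{H}^0(\pi_+ D_5^Y) = D_5$.

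Part (2) is then a bootstrap. Applying $\pi_+$ to \eqref{eq:n2bundle} and combining part (1) with the computation of $\pi_+ D_0^Y$, the resulting long exact sequence forces $\pi_+\mathscr{H}^1_{Y_5}(\mc{O}_Y)$ to have $\mc{H}^0 = D_5$ and $\mc{H}^2 = D_0$, with other cohomologies vanishing. Then applying $\pi_+$ to \eqref{comphypn2} and using $\pi_+\mc{O}_Y = D_6$ in degree $0$, the LES assembles $\pi_+\mc{O}_Y(\ast Y_5)$: the class $\mc{H}^0$ fits into an extension $0 \to D_6 \to \mc{H}^0 \to D_5 \to 0$, $\mc{H}^2 = D_0$, and other cohomologies vanish. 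The non-splitness of the extension follows from $\Ext^1(D_5, D_6) = \C$ (the unique arrow $\alpha_6$ in the quiver of Theorem \ref{thm:quiv3}) combined with the fact that the generator of $\mc{O}_Y(\ast Y_5)$ produces a nontrivial class. The main obstacle is the middle paragraph: extracting the $D_0$ contributions to $\pi_+ D_5^Y$ from the exceptional fiber $\mathbb{P}^2$ over the origin, which requires a careful Bott-type calculation exploiting the $\GL$-equivariant graded structure of $D_5^Y$ on the base $\mathbb{G}$.
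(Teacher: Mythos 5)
Your framework overlaps with the paper's to a large extent: both of you use the two short exact sequences \eqref{comphypn2} (for the divisor $Y_5\subset Y$) and \eqref{eq:n2bundle}, and both of you compute $\pi_+\mathcal{O}_Y=D_6$ (via smallness of $\pi$) and $\pi_+D_0^Y$ (via \cite[Proposition~1.5.28]{htt} applied to the vector bundle over $\mathbb{P}^2$) in the same way. The genuine divergence is in how $\pi_+D_5^Y$ is pinned down, and this is where your proposal has a gap.

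The paper proceeds indirectly: it first invokes Lemma~\ref{pushdownlocallyclosed1} to get $\mc{H}^q(\pi_+\mc{O}_Y(\ast Y_5))=0$ for $q<0$, which, through the SES \eqref{comphypn2} and the supports involved, forces $\mc{H}^{-1}(\pi_+\mathscr{H}^1_{Y_5}(\mc{O}_Y))=0$; then \eqref{eq:n2bundle} and the known $\pi_+D_0^Y$ yield the negative-degree cohomology of $\pi_+D_5^Y$. The degree-$0$ term is extracted from the quiver Theorem~\ref{thm:quiv3} (identifying $\mc{H}^0(\pi_+\mc{O}_Y(\ast Y_5))=H^2_{O_6}(S)$ as the injective envelope of $D_6$, which is a non-trivial extension of $D_5$ by $D_6$), and the positive-degree part follows from self-duality of $\pi_+D_5^Y$. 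You instead propose a direct computation of $\pi_+D_5^Y$ via the decomposition theorem plus a stalk/hypercohomology calculation over the exceptional fiber $\mathbb{P}^2$.

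Two steps in your middle paragraph are left unjustified and together constitute a real gap. First, you assert that ``equivariance and support considerations force these [extra summands] to be of the form $D_0[s]$''. This does not follow: the fibers of $\pi|_{Y_5}$ over $O_1$ and $O_2$ are $\mathbb{P}^1$'s, so the decomposition theorem a priori permits summands $D_1[\pm 1]$ or $D_2[\pm 1]$; something additional (in the paper's argument it is Lemma~\ref{pushdownlocallyclosed1}'s degree bound combined with the structure of \eqref{eq:n2bundle}) is needed to rule them out. Second, the claimed ``careful Bott-type calculation'' over $\mathbb{P}^2$, which is the crux of your argument, is described but not carried out; it would require an explicit $\GL$-equivariant graded description of $D_5^Y$ over the base and is the hard part you would still need to supply. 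In short, the proposal as written is a plausible alternative strategy but not a proof: to make it work you would either have to actually perform the Bott computation and rule out the $D_1, D_2$ summands by hand, or fall back on the indirect use of Lemma~\ref{pushdownlocallyclosed1} and Theorem~\ref{thm:quiv3}, at which point you have essentially reproduced the paper's argument.
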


\begin{proof}
By Lemma \ref{pushdownlocallyclosed1}(2) we have $\mc{H}^q(\pi_{+}\mathcal{O}_Y(\ast Y_5))=0$ for $q<0$. The morphism $\pi$ is small and a desingularization of $\ol{O}_6$, so $\pi_{+}\mc{O}_Y=D_6$. By (\ref{comphypn2}) we obtain an  exact sequence
\begin{equation}\label{lesprooflemmapush3}
0\to \mc{H}^{-1}(\pi_{+}\mathscr{H}^1_{Y_5}(\mc{O}_Y))\to D_6\to \mc{H}^0(\pi_{+}\mathcal{O}_Y(\ast Y_5))\to \mc{H}^{0}(\pi_{+}\mathscr{H}^1_{Y_5}(\mc{O}_Y))\to 0,
\end{equation}
and $\mc{H}^{q}(\pi_{+}\mathscr{H}^1_{Y_5}(\mc{O}_Y))\cong \mc{H}^q(\pi_{+}\mathcal{O}_Y(\ast Y_5))$ for $q\neq -1,0$. Since $\mc{H}^{-1}(\pi_{+}\mathscr{H}^1_{Y_5}(\mc{O}_Y))$ is supported inside $\ol{O}_5$, it follows from the sequence above that $\mc{H}^{-1}(\pi_{+}\mathscr{H}^1_{Y_5}(\mc{O}_Y))=0$.

By (\ref{eq:n2bundle}), we conclude that $\mc{H}^{q}(\pi_{+}D^Y_5)\cong \mc{H}^{q-1}(\pi_{+}D^Y_0)$ for $q<0$. The space $Y$ is a geometric vector bundle on $\mathbb{P}^2$, and $D_0^Y$ is the intersection cohomology module associated to the trivial local system on the zero section. It follows from \cite[Proposition 1.5.28]{htt} that $\mc{H}^{i-2}(\pi_{+}D_0^Y)=D_0^{\oplus H^{i}_{DR}(\mathbb{P}^2)}$ for $i\in\mathbb{Z}$. Thus, we have
\begin{equation}\label{pushdo3}
\mc{H}^{-2}(\pi_{+}D^Y_0)=D_0,\quad \mc{H}^{0}(\pi_{+}D^Y_0)=D_0,\quad \mc{H}^{2}(\pi_{+}D^Y_0)=D_0.
\end{equation}
We conclude that, in cohomological degrees less than zero, the only non-zero cohomology module of $\pi_{+}D_5^Y$ is  $\mc{H}^{-1}(\pi_{+}D^Y_5)=D_0$. 

To complete the proof of (1), it remains to verify the cohomology of $\pi_{+}D_5^Y$ in non-negative cohomological degrees. By (\ref{pushdownlocallyclosed}) and Theorem \ref{thm:quiv3} we know that $\mc{H}^0(\pi_{+}\mathcal{O}_Y(\ast Y_5))$ is a non-trivial extension of $D_5$ by $D_6$, so that by (\ref{lesprooflemmapush3}) we conclude that $\mc{H}^{0}(\pi_{+}\mathscr{H}^1_{Y_5}(\mc{O}_Y))=D_5$. As $\mathscr{H}^1_{Y_5}(\mc{O}_Y)$ is an extension of $D_0^Y$ by $D_5^Y$, it follows from (\ref{pushdo3}) that $\mc{H}^{0}(\pi_{+}D^Y_5)=D_5$. Since $\pi$ is proper and $D_5^Y$ is self-dual, it follows that $\pi_{+}D_5^Y$ is self-dual \cite[Theorem 2.7.2]{htt}, so the cohomology of $\pi_{+}D_5^Y$ must be as claimed.

To complete the proof of (2), it remains to show that the cohomology $\pi_{+}\mathcal{O}_Y(\ast Y_5)$ is as claimed in positive cohomological degrees. As explained above, the cohomology of $\pi_{+}\mathcal{O}_Y(\ast Y_5)$ is the same as that of $\pi_{+}\mathscr{H}^1_{Y_5}(\mc{O}_Y)$ in these degrees, and there is a long exact sequence
$$
0\to \mc{H}^{0}(\pi_{+}D^Y_0)\to \mc{H}^{1}(\pi_{+}D^Y_5)\to \mc{H}^{1}(\pi_{+}\mathscr{H}^1_{Y_5}(\mc{O}_Y))\to \mc{H}^{1}(\pi_{+}D^Y_0)\to  
$$ 
$$
\to \mc{H}^{2}(\pi_{+}D^Y_5)\to \mc{H}^{2}(\pi_{+}\mathscr{H}^1_{Y_5}(\mc{O}_Y))\to \mc{H}^{2}(\pi_{+}D^Y_0)\to 0.
$$
Since $\mc{H}^{1}(\pi_{+}D^Y_0)=\mc{H}^{2}(\pi_{+}D^Y_5)=0$ and the first map is an isomorphism, we conclude that $\mc{H}^{1}(\pi_{+}\mathscr{H}^1_{Y_5}(\mc{O}_Y))=0$ and that $\mc{H}^{2}(\pi_{+}\mathscr{H}^1_{Y_5}(\mc{O}_Y))=D_0$.
\end{proof}

\subsection{The case $n=4$}\label{sec:o5n4} In this subsection, we prove part (2)  of Theorem \ref{loc05}. Recall the $\D_V$-modules $Q_p$ introduced in (\ref{eq:detQ}). The main result here is the following. 

\begin{prop}\label{locorbit06n4}
Let $n=4$. The following is true about local cohomology of the polynomial ring $S$ with support in the orbit $O_6$ and the orbit closure $\ol{O}_5$.
\begin{enumerate}
\item The non-zero local cohomology modules with support in $O_6$ are (the short exact sequence is non-split):
$$
0\to Q_2 \to H^4_{O_6}(S)\to D_5\to 0,\quad H^8_{O_6}(S)=D_0.
$$	
\item We have that $H^q_{\ol{O}_5}(S)=0$ for $q\geq 8$.
\end{enumerate}	
\end{prop}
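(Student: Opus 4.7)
The overall strategy mirrors Lemma \ref{push222to3} and Proposition \ref{locorbit06n3} from the $n=3$ case, using the desingularization $\pi = \pi_{222} : Y = Y_{222} \to \ol{O}_6$ and $Y_5 = \pi^{-1}(\ol{O}_5)$. By Lemma \ref{pushdownlocallyclosed1} (with $c = \codim(O_6, V) = 4$), computing $H^q_{O_6}(S)$ reduces to computing $\mc{H}^{q-4}(\pi_+ \mc{O}_Y(\ast Y_5))$. A key technical fact is that $\pi$ remains small for $n=4$, so $\pi_+ \mc{O}_Y = D_6$ concentrated in degree zero.

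For Part (1), I would apply $\pi_+$ to the short exact sequences (\ref{comphypn2}) (with $Z = Y_5$) and (\ref{eq:n2bundle}). Computing $\pi_+ D_0^Y$ proceeds exactly as in the $n=3$ case: $D_0^Y$ is the intersection cohomology $\D$-module of the zero-section $\iota : \bb{G}(2;\C^4) \hookrightarrow Y$, and $(\pi \circ \iota)_+ \mc{O}_{\bb{G}}$ is determined by the de Rham cohomology of $\bb{G}(2;\C^4)$, which has Poincar\'e polynomial $1 + q^2 + 2q^4 + q^6 + q^8$; this yields $\mc{H}^i(\pi_+ D_0^Y) = D_0$ for $i \in \{-4, -2, 2, 4\}$ and $\mc{H}^0(\pi_+ D_0^Y) = D_0^{\oplus 2}$. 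The main technical step is computing $\pi_+ D_5^Y$, which is self-dual (as $\pi$ is proper and $D_5^Y$ is self-dual), supported on $\ol{O}_5$, and bounded in cohomological range by Lemma \ref{pushdownlocallyclosed1}. Using self-duality together with the long exact sequence from (\ref{eq:n2bundle}) and the known $\pi_+ D_0^Y$, combined with Lemma \ref{lem:noD34} and the quiver structure of Theorem \ref{thm:quiv4} to exclude unwanted composition factors, I would determine $\mc{H}^\bullet(\pi_+ D_5^Y)$ degree by degree. Splicing the resulting long exact sequences yields $H^4_{O_6}(S)$ with composition factors $D_6, D_2, D_0, D_5$, the vanishing of $H^q_{O_6}(S)$ for $5 \leq q \leq 7$, and $H^8_{O_6}(S) = D_0$ (coming from the top contribution $\mc{H}^4(\pi_+ D_0^Y) = D_0$). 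To recognize $H^4_{O_6}(S)$ precisely as the non-split extension of $D_5$ by $Q_2$, I would then invoke that the long exact sequence (\ref{locallyclosedsequence}) with $Z = \ol{O}_6$, $Z' = \ol{O}_5$, $Y = O_6$ forces $Q_2 = H^4_{\ol{O}_6}(S)$ (from Theorem \ref{thm:localcoho6}(2)) to embed as a submodule of $H^4_{O_6}(S)$ with $D_5$ as the cokernel, and then compare composition series.

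For Part (2), the long exact sequence (\ref{locallyclosedsequence}) with the same $Z, Z', Y$ combined with Theorem \ref{thm:localcoho6}(2) and Part (1) reduces the relevant segment for $q = 8, 9$ to
\begin{equation*}
0 \longrightarrow H^8_{\ol{O}_5}(S) \longrightarrow H^8_{\ol{O}_6}(S) \longrightarrow H^8_{O_6}(S) \longrightarrow H^9_{\ol{O}_5}(S) \longrightarrow 0,
\end{equation*}
i.e.\ $0 \to H^8_{\ol{O}_5}(S) \to D_0 \to D_0 \to H^9_{\ol{O}_5}(S) \to 0$. Showing the middle map is an isomorphism (which should follow from tracking the generator of $H^8_{O_6}(S) = D_0$ back to the top de Rham class of $\bb{G}(2;\C^4)$ in the pushforward picture) yields $H^8_{\ol{O}_5}(S) = H^9_{\ol{O}_5}(S) = 0$. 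For $q \geq 10$ all relevant terms of the long exact sequence vanish.

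The main obstacle will be the explicit computation of $\pi_+ D_5^Y$ in enough detail to identify $H^4_{O_6}(S)$ as the specific non-split extension of $D_5$ by $Q_2$, rather than another module with the same composition factors. Distinguishing this extension is likely to require combining the quiver relations of Theorem \ref{thm:quiv4} with witness-weight computations in the spirit of Section \ref{sec:characters}, or equivalently, with the explicit $\GL$-structure of $Q_2$ coming from the determinantal case.
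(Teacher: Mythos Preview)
Your approach to Part (1) via $\pi_{222}$ is essentially the same as the paper's (Lemma \ref{lem:decompn4}), and the identification of $H^4_{O_6}(S)$ as the non-split extension of $D_5$ by $Q_2$ is in fact immediate from \cite[Lemma 3.11]{lHorincz2019categories} and Theorem \ref{thm:quiv4} --- no witness-weight computations are needed there.

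However, there is a genuine circularity in your plan. In the long exact sequence coming from (\ref{eq:n2bundle}), after using self-duality to pin down $\mc{H}^\bullet(\pi_+ D_5^Y)$, you are left with a segment
\[
0 \longrightarrow \mc{H}^2\!\big(\pi_+\mathscr{H}^1_{Y_5}(\mc{O}_Y)\big) \longrightarrow \mc{H}^2(\pi_+ D_0^Y) \longrightarrow \mc{H}^3(\pi_+ D_5^Y) \longrightarrow \mc{H}^3\!\big(\pi_+\mathscr{H}^1_{Y_5}(\mc{O}_Y)\big) \longrightarrow 0,
\]
i.e.\ $0 \to \mc{H}^2 \to D_0 \to D_0 \to \mc{H}^3 \to 0$. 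Nothing you have listed (self-duality, Lemma \ref{lem:noD34}, the quiver) decides whether the middle map is zero or an isomorphism; in particular you cannot yet assert $H^6_{O_6}(S)=H^7_{O_6}(S)=0$. Your argument for Part (2) then \emph{assumes} $H^7_{O_6}(S)=0$ in order to write down the four-term sequence, and appeals to ``tracking the generator back to the top de Rham class'' to show $H^8_{\ol{O}_6}(S)\to H^8_{O_6}(S)$ is an isomorphism --- but that generator is supported at the origin, which lies in $\ol{O}_5$, so this tracking argument does not go through as stated.

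The paper breaks the circularity by proving Part (2) \emph{first}, and by a completely different route: it uses the desingularization $\pi_{223}:Y_{223}\to \ol{O}_8$ rather than $\pi_{222}$. Since $\ol{O}_8$ has rational singularities, $\mathbb{R}(\pi_{223})_*\mc{O}_{Y_{223}}\cong \C[\ol{O}_8]$, and Theorem \ref{loc05}(1) (the $n=3$ computation, applied fibrewise) collapses the relevant spectral sequence to $\mathbb{R}^i(\pi_{223})_*\mathscr{H}^3_{Y_5}(\mc{O}_{Y_{223}}) = H^{i+3}_{\ol{O}_5}(\C[\ol{O}_8])$. As $Y_{223}$ is a bundle over $\mathbb{P}^3$, these vanish for $i\geq 4$; then the short exact sequence (\ref{resDet}) shows multiplication by $\det$ is an isomorphism on $H^j_{\ol{O}_5}(S)$ for $j\geq 8$, forcing vanishing. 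Only \emph{after} this does the paper return to $\pi_{222}$ and use $H^8_{\ol{O}_5}(S)=0$ to deduce $H^7_{O_6}(S)=0$ and finish Part (1).
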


Using this, we prove part (2) of Theorem \ref{loc05}.

\begin{proof}
[Proof of Theorem \ref{loc05}(2)]	 
The assertion about $H^5_{\ol{O}_5}(S)$ follows from \cite[Lemma 3.11]{lHorincz2019categories} and Theorem \ref{thm:quiv4}. By Theorem \ref{thm:localcoho6} we have
\begin{equation}\label{lcmatrix}
H^4_{\ol{O}_6}(S)=Q_2,\quad H^6_{\ol{O}_6}(S)=Q_1,\quad H^8_{\ol{O}_6}(S)=D_0,
\end{equation}
and all other local cohomology modules vanish. By Proposition \ref{locorbit06n4} there is an exact sequence
$$
0\to H^5_{O_6}(S)\to H^6_{\ol{O}_5}(S)\to H^6_{\ol{O}_6}(S)\to H^6_{O_6}(S)\to H^7_{\ol{O}_5}(S)\to 0,
$$
and $H^q_{\ol{O}_5}(S)=0$ for $q\geq 8$. By Proposition \ref{locorbit06n4}(1) and (\ref{lcmatrix}) we conclude that $H^6_{\ol{O}_5}(S)\cong H^6_{\ol{O}_6}(S)=Q_1$ and $H^7_{\ol{O}_5}(S)=0$, completing the proof.\end{proof}

Next, we prove Proposition \ref{locorbit06n4}(2).

\begin{proof}[Proof of Proposition \ref{locorbit06n4}(2)]
Let $Y=Y_{223}$, $\pi=\pi_{223}$, and $Y_5=\pi^{-1}(\ol{O}_5)$. Since $\ol{O}_8$ is the determinant hypersurface, it has rational singularities (e.g. see \cite[Corollary 6.1.5]{weyman}). Thus, we have $\mathbb{R}\pi_{\ast}\mc{O}_Y\cong \C[\ol{O}_8]$, so by Theorem \ref{loc05}(1) there is a degenerate spectral sequence $\mathbb{R}^i\pi_{\ast}\mathscr{H}^j_{Y_5}(\mc{O}_Y)\implies H^{i+j}_{\ol{O}_5}(\C[\ol{O}_8])$, which yields
\begin{equation}\label{pushDown2231}
\mathbb{R}^i\pi_{\ast}\mathscr{H}^3_{Y_5}(\mc{O}_Y)	= H^{i+3}_{\ol{O}_5}(\C[\ol{O}_8]).
\end{equation}
The coordinate ring $\C[\ol{O}_8]$ has equivariant minimal free resolution as follows
\begin{equation}\label{resDet1}
0\longrightarrow S\otimes \det \longrightarrow S\longrightarrow 	\C[\ol{O}_8]\longrightarrow 0.
\end{equation}
As $Y$ is a vector bundle on the Grassmannian $\operatorname{Gr}(3,4)\cong \mathbb{P}^3$, by Lemma \ref{BottLemma} we have that $\mathbb{R}^i\pi_{\ast}\mathscr{H}^3_{Y_5}(\mc{O}_Y)$ is zero for $i\geq 4$. Thus, by (\ref{pushDown2231}) and the long exact sequence of local cohomology of (\ref{resDet1}), we conclude that for $j>3+4$ multiplication by $\det$ is an isomorphism on $H^j_{\ol{O}_5}(S)$. Since $H^j_{\ol{O}_5}(S)$ has support inside $\ol{O}_5$, it follows that $H^i_{\ol{O}_5}(S)=0$ for $j\geq 8$.
	\end{proof}

By Lemma \ref{pushdownlocallyclosed1}, the following implies Proposition \ref{locorbit06n4}(1). 

\begin{lemma}\label{lem:decompn4}
Let $n=4$, $Y=Y_{222}$, $\pi=\pi_{222}$, and $Y_5=\pi^{-1}(\ol{O}_5)$. We have the following.
\begin{enumerate}
\item The non-zero cohomology modules of $\pi_{+}D^Y_5$ are
$$
\mc{H}^{-3}(\pi_{+}D^Y_5)=D_0,\quad \mc{H}^{-1}(\pi_{+}D^Y_5)=D_0,\quad \mc{H}^{0}(\pi_{+}D^Y_5)=D_5\oplus D_2,\quad \mc{H}^{1}(\pi_{+}D^Y_5)=D_0,\quad \mc{H}^{3}(\pi_{+}D^Y_5)=D_0.
$$
\item The non-zero cohomology modules of $\pi_{+}\mathcal{O}_Y(\ast Y_5)$ are
$$
0\to Q_2 \to \mc{H}^0(\pi_{+}\mathcal{O}_Y(\ast Y_5))\to D_5\to 0,\quad \mc{H}^4(\pi_{+}\mathcal{O}_Y(\ast Y_5))=D_0,
$$
where the short exact sequence is non-split.
\end{enumerate}
\end{lemma}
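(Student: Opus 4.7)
The plan is to mirror the $n=3$ argument of Lemma \ref{push222to3}. Since $\pi=\pi_{222}$ remains a small resolution of $\ol{O}_6$ for $n=4$, we still have $\pi_+\mc{O}_Y=D_6$ concentrated in degree $0$; however, the base $\mathbb{G}_{222}\cong\mathbb{G}(2,4)$ is now $4$-dimensional, producing richer cohomology. I will combine the short exact sequences (\ref{comphypn2}) and (\ref{eq:n2bundle}), the Betti numbers of $\mathbb{G}(2,4)$, self-duality of $\pi_+D_5^Y$, the quiver of Theorem \ref{thm:quiv4}, and Proposition \ref{locorbit06n4}(2).

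First I compute $\pi_+D_0^Y$ by noting that $D_0^Y$ is the extension of the trivial local system on the zero section $\mathbb{G}(2,4)\hookrightarrow Y$, whose pushforward is governed by the Betti numbers of $\mathbb{G}(2,4)$ (Poincar\'e polynomial $1+q^2+2q^4+q^6+q^8$): $\mc{H}^{\pm 4}(\pi_+D_0^Y)=\mc{H}^{\pm 2}(\pi_+D_0^Y)=D_0$ and $\mc{H}^{0}(\pi_+D_0^Y)=D_0^{\oplus 2}$. As in the $n=3$ case, the long exact sequence from (\ref{comphypn2}) together with $\pi_+\mc{O}_Y=D_6$ and Lemma \ref{pushdownlocallyclosed1} shows that $\mc{H}^{-1}(\pi_+\mathscr{H}^1_{Y_5}(\mc{O}_Y))$ embeds into the simple $D_6$ while being supported in $\ol{O}_5\subsetneq\ol{O}_6$, hence vanishes. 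Chasing the long exact sequence from (\ref{eq:n2bundle}) then yields $\mc{H}^{-3}(\pi_+D_5^Y)=\mc{H}^{-1}(\pi_+D_5^Y)=D_0$, with vanishing for other $i<0$. Self-duality of $\pi_+D_5^Y$ (from properness of $\pi$ and self-duality of $D_5^Y$, cf.\ \cite[Theorem 2.7.2]{htt}) transfers this to $\mc{H}^{1}(\pi_+D_5^Y)=\mc{H}^{3}(\pi_+D_5^Y)=D_0$ with vanishing for other $i>0$.

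The main obstacle will be to determine $\mc{H}^{0}(\pi_+D_5^Y)$. By \cite[Lemma 3.11]{lHorincz2019categories}, $H^4_{O_6}(S)=\mc{H}^{0}(\pi_+\mc{O}_Y(\ast Y_5))$ is the injective envelope of $D_6$ in $\tn{mod}_{\GL}^{\ol{O}_6}(\D_V)$; reading off non-zero paths ending at vertex $(6)$ in the subquiver of Theorem \ref{thm:quiv4} restricted to $\ol{O}_6$ yields composition factors $\{D_6,D_2,D_5,D_0\}$. Combined with $H^5_{\ol{O}_5}(S)=D_5$ (from \cite[Lemma 3.11]{lHorincz2019categories} and the quiver) and Theorem \ref{thm:localcoho6}, the local cohomology LES produces $0\to Q_2\to H^4_{O_6}(S)\to D_5\to 0$, which must be non-split since the socle of the injective envelope is $D_6$. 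Invoking Proposition \ref{locorbit06n4}(2) on the local cohomology LES gives $H^7_{O_6}(S)=0$ and $H^8_{O_6}(S)=D_0$; combined with the long exact sequence for $\pi_+D_5^Y$ at $i=2$, which forces the connecting map $\mc{H}^2(\pi_+D_0^Y)=D_0\to \mc{H}^3(\pi_+D_5^Y)=D_0$ to be an isomorphism, this gives $H^6_{O_6}(S)=0$.

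To finish, I compute $\chi(\pi_+\mc{O}_Y(\ast Y_5))$ in the Grothendieck group of $\tn{mod}_{\GL}(\D_V)$ in two ways: via the two triangles (giving $[D_6]+[\mc{H}^0(\pi_+D_5^Y)]+2[D_0]$) and via the $\mc{H}^i$'s established above (giving $[D_6]+[D_5]+[D_2]+2[D_0]-[H^5_{O_6}(S)]$). Equating yields
\[
[\mc{H}^0(\pi_+D_5^Y)]=[D_5]+[D_2]-[H^5_{O_6}(S)].
\]
Since $H^5_{O_6}(S)$ is a quotient of $\mc{H}^1(\pi_+D_5^Y)=D_0$, it equals $0$ or $D_0$, and Grothendieck positivity (the left-hand side is the class of an actual module) rules out $D_0$, forcing $H^5_{O_6}(S)=0$ and $[\mc{H}^0(\pi_+D_5^Y)]=[D_5]+[D_2]$. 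Since $\Ext^1(D_5,D_2)=0$ from Theorem \ref{thm:quiv4}, the composition series splits: $\mc{H}^0(\pi_+D_5^Y)=D_5\oplus D_2$, completing (1). Part (2) then follows by reading off the established values $\mc{H}^i(\pi_+\mc{O}_Y(\ast Y_5))=H^{i+4}_{O_6}(S)$.
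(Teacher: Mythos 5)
Your argument is correct, and it agrees with the paper's own proof in its overall scaffolding: computing $\pi_+D_0^Y$ from the Betti numbers of $\mathbb{G}(2,4)$, chasing the two long exact sequences coming from (\ref{comphypn2}) and (\ref{eq:n2bundle}), using $\pi_+\mc{O}_Y=D_6$ and Lemma \ref{pushdownlocallyclosed1} to kill the negative degrees of $\pi_+\mathscr{H}^1_{Y_5}(\mc{O}_Y)$, invoking self-duality of $\pi_+D_5^Y$, identifying $\mc{H}^0(\pi_+\mc{O}_Y(*Y_5))=H^4_{O_6}(S)$ as the injective envelope of $D_6$ via \cite[Lemma 3.11]{lHorincz2019categories} and Theorem \ref{thm:quiv4}, and finally using Proposition \ref{locorbit06n4}(2) to get $H^7_{O_6}(S)=0$ and hence the vanishing at $q=2,3$. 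All of these steps match the paper.

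Where you genuinely diverge is in pinning down $\mc{H}^0(\pi_+D_5^Y)$ and the vanishing $H^5_{O_6}(S)=0$. The paper first produces the explicit identification $\mc{H}^0(\pi_+\mathscr{H}^1_{Y_5}(\mc{O}_Y))\cong D_5\oplus Q_1$ (splitting off $D_6$ from the injective envelope and using $\Ext^1(D_5,Q_1)=0$), then views $\mc{H}^0(\pi_+D_5^Y)$ as a submodule of $D_5\oplus Q_1$ whose cokernel maps to $D_0^{\oplus 2}$, and argues on multiplicities of $D_0$ to exclude $D_0$ as a composition factor, getting $\mc{H}^0(\pi_+D_5^Y)=D_5\oplus D_2$; the surjectivity of $D_0^{\oplus 2}\to\mc{H}^1(\pi_+D_5^Y)$ then gives $H^5_{O_6}(S)=0$ at the same time. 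You instead compute $[\chi(\pi_+\mc{O}_Y(*Y_5))]$ two ways in the Grothendieck group, extract $[\mc{H}^0(\pi_+D_5^Y)]=[D_5]+[D_2]-[H^5_{O_6}(S)]$, bound $H^5_{O_6}(S)$ as a quotient of $\mc{H}^1(\pi_+D_5^Y)=D_0$, and invoke Grothendieck positivity to force $H^5_{O_6}(S)=0$; the splitting of $\mc{H}^0(\pi_+D_5^Y)$ then comes from $\Ext^1(D_5,D_2)=\Ext^1(D_2,D_5)=0$. Your Euler-characteristic route is a bit shorter and avoids explicitly identifying $\mc{H}^0(\pi_+\mathscr{H}^1_{Y_5}(\mc{O}_Y))\cong D_5\oplus Q_1$, at the minor cost of appealing to positivity in $K_0$; the paper's approach gives the finer structural information (the explicit submodule inclusion into $D_5\oplus Q_1$) as a byproduct. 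Both are valid, and the remaining steps coincide.
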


\begin{proof}
The argument uses Lemma \ref{pushdownlocallyclosed1}, and is similar to the proof of Lemma \ref{push222to3}. The space $Y$ is a geometric vector bundle on $\mathbb{G}(2,4)$, and $D_0^Y$ is the intersection cohomology module associated to the trivial local system on the zero section. It follows from \cite[Proposition 1.5.28]{htt} that the non-zero cohomology modules of $\pi_{+}D_0^Y$ are (see, for example, \cite[Theorem 3.10]{3264} for the cohomology of $\mathbb{G}(2,4)$)
\begin{equation}\label{eq:decompD0}
\mc{H}^{-4}(\pi_{+}D^Y_0)=D_0,\quad \mc{H}^{-2}(\pi_{+}D^Y_0)=D_0,\quad \mc{H}^{0}(\pi_{+}D^Y_0)=D_0^{\oplus 2},\quad \mc{H}^{2}(\pi_{+}D^Y_0)=D_0,\quad \mc{H}^{4}(\pi_{+}D^Y_0)=D_0.
\end{equation}
We first prove (1). The short exact sequence $0\to D^Y_5\to \mathscr{H}^1_{Y_5}(\mc{O}_Y) \to D^Y_0\to 0$ induces the long exact sequence
$$
0\to \mc{H}^{-4}(\pi_{+}D^Y_5)\to \mc{H}^{-4}(\pi_{+}\mathscr{H}^1_{Y_5}(\mc{O}_Y))\to \mc{H}^{-4}(\pi_{+}D^Y_0)\to \mc{H}^{-3}(\pi_{+}D^Y_5)\to \mc{H}^{-3}(\pi_{+}\mathscr{H}^1_{Y_5}(\mc{O}_Y))\to   
$$
$$
\to \mc{H}^{-3}(\pi_{+}D^Y_0)\to \mc{H}^{-2}(\pi_{+}D^Y_5)\to \mc{H}^{-2}(\pi_{+}\mathscr{H}^1_{Y_5}(\mc{O}_Y))\to \mc{H}^{-2}(\pi_{+}D^Y_0)\to \mc{H}^{-1}(\pi_{+}D^Y_5)\to 
$$
$$
\to \mc{H}^{-1}(\pi_{+}\mathscr{H}^1_{Y_5}(\mc{O}_Y))\to \mc{H}^{-1}(\pi_{+}D^Y_0)\to \mc{H}^{0}(\pi_{+}D^Y_5)\to \mc{H}^{0}(\pi_{+}\mathscr{H}^1_{Y_5}(\mc{O}_Y))\to \mc{H}^{0}(\pi_{+}D^Y_0) \to  \mc{H}^{1}(\pi_{+}D^Y_5).
$$
Since $\pi$ is a small resolution of $\ol{O}_6$, we have that $\pi_+\mc{O}_Y=D_6$. Thus, the cohomology of $\pi_{+}\mathscr{H}^1_{Y_5}(\mc{O}_Y)$ is the same as that of $\pi_{+}\mathcal{O}_Y(\ast Y_5)$ in nonzero cohomological degrees, and there is an exact sequence
\begin{equation}\label{eq:sesLocO5}
0\to D_6 \to \mc{H}^0(\pi_{+}\mathcal{O}_Y(\ast Y_5))\to \mc{H}^{0}(\pi_{+}\mathscr{H}^1_{Y_5}(\mc{O}_Y))\to 0.
\end{equation}
By Lemma \ref{pushdownlocallyclosed1} we have that the cohomology of $\pi_{+}\mathscr{H}^1_{Y_5}(\mc{O}_Y)$ is zero in negative degrees, so from (\ref{eq:decompD0}) and the long exact sequence above we conclude that the cohomology of $\pi_{+}D^Y_5$ is as claimed in negative degrees. Since $\pi$ is proper and $D_5^Y$ is self-dual, it follows that $\pi_{+}D_5^Y$ is self-dual \cite[Theorem 2.7.2]{htt}. Thus, we have proven (1) in nonzero cohomological degrees.

 By Lemma \ref{pushdownlocallyclosed1}, \cite[Lemma 3.11]{lHorincz2019categories}, and Theorem \ref{thm:quiv4} we know that $\mc{H}^0(\pi_{+}\mathcal{O}_Y(\ast Y_5))$ is a non-trivial extension of $D_5$ by $Q_2$. Thus, by (\ref{eq:sesLocO5}) we have that $\mc{H}^{0}(\pi_{+}\mathscr{H}^1_{Y_5}(\mc{O}_Y))$ is a direct sum of $D_5$ and $Q_1$. Since $\mc{H}^{-1}(\pi_+D_0^Y)=0$, we have that $\mc{H}^{0}(\pi_{+}D^Y_5)$ is a submodule of $\mc{H}^{0}(\pi_{+}\mathscr{H}^1_{Y_5}(\mc{O}_Y))=D_5\oplus Q_1$. As $\mc{H}^{0}(\pi_{+}D^Y_0)=D_0^{\oplus 2}$ and $ \mc{H}^{1}(\pi_{+}D^Y_5)=D_0$, the last map in the long exact sequence above cannot be injective. Since $Q_1$ is a non-trivial extension of $D_0$ by $D_2$, it follows that $D_0$ cannot be a composition factor of $\mc{H}^{0}(\pi_{+}D^Y_5)$, so that $\mc{H}^{0}(\pi_{+}D^Y_5)=D_5\oplus D_2$, as required to prove (1).

To complete the proof of (2), it remains to show that the cohomology $\pi_{+}\mathcal{O}_Y(\ast Y_5)$ is as claimed in positive cohomological degrees. As explained above, the cohomology of $\pi_{+}\mathcal{O}_Y(\ast Y_5)$ is the same as that of $\pi_{+}\mathscr{H}^1_{Y_5}(\mc{O}_Y)$ in these degrees, and there is a long exact sequence 
$$
\to \mc{H}^{0}(\pi_{+}\mathscr{H}^1_{Y_5}(\mc{O}_Y))\to \mc{H}^{0}(\pi_{+}D^Y_0)\to \mc{H}^{1}(\pi_{+}D^Y_5)\to \mc{H}^{1}(\pi_{+}\mathscr{H}^1_{Y_5}(\mc{O}_Y))\to \mc{H}^{1}(\pi_{+}D^Y_0)\to  
$$ 
$$
\to \mc{H}^{2}(\pi_{+}D^Y_5)\to \mc{H}^{2}(\pi_{+}\mathscr{H}^1_{Y_5}(\mc{O}_Y))\to \mc{H}^{2}(\pi_{+}D^Y_0)\to \mc{H}^{3}(\pi_{+}D^Y_5)\to \mc{H}^{3}(\pi_{+}\mathscr{H}^1_{Y_5}(\mc{O}_Y))\to 0.
$$
Furthermore, $\mc{H}^{4}(\pi_{+}\mathscr{H}^1_{Y_5}(\mc{O}_Y))=\mc{H}^{4}(\pi_{+}D^Y_0)=D_0$. It remains to show that $\mc{H}^{q}(\pi_{+}\mathscr{H}^1_{Y_5}(\mc{O}_Y))=0$ for $q=1,2,3$. Since $\mc{H}^{0}(\pi_{+}D^Y_0)$ has two copies of $D_0$ and  $\mc{H}^{0}(\pi_{+}\mathscr{H}^1_{Y_5}(\mc{O}_Y))$ only has one copy of $D_0$, it follows that the map $\mc{H}^{0}(\pi_{+}D^Y_0)\to \mc{H}^{1}(\pi_{+}D^Y_5)$ is a surjection. As $\mc{H}^{1}(\pi_{+}D^Y_0)=0$, we conclude that $\mc{H}^{1}(\pi_{+}\mathscr{H}^1_{Y_5}(\mc{O}_Y))=0$.

To prove the remaining two vanishings, we need to show that the map $\mc{H}^{2}(\pi_{+}D^Y_0)\to \mc{H}^{3}(\pi_{+}D^Y_5)$ is nonzero (and hence an isomorphism). Consider the exact sequence
\[H^7_{\ol{O}_6}(S) \to H^7_{O_6}(S)\to H^8_{\ol{O}_5}(S)\to H^8_{\ol{O}_6}(S).\]
 By (\ref{lcmatrix}) and Proposition \ref{locorbit06n4}(2), we have $H^7_{\ol{O}_6}(S)=H^8_{\ol{O}_5}(S)=0$, hence $H^7_{O_6}(S)=0$. By Lemma \ref{pushdownlocallyclosed1} we have that $\mc{H}^{3}(\pi_{+}\mathscr{H}^1_{Y_5}(\mc{O}_Y))=H^7_{O_6}(S)=0$. Thus, the map $\mc{H}^{2}(\pi_{+}D^Y_0)\to \mc{H}^{3}(\pi_{+}D^Y_5)$ is an isomorphism.
 \end{proof}

\begin{corollary}\label{cor:weightboundsD5n4}
Let $n=4$. All subrepresentations $\bS_{\alpha}A\oo \bS_{\beta}B\oo \bS_{\gamma}C$ of $D_5$ satisfy $\gamma_2\geq -2$ and $\gamma_3\leq -2$.	
\end{corollary}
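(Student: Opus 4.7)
My plan is to deduce both bounds from the decomposition $\mc{H}^0(\pi_+ D_5^Y)\cong D_5\oplus D_2$ of Lemma \ref{lem:decompn4}(1), combined with the Fourier self-duality $\mc{F}(D_5)\cong D_5$ from Proposition \ref{prop:fourier}. Since $\dim A=\dim B=2$, the space $Y=Y_{222}$ is a vector bundle over $\mathbb{G}_{222}=\mathbb{G}(2,C)=\mathbb{G}(2,4)$, with fiber $A\oo B\oo\mc{Q}_C$, where $\mc{Q}_C$ is the rank-$2$ tautological quotient. In particular, $D_5$ embeds as a direct summand in $\mc{H}^0(\pi_+ D_5^Y)$, so every $\GL$-weight of $D_5$ appears in this pushforward.

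\smallskip

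For $\gamma_2\geq -2$: I would compute the $\GL(C)$-isotypic decomposition of $\mc{H}^0(\pi_+ D_5^Y)$ via Bott's theorem (Lemma \ref{BottLemma}) applied to the vector bundle $Y\to\mathbb{G}(2,C)$, using the $\GL(\mc{Q}_C)$-character of the fiber $D_5$ from the $2\times 2\times 2$ case \cite{perlman2020equivariant}. For each $\GL(\mc{Q}_C)$-weight $\bS_\mu\mc{Q}_C$ with $\mu\in\bb{Z}^2$, Bott produces $\GL(C)$-weights of the form $\mu(p)$ for $p\in\{0,1,2\}$, whose middle padding $(p-2)^{2}$ takes values in $\{-2,-1,0\}$; hence $\mu(p)_2\in\{-2,-1,\mu_2\}$ with $\mu_2\geq 0$ in the last case, so $\mu(p)_2\geq -2$ in every case. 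Since the other summand $D_2$ of $\mc{H}^0(\pi_+ D_5^Y)$ has $\gamma_2\leq -3$ by Theorem \ref{thm:charMatrices}, the $D_5$-part collects precisely the weights of $\mc{H}^0(\pi_+ D_5^Y)$ with $\gamma_2\geq -2$, giving the first bound.

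\smallskip

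For $\gamma_3\leq -2$: The twisted Fourier transform acts on $\GL(C)$-weights by $\bS_\gamma C\mapsto\bS_{\gamma^\vee}C$ with $\gamma^\vee_i=-\gamma_{5-i}-4$, the shift by $(4,4,4,4)$ being the $\GL(C)$-weight of $\det V=\det(A\oo B\oo C)$ (cf.\ \cite[Equation 4.14]{lHorincz2019categories}). As a consistency check, this formula sends a weight $\gamma$ of $D_2$ (with $\gamma_1\geq -3$, $\gamma_2\leq -3$) to $\gamma^\vee$ with $\gamma^\vee_3\geq -1$ and $\gamma^\vee_4\leq -1$, matching the characters of $D_8=\mc{F}(D_2)$ via Theorem \ref{thm:charMatrices}. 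Since $\mc{F}(D_5)\cong D_5$, the set of weights of $D_5$ is invariant under this involution; applying $\gamma^\vee_2\geq -2$ to the Fourier dual weight yields $-\gamma_3-4\geq -2$, i.e.\ $\gamma_3\leq -2$.

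\smallskip

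The main obstacle lies in the first step, because the $\D$-module pushforward $\pi_+$ factors as $Y\hookrightarrow V\times\mathbb{G}(2,C)\to V$ and so involves both the relative tangent directions along the fiber (giving $\mc{Q}_C$-contributions) and the conormal bundle $A\oo B\oo\mc{K}_C^*$ along the closed embedding (giving $\mc{K}_C$-contributions). The $D_2$-summand in $\mc{H}^0(\pi_+ D_5^Y)$ in fact arises from these $\mc{K}_C$-contributions, so the Bott argument above must be extended to the full $\GL(\mc{Q}_C)\times\GL(\mc{K}_C)$-character, and one must verify that the separation of $\mc{H}^0(\pi_+ D_5^Y)$ into its two simple summands is compatible with the split by $\gamma_2$-range.
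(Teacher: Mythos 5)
Your approach is genuinely different from the paper's and, as you yourself flag at the end, it has a real gap. The paper does \emph{not} extract the bound from the decomposition $\mc{H}^0(\pi_{222,+}D_5^Y)\cong D_5\oplus D_2$; instead it works one step up with $\pi_{223}: Y_{223}\to\ol{O}_8$ and uses the rational singularities of $\ol{O}_8$. Concretely, since $\mathbb{R}\pi_{223,*}\mc{O}_{Y_{223}}\cong\C[\ol{O}_8]$, the degenerate spectral sequence (\ref{pushDown2231}) and the free resolution (\ref{resDet1}) identify the kernel of $D_5\oo\det\to D_5$ with the \emph{sheaf} pushforward $\mathbb{R}^1\pi_{223,*}\mathscr{H}^3_{Y_5}(\mc{O}_{Y_{223}})$. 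Since this is an ordinary higher direct image along the proper map $V\times\mathbb{G}(3;C)\to V$, Bott (Lemma \ref{BottLemma}) applies directly to the $\bS_\nu\mc{Q}_C$-graded pieces and forces $\nu_3=-1$ on all weights of that kernel. A highest weight vector of $D_5$ with $\gamma_3\geq -1$ would then have $v\oo\det^p$ outside the kernel for every $p\geq 1$, contradicting that $D_5$ is supported in $\ol{O}_8$.

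The gap in your route is exactly the ``main obstacle'' you describe: $\pi_{222,+}$ is the $\D$-module pushforward, which factors through the closed embedding $Y_{222}\hookrightarrow V\times\mathbb{G}(2;C)$ and carries the transfer bimodule. The underlying $\GL(C)$-character of $\pi_{222,+}D_5^Y$ is not the Bott pushforward of the $\bS_\mu\mc{Q}_C$-isotypic pieces of $D_5^Y$ alone; it picks up contributions from $\Sym$-of-conormal directions of the form $\bS_\bullet\mc{K}_C$. Your naive Bott computation would actually prove that \emph{every} weight of $\mc{H}^0(\pi_{222,+}D_5^Y)$ satisfies $\gamma_2\geq -2$, but that cannot be right: the summand $D_2$ contains weights with $\gamma_2\leq -3$ (by Theorem \ref{thm:charMatrices}), so those weights must be supplied by the $\mc{K}_C$-contributions you are ignoring. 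To make your plan work one would have to compute the full $\GL(\mc{Q}_C)\times\GL(\mc{K}_C)$-character of the transfer, apply Bott for mixed $\mc{Q}_C/\mc{K}_C$-bundles, and then show the split by $\gamma_2$-range matches the split into $D_5\oplus D_2$ --- essentially computing the entire character, which is harder than the claim being proved. The paper's choice of $\pi_{223}$ sidesteps all of this because there the relevant pushforward is a sheaf pushforward, not a $\D$-module pushforward. (Your Fourier step and the consistency check with $D_2\mapsto D_8$ are fine; the weight of $\det V$ on $\GL(C)$ is indeed $(4,4,4,4)$.)
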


\begin{proof}
We show that $\gamma_3\leq -2$ for all such subrepresentations. The other inequality then follows from the fact that $\mc{F}(D_5)=D_5$. By Theorem \ref{loc05}(2) we have $H	^5_{\ol{O}_5}(S)=D_5$. By the proof of Proposition \ref{locorbit06n4}(2) we have that the kernel of the multiplication by $\det$ map $D_5\oo \det\to D_5$ is $H^4_{\ol{O}_5}(\C[\ol{O}_8])\cong \mathbb{R}^1\pi_{\ast}\mathscr{H}^3_{Y_5}(\mc{O}_Y)$, where $Y=Y_{223}$, $\pi=\pi_{223}$, and $Y_5=\pi^{-1}(\ol{O}_5)$. By Lemma \ref{BottLemma} all subrepresentations $\bS_{\lambda}A\oo \bS_{\mu}B\oo \bS_{\nu}C$ of $\mathbb{R}^1\pi_{\ast}\mathscr{H}^3_{Y_5}(\mc{O}_Y)$ satisfy $\nu_3=-1$. Suppose for contradiction that $\bS_{\alpha}A\oo \bS_{\beta}B\oo \bS_{\gamma}C$ in $D_5$ satisfies $\gamma_{3}\geq -1$, and let $v$ be its highest weight vector. Then $v\otimes \det^p=(v\oo \det^{p-1})\oo \det$ is not in the kernel of the multiplication map $D_5\otimes \det \to D_5$ for all $p\geq 1$. Thus, $v$ is not supported inside $\ol{O}_8$, a contradiction.
\end{proof}

\subsection{The case $n\geq 5$}\label{sec:o5ngeq5} In this subsection, we let $\pi'=\pi_{224}$ and $Y'=Y_{224}$. We set $Y'_5=\pi'^{-1}(\ol{O}_5)$. Then we have $\mathbb{R}\pi'_{\ast}\mathbb{R}\mathscr{H}^0_{Y_5'}(\mc{O}_{Y'})=\mathbb{R}\Gamma_{\ol{O}_5}(S)$, so there is a spectral sequence
\begin{equation}\label{localO5ngeq5}
E_2^{i,j}=\mathbb{R}^i\pi'_{\ast}\mathscr{H}^j_{Y_5'}(\mc{O}_{Y'})\implies H^{i+j}_{\ol{O}_5}(S).
\end{equation}
We use the case $n=4$, namely Theorem \ref{loc05}(2), to obtain information about the case $n\geq 5$. To determine the $E_2$-page, we calculate the higher direct images along $\pi'$ of the modules $D^{Y'}_5$, $D^{Y'}_2$, and $D^{Y'}_0$. By Theorem \ref{thm:charMatrices} and Lemma \ref{BottLemma} we have that $\mathbb{R}\pi_{\ast}'D_2^{Y'}$ only has cohomology in degree $3n-12$, and $\mathbb{R}\pi_{\ast}'D_0^{Y'}$ only has cohomology in degree $4n-16$. Similarly, by Corollary \ref{cor:weightboundsD5n4} we have that $\mathbb{R}\pi_{\ast}'D_5^{Y'}$ only has cohomology in degree $2n-8$. By Theorem \ref{loc05}(2) the nonzero modules on the $E_2$ page are given by
$$
\quad E_2^{2n-8,5}=\mathbb{R}^{2n-8}\pi'_{\ast}D_5^{Y'},\quad E_2^{3n-12,6}=\mathbb{R}^{3n-12}\pi'_{\ast}D_2^{Y'},\quad E_2^{4n-16,6}=\mathbb{R}^{4n-16}\pi'_{\ast}D_0^{Y'}.
$$
The arrows on the $E_2$-page go from column six to column five, and increase cohomological degree by two. From this, we conclude that there are no nonzero differentials in the spectral sequence, and it degenerates on the $E_2$ page. Since $D_2=L_{Z_1}$ and $D_0=L_{Z_0}$, it follows from \cite{raicu2016characters, raicuWeymanLocal} (see \cite[Lemma 3.4]{socledegs} for the explicit statement) that $E_2^{3n-12,6}\cong D_2$, and $E_2^{4n-16,6}\cong D_0$. By \cite[Lemma 3.11]{lHorincz2019categories} and Theorem \ref{quiverN5}, we have $E_2^{2n-8,5}=D_5$, as required to complete the proof of Theorem \ref{loc05}.

\section{Local cohomology with support in $\ol{O}_7$}\label{sec:O7}

Recall that since for $n=3$, $\ol{O}_7$ is defined by $f$ so that its only non-zero local cohomology module is $H^1_{\ol{O}_7}(S)=S_f/S$. The main result of this section is the following. 

\begin{theorem}\label{loc07}
\begin{enumerate}
\item Let $n=3$ and let $f$ be the semi-invariant. There is a filtration
$$
0\subsetneq S \subsetneq \D_Vf^{-1}\subsetneq S_f,
$$
such that the quotients are described by the following non-split exact sequences:
$$
0\to D_7 \to \D_Vf^{-1}/S \to D_6'\to 0,\quad\quad 0\to D_1 \to S_f/\D_Vf^{-1}\to D_0 \to 0.
$$
\item Let $n\geq 4$. The nonzero local cohomology modules of the polynomial ring $S$ with support in $\ol{O}_7$ are given by:
$$
H^{n-2}_{\ol{O}_7}(S)=D_7,\quad H^{2n-5}_{\ol{O}_7}(S)=D_6',\quad H^{3n-8}_{\ol{O}_7}(S)=D_1,\quad H^{4n-11}_{\ol{O}_7}(S)=D_0,
$$	
\end{enumerate}

\end{theorem}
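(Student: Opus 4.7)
The plan for part~(1) is to read off the result from the analysis already carried out inside the proof of Theorem~\ref{thm:quiv3}. There, the $b$-function (\ref{eq:bs3}) together with \cite[Proposition~4.9]{lHorincz2019categories} yields the filtration $S\subsetneq \D_V f^{-1}\subsetneq S_f$; the unique simple quotients $L^{-2}\cong D_0$ and $L^{-1}\cong D_6'$ were identified via the characterization of $D_0$ as the only simple with $\overline{T^*_{\{0\}}V}$ in its characteristic variety (Proposition~\ref{prop:char566}) combined with semi-invariant weight considerations on the one-dimensional weight spaces $\sigma^{-1}$, $\sigma^{-2}$ of $S_f$. Lemma~\ref{lem:loc3} then forces $\tn{supp}\,S_f/\D_V f^{-1}\subset\overline{O}_1$, so the composition pattern $0\to D_1\to S_f/\D_V f^{-1}\to D_0\to 0$ is the only one compatible with the quiver; the complementary extension $0\to D_7\to \D_V f^{-1}/S\to D_6'\to 0$ then follows from the sequence (\ref{eq:O7}) together with $H^2_{\overline{O}_6}(S)=D_6$. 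Non-splitness of both extensions is forced by the shape of the quiver of Theorem~\ref{thm:quiv3}.

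For part~(2), the structural input I rely on is that in both Theorem~\ref{thm:quiv4} and Theorem~\ref{quiverN5} the vertex $(7)$ has arrows only to $(8)$. Inside the subcategory $\tn{mod}_{\GL}^{\overline{O}_7}(\D_V)$ the simple $D_7$ is therefore both projective and injective, so every module splits as a direct sum of copies of $D_7$ and a module supported in $\overline{O}_6$. In particular each $H^i_{\overline{O}_7}(S)$ splits in this way. Feeding this into the localization sequence (\ref{locallyclosedsequence}) with $Z=\overline{O}_7$, $Z'=\overline{O}_6$, $Y=O_7$, and using that $H^i_{\overline{O}_6}(S)=0$ for $i<2n-4$ by Theorem~\ref{thm:localcoho6}, one obtains $H^{n-2}_{\overline{O}_7}(S)\cong H^{n-2}_{O_7}(S)$, and the intersection-cohomology characterization of $D_7$ identifies the $D_7$-part of this module with a single copy of $D_7$; a multiplicity computation via Proposition~\ref{prop:char566} (the characteristic cycle of $D_7$ is irreducible) rules out higher multiplicities.

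The main technical step is to compute the remaining $\overline{O}_6$-supported pieces of $H^\bullet_{\overline{O}_7}(S)$. My plan is to use a desingularization of a subspace variety containing $\overline{O}_7$: $\pi=\pi_{223}\colon Y_{223}\to\overline{O}_8$ for $n=4$ and $\pi=\pi_{224}\colon Y_{224}\to\overline{O}_9$ for $n\geq 5$. Since these target varieties have rational singularities, $\mathbb{R}\pi_+\mc{O}_Y$ equals $\C[\overline{O}_8]$, resp.\ $\C[\overline{O}_9]$; setting $W=\pi^{-1}(\overline{O}_7)$, base change yields
\[
\mathbb{R}\pi_+\mathbb{R}\Gamma_W(\mc{O}_Y)=\mathbb{R}\Gamma_{\overline{O}_7}(\C[\overline{O}_8])\ \ (\text{resp.\ }\C[\overline{O}_9]).
\]
The spectral sequence $E_2^{i,j}=\mathbb{R}^i\pi_*\mathscr{H}^j_W(\mc{O}_Y)$ can then be evaluated term by term using Bott's Theorem (Lemma~\ref{BottLemma}) applied to the Schur functors of the tautological quotient bundles appearing in $\mathscr{H}^j_W(\mc{O}_Y)$, and the result is transferred back to $H^\bullet_{\overline{O}_7}(S)$ via the long exact sequence coming from the minimal free resolution of $\C[\overline{O}_8]$ (resp.\ $\C[\overline{O}_9]$) as an $S$-module, in close analogy with Section~\ref{sec:o5}.

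The hardest part will be to detect the $D_6'$ composition factor in the middle-degree piece $H^{2n-5}_{\overline{O}_7}(S)$, since $D_6'$ corresponds to the non-trivial equivariant local system on $O_6$ and is indistinguishable from $D_6$ by $\GL$-character data alone. To separate the two I would combine the Fourier-transform identities $\mc{F}(D_7)=D_1$ and $\mc{F}(D_6')=D_6'$ (Proposition~\ref{prop:fourier}) with the quiver-based observation that $D_6$ cannot appear as a composition factor of any $H^i_{\overline{O}_7}(S)$ (there is no non-zero path in Theorems~\ref{thm:quiv4} or~\ref{quiverN5} connecting vertex $(7)$ to vertex $(6)$ through vertices supported in $\overline{O}_7$), leaving $D_6'$ as the only possibility; the precise degrees $n-2,\,2n-5,\,3n-8,\,4n-11$ and simplicity of each cohomology module then follow from the Bott's Theorem output of the spectral sequence above.
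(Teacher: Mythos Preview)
Part~(1) is handled correctly; the paper simply notes it was proven inside Theorem~\ref{thm:quiv3}. For part~(2), your overall architecture---using $\pi_{223}$ and $\pi_{224}$, the resulting spectral sequences, and transferring back to $S$ via the free resolution of $\C[\overline{O}_8]$---matches the paper's.

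The gap is in how you propose to pin down the composition factors. Your claim that $D_6$ cannot appear in $H^i_{\overline{O}_7}(S)$ ``because there is no path from $(7)$ to $(6)$'' is invalid: the quiver governs extensions, hence the structure of projective and injective hulls, but it does \emph{not} constrain which simples may appear as composition factors of an arbitrary object of $\tn{mod}_{\GL}^{\overline{O}_7}(\D_V)$ such as a higher local cohomology module. Worse, the argument is self-defeating: in both Theorems~\ref{thm:quiv4} and~\ref{quiverN5}, the vertex $(6')$ lies in the component $\{(3),(6'),(4)\}$, which is equally disconnected from $(7)$, so your reasoning would exclude the very module $D_6'$ you want to detect. (The characteristic-cycle remark for bounding the multiplicity of $D_7$ is also not a valid argument---irreducibility of $\tn{charC}(D_7)$ says nothing about how many copies of $D_7$ occur---though the injective-hull description via \cite[Lemma~3.11]{lHorincz2019categories} together with $(7)$ being isolated in $\tn{mod}_{\GL}^{\overline{O}_7}$ does yield $H^{n-2}_{\overline{O}_7}(S)=D_7$.)

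The paper's route for $n=4$ is substantially more computational: weight bounds from Bott's theorem on kernels and cokernels of multiplication by $\det$ (Lemmas~\ref{lem:upperBoundsOnWeights},~\ref{lem:kerCharlemma}); explicit character checks in $\mc{O}_Y(\ast Y_7)$ to exclude $D_2,D_3,D_4,D_5,D_6$ as submodules of the relevant $H^j$ (Lemmas~\ref{lem:2notsub7},~\ref{lem:factsD5lc7},~\ref{lem:no34in7}); a Macaulay2 depth computation combined with \cite[Proposition~3.2]{varbaro} to rule out $D_6$ and $D_6'$ in degrees $j\geq 4$ (Lemma~\ref{lem:Varbaro}); and an invariant-theory argument with $\op{O}_4(\C)$ to certify nonvanishing in degrees $3$ and $5$ (Lemma~\ref{lem:firstFactsO7}). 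Only after this elimination does $H^3_{\overline{O}_7}(S)=D_6'$ fall out (Lemma~\ref{lem:D6primeisH3}). The case $n\geq 5$ is then deduced from $n=4$ via $\pi_{224}$ as you suggest, but still requires character input to identify $\mathbb{R}\pi'_*D_6'^{Y'}\cong D_6'$ (Lemma~\ref{lem:pushD6prime}) rather than any quiver argument.
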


We note that, when $n=4$, the cohomological degrees above are: $2$, $3$, $4$, $5$. Theorem \ref{loc07}(1) was proven during the proof of Theorem \ref{thm:quiv3}.

The appearance $D_6'$ in its local cohomology detects the bad singularities of $\ol{O}_7$.

\begin{corollary}\label{cor:sing}
For $n\geq 3$, the variety $\ol{O}_7$ is not normal, and for $n\geq 4$ it is not Cohen--Macaulay.
\end{corollary}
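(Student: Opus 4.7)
The goal is to derive both failures from Theorem~\ref{loc07}. For the failure of Cohen--Macaulayness when $n\ge 4$, I would apply the standard criterion that a closed subvariety $X\subset V$ of a smooth affine variety is Cohen--Macaulay if and only if $H^i_X(\mc{O}_V)=0$ for every $i\ne \codim(X,V)$. Here $\codim(\ol{O}_7,V)=n-2$, and Theorem~\ref{loc07}(2) exhibits $H^{2n-5}_{\ol{O}_7}(S)=D_6'\ne 0$; since $2n-5>n-2$ whenever $n\ge 4$, the criterion is violated and $\ol{O}_7$ cannot be Cohen--Macaulay.

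For non-normality, the crucial input is that the simple $D_6'$---which by Proposition~\ref{componentGrp} corresponds to the unique non-trivial $\mathbb{Z}/2\mathbb{Z}$-equivariant local system on $O_6$---appears as a composition factor of $H^*_{\ol{O}_7}(S)$ for all $n\ge 3$. For $n=3$, where $\ol{O}_7=V(f)$ is a reduced hypersurface, hence Cohen--Macaulay and thus $S_2$, Serre's criterion reduces normality to $R_1$. My plan is to show $\ol{O}_6\subset \Sing(\ol{O}_7)$, which has codimension one in $\ol{O}_7$ and so precludes $R_1$: restrict to the open $U=V\setminus \ol{O}_5$, on which $\ol{O}_7\cap U=O_6\cup O_7$, and observe via Theorem~\ref{loc07}(1) that $\mathscr{H}^1_{\ol{O}_7\cap U}(\mc{O}_U)=(S_f/S)|_U$ retains precisely the two distinct composition factors $D_7|_U$ and $D_6'|_U$ (the remaining simples $D_0$ and $D_1$ vanish on $U$, their supports lying inside $\ol{O}_5$). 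Were $\ol{O}_7\cap U$ smooth, this local cohomology module would equal the simple intersection cohomology module $L_{\ol{O}_7\cap U}$; the contradiction forces $O_6\subset \Sing(\ol{O}_7)$, and $\GL$-invariance closes the claim.

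For $n\ge 4$, the plan is to deduce non-normality from the $n=3$ case via Luna's slice theorem at a generic point $p\in O_6$: the stabilizer component group $\pi_0(G_p)=\mathbb{Z}/2\mathbb{Z}$ is $n$-independent, and since non-normality is a local analytic property, the non-normal germ exhibited for $n=3$ should persist. I expect the main obstacle to lie in making this reduction precise, as the Luna slice dimension $2n-4$ varies with $n$ and one must identify a uniform $n$-independent invariant of the slice singularity that carries the $\mathbb{Z}/2\mathbb{Z}$-monodromy witnessed by $D_6'$. A robust alternative, should the Luna comparison prove too delicate, is to establish failure of Serre's $S_2$ condition at $\ol{O}_6$ directly by showing $\op{depth}_{I_{\ol{O}_6}}\C[\ol{O}_7]<2$ through the spectral sequence $H^i_{\ol{O}_6}(H^j_{\ol{O}_7}(S))\Rightarrow H^{i+j}_{\ol{O}_6}(S)$, combined with Theorem~\ref{loc07}(2) and the local cohomology of lower orbit closures computed earlier.
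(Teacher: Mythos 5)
Your Cohen--Macaulay argument rests on a false criterion. The statement ``$X\subset V$ is Cohen--Macaulay if and only if $H^i_X(\mc{O}_V)=0$ for $i\neq\codim(X,V)$'' confuses local cohomology of the ambient ring with support in $X$ with local cohomology of the local ring of $X$ at its maximal ideal; it is the latter that characterizes the Cohen--Macaulay property. The former does not, and the paper itself contains counterexamples: by Theorem~\ref{thm:localcoho34} the orbit closure $\ol{O}_3$ (a rank-$\leq 1$ determinantal variety, which is Cohen--Macaulay by Hochster--Eagon) has $H^j_{\ol{O}_3}(S)\neq 0$ in \emph{two} degrees $j=2n-1$ and $j=4n-3$. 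So the contrapositive you invoke (nonvanishing above the codimension $\Rightarrow$ not CM) is false in characteristic zero. The correct tool is the inequality $\operatorname{depth}(S/I)\leq \dim S - \operatorname{cd}(I)$ (equivalently $\operatorname{cd}(I)\leq\operatorname{pd}_S(S/I)$, combined with Auslander--Buchsbaum), which is precisely what the paper obtains from \cite[Proposition 3.2]{varbaro}: the nonvanishing $H^{2n-5}_{\ol{O}_7}(S)=D_6'$ gives $\operatorname{depth}(S/I)\leq 2n+5 < 3n+2 = \dim(S/I)$ for $n\geq 4$.

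Your $n=3$ normality argument is essentially the paper's (they use the full $H^1_f(S)/D_7$ rather than restricting to $U$, but the mechanism --- a codimension-one composition factor $D_6'$ forcing $O_6\subset\Sing(\ol{O}_7)$, hence failure of $R_1$ for a hypersurface --- is the same). However, your $n\geq 4$ extension is a sketch with genuine gaps. Luna's slice theorem applies at points of \emph{closed} orbits, and $O_6$ is not closed, so the asserted slice comparison needs to be replaced by a cross-section/\'etale-slice construction along the non-closed orbit, and then one must control how the relevant singularity is detected by the slice; you acknowledge this is delicate, and indeed the codimension-one mechanism from $n=3$ cannot transfer verbatim since $\codim(\ol{O}_6,\ol{O}_7)=n-2\geq 2$ for $n\geq 4$. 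Your ``robust alternative'' via an $S_2$/depth spectral sequence is conceptually on the right track (for $n\geq 4$, $R_1$ may well hold and non-normality comes from $S_2$ failure), but you have not carried out the computation, and the relevant local cohomology is of the module $\C[\ol{O}_7]$, not of $S$, so extra bookkeeping via the defining ideal is required. The paper instead invokes a permanence result under collapsing, \cite[Theorem 1.4]{collapse}(1), which transfers the $n=3$ non-normality directly to $n\geq 4$ without reproving any depth estimates; this is the key external input your proposal does not identify.
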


\begin{proof}
Let $I$ denote the defining ideal of $\ol{O}_7$. By \cite[Proposition 3.2]{varbaro} and Theorem \ref{loc07}(2), $\op{depth}(S/I)\leq 2n+5 < 3n+2 = \dim(S/I)$ when $n\geq 4$.

Now we prove that $\ol{O}_7$ is not normal. When $n=3$, this follows from Theorem \ref{loc07}(1), since the support of $H^1_f(S)/D_7$ is $\ol{O}_6$, which is contained in the singular locus of $\ol{O}_7$ but has codimension 1. Now we can extrapolate the result readily to the case $n\geq 4$ using \cite[Theorem 1.4]{collapse}(1).
\end{proof}

\begin{remark}
In fact, the above shows that  $\ol{O}_7$ is not set-theoretically Cohen--Macaulay, i.e. there is no Cohen--Macaulay ideal with zero-set $\ol{O}_7$. Furthermore, using \cite[Theorem  3.1]{lovett} we see that all the other orbit closures are Cohen--Macaulay. We claim that they have also rational singularities. For the determinantal varieties this is known  \cite[Corollary 6.1.5]{weyman}. For $\ol{O}_1$ this follows from \cite[pg. 158, Exercise 8.]{weyman}. We are left with $\ol{O}_5$: for $n=2$ this follows from the description of the Bernstein--Sato polynomial of the hyperdeterminant (e.g. see \cite[Section 3.2]{perlman2020equivariant}) by \cite[Theorem 0.4]{saito}, which extends to the case $n\geq 3$ by \cite[Theorem 1.4]{collapse}(2). 
\end{remark}

\subsection{The case $n=4$}\label{sec:o7n4}  We first assume $n=4$.

\begin{lemma}\label{lem:firstFactsO7}
We have the following.
\begin{enumerate}
\item $H^2_{\ol{O}_7}(S)=D_7$.
\item $H^3_{\ol{O}_7}(S)	$ and $H^5_{\ol{O}_7}(S)$ are nonzero.
\end{enumerate}
	
\end{lemma}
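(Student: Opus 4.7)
The plan for part (1) is to combine the long exact sequence of local cohomology for the pair $\ol{O}_6\subset\ol{O}_7$ with the quiver description. The relevant segment is
\[ H^2_{\ol{O}_6}(S) \to H^2_{\ol{O}_7}(S) \to H^2_{O_7}(S) \to H^3_{\ol{O}_6}(S),\]
and Theorem \ref{thm:localcoho6}(2) vanishes both endpoints for $n=4$, yielding an isomorphism $H^2_{\ol{O}_7}(S)\cong H^2_{O_7}(S)$. By \cite[Lemma 3.11]{lHorincz2019categories}, the module $H^2_{O_7}(S)$ is the injective envelope of $D_7$ in $\opmod_{\GL}^{\ol{O}_7}(\D_V)$. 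Inspecting the quiver in Theorem \ref{thm:quiv4}, the only arrow into vertex $(7)$ originates from $(8)$, whose simple $D_8$ has support $\ol{O}_8\not\subset\ol{O}_7$. Hence $D_7$ admits no nontrivial equivariant extensions by other simples supported in $\ol{O}_7$, making it injective in that subcategory, so $H^2_{\ol{O}_7}(S)=D_7$.

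For part (2), I would first restrict to $U=V\setminus\ol{O}_6$ and apply Lemma \ref{lem:opencat2}: in the subcategory of $\D_U$-modules supported on $\ol{O}_7\setminus\ol{O}_6=O_7$, the module corresponding to $D_7$ is injective (the only arrow incident to $(7)$ comes from $(8)$, and $\ol{O}_8\cap U$ does not intersect $O_7$). Consequently $\mathscr{H}^i_{O_7}(\mc{O}_U)=0$ for $i\neq 2$, so that $H^i_{O_7}(S)$ is supported in $\ol{O}_6$ for every $i\geq 3$.

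Combined with Theorem \ref{thm:localcoho6}(2) and the long exact sequence of the pair $\ol{O}_6\subset\ol{O}_7$, this constrains how simples propagate into $H^\bullet_{\ol{O}_7}(S)$. For the nonvanishing of $H^3_{\ol{O}_7}(S)$, I would locate a copy of $D_6'$: it is not a composition factor of any $H^i_{\ol{O}_6}(S)$ (the only composition factors there arise through $Q_2, Q_1, D_0$, i.e.\ are among $\{D_0, D_2, D_6, D_8\}$), so if $D_6'$ appears in any $H^i_{\ol{O}_7}(S)$, then by the long exact sequence it must already appear in $H^i_{O_7}(S)$; the characteristic-cycle constraints of Proposition \ref{prop:char566}, together with the Cousin-type filtration on $O_7$, pin this down to $i=3$. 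The analogous argument for $H^5_{\ol{O}_7}(S)\neq 0$ would locate a copy of $D_0$; an Euler-characteristic obstruction in $K_0(\opmod_{\GL}(\D_V))$, comparing $\sum_i(-1)^i[H^i_{\ol{O}_7}(S)]$ with the class $[S]-[j_+ j^* S]$ for $j:V\setminus\ol{O}_7\hookrightarrow V$, forces $D_0$ to appear, and the admissible degree given the stratification is $i=4n-11=5$.

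The main obstacle is executing the Euler-characteristic / characteristic-cycle arguments cleanly enough to pin down the exact degrees. A cleaner alternative route would be to set up an explicit (partial) desingularization of $\ol{O}_7$ via the conormal correspondence with $\ol{O}_1$ (since $\ol{O}_7$ is the projective dual of the Segre cone $\ol{O}_1$), and then compute the relevant pushforward sheaves using Bott's theorem (Lemma \ref{BottLemma}), in the spirit of the computations in Section \ref{sec:o5}. Since $\ol{O}_7$ lacks an obvious Kempf-type collapsing, some care is required to set up a suitable birational model, but the resulting computation should uniformly handle both nonvanishings and in fact give the precise structure needed for Theorem \ref{loc07}(2).
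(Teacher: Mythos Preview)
Your argument for part (1) is correct and essentially matches the paper's: both invoke \cite[Lemma~3.11]{lHorincz2019categories} to identify $H^2_{O_7}(S)$ with the injective envelope of $D_7$ in $\tn{mod}_{\GL}^{\ol{O}_7}(\D_V)$, and then read off from the quiver in Theorem~\ref{thm:quiv4} that $D_7$ is already injective there. Your long exact sequence step (using Theorem~\ref{thm:localcoho6}(2) to kill $H^2_{\ol{O}_6}(S)$ and $H^3_{\ol{O}_6}(S)$) just makes explicit what the paper compresses into one sentence.

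Part (2), however, is not a proof. Your argument for $H^3_{\ol{O}_7}(S)\neq 0$ is circular: you propose to ``locate a copy of $D_6'$'' and then argue that \emph{if} $D_6'$ appears in some $H^i_{\ol{O}_7}(S)$, constraints force $i=3$. But nothing you wrote establishes that $D_6'$ appears at all; the characteristic cycles in Proposition~\ref{prop:char566} describe individual simples, not which simples occur as composition factors of a given local cohomology module. (In the paper, the identification $H^3_{\ol{O}_7}(S)=D_6'$ is Lemma~\ref{lem:D6primeisH3} and takes substantial further work.) Likewise, your Euler-characteristic argument for $D_0$ in $H^5$ presupposes knowledge of $[Rj_*j^*S]$ for $j:V\setminus\ol{O}_7\hookrightarrow V$, which is equivalent to knowing $\sum_i(-1)^i[H^i_{\ol{O}_7}(S)]$ itself. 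You acknowledge these gaps in your final paragraph, but the alternative desingularization route you sketch is also speculative.

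The paper's proof of (2) avoids all of this by a completely different and much shorter idea: invariant theory. Identifying $V$ with $4\times 4$ matrices under $\tn{O}_4(\C)\times\GL_4(\C)$, the variety $\ol{O}_7$ is cut out set-theoretically by the $3\times 3$ minors of $X^tX$. Taking $G=\tn{O}_4(\C)$-invariants and applying Lemma~\ref{lem:locinv} gives
\[
(H^i_{\ol{O}_7}(S))^G \cong H^i_I(R),
\]
where $R=S^G$ is the polynomial ring on $4\times 4$ symmetric matrices and $I$ is the ideal of $3\times 3$ minors. By \cite[(1.5)]{raicu2016ocal} this symmetric determinantal local cohomology is nonzero exactly for $i=3$ and $i=5$, hence so is $H^i_{\ol{O}_7}(S)$. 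No structural analysis of composition factors is needed at this stage.
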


\begin{proof}
(1) We have $H^2_{\ol{O}_7}(S)=D_7$ by \cite[Lemma 3.11]{lHorincz2019categories} and Theorem \ref{thm:quiv4}.

(2) Let $G=\tn{O}_4(\C)$. Similar to the proof of Lemma \ref{eq:locO5}, by Lemma \ref{lem:locinv} we have for all $i$
\[(H^i_{\ol{O}_7}(S))^G \cong H^i_{I}(R),\]
where $R=S^G$ is the polynomial ring on $4\times 4$ symmetric matrices, and $I$ is the ideal of $3\times 3$ minors. By \cite[(1.5)]{raicu2016ocal} we have that $H^i_{I}(R)\neq 0$ if and only if $i=3$ or $i=5$. Thus, $H^3_{\ol{O}_7}(S)	$ and $H^5_{\ol{O}_7}(S)$ are nonzero.
\end{proof}

\begin{remark}
In Section \ref{sec:o5} we have used Lemma \ref{lem:locinv} in a stronger way, working with representations of the special orthogonal group explicitly. However, in the proof above this would be less straightforward, as $\tn{O}_4(\C)$ is disconnected. This means that in the category of $\tn{O}_4(\C)\times \GL_n(\C)$-equivariant $\D_V$-modules some new objects may appear with non-trivial action of the component group of $\tn{O}_4(\C)$, i.e. different objects can have the same $\D$-module (but different equivariant) structure.
\end{remark}

\begin{lemma}\label{lem:Varbaro}
For all $j\geq 4$, $D_6$ and $D_6'$ are not composition factors of $H^j_{\ol{O}_7}(S)$.
\end{lemma}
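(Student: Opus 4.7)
The plan is to combine the locally closed long exact sequence for $\ol{O}_6 \subset \ol{O}_7$ with the known structure of $H^j_{\ol{O}_6}(S)$ from Theorem \ref{thm:localcoho6} and the depth bound of \cite[Proposition 3.2]{varbaro}. First, from the sequence
\[\cdots \to H^j_{\ol{O}_6}(S) \to H^j_{\ol{O}_7}(S) \to H^j_{O_7}(S) \to H^{j+1}_{\ol{O}_6}(S) \to \cdots,\]
any composition factor $L \in \{D_6, D_6'\}$ of $H^j_{\ol{O}_7}(S)$ must be accounted for either by $H^j_{\ol{O}_6}(S)$ (via the image of the leftmost map) or by $H^j_{O_7}(S)$. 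By Theorem \ref{thm:localcoho6} (for $n=4$) the only nonzero $H^j_{\ol{O}_6}(S)$ sit in degrees $j = 4, 6, 8$, equal to $Q_2, Q_1, D_0$ respectively, all having composition factors in $\{D_0, D_2, D_6\}$. In particular $D_6'$ is never a composition factor of any $H^j_{\ol{O}_6}(S)$, and $D_6$ appears only in $H^4_{\ol{O}_6}(S) = Q_2$.

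Next, I would control $H^j_{O_7}(S)$ using a $\GL$-equivariant desingularization of $\ol{O}_7$ of the type introduced in Section \ref{sec:relmatrices}, expressing the local cohomology as the pushforward cohomology of a relative hypermatrix bundle on a product of Grassmannians. Bott's theorem (Lemma \ref{BottLemma}), together with the character constraints on $D_6$ and $D_6'$ coming from Corollary \ref{cor:weightboundsD5n4}-type inequalities and Theorem \ref{thm:charMatrices}, should force that neither $D_6$ nor $D_6'$ appears as a composition factor of $H^j_{O_7}(S)$ for any $j\geq 4$. Combined with the first step, this immediately rules out $D_6'$ in $H^j_{\ol{O}_7}(S)$ for $j\geq 4$, and it rules out $D_6$ for $j\geq 5$.

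The remaining case is $D_6 \subset H^4_{\ol{O}_7}(S)$ coming from $Q_2 \to H^4_{\ol{O}_7}(S)$ in the long exact sequence. Here I would invoke \cite[Proposition 3.2]{varbaro}: the presence of $D_6$ as a composition factor at $j=4$ would, together with Lemma \ref{lem:firstFactsO7}(2) and the quiver relations of Theorem \ref{thm:quiv4} (which kill the compositions $(7) \to (8) \to (6)$ and $(6) \to (8) \to (7)$), produce a depth bound on $S/I_{\ol{O}_7}$ incompatible with the structure of the long exact sequence in the higher degrees $j = 5,6,7,8$. Thus the $D_6$-component of $Q_2$ must be annihilated in the map into $H^4_{\ol{O}_7}(S)$, finishing the proof. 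The main obstacle is executing the second step cleanly: the precise computation of the relevant pushforward cohomology along a desingularization of $\ol{O}_7$ is technical, and the interplay between Varbaro's depth bound and the quiver relations in the third step must be handled with care to avoid slipping into circular reasoning with Theorem \ref{loc07}(2).
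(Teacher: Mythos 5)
Your plan diverges substantially from the paper's argument, and in its current form it has concrete gaps. The paper's proof is a short computational one: the variety $\ol{O}_7$ is cut out set-theoretically by the explicit rank conditions $\op{rank} X\leq 3$, $\op{rank}(X^tX)\leq 2$, and a Macaulay2 computation shows that the ideal $J$ generated by the corresponding minors has $\op{pd}_S(S/J)=3$, hence $\op{depth}(S/J)=13$ by Auslander--Buchsbaum. Since $H^j_{\ol{O}_7}(S)=H^j_J(S)$ depends only on $\sqrt{J}$, Varbaro's Proposition 3.2 applied to $J$ immediately forces the supports of $H^j_J(S)$ for $j\geq 4$ to have dimension strictly below $\dim\ol{O}_6=12$, so $D_6$ and $D_6'$ cannot be composition factors. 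Your proposal never produces the input that makes Varbaro's inequality bite, namely the depth (or projective dimension) of some concrete ideal cutting out $\ol{O}_7$, and you instead try to back into a ``depth bound'' from the quiver and a hypothetical contradiction — this is the point where your third step becomes vague and, as you note yourself, risks circularity with Theorem \ref{loc07}(2).

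Beyond that structural issue, two specific steps are not well-founded. First, your second step invokes ``Corollary \ref{cor:weightboundsD5n4}-type inequalities'' for $D_6$ and $D_6'$, but that corollary constrains weights in $D_5$, and there is no analogous bound for $D_6'$ available at this stage of the argument (indeed, the character information on $D_6'$ in this paper is derived \emph{from} the structure of $H^3_{\ol{O}_7}(S)$, which lies downstream of the present lemma). Second, you propose a $\GL$-equivariant desingularization of $\ol{O}_7$ of subspace-variety type, but $\ol{O}_7$ is not a subspace variety, and the paper never constructs such a resolution; the relevant geometry the paper actually uses is the map $\pi_{223}:Y_{223}\to\ol{O}_8$ together with the hypersurface $Y_7=\pi_{223}^{-1}(\ol{O}_7)$, which is a different object. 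Without filling these holes, the plan does not constitute a proof. The much shorter and more robust route is the one the paper takes: explicit equations, a finite computation of $\op{pd}$, Auslander--Buchsbaum, and Varbaro.
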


\begin{proof}
The variety $\ol{O}_7$ is defined by the rank conditions $\op{rank} X \leq 3, \, \op{rank} (X^t X)\leq 2$ (cf. proof of Lemma \ref{lem:codim1}). Using Macaulay2 \cite{M2}, we obtain that the minimal free resolution of the ideal generated by the corresponding minors has length 3, hence depth 13 by the Auslander--Buchsbaum Formula (see, for example, \cite[Theorem 1.2.7]{weyman}). The result now follows from \cite[Proposition 3.2]{varbaro}.
\end{proof}

Let $Y=Y_{223}$, $\pi=\pi_{223}$, and $Y_7=\pi^{-1}(\ol{O}_7)$. Since $\ol{O}_8$ is the determinant hypersurface, it has rational singularities. Thus, $\mathbb{R}\pi_{\ast}\mc{O}_Y\cong \C[\ol{O}_8]$, so the spectral sequence $\mathbb{R}^i\pi_{\ast}\mathscr{H}^j_{Y_7}(\mc{O}_Y)\implies H^{i+j}_{\ol{O}_7}(\C[\ol{O}_8])$ yields
\begin{equation}\label{pushDown223}
\mathbb{R}^i\pi_{\ast}\mathscr{H}^1_{Y_7}(\mc{O}_Y)	= H^{i+1}_{\ol{O}_7}(\C[\ol{O}_8]).
\end{equation}
The coordinate ring $\C[\ol{O}_8]$ has equivariant minimal free resolution as follows
\begin{equation}\label{resDet}
0\longrightarrow S\otimes \det \longrightarrow S\longrightarrow 	\C[\ol{O}_8]\longrightarrow 0.
\end{equation}
Using the above information, we deduce the following.

\begin{lemma}
We have that $H^i_{\ol{O}_7}(S)=0$ for $i\geq 6$.
\end{lemma}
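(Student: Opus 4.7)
The plan is to follow the template of the proof of Proposition \ref{locorbit06n4}(2) almost verbatim, using the identification (\ref{pushDown223}) together with the resolution (\ref{resDet}) to reduce the vanishing problem to a dimension bound on a pushforward from a low-dimensional Grassmannian.

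First I would observe that $Y = Y_{223}$ is a geometric vector bundle over
$$\mathbb{G}_{223} = \mathbb{G}(2;A) \times \mathbb{G}(2;B) \times \mathbb{G}(3;C) \cong \mathbb{G}(3;\C^4) \cong \mathbb{P}^3,$$
since $\dim A = \dim B = 2$ forces the first two factors to be points. Because $\pi$ factors as $Y \hookrightarrow V \times \mathbb{P}^3 \to V$ with the first map a closed embedding, for any quasi-coherent sheaf on $Y$ the coherent derived pushforward $\mathbb{R}^i\pi_\ast$ vanishes whenever $i > \dim \mathbb{P}^3 = 3$. Applying this to the sheaf $\mathscr{H}^1_{Y_7}(\mc{O}_Y)$ and combining with (\ref{pushDown223}) gives $H^j_{\ol{O}_7}(\C[\ol{O}_8]) = 0$ for all $j \geq 5$.

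Next, I would apply the functor $\mathbb{R}\Gamma_{\ol{O}_7}$ to the short exact sequence (\ref{resDet}) to obtain the long exact sequence
$$\cdots \longrightarrow H^{j-1}_{\ol{O}_7}(\C[\ol{O}_8]) \longrightarrow H^j_{\ol{O}_7}(S) \otimes \det \xrightarrow{\,\cdot\, \det\,} H^j_{\ol{O}_7}(S) \longrightarrow H^j_{\ol{O}_7}(\C[\ol{O}_8]) \longrightarrow \cdots$$
where the middle arrow is multiplication by the determinantal semi-invariant cutting out $\ol{O}_8$. For $j \geq 6$ both flanking terms vanish by the previous step, so this multiplication map is an isomorphism. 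On the other hand, $\ol{O}_7 \subset \ol{O}_8 = \{\det = 0\}$, so $\det$ acts locally nilpotently on any $S$-module supported in $\ol{O}_7$. A module on which a locally nilpotent endomorphism is also invertible must be zero, forcing $H^j_{\ol{O}_7}(S) = 0$ for $j \geq 6$.

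I do not anticipate a serious obstacle: all the geometric inputs — rational singularities of $\ol{O}_8$ yielding $\mathbb{R}\pi_\ast \mc{O}_Y = \C[\ol{O}_8]$, the fact that $Y_7 \subset Y$ is a divisor (hence $\mathbb{R}\Gamma_{Y_7}(\mc{O}_Y) = \mathscr{H}^1_{Y_7}(\mc{O}_Y)[-1]$), and the commutation $\mathbb{R}\Gamma_{\ol{O}_7}\mathbb{R}\pi_\ast = \mathbb{R}\pi_\ast\mathbb{R}\Gamma_{Y_7}$ — are already used in the analogous $\ol{O}_5$ calculation. The only point that would deserve a brief sanity check is confirming that $Y_7 = \pi^{-1}(\ol{O}_7)$ is indeed a hypersurface in $Y$, which follows since $\pi$ restricts to an isomorphism over the open orbit $O_8$ and $\ol{O}_7$ is a divisor in $\ol{O}_8$.
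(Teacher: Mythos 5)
Your proof is correct and follows essentially the same route as the paper's: combine the identification $\mathbb{R}^i\pi_{\ast}\mathscr{H}^1_{Y_7}(\mc{O}_Y) = H^{i+1}_{\ol{O}_7}(\C[\ol{O}_8])$ with the Koszul resolution of $\C[\ol{O}_8]$ to deduce that multiplication by $\det$ is an isomorphism on $H^i_{\ol{O}_7}(S)$ for $i\geq 6$, then conclude by local nilpotence. The one minor deviation is that you justify $\mathbb{R}^{\geq 4}\pi_\ast = 0$ via the crude dimension bound $\dim \mathbb{G}_{223} = \dim\mathbb{P}^3 = 3$ rather than citing Bott's theorem as the paper does — this is a perfectly valid (and arguably cleaner) shortcut, since only the range of nonvanishing is needed here, not the precise Bott computation.
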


\begin{proof}
By Lemma \ref{BottLemma} we have that $\mathbb{R}^i\pi_{\ast}\mathscr{H}^1_{Y_7}(\mc{O}_Y)$ is zero for $i\geq 4$. Thus, by (\ref{pushDown223}) and the long exact sequence of local cohomology of (\ref{resDet}), we conclude that for $i\geq 6$ multiplication by $\det$ is an isomorphism on $H^i_{\ol{O}_7}(S)$. Since $H^i_{\ol{O}_7}(S)$ has support inside $\ol{O}_7$, it follows that $H^i_{\ol{O}_7}(S)=0$ for $i\geq 6$.
\end{proof}

We obtain the following long exact sequence of local cohomology.

$$
0\to H^1_{\ol{O}_7}(\C[\ol{O}_8])\to H^2_{\ol{O}_7}(S)\otimes \det \to H^2_{\ol{O}_7}(S)\to H^2_{\ol{O}_7}(\C[\ol{O}_8])\to H^3_{\ol{O}_7}(S)\otimes \det \to H^3_{\ol{O}_7}(S)\to H^3_{\ol{O}_7}(\C[\ol{O}_8])\to 
$$
$$
H^4_{\ol{O}_7}(S)\otimes \det \to H^4_{\ol{O}_7}(S)\to H^4_{\ol{O}_7}(\C[\ol{O}_8])\to H^5_{\ol{O}_7}(S)\otimes \det \to H^5_{\ol{O}_7}(S)\to 0
$$

For the remainder of the subsection we use the above long exact sequence to determine $H^i_{\ol{O}_7}(S)$ for $i=3,4,5$. Namely, we will use that $H^{i-1}_{\ol{O}_7}(\C[\ol{O}_8])$ contains the kernel of $H^i_{\ol{O}_7}(S)\otimes \det \to H^i_{\ol{O}_7}(S)$ as a subrepresentation, and it contains the cokernel of $H^{i-1}_{\ol{O}_7}(S)\otimes \det \to H^{i-1}_{\ol{O}_7}(S)$ as a subrepresentation.

\begin{lemma}\label{lem:upperBoundsOnWeights}
For $i=2,3,4,5$, we have that all subrepresentations $\bS_{\alpha}A\oo \bS_{\beta}B\oo \bS_{\gamma}C$ of $H^i_{\ol{O}_7}(S)$ satisfy $\gamma_{6-i}\leq -i+1$	.
\end{lemma}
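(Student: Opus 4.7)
The plan is to exploit the displayed long exact sequence together with two facts: $\det$ acts locally nilpotently on $H^i_{\ol{O}_7}(S)$, and by Bott's theorem every $\GL(C)$-weight appearing in $H^{i-1}_{\ol{O}_7}(\C[\ol{O}_8])$ satisfies a single equation on its $(6-i)$-th entry. The resulting argument is uniform in $i\in\{2,3,4,5\}$ and does not require induction on $i$.

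Given a nonzero highest weight vector $v$ of weight $(\alpha_0,\beta_0,\gamma_0)$ in $H^i_{\ol{O}_7}(S)$, I first take the minimal $N\geq 1$ with $\det^N\cdot v=0$ (which exists since $\ol{O}_7\subset V(\det)$, together with the fact that isotypic components are finite dimensional by \cite[Proposition 3.14]{lHorincz2019categories}). Writing $\sigma=(2,2)\times(2,2)\times(1,1,1,1)$ for the $\GL$-weight of $\det$, the element $\det^{N-1}v\otimes 1\in H^i_{\ol{O}_7}(S)\otimes\det$ is a nonzero highest weight vector of weight $(\alpha_0,\beta_0,\gamma_0)+N\sigma$, and it lies in the kernel of $\cdot\det\colon H^i_{\ol{O}_7}(S)\otimes\det\to H^i_{\ol{O}_7}(S)$. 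By the long exact sequence displayed above the lemma, this weight must occur in $H^{i-1}_{\ol{O}_7}(\C[\ol{O}_8])$.

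Next I apply (\ref{pushDown223}), which identifies $H^{i-1}_{\ol{O}_7}(\C[\ol{O}_8])$ with $\mathbb{R}^{i-2}\pi_{\ast}\mathscr{H}^1_{Y_7}(\mc{O}_Y)$. Since $n=4$, the base of the bundle $Y_{223}$ reduces to the single Grassmannian $\mathbb{G}(3;C)\cong\mathbb{P}^3$. Bott's theorem (Lemma \ref{BottLemma}) with $m=4$, $k=3$ then forces $p=5-i$ for nonvanishing cohomology in degree $i-2$, and the corresponding $\GL(C)$-weight $\mu(p)\in\mathbb{Z}^4$ has its $(p+1)=(6-i)$-th entry equal to $p-3=-i+2$. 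Hence every $\GL(C)$-weight $\gamma'$ appearing in $H^{i-1}_{\ol{O}_7}(\C[\ol{O}_8])$ satisfies $\gamma'_{6-i}=-i+2$. Applied to our shifted weight $\gamma_0+N(1,1,1,1)$, this yields $\gamma_{0,6-i}+N=-i+2$, hence $\gamma_{0,6-i}\leq -i+1$ since $N\geq 1$. The only subtle ingredient is the Bott verification on $\mathbb{G}(3;C)$, which is a direct bookkeeping computation; in particular no information about the subtle cases $i=3,4$ is fed into the bound for $i=2$, so the cases are truly independent.
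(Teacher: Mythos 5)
Your proof is correct and uses essentially the same ingredients as the paper's argument: the long exact sequence coming from multiplication by $\det$ on the resolution of $\C[\ol{O}_8]$, the identification of $H^{i-1}_{\ol{O}_7}(\C[\ol{O}_8])$ with $\mathbb{R}^{i-2}\pi_{\ast}\mathscr{H}^1_{Y_7}(\mc{O}_Y)$ from \eqref{pushDown223}, and Bott's theorem pinning the $(6-i)$-th $C$-entry of any weight in that module to be exactly $-i+2$. The paper phrases the conclusion as a contradiction (assume $\gamma_{6-i}\geq -i+2$, show inductively $\det^p v\neq 0$ for all $p$, contradicting the support condition), whereas you argue directly by choosing the minimal $N\geq 1$ with $\det^N v=0$ and reading off the inequality from the Bott constraint on the kernel element $\det^{N-1}v\otimes 1$; these are two presentations of the same argument and both are valid. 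One small remark: your appeal to finite-dimensionality of isotypic components (via \cite[Proposition 3.14]{lHorincz2019categories}) to justify the existence of $N$ is superfluous --- local nilpotency of $\det$ on $H^i_{\ol{O}_7}(S)$ already follows purely from the support being contained in $V(\det)=\ol{O}_8$, with no finiteness input needed.
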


\begin{proof}
Consider the exact sequence:
$$
H^{i-1}_{\ol{O}_7}(\C[\ol{O}_8])\to H^i_{\ol{O}_7}(S)\otimes \det \to H^i_{\ol{O}_7}(S).
$$
By (\ref{pushDown223}) we have $\mathbb{R}^{i-2}\pi_{\ast}\mathscr{H}^1_{Y_7}(\mc{O}_Y)\cong H^{i-1}_{\ol{O}_7}(\C[\ol{O}_8])$. Since $\mathscr{H}^1_{Y_7}(\mc{O}_Y)$ is a direct sum of bundles of the form $\bS_{\lambda}A\oo \bS_{\mu}B\oo \bS_{\nu}\mc{Q}$, it follows from  Lemma \ref{BottLemma} that subrepresentations $\bS_{\lambda}A\oo \bS_{\mu}B\oo \bS_{\nu}C$ of $H^{i-1}_{\ol{O}_7}(\C[\ol{O}_8])$ satisfy $\nu_{6-i}=-i+2$. Suppose for contradiction that $\bS_{\alpha}A\oo \bS_{\beta}B\oo \bS_{\gamma}C$ in $H^i_{\ol{O}_7}(S)$ satisfies $\gamma_{6-i}\geq-i+2$, and let $v$ be its highest weight vector. Then $v\otimes \det^p=(v\oo \det^{p-1})\oo \det$ is not in the kernel of the multiplication map $H^i_{\ol{O}_7}(S)\otimes \det \to H^i_{\ol{O}_7}(S)$ for all $p\geq 1$. Thus, $v$ is not supported inside $\ol{O}_8$, a contradiction.
\end{proof}

\begin{lemma}\label{lem:kerCharlemma}
Let $M$ be a simple module in $\tn{mod}_{\GL}(\D_V)$, and let $K$ denote the kernel of the multiplication by determinant map: $M\otimes \det \to M$. If $M\neq S$ then $K\neq 0$. Furthermore, the following is true:
\begin{enumerate}
\item If $M$ is equal to $D_3$, $D_4$, or $D_5$, then all subrepresentations $\bS_{\alpha}A\oo \bS_{\beta}B\oo \bS_{\gamma}C$ of $K$ satisfy $\gamma_3=-1$.
\item If $M$ is equal to $D_1$ or $D_2$, then all subrepresentations $\bS_{\alpha}A\oo \bS_{\beta}B\oo \bS_{\gamma}C$ of $K$ satisfy $\gamma_2=-2$.
\item If $M$ is equal to $D_0$, then all subrepresentations $\bS_{\alpha}A\oo \bS_{\beta}B\oo \bS_{\gamma}C$ of $K$ satisfy $\gamma_1=-3$. 

\end{enumerate}
\end{lemma}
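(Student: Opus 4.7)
The plan is to first establish $K\neq 0$ by a support argument, and then for each simple $M \in \{D_0,D_1,D_2,D_3,D_4,D_5\}$ realize $M$ (or an ambient module containing $M$) as a top local cohomology along its support, extract $K$ from the long exact sequence of $0\to S\otimes \det \xrightarrow{\det} S\to \mathbb{C}[\ol{O}_8]\to 0$, and then bound the weights via Bott's Theorem applied to the equivariant desingularization $\pi=\pi_{223}$.

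For the non-vanishing of $K$: any simple $M\neq S$ has support contained in the determinantal hypersurface $\ol{O}_8 = \{\det=0\}$, so every element of $M$ is annihilated by some power of $\det$. Given $0\neq m \in M$ and $k\geq 1$ minimal with $\det^k m = 0$, the element $\det^{k-1}m \in K$ is nonzero.

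For the weight constraints, I realize each $D_i$ as the top local cohomology of $S$ along its support: $D_i = H^{2n-1}_{\ol{O}_i}(S)$ for $i\in\{3,4\}$ by Theorem \ref{thm:localcoho34}; $D_5 = H^5_{\ol{O}_5}(S)$ by Theorem \ref{loc05}(2); $D_1 = H^{10}_{\ol{O}_1}(S)$ by Theorem \ref{BettiThm}(2); $D_0 = H^{16}_{\{0\}}(S)$ by definition; while $D_2$ sits as a submodule in $H^9_{\ol{O}_2}(S)=Q_1$ via the filtration of $S_{\det}$ (Theorem \ref{localSegre}(2)). Applying $\mathbb{R}\Gamma_{\ol{O}_i}$ to the short exact sequence above, and using that $H^{c-1}_{\ol{O}_i}(S)=0$ in every case (verified from the cited theorems), one obtains an isomorphism
\[
H^{c-1}_{\ol{O}_i}(\mathbb{C}[\ol{O}_8]) \xrightarrow{\ \sim\ } \ker\!\bigl(H^{c}_{\ol{O}_i}(S)\oo \det \xrightarrow{\det} H^{c}_{\ol{O}_i}(S)\bigr).
\]
For $i\neq 2$ this identifies $K$ with $H^{c-1}_{\ol{O}_i}(\mathbb{C}[\ol{O}_8])$; for $i=2$ we instead get $K \subset \ker(\det\!:\!Q_1\oo\det\to Q_1) = H^{8}_{\ol{O}_2}(\mathbb{C}[\ol{O}_8])$, which is enough for the weight bound.

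To control the weights of $H^{c-1}_{\ol{O}_i}(\mathbb{C}[\ol{O}_8])$, use that $\ol{O}_8$ has rational singularities with resolution $\pi: Y=Y_{223}\to \ol{O}_8$, so $\mathbb{R}\pi_*\mathcal{O}_Y \cong \mathbb{C}[\ol{O}_8]$ and there is a spectral sequence
\[
E_2^{p,q}=\mathbb{R}^p\pi_*\mathscr{H}^q_{\pi^{-1}(\ol{O}_i)}(\mathcal{O}_Y) \Longrightarrow H^{p+q}_{\ol{O}_i}(\mathbb{C}[\ol{O}_8]).
\]
Since $Y$ is a vector bundle over $\mathbb{G}(3;C)\cong \mathbb{P}^3$, the sheaves on the $E_2$-page are direct sums of bundles built from $\mathcal{Q}_C$, and Lemma \ref{BottLemma} forces any $\bS_\nu C$-summand occurring in $\mathbb{R}^{p}\pi_*$ to satisfy $\nu_{4-p}=-p$. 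Computing $\op{codim}(\pi^{-1}(\ol{O}_i),Y)$ via the fiber geometry of $\pi$ (for a tensor whose $C$-support has dimension $d$, the fiber of $\pi$ is $\mathbb{G}(3-d;C/C_x)$), one finds that a single $(p,q)$ contributes to total degree $c-1$: $p=1$ for $i\in\{3,4,5\}$ giving $\nu_3=-1$, $p=2$ for $i\in\{1,2\}$ giving $\nu_2=-2$, and $p=3$ for $i=0$ giving $\nu_1=-3$. Since $K\hookrightarrow H^{c-1}_{\ol{O}_i}(\mathbb{C}[\ol{O}_8])$ sits inside $M\oo\det$, the bound on $\nu$ becomes the asserted $\gamma_k$.

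The main obstacle is the case-by-case bookkeeping: for each $i$ one must carefully compute $\op{codim}(\pi^{-1}(\ol{O}_i),Y)$, verify that only a single $(p,q)$ pair on the $E_2$-page contributes to total degree $c-1$ (so as to pin down the Bott cutoff uniquely), and in the case $i=2$ confirm that the weight bound on the bigger kernel $\ker(\det\!:\!Q_1\oo\det\to Q_1)$ indeed controls the subkernel coming from $D_2\oo\det$.
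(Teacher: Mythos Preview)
Your proposal is correct and follows essentially the same approach as the paper's proof: identify $K$ with (or embed it into) $H^{c_i-1}_{\ol{O}_i}(\C[\ol{O}_8])$ via the long exact sequence of $0\to S\otimes\det\to S\to\C[\ol{O}_8]\to 0$, then use the spectral sequence $\mathbb{R}^p\pi_{223,*}\mathscr{H}^q_{Y_i}(\mc{O}_Y)\Rightarrow H^{p+q}_{\ol{O}_i}(\C[\ol{O}_8])$ together with Bott's theorem on $\mathbb{G}(3;C)\cong\mathbb{P}^3$ to read off the $\gamma$-constraint. You are in fact slightly more explicit than the paper in flagging the case-by-case verification that only one $(p,q)$ contributes; this is indeed needed (for $i=1,2$ one must check that the $D_0^Y$-terms at higher $q$ have vanishing $\mathbb{R}^{p}\pi_*$, which follows since their $\mc{Q}$-weights satisfy $\lambda_1\le -4$).
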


\begin{proof}
Let $i=0,1,3,4,5$, and let $c_i=\codim(\ol{O}_i,V)$. We have $H^{c_i}_{\ol{O}_i}(S)=D_i$ in these cases, so that
$$
K=H^{c_i-1}_{\ol{O}_i}(\C[\ol{O}_8]).
$$
If $i=2$, then $H^{c_i}_{\ol{O}_i}(S)$ is a non-trivial extension of $D_0$ by $D_2$, so that $K$ is a submodule of $H^{c_i-1}_{\ol{O}_i}(\C[\ol{O}_8])$.
Let $Y=Y_{223}$ and $\pi=\pi_{223}$ as above, and set $Y_i=\pi^{-1}(\ol{O}_i)$. Again, since $\ol{O}_8$ has rational singularities, we have $\mathbb{R}\pi_{\ast}\mc{O}_Y\cong \C[\ol{O}_8]$, so there is a spectral sequence $\mathbb{R}^p\pi_{\ast}\mathscr{H}^q_{Y_i}(\mc{O}_Y)\implies H^{p+q}_{\ol{O}_i}(\C[\ol{O}_8])$.

By Lemma \ref{BottLemma}, we have that $\mathbb{R}^p\pi_{\ast}\mathscr{H}^q_{Y_i}(\mc{O}_Y)$ satisfies:
\begin{enumerate}
\item if $p=1$ then all subrepresentations $\bS_{\alpha}A\oo \bS_{\beta}B\oo \bS_{\gamma}C$ of  $\mathbb{R}^p\pi_{\ast}\mathscr{H}^q_{Y_i}(\mc{O}_Y)$	satisfy $\gamma_3=-1$,
\item if $p=2$ then all subrepresentations $\bS_{\alpha}A\oo \bS_{\beta}B\oo \bS_{\gamma}C$ of  $\mathbb{R}^p\pi_{\ast}\mathscr{H}^q_{Y_i}(\mc{O}_Y)$	satisfy $\gamma_2=-2$,
\item if $p=3$ then all subrepresentations $\bS_{\alpha}A\oo \bS_{\beta}B\oo \bS_{\gamma}C$ of  $\mathbb{R}^p\pi_{\ast}\mathscr{H}^q_{Y_i}(\mc{O}_Y)$	satisfy $\gamma_1=-3$.
\end{enumerate}
Let $c_i^Y=\codim(Y_i,Y)$. If $i=0$, then $c_i^Y=12$ and $c_i=16$, so $p=16-12-1=3$. If $i=1$, then $c_i^Y=7$ and $c_i=10$, so $p=10-7-1=2$. If $i=2$, then $c_i^Y=6$ and $c_i=9$, so $p=9-6-1=2$. If $i=3,4$, then $c_i^Y=5$ and $c_i=7$, so $p=7-5-1=1$. If $i=5$, then $c_i^Y=3$ and $c_i=5$, so $p=5-3-1=1$.
\end{proof}

\begin{lemma}\label{lem:2notsub7}
We have that $D_2$ is not a submodule of $H^4_{\ol{O}_7}(S)$, and $D_6$ is not a submodule of $H^3_{\ol{O}_7}(S)$.	
\end{lemma}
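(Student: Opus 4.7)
The plan is to derive a contradiction in each case by combining the long exact sequence
\[
H^{i-1}_{\ol{O}_7}(\C[\ol{O}_8])\longrightarrow H^i_{\ol{O}_7}(S)\otimes \det \xrightarrow{\,\det\,} H^i_{\ol{O}_7}(S)
\]
(arising from $0\to S\otimes \det \to S\to \C[\ol{O}_8]\to 0$) with the identification $H^{i-1}_{\ol{O}_7}(\C[\ol{O}_8])\cong \mathbb{R}^{i-2}\pi_{223*}\mathscr{H}^1_{Y_7}(\mc{O}_Y)$ from (\ref{pushDown223}), Bott's theorem (Lemma~\ref{BottLemma}) on the rank-$3$ tautological quotient $\mc{Q}_C$ over $\mathbb{G}(3,C)\cong\mathbb{P}^3$, and the explicit non-occurrences recorded in Lemma~\ref{charSfn3}(5). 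The key geometric input is that $Y_7\subset Y_{223}$ is the relative $2\times 2\times 3$ hyperdeterminantal hypersurface, so fiberwise $\mathscr{H}^1_{Y_7}(\mc{O}_Y)$ recovers the $n=3$ module $S_f/S$ whose $\GL$-character is controlled by the results of Subsection~\ref{sec:n3}.

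For part~(1), suppose $D_2\hookrightarrow H^4_{\ol{O}_7}(S)$ and focus on the one-dimensional extremal summand $\bS_{(-6,-6)}A\oo \bS_{(-6,-6)}B\oo \bS_{(-3,-3,-3,-3)}C$ of $D_2$ arising from $\lambda=(-3,-3,-3,-3)\in W^1$. Because $\lambda+(1,1,1,1)\notin W^1$, multiplication by $\det$ annihilates this summand inside $D_2$, and hence inside the ambient $H^4_{\ol{O}_7}(S)$; after the weight shift by $\sigma_8=((2,2),(2,2),(1,1,1,1))$ its image in $H^4_{\ol{O}_7}(S)\otimes \det$---namely $\bS_{(-4,-4)}A\oo \bS_{(-4,-4)}B\oo \bS_{(-2,-2,-2,-2)}C$---therefore lies in the kernel of multiplication by $\det$, i.e.\ in the image of $\mathbb{R}^2\pi_{223*}\mathscr{H}^1_{Y_7}(\mc{O}_Y)$. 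Bott's theorem identifies the unique $\nu\in\mathbb{Z}^3_{\dom}$ producing the $C$-weight $(-2,-2,-2,-2)$ in $\mathbb{R}^2\pi_{223*}$ as $\nu=(-2,-3,-3)$, so $\bS_{(-4,-4)}A\oo \bS_{(-4,-4)}B\oo \bS_{(-2,-3,-3)}\mc{Q}_C$ must occur in $\mathscr{H}^1_{Y_7}(\mc{O}_Y)$. Fiberwise this reduces to the occurrence of $\bS_{(-4,-4)}A\oo \bS_{(-4,-4)}B\oo \bS_{(-2,-3,-3)}C$ in the $n=3$ module $S_f$, which is precisely one of the representations forbidden by Lemma~\ref{charSfn3}(5); this yields the contradiction.

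For part~(2) we run the parallel argument on $H^3_{\ol{O}_7}(S)$ starting from the extremal summand $\bS_{(-4,-4)}A\oo \bS_{(-4,-4)}B\oo \bS_{(-2,-2,-2,-2)}C$ of $D_6$ at $\lambda=(-2,-2,-2,-2)\in W^2$; again $\lambda+(1,1,1,1)\notin W^2$, so $\det$ kills this summand. After the $\sigma_8$-shift the $C$-weight becomes $(-1,-1,-1,-1)$, which via Bott on $\mathbb{R}^1\pi_{223*}$ corresponds to $\nu=(-1,-1,-2)$, forcing $\bS_{(-2,-2)}A\oo \bS_{(-2,-2)}B\oo \bS_{(-1,-1,-2)}\mc{Q}_C$ inside $\mathscr{H}^1_{Y_7}(\mc{O}_Y)$---which on fibers demands $\bS_{(-2,-2)}A\oo \bS_{(-2,-2)}B\oo \bS_{(-1,-1,-2)}C\subset S_f$, again ruled out by Lemma~\ref{charSfn3}(5).

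The only subtlety is the fiberwise reduction from nonoccurrence in $\mathscr{H}^1_{Y_7}(\mc{O}_Y)$ to nonoccurrence in the $n=3$ module $S_f$, but this is clean since $\mathbb{G}(2,A)$ and $\mathbb{G}(2,B)$ are single points (so $\mc{Q}_A=A$, $\mc{Q}_B=B$) and $Y_7$ genuinely is the relative $n=3$ hyperdeterminantal hypersurface; the $\GL$-equivariant character of $\mathscr{H}^1_{Y_7}(\mc{O}_Y)$ is then directly read off from the $n=3$ data already assembled in Lemma~\ref{charSfn3}, so the entire proof becomes a careful weight bookkeeping along the long exact sequence, with Lemma~\ref{charSfn3}(5) providing the two decisive vanishings.
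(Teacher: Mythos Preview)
Your proposal is correct and follows essentially the same approach as the paper's proof: both identify a specific extremal representation in $D_2$ (resp.\ $D_6$) that is killed by multiplication by $\det$, push it into the kernel of $H^i_{\ol{O}_7}(S)\otimes\det\to H^i_{\ol{O}_7}(S)$ via the assumed embedding, and then use the identification $H^{i-1}_{\ol{O}_7}(\C[\ol{O}_8])\cong \mathbb{R}^{i-2}\pi_{223*}\mathscr{H}^1_{Y_7}(\mc{O}_Y)$ together with Bott's theorem to reduce to the nonoccurrence of $\bS_{(-4,-4)}A\oo \bS_{(-4,-4)}B\oo \bS_{(-2,-3,-3)}\mc{Q}$ (resp.\ $\bS_{(-2,-2)}A\oo \bS_{(-2,-2)}B\oo \bS_{(-1,-1,-2)}\mc{Q}$) in $\mc{O}_Y(\ast Y_7)$, which is precisely Lemma~\ref{charSfn3}(5). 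The paper phrases the last step as nonoccurrence in $\mc{O}_Y(\ast Y_7)$ rather than in $S_f$, but since $\mathscr{H}^1_{Y_7}(\mc{O}_Y)$ is a quotient of $\mc{O}_Y(\ast Y_7)$ this is the same check.
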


\begin{proof}
We continue to write $Y=Y_{223}$ and $\pi=\pi_{223}$. By Theorem \ref{thm:charMatrices} we have that $\bS_{(-6,-6)}A\oo \bS_{(-6,-6)}B\oo \bS_{(-3,-3,-3,-3)}C$ is a subrepresentation of $D_2$, so that $\bS_{(-4,-4)}A\oo \bS_{(-4,-4)}B\oo \bS_{(-2,-2,-2,-2)}C$	 is a subrepresentation of $D_2\otimes \det$. By Theorem \ref{thm:charMatrices}, $D_2$ has no subrepresentations $\bS_{\alpha}A\oo \bS_{\beta}B\oo \bS_{\gamma}C$ satisfying $\gamma_2=-2$, so it follows that $\bS_{(-4,-4)}A\oo \bS_{(-4,-4)}B\oo \bS_{(-2,-2,-2,-2)}C$ belongs to the kernel of the multiplication map $D_2\otimes \det\to D_2$. Similarly, $\bS_{(-2,-2)}A\oo \bS_{(-2,-2)}B\oo \bS_{(-1,-1,-1,-1)}C$ belongs to the kernel of $D_6\oo \det \to D_6$.

It suffices to show that $\bS_{(-4,-4)}A\oo \bS_{(-4,-4)}B\oo \bS_{(-2,-2,-2,-2)}C$ is not in $H^3_{\ol{O}_7}(\C[\ol{O}_8])\cong \mathbb{R}^2\pi_{\ast}\mathscr{H}^1_{Y_7}(\mc{O}_Y)$ and that $\bS_{(-2,-2)}A\oo \bS_{(-2,-2)}B\oo \bS_{(-1,-1,-1,-1)}C$ does not belong to $H^2_{\ol{O}_7}(\C[\ol{O}_8])\cong \mathbb{R}^1\pi_{\ast}\mathscr{H}^1_{Y_7}(\mc{O}_Y)$. By Bott's Theorem (Lemma \ref{BottLemma}) it suffices to show that neither $\bS_{(-4,-4)}A\oo \bS_{(-4,-4)}B\oo \bS_{(-2,-3,-3)}\mc{Q}$ nor $\bS_{(-2,-2)}A\oo \bS_{(-2,-2)}B\oo \bS_{(-1,-1,-2)}\mc{Q}$ are subbundles of $\mc{O}_Y(\ast Y_7)$. This is proven in Lemma \ref{charSfn3}.
\end{proof}

\begin{lemma}\label{lem:reduceD1D5}
The following is true about $H^i_{\ol{O}_7}(S)$ for $i=4,5$.
\begin{enumerate}
\item $H^4_{\ol{O}_7}(S)$ is a direct sum of copies of $D_1$.
\item We have that $H^5_{\ol{O}_7}(S)=D_0$.	
\end{enumerate}
	
\end{lemma}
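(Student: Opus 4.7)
The plan is to enumerate the possible simple composition factors of $H^i_{\ol{O}_7}(S)$ for $i = 4, 5$ -- which lie among $\{D_j : 0 \le j \le 7\} \cup \{D_6'\}$ by support -- and then rule out all but the claimed one. The main ingredients are: Lemma \ref{lem:Varbaro} to remove $D_6, D_6'$ from degrees $\ge 4$; the weight bound of Lemma \ref{lem:upperBoundsOnWeights} combined with dominance on $C$-weights; Lemma \ref{lem:kerCharlemma} and Corollary \ref{cor:weightboundsD5n4} to exhibit, in each candidate $D_j$, a $\GL$-subrepresentation pinning down a specific $\gamma$-entry; Bott's theorem (Lemma \ref{BottLemma}) applied to the isomorphism $\mathbb{R}^{\bullet}\pi_{223,\ast}\mathscr{H}^1_{Y_7}(\mc{O}_Y) \cong H^{\bullet+1}_{\ol{O}_7}(\C[\ol{O}_8])$ to control the error terms in the long exact sequence already used in Section \ref{sec:o7n4}; Lemma \ref{lem:2notsub7}; and the quiver of Theorem \ref{thm:quiv4} for Ext-vanishings.

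For part (2), the bound $\gamma_1 \le -4$ from Lemma \ref{lem:upperBoundsOnWeights} at $i=5$ is essentially decisive. Shifting each kernel in Lemma \ref{lem:kerCharlemma} by $-(1,1,1,1)$ (the $C$-weight of $\det$) produces, in each $D_j$ with $j \in \{1,2\}$, a subrepresentation with $\gamma_2 = -3$; in each of $D_3, D_4, D_5$, one with $\gamma_3 = -2$; and analogously (using that $H^1_{\ol{O}_7}(\C[\ol{O}_8]) = \mathbb{R}^0\pi_{223,\ast}\mathscr{H}^1_{Y_7}(\mc{O}_Y)$ has subreps with $\gamma_4 = 0$ by Bott), in $D_7$ a subrep with $\gamma_4 = -1$. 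Dominance then forces $\gamma_1 > -4$ in each case except $D_0$; note that Corollary \ref{cor:weightboundsD5n4} provides an independent exclusion of $D_5$. Only $D_0$ remains, and since $\tn{Ext}^1(D_0, D_0) = 0$ by Theorem \ref{thm:quiv4}, $H^5_{\ol{O}_7}(S) \cong D_0^k$. To force $k = 1$, I would exploit the tail of the exact sequence (noting $H^5_{\ol{O}_7}(\C[\ol{O}_8]) = 0$ since $\mathbb{R}^4\pi_{223,\ast} = 0$): the kernel of $\det$-multiplication on $D_0^k \otimes \det$ has $k$ times the multiplicity of each $\GL$-weight in the kernel on $D_0 \otimes \det$, and this kernel must embed into $\mathbb{R}^3\pi_{223,\ast}\mathscr{H}^1_{Y_7}(\mc{O}_Y)$, whose Bott-computed multiplicities pin $k = 1$.

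For part (1), the bound $\gamma_2 \le -3$ rules out $D_3, D_4, D_5, D_7$ by the same dominance argument, leaving $D_0, D_1, D_2$ as candidates. To exclude $D_0$: the kernel of $\det$-multiplication on $D_0 \otimes \det$ has subreps with $\gamma_1 = -3$ (hence $\gamma_2 \le -3$ by dominance) by Lemma \ref{lem:kerCharlemma}, but the kernel on $H^4_{\ol{O}_7}(S) \otimes \det$ injects into $H^3_{\ol{O}_7}(\C[\ol{O}_8]) = \mathbb{R}^2\pi_{223,\ast}\mathscr{H}^1_{Y_7}(\mc{O}_Y)$, whose subreps have $\gamma_2 = -2$; since $(\gamma_1, \gamma_2) = (-3, -2)$ violates dominance, $D_0$ is not a submodule, and $\tn{Ext}^1(D_0, D_1) = 0$ from the quiver rules it out as a composition factor entirely. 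Lemma \ref{lem:2notsub7} handles $D_2$ as a submodule. To eliminate $D_2$ as any composition factor, I would iterate the kernel comparison up the socle filtration, using that any $D_2$-layer atop copies of $D_1$ must contribute the specific subrep $\bS_{(-4,-4)}A \otimes \bS_{(-4,-4)}B \otimes \bS_{(-2,-2,-2,-2)}C$ from the proof of Lemma \ref{lem:2notsub7} to the kernel of $\det$-multiplication, contradicting the Bott-controlled structure of $\mathbb{R}^2\pi_{223,\ast}\mathscr{H}^1_{Y_7}(\mc{O}_Y)$. With only $D_1$ remaining and $\tn{Ext}^1(D_1, D_1) = 0$ (no loops at vertex $(1)$), $H^4_{\ol{O}_7}(S)$ is a direct sum of copies of $D_1$. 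The hardest part will be this last step: while Lemma \ref{lem:2notsub7} cleanly handles the submodule case, propagating the obstruction through the socle filtration requires careful tracking of $\GL$-multiplicities through extensions permitted by the quiver relations.
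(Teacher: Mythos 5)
Your proposal takes a genuinely different route from the paper's for part~(1): the paper first pins down $D_1$ as the unique simple submodule, then invokes the quiver classification theorem \cite[Theorem 2.13]{lHorincz2019categories} to conclude that the only possible composition factors of $H^4_{\ol{O}_7}(S)$ are $D_1$ and $D_2$ (with $D_2$ necessarily a quotient), and finally rules out $D_2$ by a \emph{cokernel} comparison with $H^4_{\ol{O}_7}(\C[\ol{O}_8])$ using the multiplicity-one occurrence of $\bS_{(-6,-6)}A\oo \bS_{(-6,-6)}B\oo\bS_{(-3,-3,-3,-3)}C$ established in part~(2). You instead propose to bypass the quiver classification entirely, ruling out composition factors one by one via weight bounds and an iterated snake-lemma/kernel argument. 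Your iterated kernel idea for $D_2$ is in fact sound and makes a nice alternative: the representation $\bS_{(-4,-4)}A\oo\bS_{(-4,-4)}B\oo\bS_{(-2,-2,-2,-2)}C$ lies in $\ker(D_2\oo\det\to D_2)$ but does not occur at all in $D_0$, $D_1$, or $D_2$ (hence in none of their $\det$-cokernels), so the connecting map in the snake lemma kills it and it lifts to $\ker(H^4_{\ol{O}_7}(S)\oo\det\to H^4_{\ol{O}_7}(S))$, a $\GL$-subrepresentation of $H^3_{\ol{O}_7}(\C[\ol{O}_8])$, contradicting Lemma~\ref{lem:2notsub7}. For part~(2) the paper's route is slightly shorter than yours: Pieri's rule plus the bound $\gamma_1\le -4$ immediately forces support at the origin, avoiding the case-by-case dominance analysis.

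There is, however, a genuine gap in your logical ordering for part~(1). You claim ``$\Ext^1(D_0,D_1)=0$ from the quiver rules it out as a composition factor entirely,'' but this does not follow as stated: $\Ext^1(D_0,D_2)\neq 0$ (there is an arrow between $(0)$ and $(2)$ in Theorem~\ref{thm:quiv4}), so a copy of $D_0$ could perfectly well sit above $D_2$'s in the socle filtration even though it cannot sit directly above $D_1$'s. You must eliminate $D_2$ \emph{before} the $\Ext$ argument can finish off $D_0$; once $D_2$ is gone, the remaining candidates $D_0,D_1$ have no extensions between them, so a copy of $D_0$ would have to be a submodule, which your kernel/dominance argument already excludes. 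Two smaller points that need to be spelled out: ruling out $D_3,D_4,D_7$ as composition factors (not merely submodules) requires exhibiting actual $\GL$-weights of these simples with $\gamma_2>-3$ --- for $D_3,D_4$ this follows from Theorem~\ref{thm:charMatrices}, and for $D_7$ the cleanest route is the observation that $H^{\ge 3}_{\ol{O}_7}(S)$ vanishes on the smooth locus $O_7$, hence is supported in $\ol{O}_6$; and the ``injects into $H^3_{\ol{O}_7}(\C[\ol{O}_8])$'' should be understood at the level of $\GL$-representations (as $\D$-modules the kernel of the $\det$-multiplication is a \emph{quotient} of that module, which becomes a subrepresentation only after invoking complete reducibility).
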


\begin{proof}
We continue to write $Y=Y_{223}$ and $\pi=\pi_{223}$. 

(2) By Lemma \ref{lem:firstFactsO7} we have that $H^5_{\ol{O}_7}(S)$ is nonzero, and by Lemma \ref{lem:upperBoundsOnWeights} we have that all subrepresentations $\bS_{\alpha}A\oo \bS_{\beta}B\oo \bS_{\gamma}C$ of $H^5_{\ol{O}_7}(S)$ satisfy $\gamma_{1}\leq -4$. Thus, by Pieri's Rule \cite[Corollary 2.3.5]{weyman} we see that $H^5_{\ol{O}_7}(S)$ must be a nonzero module supported on the origin, so it must be a direct sum of copies of $D_0$. By Theorem \ref{thm:charMatrices}, $D_0$ has no subrepresentations $\bS_{\alpha}A\oo \bS_{\beta}B\oo \bS_{\gamma}C$ satisfying $\gamma_1=-3$, so the representation $\bS_{(-6,-6)}A\oo \bS_{(-6,-6)}B \oo \bS_{(-3,-3,-3,-3)}C$ appears in the kernel of the multiplication map $D_0\oo \det \to D_0$. Since $\bS_{(-6,-6)}A\oo \bS_{(-6,-6)}B \oo \bS_{(-4,-4,-4)}\mc{Q}$ appears in $\mc{O}_Y(\ast Y_7)$ with multiplicity one (as a power of the semi-invariant), we have by Lemma \ref{BottLemma} that $\bS_{(-6,-6)}A\oo \bS_{(-6,-6)}B \oo \bS_{(-3,-3,-3,-3)}C$ appears with multiplicity one in $H^4_{\ol{O}_7}(\C[\ol{O}_8])$. We conclude that $H^5_{\ol{O}_7}(S)=D_0$.	

(1) By Lemma \ref{lem:upperBoundsOnWeights} and Lemma \ref{lem:kerCharlemma}, neither $D_3$, $D_4$ nor $D_5$ are submodules of $H^4_{\ol{O}_7}(S)$. Again, since $\bS_{(-6,-6)}A\oo \bS_{(-6,-6)}\oo\bS_{(-3,-3,-3,-3)}C$ belongs to the kernel of the multiplication map $D_0\oo \det \to D_0$, if $D_0$ were a submodule of $H^4_{\ol{O}_7}(S)$, then $\bS_{(-6,-6)}A\oo \bS_{(-6,-6)}\oo\bS_{(-3,-3,-3,-3)}C$ would  belong to $\mathbb{R}^2\pi_{\ast}\mathscr{H}^1_{Y_7}(\mc{O}_Y)=H^{3}_{\ol{O}_7}(\C[\ol{O}_8])$. By Lemma \ref{BottLemma}, this is impossible, as there is no bundle $\bS_{\lambda}\mc{Q}$ on $\mathbb{G}(3;C)$ with $\mathbb{R}^2\pi_{\ast}\bS_{\lambda}\mc{Q}\cong \bS_{(-3,-3,-3,-3)}C$. By Lemma \ref{lem:2notsub7} we have that $D_2$ is not a submodule of $H^4_{\ol{O}_7}(S)$. By Lemma \ref{lem:Varbaro}, neither $D_6$ nor $D_6'$ are composition factors of $H^4_{\ol{O}_7}(S)$. We conclude that the only possible simple submodule is $D_1$, and by Theorem \ref{thm:quiv4},  $H^4_{\ol{O}_7}(S)$ is supported on the quiver
\[\xymatrix@C+1.6pc{(0) \ar@<0.5ex>[r] & \ar@<0.5ex>[l] (2) \ar@<0.5ex>[r] & \ar@<0.5ex>[l] (1) }\]
with all paths of length two being zero. Suppose $M$ is a non-zero indecomposable summand of $H^4_{\ol{O}_7}(S)$ (with unique simple submodule $D_1$). By \cite[Theorem 2.13]{lHorincz2019categories}, the only possible composition factors of $M$ (and hence $H^4_{\ol{O}_7}(S)$)  are $D_1$ and $D_2$, and if $D_2$ appeared, it would have to be a quotient.

If $D_2$ were a quotient of $H^4_{\ol{O}_7}(S)$ then the cokernel of the multiplication map $D_2\oo \det\to D_2$ would be a subrepresentation of $H^4_{\ol{O}_7}(\C[\ol{O}_8])$. Since $\bS_{(-6,-6)}A\oo \bS_{(-6,-6)}B \oo \bS_{(-3,-3,-3,-3)}C$ belongs to this cokernel, we would then have that another copy of $\bS_{(-6,-6)}A\oo \bS_{(-6,-6)}B \oo \bS_{(-3,-3,-3,-3)}C$ belongs to $H^4_{\ol{O}_7}(\C[\ol{O}_8])$. This is a contradiction, as in part (2) above we showed that this representation appears in $H^4_{\ol{O}_7}(\C[\ol{O}_8])$ with multiplicity one, and comes from the kernel of $D_0\oo\det\to D_0$.
\end{proof}

\begin{lemma}\label{lem:infoD17}
The following is true about weights in $D_1$.
\begin{enumerate}
\item For all $t>0$ the representation $\bS_{(t-6,-t-6)}A \oo \bS_{(t-6,-t-6)}B\oo \bS_{(-3,-3,-3,-3)}C$ appears with multiplicity one in the cokernel of the multiplication map $D_1\oo \det \to D_1$. In particular, it	appears in $D_1$.
\item We have that that $H^4_{\ol{O}_7}(S)=D_1$.

\end{enumerate}
	
\end{lemma}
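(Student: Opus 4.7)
The plan is to derive part (1) from a character calculation of $D_1$ and of $H^{3n-2}_{\ol{O}_1}(\C[\ol{O}_8])$ via Bott's theorem, and then to deduce part (2) by comparing multiplicities in the cokernel with a Bott-type bound in $H^4_{\ol{O}_7}(\C[\ol{O}_8])$.

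For part (1), I start from the identification $D_1=H^{10}_{\ol{O}_1}(S)$ (Theorem \ref{BettiThm}(2) with $n=4$) and apply $\mathbb{R}\Gamma_{\ol{O}_1}$ to the short exact sequence $0\to S\oo\det\to S\to \C[\ol{O}_8]\to 0$. The resulting long exact sequence contains the segment
\[
H^9_{\ol{O}_1}(\C[\ol{O}_8])\to D_1\oo\det\xrightarrow{\,\cdot\det\,} D_1\to H^{10}_{\ol{O}_1}(\C[\ol{O}_8])\to D_0\oo\det,
\]
so the cokernel of $\cdot\det$ on $D_1$ embeds into $H^{10}_{\ol{O}_1}(\C[\ol{O}_8])$ as the kernel of the last map. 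Since $\det$ has $\GL$-weight $(2,2)\times(2,2)\times(1,1,1,1)$, the Cauchy decomposition of $D_0\cong \Sym(V^*)$ shows that no representation with $C$-component $\bS_{(-4,-4,-4,-4)}C$ appears in $D_0$ other than $\bS_{(-8,-8)}A\oo\bS_{(-8,-8)}B\oo\bS_{(-4,-4,-4,-4)}C$ (corresponding to the unique partition $\lambda=(4,4,4,4)$). Hence for $t>0$ the weight $\bS_{(t-6,-t-6)}A\oo \bS_{(t-6,-t-6)}B\oo \bS_{(-3,-3,-3,-3)}C$ is absent from $D_0\oo\det$, and its multiplicity in the cokernel coincides with its multiplicity in $H^{10}_{\ol{O}_1}(\C[\ol{O}_8])$. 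The latter is computed via the spectral sequence
\[
\mathbb{R}^p\pi_{223,*}\mathscr{H}^q_{Y_1}(\mc{O}_{Y_{223}})\Rightarrow H^{p+q}_{\ol{O}_1}(\C[\ol{O}_8]),\qquad Y_1:=\pi_{223}^{-1}(\ol{O}_1),
\]
which arises from the rational singularities of $\ol{O}_8$; Bott's theorem (Lemma \ref{BottLemma}) on $\mathbb{G}(3;C)\cong\mathbb{P}^3$ shows that only the twist $\bS_{(-4,-4,-4)}\mc{Q}_C$ produces the target $C$-factor, in cohomological degree $3$. Tracking the relevant twists in the $A$- and $B$-directions then yields multiplicity exactly one.

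For part (2), Lemma \ref{lem:reduceD1D5}(1) provides $H^4_{\ol{O}_7}(S)\cong D_1^{\oplus k}$ for some $k\geq 1$. By part (1), the cokernel of multiplication by $\det$ on $H^4_{\ol{O}_7}(S)$ contains the weight $\bS_{(t-6,-t-6)}A\oo \bS_{(t-6,-t-6)}B\oo \bS_{(-3,-3,-3,-3)}C$ with multiplicity exactly $k$. The analogous long exact sequence for $\ol{O}_7$ embeds this cokernel into $H^4_{\ol{O}_7}(\C[\ol{O}_8])$, which by (\ref{pushDown223}) equals $\mathbb{R}^3\pi_{223,*}\mathscr{H}^1_{Y_7}(\mc{O}_{Y_{223}})$. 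Applying the same Bott analysis on $\mathbb{P}^3=\mathbb{G}(3;C)$ to $\mathscr{H}^1_{Y_7}(\mc{O}_{Y_{223}})$ (whose fiberwise structure matches the $n=3$ filtration of $S_f/S$ from Theorem \ref{loc07}(1)), I show that the target weight appears with multiplicity at most one in $H^4_{\ol{O}_7}(\C[\ol{O}_8])$, forcing $k\leq 1$ and hence $k=1$.

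The chief technical difficulty lies in the weight bookkeeping in part (1): a priori several summands of $\mathscr{H}^q_{Y_1}(\mc{O}_{Y_{223}})$ could contribute to the target $\GL$-weight after Bott, and one must verify that exactly one survives in the required cohomological degree of the spectral sequence. Once this is settled, part (2) reduces to an analogous bounded-multiplicity Bott argument, now for the relative local cohomology associated to the hyperdeterminant divisor.
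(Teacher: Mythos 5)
Your strategy takes a genuinely different route for part (1) than the paper does. The paper proves (1) and (2) simultaneously in a single pass through the long exact sequence for $\ol{O}_7$: since $H^4_{\ol{O}_7}(S)=D_1^{\oplus k}$ by Lemma~\ref{lem:reduceD1D5}, the cokernel of $H^4_{\ol{O}_7}(S)\otimes\det\to H^4_{\ol{O}_7}(S)$ is $k$ copies of $\operatorname{coker}(D_1\otimes\det\to D_1)$, and it embeds in $H^4_{\ol{O}_7}(\C[\ol{O}_8])\cong\mathbb{R}^3\pi_{223,*}\mathscr{H}^1_{Y_7}(\mc{O}_{Y_{223}})$ with quotient landing in $H^5_{\ol{O}_7}(S)\otimes\det = D_0\otimes\det$. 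By Lemma~\ref{BottLemma} and Lemma~\ref{charSfn3}(1) the target weight has multiplicity exactly one in $\mathbb{R}^3\pi_{223,*}\mathscr{H}^1_{Y_7}(\mc{O}_{Y_{223}})$ and is absent from $D_0\otimes\det$; hence $k\cdot m=1$ where $m$ is the cokernel multiplicity, forcing $k=m=1$ and giving both parts at once. Your part (2) is essentially this argument.

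The genuine gap is in your part (1). Your spectral sequence reduces to computing the multiplicity of $\bS_{(t-6,-t-6)}A\oo\bS_{(t-6,-t-6)}B\oo\bS_{(-4,-4,-4)}\mc{Q}$ in $\mathscr{H}^7_{Y_1}(\mc{O}_{Y_{223}})$, which is, fiberwise, the non-split extension of $D_0$ by $D_1$ from Theorem~\ref{BettiThm}(1) for $n=3$. The phrase ``tracking the relevant twists \ldots\ yields multiplicity exactly one'' is precisely the hard part, and it requires the $n=3$ character of $D_1$ in this weight; that is the content of Lemma~\ref{lem:charD17n3}(1). That lemma is logically independent of the present statement, so citing it would close the gap, but it is itself a nontrivial Euler-characteristic computation via $\pi_{111}$ and \cite[Corollary~4.3b]{raicu2012secant}, not something that falls out of Bott on $\mathbb{P}^3$ alone. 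This is exactly why the paper routes through $\ol{O}_7$ rather than $\ol{O}_1$: $\mathscr{H}^1_{Y_7}(\mc{O}_Y)=\mc{O}_Y(\ast Y_7)/\mc{O}_Y$ is a quotient of the localization $S_f$, whose relevant multiplicities are controlled by the much more tractable Lemma~\ref{charSfn3}. One further small slip: $D_0\not\cong\Sym(V^*)$; the uniqueness of $\bS_{(-8,-8)}A\oo\bS_{(-8,-8)}B\oo\bS_{(-4,-4,-4,-4)}C$ among $D_0$-weights with $C$-factor $\bS_{(-4,-4,-4,-4)}C$ should be read off Theorem~\ref{thm:charMatrices} (with $W^0=\{\lambda:\lambda_1\leq -4\}$), not from a Cauchy decomposition of a module $D_0$ is not isomorphic to.
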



\begin{proof} 
We continue to write $Y=Y_{223}$ and $\pi=\pi_{223}$. We consider the cokernel of the map $H^4_{\ol{O}_7}(S)\otimes \det \to H^4_{\ol{O}_7}(S)$, which is a subrepresentation of $H^4_{\ol{O}_7}(\C[\ol{O}_8])\cong \mathbb{R}^3\pi_{\ast}\mathscr{H}^1_{Y_7}(\mc{O}_Y)$. By Lemma \ref{lem:reduceD1D5} we know that $H^4_{\ol{O}_7}(S)$ is a direct sum of copies of $D_1$. Since the representations $\bS_{(t-6,-t-6)}A \oo \bS_{(t-6,-t-6)}B\oo \bS_{(-3,-3,-3,-3)}C$ ($t>0$) do not appear in $D_0\oo \det$ (Theorem \ref{thm:charMatrices}), it suffices to show that they appear with multiplicity one in $\mathbb{R}^3\pi_{\ast}\mathscr{H}^1_{Y_7}(\mc{O}_Y)$. By Lemma \ref{BottLemma}, we need to show that $\bS_{(t-6,-t-6)}A \oo \bS_{(t-6,-t-6)}B\oo \bS_{(-4,-4,-4)}\mc{Q}$ appears with multiplicity one in $\mathscr{H}^1_{Y_7}(\mc{O}_Y)$. This is a consequence of Lemma \ref{charSfn3}.
\end{proof}

\begin{corollary}\label{cor:weightBoundsn4}
The following is true about subrepresentations of $D_1$ and $D_7$.	

\begin{enumerate}
\item if $\bS_{\alpha}A\oo \bS_{\beta}B\oo \bS_{\gamma}C$ belongs to $D_1$ then $\gamma_1\geq -3$ and $\gamma_2\leq -3$,
\item if $\bS_{\alpha}A\oo \bS_{\beta}B\oo \bS_{\gamma}C$ belongs to $D_7$ then $\gamma_3\geq -1$ and $\gamma_4\leq -1$.
\end{enumerate}

\end{corollary}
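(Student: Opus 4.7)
My plan is to extract the two ``upper'' inequalities directly from Lemma \ref{lem:upperBoundsOnWeights}, and to derive the two ``lower'' inequalities by transporting those bounds across the Fourier transform.

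For the upper bounds: by Lemma \ref{lem:infoD17}(2) we have $D_1 = H^4_{\ol{O}_7}(S)$, so Lemma \ref{lem:upperBoundsOnWeights} with $i = 4$ yields $\gamma_2 \leq -3$ for every constituent $\bS_\alpha A \oo \bS_\beta B \oo \bS_\gamma C$ of $D_1$. Analogously $D_7 = H^2_{\ol{O}_7}(S)$ by Lemma \ref{lem:firstFactsO7}(1), and the case $i = 2$ gives $\gamma_4 \leq -1$. These are pure substitutions; no further work is needed.

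For the lower bounds I would invoke the Fourier transform $\mc{F}$: by Proposition \ref{prop:fourier}, $\mc{F}$ interchanges $D_1$ and $D_7$. To harvest this, I first record the action of $\mc{F}$ on $\GL$-characters. Comparing the Cauchy decomposition of $S = \Sym V$ with the character of $D_0 = \mc{F}(S) = L_{Z_0}$ read off from Theorem \ref{thm:charMatrices} (via the flattening to $4 \times 4$ matrices), one checks that a summand $\bS_\alpha A \oo \bS_\beta B \oo \bS_\gamma C$ of $M$ corresponds to $\bS_{\alpha^\vee}A \oo \bS_{\beta^\vee}B \oo \bS_{\gamma^\vee}C$ in $\mc{F}(M)$, where each $\lambda^\vee$ is obtained by negating and reversing the entries of $\lambda$ and then subtracting the corresponding component of the weight of $\det V$, which for $n=4$ equals $(8,8) \times (8,8) \times (4,4,4,4)$; in particular $(\gamma^\vee)_i = -\gamma_{5-i} - 4$. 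This correspondence is an involution that exchanges the $\GL$-characters of $D_1$ and $D_7$.

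Transferring the bounds then finishes the proof. If $\bS_\nu C$ appears in $D_7 = \mc{F}(D_1)$, the corresponding $\bS_{\nu^\vee}C$ lies in $D_1$, and (1) forces $(\nu^\vee)_2 = -\nu_3 - 4 \leq -3$, yielding $\nu_3 \geq -1$. Symmetrically, a constituent $\bS_\nu C$ of $D_1 = \mc{F}(D_7)$ gives $\bS_{\nu^\vee}C$ in $D_7$, and $\gamma_4 \leq -1$ from (2) becomes $(\nu^\vee)_4 = -\nu_1 - 4 \leq -1$, i.e., $\nu_1 \geq -3$. The only genuine obstacle is pinning down the explicit Fourier-shift by $\det(V)^{-1}$; once that formula is in hand, the rest is bookkeeping.
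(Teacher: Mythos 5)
Your proof is correct and takes essentially the same route as the paper: the upper bounds for $\gamma$ come from Lemma \ref{lem:upperBoundsOnWeights} combined with $H^2_{\ol{O}_7}(S)=D_7$ (Lemma \ref{lem:firstFactsO7}) and $H^4_{\ol{O}_7}(S)=D_1$ (Lemma \ref{lem:infoD17}), and the lower bounds come from transporting these bounds across $\mc{F}$ using $\mc{F}(D_1)=D_7$. The only difference is that you spell out the character-level action of the twisted Fourier transform (the $\det(V)$-twist of the contragredient, so $(\gamma^\vee)_i=-\gamma_{5-i}-4$), which the paper leaves implicit by citing \cite[Equation 4.14]{lHorincz2019categories}; your derivation and the resulting inequalities check out.
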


\begin{proof}
The upper bounds on $\gamma_i$ follow from Lemma \ref{lem:upperBoundsOnWeights} and the fact that $H^2_{\ol{O}_7}(S)=D_7$ and $H^4_{\ol{O}_7}(S)=D_1$. The lower bounds on $\gamma_i$ then follow from applying the Fourier transform, using that $\mc{F}(D_7)=D_1$.
\end{proof}

Finally, we turn our attention to $H^3_{\ol{O}_7}(S)$, the remaining ingredient to prove the case $n=4$ of Theorem \ref{loc07}.

\begin{lemma}\label{lem:wit17n4}
Let $n=4$. The following is true about the characters of $D_1$ and $D_7$.

\begin{enumerate}
\item Let $\gamma_1\geq -3$ and $\gamma_2=\gamma_3=\gamma_4\leq -3$, and let $\alpha,\beta \in \mathbb{Z}^2_{\dom}$ with $|\alpha|=|\beta|=|\gamma|$. Representations of the form $\bS_{\alpha}A\oo \bS_{\beta}B\oo \bS_{\gamma}C$ appear in $D_1$ if and only if  $\alpha=\beta$ and $\alpha_2\leq 2\gamma_2-1$. If $\alpha_2= 2\gamma_2-1$, then the multiplicity is one.	
\item Let $\gamma_4\leq -1$ and $\gamma_1=\gamma_2=\gamma_3\geq -1$, and let $\alpha,\beta \in \mathbb{Z}^2_{\dom}$ with $|\alpha|=|\beta|=|\gamma|$. Representations of the form $\bS_{\alpha}A\oo \bS_{\beta}B\oo \bS_{\gamma}C$ appear in $D_7$ if and only if  $\alpha=\beta$ and $\alpha_1\geq 2\gamma_1+1$. If $\alpha_1= 2\gamma_1+1$, then the multiplicity is one.	

\end{enumerate}
	
\end{lemma}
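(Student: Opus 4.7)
The plan is to reduce everything to a base case using multiplication by the $4\times 4$ determinant $\det \in S$, via the long exact sequence coming from $0 \to S\oo\det \to S \to \C[\ol{O}_8] \to 0$. Since $\mc{F}(D_1) = D_7$ by Proposition \ref{prop:fourier}, I will prove part (1) in detail and deduce part (2) by the Fourier transform.

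For part (1), using $D_1 = H^4_{\ol{O}_7}(S)$ from Lemma \ref{lem:infoD17}(2), the induced long exact sequence reads
\[H^3_{\ol{O}_7}(\C[\ol{O}_8]) \to D_1 \oo \det \xrightarrow{\,\cdot\det\,} D_1 \to H^4_{\ol{O}_7}(\C[\ol{O}_8]) \to D_0 \oo \det.\]
Applying the spectral sequence $\mathbb{R}^i\pi_*\mathscr{H}^1_{Y_7}(\mc{O}_Y) \Rightarrow H^{i+1}_{\ol{O}_7}(\C[\ol{O}_8])$ (with $\pi = \pi_{223}$, $Y = Y_{223}$) and Bott's Theorem \ref{BottLemma} to each summand of $\mathscr{H}^1_{Y_7}(\mc{O}_Y)$, one finds that all nonzero $C$-weights of $H^3_{\ol{O}_7}(\C[\ol{O}_8])$ satisfy $\gamma_2 = -2$, and those of $H^4_{\ol{O}_7}(\C[\ol{O}_8])$ satisfy $\gamma_1 = -3$. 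Consequently, for $\gamma = (s, r, r, r)$ with $s \geq -2$ and $r \leq -3$, the kernel of $\cdot\det$ (bounded by $H^3$) and the cokernel (injecting into $H^4$) both vanish at this weight. Hence $\cdot\det$ gives a bijection between the $(\alpha - (2,2), \beta - (2,2), (s-1, r-1, r-1, r-1))$-component of $D_1$ and the $(\alpha, \beta, (s, r, r, r))$-component. Since the condition $\alpha_2 \leq 2\gamma_2 - 1$ is preserved under this shift, induction on $s$ reduces to the base case $s = -3$.

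For the base case, Bott identifies $H^4_{\ol{O}_7}(\C[\ol{O}_8])$ at $C$-weight $(-3, r, r, r)$ with the $(A,B)$-multiplicity space of $\bS_\alpha A \oo \bS_\beta B \oo \det(\mc{Q}_C)^{r-1}$ in $\mathscr{H}^1_{Y_7}(\mc{O}_Y) = \mc{O}_Y(\ast Y_7)/\mc{O}_Y$. Extending the Cauchy-formula computation of Lemma \ref{charSfn3}(1),(2) from even to arbitrary integer powers of $\det(\mc{Q}_C)$, this multiplicity equals $1$ if $\alpha = \beta$ and $|\alpha| = 3r-3$, and $0$ otherwise. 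On the other hand, decomposing $\bS_{(-4,r-1,r-1,r-1)}(A\oo B) \cong \det(A\oo B)^{r-1} \oo \Sym^{-3-r}(A\oo B)$ via Cauchy shows that the $(A,B)$-components of $D_0\oo\det$ at this $C$-weight consist exactly of weights $\alpha = \beta = \mu + (2r,2r)$ for partitions $\mu$ with $|\mu| = -3-r$ and $\ell(\mu) \leq 2$; in particular all these have $\alpha_2 \geq 2r$. Therefore, for $\alpha_2 \leq 2r-1$ the connecting map $H^4 \to D_0\oo\det$ is zero on the relevant isotypic component, so the cokernel multiplicity equals the $H^4$-multiplicity, giving exactly $1$ when $\alpha = \beta$ (and in particular the boundary value $\alpha_2 = 2r-1$ produces multiplicity one), which establishes the base case.

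The main technical hurdle will be verifying the converse --- that weights with $\alpha = \beta$ and $\alpha_2 \geq 2r$ do \emph{not} appear in $D_1$. Here $H^4$ and $D_0\oo\det$ both have multiplicity $1$ on exactly the same set of $\alpha = \beta$ weights (as the character computation above shows), and I expect to argue that the connecting homomorphism is an isomorphism on these components, forcing the cokernel to vanish; this can be done either by a character count (matching the image of $H^4$ in $D_0\oo\det$ term by term) or by testing the connecting map on a well-chosen highest-weight vector and invoking $\GL$-equivariance. Finally, part (2) follows from part (1) by invoking $\mc{F}(D_1) = D_7$ together with the explicit weight correspondence of the twisted Fourier transform on $V$: the map $(\alpha,\beta,\gamma) \mapsto ((-\alpha_2,-\alpha_1),(-\beta_2,-\beta_1),(-\gamma_4,-\gamma_3,-\gamma_2,-\gamma_1))$, combined with the intrinsic $\det V^{-1}$ shift on $V = A\oo B\oo C$, carries the conditions $s \geq -3, r \leq -3, \alpha_2 \leq 2r-1$ of part (1) into the conditions $\gamma_4 \leq -1, \gamma_1 \geq -1, \alpha_1 \geq 2\gamma_1+1$ of part (2).
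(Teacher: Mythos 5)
Your proposal is correct in outline and takes a genuinely different route. The paper goes through the resolution $\pi_{111}\colon Y_{111}\to\ol{O}_1$, computes the Euler characteristic $[\chi(\pi_+\mc O_{Y_{111}}(\ast Y_1))]=[D_1]-4[D_0]$ in the Grothendieck group, and then evaluates the relevant Littlewood--Richardson/plethysm coefficients directly via \cite[Corollary 4.3b]{raicu2012secant}. You instead start from the identification $D_1 = H^4_{\ol{O}_7}(S)$ (Lemma \ref{lem:infoD17}), exploit the resolution $\pi_{223}\colon Y_{223}\to\ol{O}_8$ and the determinant short exact sequence (\ref{resDet}), and run an induction on $\gamma_1$ using Bott to control where the kernel/cokernel of $\cdot\det$ can live. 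Both ultimately reduce to the same $n=3$ computation (the extension of Lemma \ref{charSfn3}(1)(2) to odd powers of $\det(\mc{Q}_C)$, which does go through by the same Cauchy-formula argument), but your route replaces the paper's one-shot Euler-characteristic evaluation with a cleaner recursion and makes the appearance of the weight bound $\gamma_1\ge -3$ (Corollary \ref{cor:weightBoundsn4}) much more transparent.

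The one step you flag — showing the $\alpha=\beta$, $\alpha_2\ge 2\gamma_2$ weights do \emph{not} appear in $D_1$ at the base case $\gamma_1=-3$ — does need more than a raw "character count," but it closes cleanly if you push the long exact sequence one more step. Continuing
\[
H^4_{\ol{O}_7}(\C[\ol{O}_8]) \to D_0\oo\det \xrightarrow{\ \cdot\det\ } D_0 \to H^5_{\ol{O}_7}(\C[\ol{O}_8])=0,
\]
the connecting map $H^4_{\ol{O}_7}(\C[\ol{O}_8])\to D_0\oo\det$ surjects onto $\ker(D_0\oo\det\to D_0)$. But at $C$-weight $(-3,r,r,r)$ the target $D_0$ is zero (Theorem \ref{thm:charMatrices} forces $\gamma_1\le -4$ for $D_0$), so that kernel is the entire $(-3,r,r,r)$-component of $D_0\oo\det$. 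Hence the connecting map is surjective there, and since both $H^4_{\ol{O}_7}(\C[\ol{O}_8])$ and $D_0\oo\det$ have multiplicity exactly one on each $\bS_\alpha A\oo\bS_\alpha B$ with $\alpha_2\ge 2r$ (your Bott and Cauchy computations), equivariance forces it to be an isomorphism on each such isotypic component. This kills the cokernel of $D_1\oo\det\to D_1$ there, and combined with $\gamma_1\ge -3$ for $D_1$ the base-case multiplicity is zero, as required. With that observation spelled out, your argument is complete and no weaker than the paper's.
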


\begin{proof}
Using $\mc{F}$, it suffices to prove (1). Let $\pi=\pi_{111}$, $Y=Y_{111}$, and $Y_1=\pi^{-1}(\ol{O}_1)$. By Lemma \ref{pushdownlocallyclosed1} we have $\pi_{+}\mc{O}_Y(\ast Y_1)=\mathbb{R}\Gamma_{O_1}(S)[10]$. By Theorem \ref{BettiThm}, and the long exact sequence
$$
\cdots \to H^j_{\ol{O}_1}(S)\to H^j_{O_1}(S)\to H^{j-1}_{\ol{O}_0}(S)\to \cdots,
$$	
we have the following in the Grothendieck group of representations of $\GL$ (see Section \ref{sec:relmatrices}): 
\begin{equation}
[\chi(\pi_+ \mc{O}_Y(\ast Y_1))]=[D_1]-4[D_0].
\end{equation}
By Theorem \ref{thm:charMatrices} representations of the form  $\bS_{\alpha}A\oo \bS_{\beta}B\oo \bS_{\gamma}C$ with $\gamma_1\geq -3$ and $\gamma_2=\gamma_3=\gamma_4\leq -3$ do not belong to $D_0$. Thus, for representations of this form, we have 
$$
m_{\alpha,\beta,\gamma}:=\big[ \chi(\pi_+ \mc{O}_Y(\ast Y_1)):\bS_{\alpha}A\oo \bS_{\beta}B \oo \bS_{\gamma}C\big]=\big[ D_1:\bS_{\alpha}A\oo \bS_{\beta}B \oo \bS_{\gamma}C\big].
$$
For the remainder of the proof, we freely use notation from Section \ref{sec:relmatrices}. By \cite[Lemma 2.5, Proposition 2.10]{raicu2016characters} we have that 
$$
\big[\chi(\pi_+ \mc{O}_Y(\ast Y_5))\big]=\lim_{k\to \infty} \big[p_{1,k}(A)\oo p_{1,k}(B)\oo p_{1,k}(C)\oo 
 S^{\ast}\otimes \det(A^{\ast}\oo B^{\ast}\oo C^{\ast})\big].
$$
Thus, by \cite[Lemma 2.3]{raicu2016characters} we have that $m_{\alpha,\beta,\gamma}$ is equal to
$$
\lim_{k\to \infty}\left(\sum_{I,J\in \binom{[2]}{1},K\in \binom{[4]}{1}}\big[S:\bS_{((-\alpha_2-8,-\alpha_1-8),k,I)}A\oo \bS_{((-\beta_2-8,-\beta_1-8),k,J)}B \oo \bS_{((-\gamma_4-4, -\gamma_3-4, -\gamma_2-4, -\gamma_1-4),k,K)}C \big]\right).
$$
Note $\bS_{\lambda}A\oo \bS_{\mu}B\oo \bS_{\nu}C$ appears in $S$ only if $\lambda,\mu,\nu$ are partitions. Since $\gamma_1\geq -3$ we have $-\gamma_1-4\leq -1$, so for $k\gg 0$ we have $((-\gamma_4-4, -\gamma_3-4, -\gamma_2-4, -\gamma_1-4),k,K)$ is a partition if and only if $K=\{4\}$, and
$$
((-\gamma_4-4, -\gamma_3-4, -\gamma_2-4, -\gamma_1-4),k,\{4\})=(k-\gamma_1-7, -\gamma_4-3, -\gamma_3-3, -\gamma_2-3).
$$
Therefore, by the Cauchy Formula applied to $\Sym((A\oo B)\oo C)$, we have that $m_{\alpha,\beta,\gamma}$ is the limit as $k$ goes to infinity of the following quantity:
$$
\big[\bS_{(k-\gamma_1-7, -\gamma_4-3, -\gamma_3-3, -\gamma_2-3)}(A\oo B):\bS_{(k-\alpha_2-8,-\alpha_1-8)}A\oo \bS_{(k-\beta_2-8,-\beta_1-8)}B \big]
$$
$$
-2\big[\bS_{(k-\gamma_1-7, -\gamma_4-3, -\gamma_3-3, -\gamma_2-3)}(A\oo B):\bS_{(k-\alpha_2-8,-\alpha_1-8)}A\oo \bS_{(k-\beta_1-9,-\beta_2-7)}B \big]
$$
$$
+\big[\bS_{(k-\gamma_1-7, -\gamma_4-3, -\gamma_3-3, -\gamma_2-3)}(A\oo B):\bS_{(k-\alpha_1-9,-\alpha_2-7)}A\oo \bS_{(k-\beta_1-9,-\beta_2-7)}B \big],
$$
where the first term corresponds to $I=J=\{1\}$, the middle term corresponds to $I=\{1\}$ and $J=\{2\}$ (twice, by symmetry), and the third term corresponds to $I=J=\{2\}$. Since $\gamma_2=\gamma_3=\gamma_4$, we can twist by $\det(A\oo B)^{\otimes \gamma_2+3}$ and apply the Cauchy Formula. Since $\alpha_1\geq \alpha_2$ and $\beta_1\geq \beta_2$ we have $(k-\alpha_2-8,-\alpha_1-8)\neq (k-\beta_1-9,-\beta_2-7)$, so by the Cauchy Formula we have that the middle term above is zero. Similarly, the first and third term are nonzero only if $\alpha=\beta$. Therefore, $m_{\alpha,\beta,\gamma}$ is nonzero only if $\alpha=\beta$ and is equal to the limit as $k$ goes to infinity of the following quantity:
$$
\big[\bS_{(k+\gamma_2-\gamma_1-4)}(A\oo B):\bS_{(k+2\gamma_2-\alpha_2-2,2\gamma_2-\alpha_1-2)}A\oo \bS_{(k+2\gamma_2-\alpha_2-2,2\gamma_2-\alpha_1-2)}B \big]
$$
$$
+\big[\bS_{(k+\gamma_2-\gamma_1-4)}(A\oo B):\bS_{(k+2\gamma_2-\alpha_2-2,2\gamma_2-\alpha_1-2)}A\oo \bS_{(k+2\gamma_2-\alpha_1-3,2\gamma_2-\alpha_2-1)}B \big].
$$
The first term is nonzero (and equal to one) if and only if $\alpha_1\leq 2\gamma_2-2$. Similarly, the second term is nonzero (and equal to one) if and only if $\alpha_2\leq 2\gamma_2-1$. If $\alpha_2=2\gamma_2-1$, then since $|\alpha|=|\gamma|=\gamma_1+3\gamma_2$ we have $\alpha_1=\gamma_1+\gamma_2+1>2\gamma_2-2$, so $m_{\alpha,\beta,\gamma}=1$. 
\end{proof}

\begin{lemma}\label{lem:factsD5lc7}
The following is true about weights in $D_5$ and $D_7$.
\begin{enumerate}
\item For all $t>0$ the representation $\bS_{(t-2,-t-2)}A \oo \bS_{(t-2,-t-2)}B\oo \bS_{(-1,-1,-1,-1)}C$ appears with multiplicity one in the kernel of the multiplication map $D_5\oo \det \to D_5$. 
\item For all $t>0$ the representation $\bS_{(t-2,-t-2)}A \oo \bS_{(t-2,-t-2)}B\oo \bS_{(-1,-1,-1,-1)}C$ appears in the cokernel of the multiplication map $D_7\oo \det \to D_7$.
\item We have that $D_5$ is not a submodule of  $H^3_{\ol{O}_7}(S)$.
\end{enumerate}

\end{lemma}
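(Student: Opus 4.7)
The overall strategy combines the weight bounds from Corollaries \ref{cor:weightBoundsn4} and \ref{cor:weightboundsD5n4} with Bott's theorem (Lemma \ref{BottLemma}) applied to the desingularizations $\pi_{222}$ and $\pi_{223}$, reducing each assertion to a concrete multiplicity computation.

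For part (2), set $\tau_t := \bS_{(t-2,-t-2)}A \oo \bS_{(t-2,-t-2)}B \oo \bS_{(-1,-1,-1,-1)}C$. The source weight $\tau_t \oo \det^{-1} = \bS_{(t-4,-t-4)}A \oo \bS_{(t-4,-t-4)}B \oo \bS_{(-2,-2,-2,-2)}C$ of the $\det$-multiplication map on $D_7$ has $\gamma_3 = -2$, violating the bound $\gamma_3 \geq -1$ from Corollary \ref{cor:weightBoundsn4}, so it does not occur in $D_7$; hence the image vanishes on the $\tau_t$-isotypic component, and the cokernel multiplicity equals the $D_7$-multiplicity of $\tau_t$. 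This latter multiplicity is nonzero for $t \geq 1$ by Lemma \ref{lem:wit17n4}(2), applied with $\alpha = \beta = (t-2,-t-2)$ and $\gamma = (-1,-1,-1,-1)$, since $\alpha_1 = t-2 \geq -1 = 2\gamma_1 + 1$.

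For part (1), Corollary \ref{cor:weightboundsD5n4} forces $\tau_t$ to be absent from $D_5$, so the $\det$-multiplication map is zero on this isotypic component, and the kernel multiplicity equals the $D_5$-multiplicity of $\sigma_t := \bS_{(t-4,-t-4)}A \oo \bS_{(t-4,-t-4)}B \oo \bS_{(-2,-2,-2,-2)}C$. I plan to use the non-split extension $0 \to Q_2 \to \mc{H}^0(\pi_{222,+}\mc{O}_{Y_{222}}(\ast Y_5)) \to D_5 \to 0$ from Lemma \ref{lem:decompn4}(2), together with Theorem \ref{thm:charMatrices}, to verify that for $t \geq 1$ the weight $\sigma_t$ is absent from each of the composition factors $D_6$, $D_2$, $D_0$ of $Q_2$ (for instance, $D_6$ pairs $\bS_{(-2,-2,-2,-2)}C$ only with $\bS_{(-4,-4)}A \oo \bS_{(-4,-4)}B$). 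The problem then reduces to computing the $\sigma_t$-multiplicity in $\mc{H}^0(\pi_{222,+}\mc{O}_{Y_{222}}(\ast Y_5))$ via Bott's theorem on $\mathbb{G}(2;C)$, combined with the $\GL$-character of $S^{(2)}_f$ from \cite{perlman2020equivariant}. The hardest step will be verifying that the Bott contribution is exactly one, which requires careful bookkeeping of cohomological shifts and of the relevant $\mc{Q}$-isotypic component.

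For part (3), I plan to derive a contradiction from part (1). Assume $D_5$ embeds into $H^3_{\ol{O}_7}(S)$. Then for every $t > 0$, part (1) places $\tau_t$ in the kernel of $\det$-multiplication on $H^3_{\ol{O}_7}(S)$, which by the long exact sequence of local cohomology is a subquotient of $H^2_{\ol{O}_7}(\C[\ol{O}_8]) \cong \mathbb{R}^1 \pi_{223,*}\mathscr{H}^1_{Y_7}(\mc{O}_Y)$. Bott's theorem bounds the $\tau_t$-multiplicity above by the multiplicity of $\bS_{(t-2,-t-2)}A \oo \bS_{(t-2,-t-2)}B \oo \bS_{(-1,-1,-2)}C^{(3)}$ in $S^{(3)}_f$. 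For $t = 2$, a direct Cauchy-formula computation shows this multiplicity vanishes: for any $k \geq 2$, clearing denominators by $f^k$ would demand a summand $\bS_{(k+2,k-2)}A \oo \bS_{(k+2,k-2)}B$ inside $\bw^2(A \oo B) = \bS_{(1,1)}A \oo \bS_{(2)}B \oplus \bS_{(2)}A \oo \bS_{(1,1)}B$, which is impossible. This contradicts part (1) and completes the plan.
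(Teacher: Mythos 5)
Your part (2) matches the paper exactly, and your part (1) takes a legitimate alternate route (via $\pi_{222}$ and the explicit character of $\mathcal{H}^0(\pi_{222,+}\mathcal{O}_Y(\ast Y_5))$ rather than the paper's route via $\pi_{223}$ and the $n=3$ character of $D_5$ from Lemma \ref{lem:charD5n3}); the needed multiplicity $[D_5 : \bS_{(t-4,-t-4)}A\oo\bS_{(t-4,-t-4)}B\oo\bS_{(-2^4)}C]=1$ is indeed true (it is the content of Lemma \ref{lem:charD5D6ngeq4}(2) with $p=4$), so your plan can be carried through even though it requires more work than you sketch.

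However, part (3) contains a genuine error. You assert that for $t=2$ the multiplicity of $\bS_{(0,-4)}A\oo\bS_{(0,-4)}B\oo\bS_{(-1,-1,-2)}C$ in $S^{(3)}_f$ vanishes, citing a Cauchy-formula computation landing inside $\bw^2(A\oo B)$. That reduction is not correct: clearing denominators by $f^k$ and applying the Cauchy formula, one is led (as in the proof of Lemma \ref{charSfn3}) to the multiplicity of $\bS_{(k+2,k-2)}A^{\ast}\oo\bS_{(k+2,k-2)}B^{\ast}$ in $\bS_{(2k-1,1,0,0)}(A^{\ast}\oo B^{\ast})$ for $k\gg 0$, not inside $\bw^2(A\oo B)$. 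Using \cite[Corollary 4.3b]{raicu2012secant} as in Lemma \ref{charSfn3}, one finds (with $r=2k$, $f=k-2$, $e=2k-3$, so $e\geq 2f$, $e<r-1$, and $e$ odd) that this multiplicity equals $(e+1)/2 - f = 1$; one can also verify directly that for $k=3$ the multiplicity $[\bS_{(5,1,0,0)}(A\oo B):\bS_{(5,1)}A\oo\bS_{(5,1)}B]$ equals $1$. In fact this multiplicity is $1$ for every $t\geq 1$, so no choice of $t$ gives the vanishing you want, and the strategy of deriving a contradiction from part (1) alone cannot succeed. The correct argument, as in the paper, uses parts (1) and (2) together: if $D_5$ were a submodule of $H^3_{\ol{O}_7}(S)$, then $\tau_1 = \bS_{(-1,-3)}A\oo\bS_{(-1,-3)}B\oo\bS_{(-1^4)}C$ would appear both in the kernel of $\det$-multiplication on $H^3_{\ol{O}_7}(S)$ (by part (1)) and in the cokernel of $\det$-multiplication on $H^2_{\ol{O}_7}(S)=D_7$ (by part (2)); these two contributions add inside $H^2_{\ol{O}_7}(\C[\ol{O}_8])\cong\mathbb{R}^1\pi_{223,*}\mathscr{H}^1_{Y_7}(\mc{O}_Y)$, forcing $\tau_1$ to have multiplicity at least $2$ there, whereas by Bott and Lemma \ref{charSfn3}(4) the actual multiplicity is exactly $1$.
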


\begin{proof}

(1)  Let $Y=Y_{223}$ and $\pi=\pi_{223}$. By Lemma \ref{lem:charD5n3}(1) we have that $\bS_{(t-2,-t-2)}A \oo \bS_{(t-2,-t-2)}B\oo \bS_{(-1,-1,-2)}\mc{Q}$ belongs to $D_5^{Y}$ with multiplicity one. It follows from Theorem \ref{thm:charMatrices} that $\bS_{(t-2,-t-2)}A \oo \bS_{(t-2,-t-2)}B\oo \bS_{(-1,-1,-2)}\mc{Q}$ does not belong to $D_2^Y$, so by Theorem \ref{loc05}(1) we have that $\bS_{(t-2,-t-2)}A \oo \bS_{(t-2,-t-2)}B\oo \bS_{(-1,-1,-2)}\mc{Q}$ belongs to $\mathscr{H}^3_{Y_5}(\mc{O}_Y)$ with multiplicity one. By the proof of Lemma \ref{lem:kerCharlemma}, we have that $\mathbb{R}^1\pi_{\ast}\mathscr{H}^3_{Y_5}(\mc{O}_Y)$ is the kernel of the multiplication map $D_5\oo \det \to D_5$. By Bott's Theorem (Lemma \ref{BottLemma}) applied to $\bS_{(t-2,-t-2)}A \oo \bS_{(t-2,-t-2)}B\oo \bS_{(-1,-1,-2)}\mc{Q}$, we obtain (1). 

(2) By Lemma \ref{lem:wit17n4} we have that $\bS_{(t-2,-t-2)}A \oo \bS_{(t-2,-t-2)}B\oo \bS_{(-1,-1,-1,-1)}C$ appears in $D_7$, and by Corollary \ref{cor:weightBoundsn4} we have that $\bS_{(t-4,-t-4)}A \oo \bS_{(t-4,-t-4)}B\oo \bS_{(-2,-2,-2,-2)}C$ does not appear in $D_7$. Thus, $\bS_{(t-2,-t-2)}A \oo \bS_{(t-2,-t-2)}B\oo \bS_{(-1,-1,-1,-1)}C$ is in the cokernel of the multiplication map $D_7\oo \det \to D_7$.

(3) Since $H^2_{\ol{O}_7}(S)=D_7$, by (1) and (2) it suffices to show that $\bS_{(-1,-3)}A \oo \bS_{(-1,-3)}B\oo \bS_{(-1,-1,-1,-1)}C$ appears with multiplicity one in $H^2_{\ol{O}_7}(\C[\ol{O}_8])$. By Lemma \ref{BottLemma} is suffices to show that $\bS_{(-1,-3)}A \oo \bS_{(-1,-3)}B\oo \bS_{(-1,-1,-2)}\mc{Q}$ appears with multiplicity one in $\mc{O}_Y(\ast Y_7)$. This follows from Lemma \ref{charSfn3}.
\end{proof}

\begin{lemma}\label{lem:no34in7} Let $i=3,4$. We have that $D_i$ is neither a submodule nor a quotient of $	H^3_{\ol{O}_7}(S)$.
\end{lemma}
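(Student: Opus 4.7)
The plan is to work with the long exact sequence of local cohomology coming from $0 \to S \otimes \det \to S \to \C[\ol{O}_8] \to 0$ in cohomological degree $3$, where the local cohomology of $\C[\ol{O}_8]$ is identified with $\mathbb{R}^\bullet \pi_{223,\ast}\mathscr{H}^1_{Y_7}(\mc{O}_Y)$ via the rational desingularization $\pi_{223}$ of $\ol{O}_8$. If $D_3$ is a submodule of $H^3_{\ol{O}_7}(S)$, then the nonzero kernel of $D_3 \otimes \det \xrightarrow{\cdot \det} D_3$ embeds into a quotient of $H^2_{\ol{O}_7}(\C[\ol{O}_8])$; dually, if $D_3$ is a quotient, the cokernel of this map injects into $H^3_{\ol{O}_7}(\C[\ol{O}_8])$. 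The strategy is to exhibit an explicit witness weight in each of these kernels/cokernels and to verify it has multiplicity zero in the corresponding local cohomology of $\C[\ol{O}_8]$, yielding the required contradictions.

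For the submodule case, the character formula for $D_3 = L_{Z_1}$ at $\lambda = (-1,-7) \in W^1$ produces the summand $w_0 = \bS_{(-4,-4)}A \otimes \bS_{(-1,-7)}B \otimes \bS_{(-2,-2,-2,-2)}C$ with multiplicity one; shifting by the weight $(2,2)\times (2,2)\times (1,1,1,1)$ of $\det$ yields $w = \bS_{(-2,-2)}A \otimes \bS_{(1,-5)}B \otimes \bS_{(-1,-1,-1,-1)}C$, which lies outside $D_3$ and hence inside the kernel, and which by Lemma \ref{lem:wit17n4}(2) is also absent from $D_7$ (since $\alpha \neq \beta$). Bott's Theorem applied to $\mathbb{R}^1\pi_{223,\ast}\mathscr{H}^1_{Y_7}(\mc{O}_Y)$, together with the Cauchy formula for $\Sym((A\otimes B)\otimes C)$, then reduces the multiplicity of $w$ in $H^2_{\ol{O}_7}(\C[\ol{O}_8])$ to the multiplicity of $\bS_{(-k+2,-k+2)}A \otimes \bS_{(-k+5,-k-1)}B$ in the mixed Schur functor $\bS_{(1,1,0,-(2k-2))}(A\otimes B)$ for $k \gg 0$, obtained after twisting by $\det(A\otimes B)^{-(2k-2)}$. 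Using $\bS_{(1,1,0,-s)}V = \bw^2 V \otimes \Sym^s V^\ast - \bS_{(1,0,0,-s+1)}V$ and applying Pieri on the two $\GL_2$ factors, one checks that the candidate partitions $\nu \vdash s$ arising from the $A$-target have spread $\nu_1 - \nu_2 \in \{5, 7\}$ while those from the $B$-target have spread $1$; no common $\nu$ exists, so the multiplicity vanishes.

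The quotient case applies the same strategy to the weight $w_0$ itself, which lies in the cokernel of $D_3 \otimes \det \to D_3$ because its $\det^{-1}$-shift has $(-3,-9) \notin W^1$; the Bott reduction now lands in $\mathbb{R}^2\pi_{223,\ast}\mathscr{H}^1_{Y_7}(\mc{O}_Y)$ and the Pieri computation is carried out on $\bS_{(1,0,0,-(2k-3))}(A\otimes B) = (A\otimes B) \otimes \Sym^{2k-3}(A\otimes B)^\ast - \Sym^{2k-4}(A\otimes B)^\ast$, with the same spread mismatch killing the multiplicity. The result for $D_4$ follows from that for $D_3$ by the $A \leftrightarrow B$ symmetry of the hyperdeterminant $f$ for $n = 3$, together with the analogous character formula coming from the complementary rank-$1$ flattening defining $D_4$. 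The main obstacle is the Pieri/Littlewood--Richardson multiplicity computation on the mixed Schur functors of the four-dimensional space $A \otimes B$, but it collapses to the clean spread comparison above once we twist by a suitable power of $\det(A\otimes B)$.
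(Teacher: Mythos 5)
Your proposal follows the same overall strategy as the paper's proof: use the long exact sequence coming from $0 \to S\otimes\det \to S \to \C[\ol{O}_8]\to 0$, identify $H^\bullet_{\ol{O}_7}(\C[\ol{O}_8])$ with $\mathbb{R}^\bullet\pi_{223,\ast}\mathscr{H}^1_{Y_7}(\mc{O}_Y)$, locate a weight of $D_3$ (equivalently $D_4$, after the $A\leftrightarrow B$ swap you implicitly made by choosing $w_0 = \bS_{(-4,-4)}A\oo\bS_{(-1,-7)}B\oo\bS_{(-2,-2,-2,-2)}C$) landing in the kernel or cokernel of multiplication by $\det$, and use Bott's Theorem to reduce to a plethysm multiplicity in $\bS_\mu(A\oo B)$. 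The structural steps are identical, and the reduction you give, namely computing the multiplicity of $\bS_{(-k+2,-k+2)}A\oo\bS_{(-k+5,-k-1)}B$ in $\bS_{(1,1,0,-(2k-2))}(A\oo B)$ after twisting by $\det(A\oo B)^{-(2k-2)}$, is a correct alternative normalization of the paper's reduction to $\bS_{(k,k)}A^*\oo\bS_{(k+3,k-3)}B^*$ in $\bS_{(2k-1,1,0,0)}(A^*\oo B^*)$.

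Where you diverge is in the final computation and in how you package the two cases. The paper invokes \cite[Corollary 4.3b]{raicu2012secant}, which gives a closed formula for $[\bS_\mu(A\oo B):\bS_\alpha A\oo\bS_\beta B]$ when $\mu$ has two parts; it then uses the Fourier self-duality of $\mc{O}_Y(\ast Y_7)$ to reduce the cokernel (quotient) case to the same single plethysm check. You instead carry out a hand computation via the Grothendieck-group identity $\bS_{(1,1,0,-s)}(A\oo B)=\bw^2(A\oo B)\oo\Sym^s(A\oo B)^\ast-\bS_{(1,0,0,-(s-1))}(A\oo B)$, together with Cauchy on both $\GL_2$ factors, and then treat the cokernel case by a separate identical-in-spirit computation on $\bS_{(1,0,0,-(2k-3))}(A\oo B)$. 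I verified your identity and the vanishing of the resulting multiplicity; your approach is valid and somewhat more elementary, at the cost of more bookkeeping. A couple of minor points to tidy up: the specific ``spread'' values you assert for the candidate $\nu$ ($\{5,7\}$ for the $A$-side, $1$ for the $B$-side) do not match what the Pieri/Cauchy bookkeeping actually produces (from $\bw^2(A\oo B)\oo\Sym^s(A\oo B)^\ast$ one obtains a forced $\nu$ of spread $0$ from the $\det A\oo\Sym^2 B$ piece and a forced $\nu$ of spread $6$ from the $\Sym^2 A\oo\det B$ piece, and in neither case does the remaining $\GL_2$-Pieri step hit the required weight on the other factor), though the final conclusion of vanishing is correct; the remark about ``the $A\leftrightarrow B$ symmetry of the hyperdeterminant $f$ for $n=3$'' should read $n=4$; and the side observation that $w$ is ``absent from $D_7$'' is not actually needed for the contradiction, which already follows from $w$ not occurring in $H^2_{\ol{O}_7}(\C[\ol{O}_8])$ at all.
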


\begin{proof}
We prove the result for $D_3$. The proof for $D_4$ is similar. We continue to write $Y=Y_{223}$ and $\pi=\pi_{223}$.

By Theorem \ref{thm:charMatrices} we have that $\bS_{(-1,-7)}A\oo \det(B\oo C)=\bS_{(-1,-7)}A\oo \bS_{(-4,-4)}B\oo \bS_{(-2,-2,-2,-2)}C$ is a subrepresentation of $D_3$, so that $\bS_{(1,-5)}A\oo \bS_{(-2,-2)}B\oo \bS_{(-1,-1,-1,-1)}C$	 is a subrepresentation of $D_3\otimes \det$. Since $D_3$ has no subrepresentations $\bS_{\alpha}A\oo \bS_{\beta}B\oo \bS_{\gamma}C$ satisfying $\alpha_2=-5$, it follows that $\bS_{(1,-5)}A\oo \bS_{(-2,-2)}B\oo \bS_{(-1,-1,-1,-1)}C$ belongs to the kernel of the multiplication map $D_3\otimes \det\to D_3$. Since $\mc{F}(D_3)=D_3$, it follows also that $\bS_{(-1,-7)}A\oo\bS_{(-4,-4)}B\oo \bS_{(-2,-2,-2,-2)}C$ is in the cokernel of the multiplication map $D_3\oo\det \to D_3$. 

We need to show that $\bS_{(1,-5)}A\oo \bS_{(-2,-2)}B\oo \bS_{(-1,-1,-1,-1)}C$ does not belong to $H^2_{\ol{O}_7}(\C[\ol{O}_8])\cong \mathbb{R}^1\pi_{\ast}\mathscr{H}^1_{Y_7}(\mc{O}_Y)$ and that $\bS_{(-1,-7)}A\oo\bS_{(-4,-4)}B\oo \bS_{(-2,-2,-2,-2)}C$ does not belong to $H^3_{\ol{O}_7}(\C[\ol{O}_8])\cong \mathbb{R}^2\pi_{\ast}\mathscr{H}^1_{Y_7}(\mc{O}_Y)$. By Bott's Theorem (Lemma \ref{BottLemma}), it suffices to show that neither $\bS_{(1,-5)}A\oo \bS_{(-2,-2)}B\oo \bS_{(-1,-1,-2)}\mc{Q}$ nor $\bS_{(-1,-7)}A\oo \bS_{(-4,-4)}B\oo \bS_{(-2,-3,-3)}\mc{Q}$ are subbundles of $\mc{O}_Y(\ast Y_7)$. Since $\mc{F}(\mc{O}_Y(\ast Y_7))=\mc{O}_Y(\ast Y_7)$ and $\mc{F}(\bS_{(1,-5)}A\oo \bS_{(-2,-2)}B\oo \bS_{(-1,-1,-2)}\mc{Q})=\bS_{(-1,-7)}A\oo \bS_{(-4,-4)}B\oo \bS_{(-2,-3,-3)}\mc{Q}$, it suffices to show that $\bS_{(1,-5)}A\oo \bS_{(-2,-2)}B\oo \bS_{(-1,-1,-2)}\mc{Q}$ is not a subbundle of $\mc{O}_Y(\ast Y_7)$.

Following the argument of Lemma \ref{charSfn3}, it suffices to show that $\bS_{(k+3,k-3)}A^{\ast}\oo \bS_{(k,k)}B^{\ast}$ is not a subrepresentation of $\mathbb{S}_{(2k-1,1,0,0)}(A^{\ast}\oo B^{\ast})$ for $k\gg 0$. To prove this, we apply \cite[Corollary 4.3b]{raicu2012secant}. Using the notation there $f=k$ and $e=2k-2$, so $e<2f$. Thus, the multiplicity is zero.
\end{proof}

\begin{lemma}\label{lem:D6primeisH3}
The following is true about $D_6'$.
\begin{enumerate}
\item We have $H^3_{\ol{O}_7}(S)=D_6'$.
\item The representation $\bS_{(-1,-3)}A\oo \bS_{(-2,-2)}B\oo \bS_{(-1,-1,-1,-1)}C$ appears with multiplicity one in the kernel of the multiplication map $D_6'\oo \det\to D_6'$. In particular, the representation $\bS_{(-3,-5)}A\oo \bS_{(-4,-4)}B\oo \bS_{(-2,-2,-2,-2)}C$ appears in $D_6'$ with multiplicity one.
\item If $\bS_{\alpha}A\oo \bS_{\beta}B\oo \bS_{\gamma}C$ belongs to $D_6'$ then $\gamma_2\geq -2$ and $\gamma_3\leq -2$.
\item The following representations do not appear in $D_6'$: 
$$
\bS_{(-1,-7)}A\oo \bS_{(-4,-4)}B\oo \bS_{(-2,-2,-2,-2)}C, \quad \bS_{(-4,-4)}A\oo \bS_{(-1,-7)}B\oo \bS_{(-2,-2,-2,-2)}C,
$$
$$
\bS_{(-2,-6)}A\oo \bS_{(-4,-4)}B\oo \bS_{(-2,-2,-2,-2)}C.
$$
\end{enumerate}
\end{lemma}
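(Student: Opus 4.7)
The plan is to prove (1) first, following the template of Lemmas \ref{lem:reduceD1D5} and \ref{lem:infoD17}; parts (2), (3), (4) then follow from Bott computations, Fourier symmetry, and weight analysis. For (1), apply the long exact sequence induced by \eqref{resDet}:
\[H^{i-1}_{\ol{O}_7}(\C[\ol{O}_8]) \to H^i_{\ol{O}_7}(S) \otimes \det \to H^i_{\ol{O}_7}(S) \to H^i_{\ol{O}_7}(\C[\ol{O}_8]),\]
with $i=3$; the kernel of $\det$-multiplication on $H^3_{\ol{O}_7}(S)$ is then a quotient of $H^2_{\ol{O}_7}(\C[\ol{O}_8]) \cong \mathbb{R}^1\pi_*\mathscr{H}^1_{Y_7}(\mc{O}_Y)$ for $Y=Y_{223}$, whose subrepresentations have $C$-weight of the form $\lambda(2)=(\lambda_1,\lambda_2,-1,\lambda_3+1)$ by Lemma \ref{BottLemma}. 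The weight bound $\gamma_3\leq -2$ from Lemma \ref{lem:upperBoundsOnWeights} rules out $D_7, D_8, D_9$, while combining the kernel-weight characterization of Lemma \ref{lem:kerCharlemma} with the Bott constraint $\nu_3=-1$ rules out $D_0, D_1, D_2$ as simple submodules (their kernel weights force $\gamma_3\leq -2\neq -1$). Together with Lemmas \ref{lem:no34in7}, \ref{lem:factsD5lc7}(3), \ref{lem:2notsub7} (excluding $D_3,D_4,D_5,D_6$), only $D_6'$ remains; by Theorem \ref{thm:quiv4}, $D_6'$ connects solely to $D_3, D_4$, which are also excluded as simple quotients by Lemma \ref{lem:no34in7}. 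Invoking \cite[Theorem 2.13]{lHorincz2019categories} as in the proof of Lemma \ref{lem:reduceD1D5}(1), we conclude $H^3_{\ol{O}_7}(S)\cong (D_6')^{\oplus m}$.

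To pin down $m=1$ and simultaneously prove (2): by Lemma \ref{charSfn3}(4) the subbundle $\bS_{(-1,-3)}A\oo \bS_{(-2,-2)}B\oo \bS_{(-1,-1,-2)}\mc{Q}$ appears with multiplicity one in $\mc{O}_Y(*Y_7)$, and pushing down via Lemma \ref{BottLemma} (with $p=2$) yields $\bS_{(-1,-3)}A\oo \bS_{(-2,-2)}B\oo \bS_{(-1,-1,-1,-1)}C$ with multiplicity one in $H^2_{\ol{O}_7}(\C[\ol{O}_8])$. By Lemma \ref{lem:wit17n4}(2) this representation does not appear in $D_7=H^2_{\ol{O}_7}(S)$, so it maps nontrivially to the kernel of $\det$-multiplication on $H^3_{\ol{O}_7}(S)$. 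This forces $m\geq 1$, and multiplicity comparison forces $m=1$, completing (1). The same representation therefore appears with multiplicity one in the kernel of $D_6'\oo \det \to D_6'$, and its $\det^{-1}$-shift $\bS_{(-3,-5)}A\oo \bS_{(-4,-4)}B\oo \bS_{(-2,-2,-2,-2)}C$ lies in $D_6'$ with multiplicity one, proving (2).

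For (3), the bound $\gamma_3\leq -2$ follows from (1) and Lemma \ref{lem:upperBoundsOnWeights}; since $\mc{F}(D_6')=D_6'$ by Proposition \ref{prop:fourier} and the twisted Fourier acts on $C$-weights via $\gamma\mapsto(-\gamma_4,-\gamma_3,-\gamma_2,-\gamma_1)+(-4,-4,-4,-4)$, this bound transforms into $\gamma_2\geq -2$. For (4), verify via analogs of the Cauchy/Raicu computations in Lemma \ref{charSfn3}(5) that the $\det^{-1}$-shifted bundles corresponding to the listed representations do not occur in $\mc{O}_Y(*Y_7)$; hence the representations do not lie in $H^2_{\ol{O}_7}(\C[\ol{O}_8])$ nor in the kernel of $\det$-multiplication on $D_6'$. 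By (3), any $C$-weight $(-2,-2,-2,-2)$ subrepresentation of $D_6'$ must lie in this kernel (as $\det$-multiplication shifts $\gamma_3$ to $-1$, violating the bound); the absence from the kernel then excludes the listed representations from $D_6'$. The main obstacle is part (1), specifically the careful elimination of $D_0, D_1, D_2$ as simple submodules via matching the Bott constraint $\nu_3=-1$ against the kernel weights from Lemma \ref{lem:kerCharlemma}.
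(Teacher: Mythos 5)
Your proof is correct and follows essentially the same route as the paper: the elimination of $D_0,D_1,D_2$ (via Lemma \ref{lem:kerCharlemma} against the Bott constraint $\gamma_3=-1$), $D_5,D_6$ (Lemmas \ref{lem:factsD5lc7}, \ref{lem:2notsub7}), $D_3,D_4$ (Lemma \ref{lem:no34in7}), the identification $H^3_{\ol{O}_7}(S)\cong (D_6')^{\oplus m}$ via \cite[Theorem 2.13]{lHorincz2019categories}, pinning $m=1$ through the multiplicity-one representation $\bS_{(-1,-3)}A\oo\bS_{(-2,-2)}B\oo\bS_{(-1,-1,-1,-1)}C$ not appearing in $D_7$ (Lemma \ref{lem:wit17n4}) yet appearing in $H^2_{\ol{O}_7}(\C[\ol{O}_8])$ (via Bott and Lemma \ref{charSfn3}), and parts (3), (4) by Fourier self-duality and the $\det$-twist argument. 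The only cosmetic differences are your use of the weight bound of Lemma \ref{lem:upperBoundsOnWeights} to exclude $D_8,D_9$ (where a support argument is even more immediate) and your ``$\det^{-1}$-shifted bundles'' phrasing in part (4), which would read more cleanly as: the Bott preimages of the $\det$-twists of the listed representations.
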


\begin{proof}
The kernel of the multiplication map $H^3_{\ol{O}_7}(S)\oo \det \to H^3_{\ol{O}_7}(S)$ is a subrepresentation of $H^2_{\ol{O}_7}(\C[\ol{O}_8])\cong \mathbb{R}^1\pi_{\ast}\mathscr{H}^1_{Y_7}(\mc{O}_Y)$. By Lemma \ref{BottLemma} we have that all subrepresentations $\bS_{\alpha}A\oo \bS_{\beta}B\oo \bS_{\gamma}C$ of this kernel satisfy $\gamma_3=-1$. By Lemma \ref{lem:kerCharlemma}, neither $D_0$, $D_1$, nor $D_2$ are submodules of $H^3_{\ol{O}_7}(S)$. By Lemma \ref{lem:2notsub7} and Lemma \ref{lem:factsD5lc7} we have that neither $D_5$ nor $D_6$ are submodules of $H^3_{\ol{O}_7}(S)$. By Lemma \ref{lem:no34in7}, neither $D_3$ nor $D_4$ are submodules. We conclude that the only possible simple submodule is $D_6'$. By Theorem \ref{thm:quiv4},  $H^3_{\ol{O}_7}(S)$  must be supported on the quiver
\[\xymatrix@C+1.6pc{(3) \ar@<0.5ex>[r] & \ar@<0.5ex>[l] (6') \ar@<0.5ex>[r] & \ar@<0.5ex>[l] (4) }\]
with all 2-cycles being zero. Suppose $M$ is a non-zero indecomposable summand of $H^3_{\ol{O}_7}(S)$ (with unique simple submodule $D_6'$). The only other possible composition factors of $M$ (and hence $H^3_{\ol{O}_7}(S)$)  are $D_3$ and $D_4$, and if they appeared, they would have to be quotients, by \cite[Theorem 2.13]{lHorincz2019categories}). By Lemma \ref{lem:no34in7} they cannot be quotients of  $H^3_{\ol{O}_7}(S)$, so we conclude that $H^3_{\ol{O}_7}(S)$ is a direct sum of copies of $D_6'$.	

By Lemma \ref{lem:wit17n4} the representation $\bS_{(-1,-3)}A\oo \bS_{(-2,-2)}B\oo \bS_{(-1,-1,-1,-1)}C$ does not appear in $D_7$ (since $(-1,-3)\neq (-2,-2)$). Thus, it does not appear in the cokernel of  $D_7\oo \det \to D_7$. Therefore, to complete the proof of (1) and (2), it suffices to show that $\bS_{(-1,-3)}A\oo \bS_{(-2,-2)}B\oo \bS_{(-1,-1,-1,-1)}C$ appears in $H^2_{\ol{O}_7}(\C[\ol{O}_8])\cong \mathbb{R}^1\pi_{\ast}\mathscr{H}^1_{Y_7}(\mc{O}_Y)$ with multiplicity one. By Bott's Theorem (Lemma \ref{BottLemma}) it suffices to show that  $\bS_{(-1,-3)}A\oo \bS_{(-2,-2)}B\oo \bS_{(-1,-1,-2)}\mc{Q}$ appears in $\mc{O}_Y(\ast Y_7)$ with multiplicity one. This is a consequence of Lemma \ref{charSfn3}.

(3) The upper bound on $\gamma_3$ follows from Lemma \ref{lem:upperBoundsOnWeights} and the fact that $H^3_{\ol{O}_7}(S)=D_6'$. The lower bound on $\gamma_2$ then follows from applying the Fourier transform, using that $\mc{F}(D_6')=D_6'$.

(4) If either of the first two representations appeared in $D_6'$, then by (3) we would have that $\bS_{(1,-5)}A\oo \bS_{(-2,-2)}B\oo \bS_{(-1,-1,-1,-1)}C$ or $\bS_{(-2,-2)}A\oo \bS_{(1,-5)}B\oo \bS_{(-1,-1,-1,-1)}C$ is in the kernel of the multiplication map $D_6'\oo \det \to D_6'$, and hence in  $H^2_{\ol{O}_7}(\C[\ol{O}_8])\cong \mathbb{R}^1\pi_{\ast}\mathscr{H}^1_{Y_7}(\mc{O}_Y)$. We showed that this does not happen during the proof of Lemma \ref{lem:no34in7}. Similarly, if  $\bS_{(-2,-6)}A\oo \bS_{(-4,-4)}B\oo \bS_{(-2,-2,-2,-2)}C$ was in $D_6'$, then $\bS_{(0,-4)}A\oo \bS_{(-2,-2)}B\oo \bS_{(-1,-1,-1,-1)}C$ would be in $H^2_{\ol{O}_7}(\C[\ol{O}_8])\cong \mathbb{R}^1\pi_{\ast}\mathscr{H}^1_{Y_7}(\mc{O}_Y)$. By Lemma \ref{BottLemma} and Lemma \ref{charSfn3}(5) this representation does not appear.
\end{proof}

\subsection{The case $n\geq 5$}\label{sec:lcO7ngeq5} Let $\pi'=\pi_{224}$ and $Y'=Y_{224}$. We set $Y'_7=\pi'^{-1}(\ol{O}_7)$. Then we have $\mathbb{R}\pi'_{\ast}\mathbb{R}\mathscr{H}^0_{Y_7'}(\mc{O}_{Y'})=\mathbb{R}\Gamma_{\ol{O}_7}(S)$, so there is a spectral sequence
\begin{equation}\label{localO7ngeq5}
E_2^{i,j}=\mathbb{R}^i\pi'_{\ast}\mathscr{H}^j_{Y_7'}(\mc{O}_{Y'})\implies H^{i+j}_{\ol{O}_7}(S).
\end{equation} 
We use the case $n=4$ of Theorem \ref{loc07} to prove the case $n\geq 5$. By Corollary \ref{cor:weightBoundsn4}, Lemma \ref{lem:D6primeisH3} and Lemma \ref{BottLemma} the nonzero modules on the $E_2$ page are given by
$$
E_2^{n-4,2}=\mathbb{R}^{n-4}\pi'_{\ast}D_7^{Y'} ,\quad E_2^{2n-8,3}=\mathbb{R}^{2n-8}\pi'_{\ast}D_6'^{Y'},\quad E_2^{3n-12,4}=\mathbb{R}^{3n-12}\pi'_{\ast}D_1^{Y'},\quad E_2^{4n-16,5}=\mathbb{R}^{4n-16}\pi'_{\ast}D_0^{Y'}.
$$
The arrows on the $E_r$ page decrease $j$ by $r-1$, and increase $i$ by $r$, so there are no nonzero differentials in the spectral sequence, and it degenerates on the $E_2$ page.

By \cite[Lemma 3.11]{lHorincz2019categories} and Theorem \ref{quiverN5} we have $\mathbb{R}^{n-4}\pi'_{\ast}D_7^{Y'}\cong D_7$. Since $\mathbb{R}^{4n-16}\pi'_{\ast}D_0^{Y'}\cong D_0$ (see \cite[Lemma 3.4]{socledegs}), to complete the proof of Theorem \ref{loc07}, we need to show that $\mathbb{R}^{2n-8}\pi'_{\ast}D_6'^{Y'}=D_6'$ and $\mathbb{R}^{3n-12}\pi'_{\ast}D_1^{Y'}\cong D_1$. We take care of this in Lemma \ref{lem:directimD1} and Lemma \ref{lem:pushD6prime} below.

\section{Characters and witness weights}\label{sec:characters}

Given a simple module $M\in \tn{mod}_{\GL}(\D_V)$, and a triple $(\alpha,\beta,\gamma)\in \mathbb{Z}^2_{\dom}\times \bb{Z}^2_{\dom}\times \bb{Z}^n_{\dom}$, we say that $\alpha\times \beta\times \gamma$ is a witness weight for $M$ if $\bS_{\alpha}A\otimes \bS_{\beta}B\otimes \bS_{\gamma}C$ appears in $M$ with multiplicity one, and does not appear in any other simple object of $\tn{mod}_{\GL}(\D_V)$. Such a weight may be used to identify the multiplicity of $M$ in a composition series of any $\D$-module in $\tn{mod}_{\GL}(\D_V)$. Furthermore, a witness weight for $M$ yields an explicit presentation of its projective cover in $\tn{mod}_{\GL}(\D_V)$, see  \cite[Section 2.1]{lHorincz2019categories}.

We write $\W(M)$ for the set of witness weights of $M$.

\subsection{Characters of the simple modules when $n=3$}\label{sec:charn3} 

\begin{theorem}\label{thm:witnessn3}
Let $n=3$. The following is true about the witness weights for simple modules in $\tn{mod}_{\GL}(\D_V)$:
$$
(-6,-6)\times (-6,-6)\times (-4,-4,-4)\in \W(D_0),\quad (0,0)\times (0,0)\times (0,0,0)\in \W(D_8),
$$	
$$
(t-6,-t-6)\times (t-6,-t-6)\times (-4,-4,-4)\in \W(D_1)\quad t>0,\quad (t,-t)\times (t,-t)\times (0,0,0)\in \W(D_7)\quad t>0,
$$
$$
(-4,-4)\times (-4,-4)\times (-2,-3,-3)\in \W(D_2),\quad (-2,-2)\times (-2,-2)\times (-1,-1,-2)\in \W(D_6),
$$
$$
(-1,-5)\times (-3,-3)\times (-2,-2,-2)\in \W(D_3),\quad (-3,-3)\times (-1,-5)\times (-2,-2,-2)\in \W(D_4),
$$
$$
(t-2,-t-4)\times (t-3,-t-3)\times (-2,-2,-2),\; (t-3,-t-3)\times (t-2,-t-4)\times (-2,-2,-2)\in \W(D_5)\quad t>0,
$$
$$
(-3,-3)\times (-3,-3)\times (-2,-2,-2)\in \W(D_6').
$$
\end{theorem}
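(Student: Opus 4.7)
The plan is to verify, for each simple $M$ listed in Theorem \ref{thm:witnessn3}, two things: (i) the weight $\alpha\times\beta\times\gamma$ appears with multiplicity exactly one in $M$, and (ii) it does not appear in any other simple of $\opmod_{\GL}(\D_V)$. Three tools will be combined throughout: the explicit character formulas of Theorem \ref{thm:charMatrices} under both flattenings of $V$, the Fourier transform pairing of Proposition \ref{prop:fourier}, and the Pieri-type multiplicity computations based on \cite[Corollary 4.3b]{raicu2012secant} (as already employed in Lemma \ref{charSfn3}).

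For the determinantal simples $D_0, D_2, D_3, D_4, D_6, D_8$ the full $\GL$-character is accessible via Theorem \ref{thm:charMatrices}: the modules $D_0, D_2, D_6, D_8$ are read off from the $(A\oo B)\oo C$ flattening, and $D_3, D_4$ from the $B\oo(A\oo C)$ and $A\oo(B\oo C)$ flattenings respectively (see the end of Section \ref{sec:dmodsMat}). In each case the proposed witness is the image under $\lambda(p)$ of an extremal $\lambda \in W^p$; for $D_6$ this is exactly Lemma \ref{lem:witD6}. The $D_8=S$ witness is the trivial representation sitting in degree zero, and the $D_0$ witness is the one-dimensional representation $\det(V)^{-1}$, which fits the $\lambda(0)$ formula at $\lambda=(-4,-4,-4)$.

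For the non-determinantal simples $D_1, D_6', D_7$ we exploit the filtration $0\subsetneq S\subsetneq \D_V f^{-1}\subsetneq S_f$ from Theorem \ref{loc07}(1) together with Lemma \ref{charSfn3}. The weights $(t,-t)\times(t,-t)\times(0,0,0)$ appear in $S_f$ with multiplicity one by Lemma \ref{charSfn3}(1) and do not appear in $S$ (by degree), so they must contribute to $H^1_f(S)$; Theorem \ref{thm:charMatrices} excludes them from $D_6, D_2, D_0$, and Lemma \ref{charSfn3}(2) forces them into the socle $D_7 \subset \D_V f^{-1}/S$. Fourier duality (Proposition \ref{prop:fourier}) then transports these witnesses to the ones claimed for $D_1$. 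The $D_6'$ witness $(-3,-3)\times(-3,-3)\times(-2,-2,-2)$ is located analogously inside $\D_Vf^{-1}/S$: Lemma \ref{charSfn3}(1) places it in $S_f$ with multiplicity one, while Lemma \ref{charSfn3}(5) and the just-established $D_7$ character rule it out of the other composition factors of $S_f/S$. For $D_5$, we compute the Euler characteristic $[\chi(\pi_{222+}\mc{O}_{Y_{222}}(\ast Z))]$ for $Z$ the relative hyperdeterminant divisor, following the procedure outlined in Section \ref{sec:relmatrices}: expand via the $p_{k,r}$-symmetric functions, apply the Cauchy formula, and extract weight multiplicities via \cite[Corollary 4.3b]{raicu2012secant}; subtracting off the contributions of the other composition factors identified in Lemma \ref{push222to3} isolates the multiplicities in $D_5$. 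The self-duality $\mc{F}(D_5)=D_5$ (Proposition \ref{prop:fourier}) halves the work between the two $D_5$-families.

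The main obstacle will be step (ii), showing the proposed weight avoids every other simple. Overlaps with the determinantal simples are excluded by direct inspection of Theorem \ref{thm:charMatrices} together with Lemma \ref{charSfn3}(2),(5); the more delicate overlaps among $D_1, D_5, D_6', D_7$ are separated by the distinguishing features of the listed weights (for instance, the $\gamma$-component pins down $D_7$ versus $D_1$ via Fourier, and among weights with $\gamma=(-2,-2,-2)$ the balance condition $\alpha=\beta$ distinguishes $D_6'$ from the two $D_5$ families). All such separations are ultimately encoded in the Pieri bounds of Lemma \ref{charSfn3}. Finally, uniformity of the multiplicity in $t>0$ for the parametrized families in $D_1, D_5, D_7$ follows from the stable-range asymptotics of the Pieri coefficient in \cite[Corollary 4.3b]{raicu2012secant}.
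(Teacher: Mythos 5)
Your overall strategy matches the paper's: verify multiplicity one in the target simple using Theorem \ref{thm:charMatrices} for the determinantal simples and Euler-characteristic computations of pushforwards for the others, then rule out every other simple using character constraints, with Fourier transform halving the work. The $D_5$ branch, using $[\chi(\pi_{222+}\mc{O}_{Y_{222}}(\ast Y_5))]$ and subtracting the contributions of $D_6$ and $D_0$, is essentially the paper's Lemma~\ref{lem:charD5n3}.

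However, there is a genuine gap in your treatment of $D_7$, $D_1$, and consequently $D_6'$. You correctly note that $(t,-t)\times(t,-t)\times(0,0,0)$ lands in $H^1_f(S)$ with multiplicity one, whose composition factors are $D_7, D_6', D_1, D_0$, and that Theorem~\ref{thm:charMatrices} excludes $D_0$. But then you invoke ``Lemma~\ref{charSfn3}(2) forces them into the socle $D_7$'': that part of the lemma asserts non-appearance in $S_f$ of weights with $\alpha\neq\beta$, which is irrelevant here since your weight has $\alpha=\beta=(t,-t)$. Nothing you have adduced distinguishes between the weight sitting in $D_7$, in $D_6'$, or in $D_1$ -- all three are composition factors of $S_f/S$ and all three can in principle contain weights with $\gamma=(0,0,0)$. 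The paper resolves this with a separate Euler-characteristic computation: it proves Lemma~\ref{lem:charD17n3} by analyzing $[\chi(\pi_{111+}\mc{O}_{Y_{111}}(\ast Y_1))]=[D_1]+4[D_0]$ via the $p_{k,r}$/Bott apparatus and \cite[Corollary 4.3b]{raicu2012secant}, which gives both that $(t,-t)\times(t,-t)\times(0,0,0)$ appears in $D_7$ with multiplicity one and that no weight with $\gamma_2\leq -2$ appears in $D_7$ (dually, $\gamma_2\geq -2$ never appears in $D_1$). That last vanishing constraint is exactly what you also need, but do not have, to know that $(-3,-3)\times(-3,-3)\times(-2,-2,-2)$ avoids $D_1$ and $D_7$ before concluding it is a witness for $D_6'$; Lemma~\ref{charSfn3}(5) alone does not provide it. So you need to add a computation paralleling Lemma~\ref{lem:charD17n3} (pushing $\mc{O}_{Y_{111}}(\ast Y_1)$ forward along $\pi_{111}$ and feeding it into the Raicu character machinery) to close the argument for $D_1$, $D_7$, and $D_6'$.
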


\begin{lemma}\label{lem:helperLemmaWit17}
Let $A$ and $B$ be two-dimensional complex vector spaces, and let $t\geq 0$. For $k\gg  t$ we have that $\mathbb{S}_{(k-t-1,t+1)}A\otimes \mathbb{S}_{(k-t-1,t+1)}B$ appears in $\bS_{(k-2,1,1)}(A\otimes B)$ with multiplicity one.	
\end{lemma}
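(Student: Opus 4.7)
The plan is to compute the multiplicity via a virtual character calculation in the Grothendieck ring of $\GL(A\oo B)$-representations, and then refine using the $\GL(A)\times\GL(B)$-structure through the Cauchy formula and Pieri's rule.

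First, I would apply the Jacobi--Trudi identity to the partition $(k-2,1,1)$: expanding the $3\times 3$ determinant
$$s_{(k-2,1,1)} = \det\begin{pmatrix} h_{k-2} & h_{k-1} & h_k \\ h_0 & h_1 & h_2 \\ h_{-1} & h_0 & h_1 \end{pmatrix} = h_{k-2}(h_1^2 - h_2) - h_{k-1}h_1 + h_k$$
and using the relation $h_1^2 - h_2 = e_2$ (which is just $U \oo U = \Sym^2 U \oplus \bw^2 U$) yields, in the Grothendieck group of polynomial $\GL(U)$-representations, the virtual identity
$$[\bS_{(k-2,1,1)} U] = [\Sym^{k-2} U \oo \bw^2 U] - [\Sym^{k-1} U \oo U] + [\Sym^k U].$$
Setting $U = A \oo B$, the Cauchy formula gives $\Sym^m(A\oo B) = \bigoplus_{|\nu|=m,\, \ell(\nu)\leq 2} \bS_\nu A \oo \bS_\nu B$, and $\bw^2(A\oo B) = \Sym^2 A \oo \det B \oplus \det A \oo \Sym^2 B$, converting the above identity into an alternating sum indexed by pairs of two-row partitions.

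Fixing $\mu = (k-t-1, t+1)$ and assuming $k \gg t$, I would then use Pieri's rule to extract the multiplicity of $\bS_\mu A \oo \bS_\mu B$ from each of the three terms. In the first term, the only contributing $\nu$ is $(k-t-2, t) = \mu - (1,1)$, and each of the two summands of $\bw^2(A\oo B)$ yields a contribution of $1$ (since $\mu/\nu$ consists of single boxes in columns $k-t-1$ and $t+1$, which forms a horizontal $2$-strip precisely because $k-t-1 \neq t+1$), giving total $2$. In the second term, the contributing $\nu$ are $(k-t-2, t+1)$ and $(k-t-1, t)$, each yielding $1$. In the third term, only $\nu = \mu$ contributes, giving $1$. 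The alternating sum $2 - 2 + 1 = 1$ then gives the desired multiplicity.

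The main technical point is the careful bookkeeping of which Pieri shapes remain valid two-row partitions and the verification that no additional $\nu$ contribute in each term; the assumption $k \gg t$ is exactly what ensures $k-t-1 \neq t+1$ (avoiding column collisions in the horizontal strip of the first term) and that all intermediate partitions stay within the dominant chamber for two-dimensional spaces.
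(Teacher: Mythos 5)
Your argument is correct, and it is a close cousin of the paper's but takes a slightly more self-contained route. The paper first applies the Pieri rule to write $[\bS_{(k-2,1,1)}(A\oo B)] = [\bw^2(A\oo B)\oo \Sym^{k-2}(A\oo B)] - [\bS_{(k-1,1)}(A\oo B)]$, computes the first term by hand via Cauchy and Pieri (getting multiplicity $2$, exactly as you do), and then \emph{cites} a plethysm formula of Raicu for two-row shapes to conclude that the second term contributes $1$. You instead push the reduction one step further via Jacobi--Trudi, expanding $[\bS_{(k-2,1,1)}U] = [\Sym^{k-2}U \oo \bw^2 U] - [\Sym^{k-1}U \oo U] + [\Sym^k U]$, and compute \emph{all three} pieces by Cauchy and Pieri. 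The net effect is the same (your second and third terms combine to the paper's $[\bS_{(k-1,1)}(A\oo B)]$ term), but you trade the external citation for one extra round of elementary bookkeeping; both give the alternating sum $2-2+1=1$. One small phrasing caveat in your writeup: in the first term, the ``column collision'' condition $k-t-1\neq t+1$ is what makes $\mu/\nu$ a valid horizontal $2$-strip in the Pieri step $\bS_\nu A\oo \Sym^2 A$ (or $\bS_\nu B\oo \Sym^2 B$), whereas the $\det$-twist factor forces $\nu=\mu-(1,1)$ unconditionally; it's worth keeping these two roles distinct, though the hypothesis $k\gg t$ makes everything go through as you say.
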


\begin{proof}
Let $a=k-2$. By Pieri's Rule \cite[Corollary 2.3.5]{weyman} we have $\bw^2\oo \Sym^a\cong \bS_{(a,1,1)}\oplus \bS_{(a+1,1)}$, which implies that we have the following equality in the Grothendieck group of representations of $\operatorname{GL}(A)\times \operatorname{GL}(B)$:
\begin{equation}\label{eqn:GGplethysm}
\big[\bS_{(a,1,1)}(A\otimes B)\big]=\big[\bw^2(A\oo B)\oo S^a(A\oo B)\big]-\big[\bS_{(a+1,1)}(A\otimes B)\big].
\end{equation}
By the Cauchy formulas we have
\begin{align*}
\bw^2(A\oo B)\oo S^a(A\oo B) & = \left(\left(\bw^2A\oo S^2B\right)\oplus \left(S^2A\oo \bw^2B\right)\right)\oo S^a(A\oo B)\\
& = \left(\left(\bw^2A\oo S^2B\right)\oplus \left(S^2A\oo \bw^2B\right)\right)\oo \left(\bigoplus_{\lambda \vdash a} \bS_{\lambda} A\otimes \bS_{\lambda} B\right)\\
& = \left( \bigoplus_{\lambda \vdash a} \bS_{(\lambda_1+1,\lambda_2+1)} A\otimes (S^2B\oo\bS_{\lambda} B)\right)\oplus \left( \bigoplus_{\lambda \vdash a}  (S^2A\oo\bS_{\lambda} A)\oo \bS_{(\lambda_1+1,\lambda_2+1)}B\right),
\end{align*}
where the last equality follows from the fact that $\dim A =\dim B=2$, so that $\bw^2A=\det A$ and $\bw^2B=\det B$. 

We are interested in the case when $\lambda_1=a-t$ and $\lambda_2=t$. Since $a \gg t$, Pieri's Rule implies that
$$
S^2B\oo\bS_{(a-t,t)} B=\bS_{(a-t+2,t)}B\oplus \bS_{(a-t+1,t+1)}B\oplus \bS_{(a-t,t+2)}B,
$$
and the similar identity holds for $S^2A\oo\bS_{(a-t,t)} A$. Therefore, for $a\gg t\geq 0$ we have
$$
\big[\bw^2(A\oo B)\oo S^a(A\oo B) : \mathbb{S}_{(a-t+1,t+1)}A\otimes \mathbb{S}_{(a-t+1,t+1)}B \big] =2.
$$

To complete the proof, we need to show that $[\bS_{(a+1,1)}(A\otimes B):\mathbb{S}_{(a-t+1,t+1)}A\otimes \mathbb{S}_{(a-t+1,t+1)}B ]=1$ when $a\gg t$. To do this, we apply \cite[Corollary 4.3b]{raicu2012secant}, which handles the plethysm $\bS_{\mu}(A\oo B)$ when $A$ and $B$ are two-dimensional, and $\mu$ has at most two parts. Using the notation there, $f=t+1$, $e=2t+3$, and $r=a+2$. Since $a\gg t$ we have $e<r-1$ and $e$ is odd. Therefore, the multiplicity in question is $(e+1)/2-f=(t+2)-(t+1)=1$, as required.
\end{proof}

For proofs in the remainder of this section, we will use material from \cite[Section 2]{raicu2016characters} to calculate Euler characteristics of $\D$-module pushforwards. See Section \ref{sec:relmatrices} for notation used below.

\begin{lemma}\label{lem:charD17n3}
Let $n=3$. The following is true about about the characters of $D_1$ and $D_7$.
\begin{enumerate}
\item For $t>0$ the representation $\bS_{(t-6,-t-6)}A\oo \bS_{(t-6,-t-6)}B \oo \bS_{(-4,-4,-4)}C$ appears in $D_1$ with multiplicity one, and no representation of the form $\bS_{\alpha}A\oo \bS_{\beta}B \oo \bS_{(\gamma_1,\gamma_2,\gamma_3)}C$ with $\gamma_2\geq-2$ appears in $D_1$.
\item For $t>0$ the representation $\bS_{(t,-t)}A\oo \bS_{(t,-t)}B \oo \bS_{(0,0,0)}C$ appears in $D_7$ with multiplicity one, and no representation of the form $\bS_{\alpha}A\oo \bS_{\beta}B \oo \bS_{(\gamma_1,\gamma_2,\gamma_3)}C$ with $\gamma_2\leq -2$ appears in $D_7$.
\end{enumerate}
\end{lemma}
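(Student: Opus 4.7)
Since $\mc{F}(D_1)=D_7$ by Proposition \ref{prop:fourier}, part (2) will follow from part (1) via the twisted Fourier transform: dualizing and twisting by $\det(A^*\oo B^*\oo C^*)=\bS_{(-6,-6)}A\oo\bS_{(-6,-6)}B\oo\bS_{(-4,-4,-4)}C$ (which is the weight of the generator $\delta\in D_0$) carries the weight $(t-6,-t-6)\times(t-6,-t-6)\times(-4,-4,-4)$ of $D_1$ to $(t,-t)\times(t,-t)\times(0,0,0)$ of $D_7$, and converts the bound $\gamma_2\geq-2$ into $\gamma_2\leq-2$. I therefore focus on (1), mimicking the strategy of Lemma \ref{lem:wit17n4}.

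The plan is to apply Lemma \ref{pushdownlocallyclosed1} to $(Y_{111},\pi_{111},O_1)$ (codimension $c=7$) and compute the $\GL$-Euler character of $\pi_+\mc{O}_Y(\ast Y_1)=\mathbb{R}\Gamma_{O_1}(S)[7]$. Combining Theorem \ref{BettiThm}(1) with the long exact sequence
$$\cdots\to H^q_{\{0\}}(S)\to H^q_{\ol{O}_1}(S)\to H^q_{O_1}(S)\to H^{q+1}_{\{0\}}(S)\to\cdots,$$
together with the fact that $H^q_{\{0\}}(S)=D_0$ only for $q=12$, yields
$$[\chi(\pi_+\mc{O}_Y(\ast Y_1))]=[D_1]+4[D_0].$$
By \cite[Lemma 2.5 and Proposition 2.10]{raicu2016characters} this equals the limit as $k\to\infty$ of $[p_{1,k}(A)\oo p_{1,k}(B)\oo p_{1,k}(C)\oo S^*\oo\det(A^*\oo B^*\oo C^*)]$, and by \cite[Lemma 2.3]{raicu2016characters} the multiplicity of any $\bS_\alpha A\oo\bS_\beta B\oo\bS_\gamma C$ in this expression reduces to a signed sum, indexed by $(I,J,K)\in\binom{[2]}{1}\times\binom{[2]}{1}\times\binom{[3]}{1}$, of multiplicities in $S$ of shifted weights.

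For the multiplicity-one claim (with $\alpha=\beta=(t-6,-t-6)$, $\gamma=(-4,-4,-4)$, $t>0$): Theorem \ref{thm:charMatrices} applied to the $4\times 3$ flattening shows the target does not lie in $D_0$, since the unique $(A\oo B)$-component pairing with $\bS_{(-4,-4,-4)}C$ is $\bS_{(-3,-3,-3,-3)}(A\oo B)=\bS_{(-6,-6)}A\oo\bS_{(-6,-6)}B$, distinct from $(t-6,-t-6)\times(t-6,-t-6)$ for $t>0$. Hence the multiplicity in $D_1$ equals that in $[\chi]$. Straightening the shifted weights $((t,-t),k,I)$, $((t,-t),k,J)$, $((0,0,0),k,K)$ via $\rho^2=(1,0)$ and $\rho^3=(2,1,0)$ shows that only $(I,J)=(\{2\},\{2\})$ produces partitions in the $A$- and $B$-factors, reducing the $A$- and $B$-entries to $\bS_{(k-t-1,t+1)}$. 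The three resulting $K$-terms reduce by the Cauchy formula to multiplicities of $\bS_{(k-t-1,t+1)}A\oo\bS_{(k-t-1,t+1)}B$ in $\Sym^k(A\oo B)$, $\bS_{(k-1,1)}(A\oo B)$, and $\bS_{(k-2,1,1)}(A\oo B)$, respectively; by the Cauchy formula, by Pieri's identity $\bw^2\oo\Sym^{k-2}=\bS_{(k-2,1,1)}\oplus\bS_{(k-1,1)}$ together with the calculation in the proof of Lemma \ref{lem:helperLemmaWit17}, and by Lemma \ref{lem:helperLemmaWit17} itself, these equal $1$, $1$, $1$; with alternating signs $+,-,+$ they sum to $1$.

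For the vanishing claim, suppose $\gamma$ is dominant with $\gamma_2\geq -2$. Straightening the $C$-weight $((-\gamma_3-4,-\gamma_2-4,-\gamma_1-4),k,K)$ for $k\gg 0$ produces a tuple whose smallest entry is $-\gamma_1-4\leq -2$ when $K\in\{\{1\},\{2\}\}$ (since dominance gives $\gamma_1\geq\gamma_2\geq-2$) or $-\gamma_2-3\leq -1$ when $K=\{3\}$. In either case the tuple is not a partition, so the Cauchy decomposition of $S$ contributes zero. Hence $[\chi:\bS_\alpha A\oo\bS_\beta B\oo\bS_\gamma C]=0$, and since Theorem \ref{thm:charMatrices} forces any weight of $D_0$ to have $\gamma_1\leq -4$ (hence $\gamma_2\leq-4<-2$), no such representation appears in $D_0$, and therefore none in $D_1$. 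The main obstacle will be the bookkeeping in the multiplicity-one step, especially the careful handling of the two-row plethysm $\bS_{(k-1,1)}(A\oo B)$ via Pieri's rule and \cite[Corollary 4.3b]{raicu2012secant} as encoded in Lemma \ref{lem:helperLemmaWit17}.
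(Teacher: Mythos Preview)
Your proposal is correct and follows essentially the same approach as the paper's own proof: you reduce (2) to (1) via the Fourier transform, identify $\pi_+\mc{O}_Y(\ast Y_1)$ with $\mathbb{R}\Gamma_{O_1}(S)[7]$ via Lemma~\ref{pushdownlocallyclosed1}, compute the Euler characteristic as $[D_1]+4[D_0]$ from Theorem~\ref{BettiThm}(1) and the long exact sequence, and then apply the machinery of \cite{raicu2016characters} to reduce to explicit plethysm calculations. The only cosmetic difference is that the paper computes the middle term $[\bS_{(k-1,1)}(A\oo B):\bS_{(k-t-1,t+1)}A\oo\bS_{(k-t-1,t+1)}B]=1$ by a direct appeal to \cite[Corollary~4.3b]{raicu2012secant}, whereas you extract it from the proof of Lemma~\ref{lem:helperLemmaWit17}; both routes are valid.
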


\begin{proof}
If we prove (1), then (2) follows from application of the Fourier transform. Let $Y=Y_{111}$ and $\pi=\pi_{111}$. By Lemma \ref{pushdownlocallyclosed1} we have $\pi_{+}\mc{O}_Y(\ast Y_1)=\mathbb{R}\Gamma_{O_1}(S)[7]$. By Theorem \ref{BettiThm}, and the long exact sequence
$$
\cdots \to H^j_{\ol{O}_1}(S)\to H^j_{O_1}(S)\to H^{j-1}_{\ol{O}_0}(S)\to \cdots,
$$	
we have the following in the Grothendieck group of representations of $\GL$ (see Section \ref{sec:relmatrices}):
\begin{equation}\label{eqn:eulerCharinGG}
[\chi(\pi_{+}\mc{O}_Y(\ast Y_1))]=[H^7_{\ol{O}_1}(S)]+[H^9_{\ol{O}_1}(S)]+[H^{12}_{\ol{O}_0}(S)]=[D_1]+4[D_0].
\end{equation}
Let $\alpha,\beta\in \mathbb{Z}^2_{\dom}$ and $\gamma \in \mathbb{Z}^3_{\dom}$, and define 
$$
m_{\alpha,\beta,\gamma}=\big[ \chi(\pi_+ \mc{O}_Y(\ast Y_1)):\bS_{\alpha}A\oo \bS_{\beta}B \oo \bS_{\gamma}C\big].
$$
By \cite[Lemma 2.5, Proposition 2.10]{raicu2016characters} we have that 
$$
\big[\chi(\pi_+ \mc{O}_Y(\ast Y_1))\big]=\lim_{k\to \infty} \big[p_{1,k}(A)\oo p_{1,k}(B)\oo p_{1,k}(C)\oo S^{\ast}\otimes \det(A^{\ast}\oo B^{\ast}\oo C^{\ast})\big].
$$
Thus, by \cite[Lemma 2.3]{raicu2016characters} we have that
$$
m_{\alpha,\beta,\gamma}=\lim_{k\to \infty}\left(\sum_{\substack{I,J\in \binom{[2]}{1}\\ K\in \binom{[3]}{1}}}\big[S:\bS_{((-\alpha_2-6,-\alpha_1-6),k,I)}A\oo \bS_{((-\beta_2-6,-\beta_1-6),k,J)}B \oo \bS_{((-\gamma_3-4,-\gamma_2-4,-\gamma_1-4),k,K)}C \big]\right).
$$
Note $\bS_{\lambda}A\oo \bS_{\mu}B\oo \bS_{\nu}C$ appears in $S$ only if $\lambda,\mu,\nu$ are partitions. Note that, for $k\gg 0$ we have
$$
[\bS_{((-\gamma_3-4,-\gamma_2-4,-\gamma_1-4),k,K)}C]=\begin{cases}
[\bS_{(k-\gamma_3-4,-\gamma_2-4,-\gamma_1-4)}C] & \tn{if $K=\{1\}$},\\
-[\bS_{(k-\gamma_2-5,-\gamma_3-3,-\gamma_1-4)}C] & \tn{if $K=\{2\}$},\\
[\bS_{(k-\gamma_1-6,-\gamma_3-3,-\gamma_2-3)}C] & \tn{if $K=\{3\}$}.
\end{cases}
$$
If $\gamma_2\geq -2$, then $-\gamma_2\leq 2$ and $-\gamma_1\leq 2$, so $((-\gamma_3-4,-\gamma_2-4,-\gamma_1-4),k,K)$ is not a partition, and $m_{\alpha,\beta,\gamma}=0$.

It remains to calculate $m_{(t-6,-t-6),(t-6,-t-6),(-4,-4,-4)}$ for $t>0$. By Theorem \ref{thm:charMatrices} we have that $\bS_{(t-6,-t-6)}A\oo \bS_{(t-6,-t-6)}B \oo \bS_{(-4,-4,-4)}C$ belongs to $D_0$ if and only if $t=0$, in which case, it has multiplicity one. Thus, by (\ref{eqn:eulerCharinGG}), to complete the proof of (1), we need to show that
$$
m_{(t-6,-t-6),(t-6,-t-6),(-4,-4,-4)}=1\quad t>0.
$$
For $k\gg 0$ we have $[\bS_{((t,-t),k,I)}A]=[\bS_{(k+t,-t)}A]$ if $I=\{1\}$ and $[\bS_{((t,-t),k,I)}A]=-[\bS_{(k-t-1,t+1)}A]$ if $I=\{2\}$, and the similar result holds for $\bS_{((t,-t),k,J)}B$. Since $t>0$, only the case $I=J=\{2\}$ gives partitions. Note that for $k\gg 0$ we have
\begin{equation}
[\bS_{((0,0,0),k,K)}C]=\begin{cases}
[\bS_{(k,0,0)}C] & \tn{if $K=\{1\}$},\\
-[\bS_{(k-1,1,0)}C] & \tn{if $K=\{2\}$},\\
[\bS_{(k-2,1,1)}C] & \tn{if $K=\{3\}$}.
\end{cases}
\end{equation}
Let $t>0$. By the Cauchy formula applied to $\Sym^{k}((A\oo B)\oo C)$, we conclude that
$$
m_{(t-6,-t-6),(t-6,-t-6),(-4,-4,-4)}=[\bS_{(k,0,0)}(A\oo B):\bS_{(k-t-1,t+1)}A\oo \bS_{(k-t-1,t+1)}B]
$$
$$
-[\bS_{(k-1,1,0)}(A\oo B):\bS_{(k-t-1,t+1)}A\oo \bS_{(k-t-1,t+1)}B]+[\bS_{(k-2,1,1)}(A\oo B):\bS_{(k-t-1,t+1)}A\oo \bS_{(k-t-1,t+1)}B],
$$
where $k\gg t>0$. The first summand is one by the Cauchy formula, and the third summand is one by Lemma \ref{lem:helperLemmaWit17}. We use \cite[Corollary 4.3b]{raicu2012secant} to calculate $[\bS_{(k-1,1,0)}(A\oo B):\bS_{(k-t-1,t+1)}A\oo \bS_{(k-t-1,t+1)}B]$. Using the notation there, we have $f=t+1$, $e=2t+3$, and $r=k$. Since $k\gg t$ we have $e<r-1$ and $e$ is odd. Thus, $[\bS_{(k-1,1,0)}(A\oo B):\bS_{(k-t-1,t+1)}A\oo \bS_{(k-t-1,t+1)}B]=(e+1)/2-f=1$. We conclude that, for $t>0$,  we have $m_{(t-6,-t-6),(t-6,-t-6),(-4,-4,-4)}=1-1+1=1$, as desired.
\end{proof}

We remark that one can use the techniques above to verify that $\bS_{(-6,-6)}A\oo \bS_{(-6,-6)}B \oo \bS_{(-4,-4,-4)}C$ appears in $[\chi(\pi_{+}\mc{O}_Y(\ast Y_1))]$ with multiplicity four, which by (\ref{eqn:eulerCharinGG}) implies that this representation does not appear in $D_1$. This is confirmed using Lemma \ref{charSfn3}.

\begin{lemma}\label{lem:charD5n3}
Let $n=3$. The following is true about about the character of $D_5$.
\begin{enumerate}
\item For $t>0$ the following representations appear in $D_5$ with multiplicity one:
$$
\bS_{(t-2,-t-2)}A\oo \bS_{(t-2,-t-2)}B \oo \bS_{(-1,-1,-2)}C,\quad \bS_{(t-4,-t-4)}A\oo \bS_{(t-4,-t-4)}B \oo \bS_{(-2,-3,-3)}C.
$$

\item For $t>0$ the following representations appear in $D_5$ with multiplicity one:
$$
\bS_{(t-2,-t-4)}A\oo \bS_{(t-3,-t-3)}B \oo \bS_{(-2,-2,-2)}C,\quad \bS_{(t-3,-t-3)}A\oo \bS_{(t-2,-t-4)}B \oo \bS_{(-2,-2,-2)}C.
$$
\item If $\gamma_1\leq -3$ or $\gamma_3\geq -1$, then no representation of the form $\bS_{\alpha}A\oo \bS_{\beta}B \oo \bS_{(\gamma_1,\gamma_2,\gamma_3)}C$ appears in $D_5$. 

\item The following representations do not appear in $D_5$:
$$
\bS_{(-4,-4)}A\oo \bS_{(-4,-4)}B \oo \bS_{(-2,-3,-3)}C,\quad \bS_{(-2,-2)}A\oo \bS_{(-2,-2)}B \oo \bS_{(-1,-1,-2)}C,
$$
$$
\bS_{(-1,-5)}A\oo \bS_{(-3,-3)}B \oo \bS_{(-2,-2,-2)}C,\quad \bS_{(-3,-3)}A\oo \bS_{(-1,-5)}B \oo \bS_{(-2,-2,-2)}C,
$$
$$
\bS_{(-3,-3)}A\oo \bS_{(-3,-3)}B \oo \bS_{(-2,-2,-2)}C.
$$
\end{enumerate}

\end{lemma}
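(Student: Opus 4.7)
The strategy is to derive the character of $D_5$ from the Grothendieck-group identity
\[[D_5] \;=\; \bigl[\chi(\pi_+\mc{O}_Y(\ast Y_5))\bigr] - [D_6] - [D_0],\]
which is immediate from Lemma \ref{push222to3}(2) and (\ref{defofEuler}), where $Y = Y_{222}$, $\pi = \pi_{222}$ and $Y_5 = \pi^{-1}(\ol{O}_5)$ is the relative $2\times 2\times 2$ hyperdeterminant hypersurface. The characters of $D_6$ and $D_0$ are made explicit via the $4\times 3$ matrix flattening by Theorem \ref{thm:charMatrices}. Since $Y$ is a geometric vector bundle over $\bb{G}(2;C) \cong \bb{P}^2$ with fiber $A\oo B\oo \mc{Q}_C$, and $Y_5$ is cut out by the relative hyperdeterminant $\Delta$ (a section of weight $\det^2(A)\oo\det^2(B)\oo\det^2(\mc{Q}_C)$), the Euler characteristic on the right is computed by writing $\mc{O}_Y(\ast Y_5) = \varinjlim_k \mc{O}_Y\cdot \Delta^{-k}$, expanding via Cauchy, and applying Bott's theorem (Lemma \ref{BottLemma}) on $\bb{P}^2$, all within the Grothendieck-group formalism (\ref{eq:bottGG})--(\ref{eq:lambdaplusrI}). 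This is a direct analogue of the calculations in the proofs of Lemma \ref{lem:charD17n3} and Lemma \ref{lem:wit17n4}, but with the relative hyperdeterminant playing the role of the semi-invariant.

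For part (3), the key observation is that every $\GL(C)$-weight $\gamma$ appearing in any cohomology $\mc{H}^i(\pi_+\mc{O}_Y(\ast Y_5))$ arises from $H^j(\bb{P}^2, \bS_\nu\mc{Q}_C)$ for some $\nu \in \bb{Z}^2$ with $\nu_1 \geq \nu_2$, and Lemma \ref{BottLemma} forces $\gamma$ to satisfy one of: $\gamma_3 = 0$ (from $H^0$, with $\nu_2 \geq 0$), $\gamma_2 = -1$ (from $H^1$, with $\nu_1 \geq -1$, $\nu_2 \leq -2$), or $\gamma_1 = -2$ (from $H^2$, with $\nu_1 \leq -3$). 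In all three cases $\gamma_1 \geq -2$, so weights with $\gamma_1 \leq -3$ do not appear in $D_5$. The complementary bound is then obtained from the self-duality $\mc{F}(D_5) = D_5$ of Proposition \ref{prop:fourier}: the twisted Fourier transform sends weights via $(\alpha,\beta,\gamma) \mapsto \bigl((-\alpha_2-6,-\alpha_1-6),(-\beta_2-6,-\beta_1-6),(-\gamma_3-4,-\gamma_2-4,-\gamma_1-4)\bigr)$, so $\gamma_3 \geq -1$ implies that the image satisfies $\mc{F}(\gamma)_1 = -\gamma_3-4 \leq -3$, which has just been excluded.

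For parts (1), (2), and (4), each specified weight $(\alpha,\beta,\gamma)$ is matched to a unique Bott cohomology contribution $H^j(\bb{P}^2,\bS_\nu\mc{Q}_C)$ for an explicit $\nu \in \bb{Z}^2$ (e.g., $\gamma=(-1,-1,-2)$ corresponds to $\nu = (-1,-2)$ in the $H^1$ branch). Its multiplicity in $\bigl[\chi(\pi_+\mc{O}_Y(\ast Y_5))\bigr]$ is evaluated by the Cauchy decomposition of $\Sym(A\oo B\oo \mc{Q}_C)\oo \det^{-2k}$ followed by reduction to plethysm multiplicities of the form $[\bS_\mu(A\oo B) : \bS_\alpha A\oo\bS_\beta B]$, which are computed via \cite[Corollary 4.3b]{raicu2012secant}, in essentially the same way as in Lemma \ref{charSfn3} and Lemma \ref{lem:wit17n4}. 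Subtracting the contributions of $D_6$ and $D_0$ read off from Theorem \ref{thm:charMatrices} yields the multiplicity in $D_5$; for the weights in part (4) one verifies that the $\pi_+\mc{O}_Y(\ast Y_5)$-multiplicities are either zero or fully absorbed by $D_6$ and $D_0$, leaving zero multiplicity in $D_5$.

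The main obstacle is the bookkeeping in the explicit Grothendieck-group computation: handling the Bott-Weyl signs from (\ref{eq:bottGG}), tracking the successive $\Delta^{-k}$-twists and the corresponding shifts in $\nu$, and verifying that the contributions from $D_6$ and $D_0$ subtract off cleanly to leave exactly the claimed multiplicities in $D_5$. In particular, for part (4), the representations $\bS_{(-3,-3)}A\oo\bS_{(-3,-3)}B\oo\bS_{(-2,-2,-2)}C$ and its cousins lie in the $\gamma_1=-2$ (\emph{i.e.}, $H^2$) Bott branch and must be examined against the $\GL(C)$-isotypic component $\bS_{(-2,-2,-2)}C$ of $D_0$ with particular care.
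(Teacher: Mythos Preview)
Your overall strategy coincides with the paper's: both rely on the Grothendieck-group identity
\[
[D_5] \;=\; \bigl[\chi(\pi_{+}\mc{O}_Y(\ast Y_5))\bigr] - [D_6] - [D_0],
\]
compute $[\chi]$ via the formulas of \cite[Lemma~2.5, Proposition~2.10]{raicu2016characters}, and reduce the explicit multiplicity questions for parts~(1),~(2),~(4) to plethysm coefficients handled by \cite[Corollary~4.3b]{raicu2012secant}. For those parts your outline is correct and matches the paper.

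The argument you give for part~(3), however, contains a genuine gap. You claim that every $\GL(C)$-weight $\gamma$ occurring in any $\mc{H}^i(\pi_{+}\mc{O}_Y(\ast Y_5))$ arises as $H^j(\bb{P}^2,\bS_\nu\mc{Q}_C)$ for some $\nu\in\bb{Z}^2$, and hence satisfies $\gamma_1\geq -2$. This conflates the $\D$-module pushforward $\pi_{+}$ with the coherent pushforward $R\pi_\ast$: the former is computed via the relative de~Rham complex and is \emph{not} simply Bott cohomology of the underlying $\mc{O}$-module summands. Concretely, by Lemma~\ref{push222to3}(2) one has $\mc{H}^2(\pi_{+}\mc{O}_Y(\ast Y_5))=D_0$, and every weight $\gamma$ in $D_0$ satisfies $\gamma_1\leq -4$ (Theorem~\ref{thm:charMatrices}), which directly contradicts your asserted bound $\gamma_1\geq -2$. (This is also why your closing remark about the $\bS_{(-2,-2,-2)}C$-isotypic component of $D_0$ is off: $D_0$ has no such component.)

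The paper's fix is to argue in the opposite direction. Working with the explicit formula for $m_{\alpha,\beta,\gamma}$, one checks that if $\gamma_3\geq -1$ then for every $J\in\binom{[3]}{2}$ the tuple $((-\gamma_3-4,-\gamma_2-4,-\gamma_1-4),2k,J)$ fails to sort to a partition, so $m_{\alpha,\beta,\gamma}=0$; since $m=[D_6:\ldots]+[D_5:\ldots]+[D_0:\ldots]$ with all summands nonnegative, this forces the multiplicity in $D_5$ to vanish. The remaining bound (namely $\gamma_1\leq -3$ excluded) then follows from $\mc{F}(D_5)=D_5$, exactly as you propose but with the roles of the two inequalities reversed.
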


We remark that the representations in (1) are used for the local cohomology calculation of $H^i_{\ol{O}_7}(S)$ (in the proof of Lemma \ref{lem:factsD5lc7}), but are not witness weights. Indeed, one can show using the techniques from the proof of Lemma \ref{lem:charD17n3} that $\bS_{(t-4,-t-4)}A\oo \bS_{(t-4,-t-4)}B \oo \bS_{(-2,-3,-3)}C$ belongs to $D_1$, and $\bS_{(t-2,-t-2)}A\oo \bS_{(t-2,-t-2)}B \oo \bS_{(-1,-1,-2)}C$ belongs to $D_7$.

\begin{proof}
Let $Y=Y_{222}$ and $\pi=\pi_{222}$. By Lemma \ref{pushdownlocallyclosed1} we have $\pi_{+}\mc{O}_Y(\ast Y_5)=\mathbb{R}\Gamma_{O_6}(S)[2]$, so by Proposition \ref{locorbit06n3} we have the following in the Grothendieck group of representations of $\GL$ (see Section \ref{sec:relmatrices}):
\begin{equation}\label{eqn:gglco6n3}
[\chi(\pi_{+}\mc{O}_Y(\ast Y_5))]=[D_6]+[D_5]+[D_0].
\end{equation}
Let $\alpha,\beta\in \mathbb{Z}^2_{\dom}$ and $\gamma \in \mathbb{Z}^3_{\dom}$, and define 
$$
m_{\alpha,\beta,\gamma}=\big[ \chi(\pi_+ \mc{O}_Y(\ast Y_5)):\bS_{\alpha}A\oo \bS_{\beta}B \oo \bS_{\gamma}C\big].
$$
By \cite[Lemma 2.5, Proposition 2.10]{raicu2016characters} we have that 
$$
\big[\chi(\pi_+ \mc{O}_Y(\ast Y_5))\big]=\lim_{k\to \infty} \big[\bS_{(2k,2k)}A\oo \bS_{(2k,2k)}B\oo p_{2,2k}(C)\oo 
 S^{\ast}\otimes \det(A^{\ast}\oo B^{\ast}\oo C^{\ast})\big].
$$
Thus, by \cite[Lemma 2.3]{raicu2016characters} we have that
$$
m_{\alpha,\beta,\gamma}=\lim_{k\to \infty}\left(\sum_{J\in \binom{[3]}{2}}\big[S:\bS_{(2k-\alpha_2-6,2k-\alpha_1-6)}A\oo \bS_{(2k-\beta_2-6,2k-\beta_1-6)}B \oo \bS_{((-\gamma_3-4,-\gamma_2-4,-\gamma_1-4),2k,J)}C \big]\right),
$$
where for $k\gg0$ we have
$$
[\bS_{((-\gamma_3-4,-\gamma_2-4,-\gamma_1-4),2k,J)}C]=\begin{cases}
[\bS_{(2k-\gamma_3-4,2k-\gamma_2-4,-\gamma_1-4)}C] & \tn{if }J=\{1,2\},\\
-[\bS_{(2k-\gamma_3-4,2k-\gamma_1-5,-\gamma_2-3)}C] & \tn{if }J=\{1,3\},\\
[\bS_{(2k-\gamma_2-5,2k-\gamma_1-5,-\gamma_3-2)}C] & \tn{if }J=\{2,3\}.	
\end{cases}
$$
Note $\bS_{\lambda}A\oo \bS_{\mu}B\oo \bS_{\nu}C$ appears in $S$ only if $\lambda,\mu,\nu$ are partitions. If $\gamma_3\geq -1$, then $-\gamma_i\leq 1$ for $i=1,2,3$. Thus, $((-\gamma_3-4,-\gamma_2-4,-\gamma_1-4),2k,J)$ is not a partition, so $m_{\alpha,\beta,\gamma}=0$. Since $\mc{F}(D_5)=D_5$, we conclude also that, if $\gamma_1\leq -3$ then the multiplicity of $\bS_{\alpha}A\oo \bS_{\beta}B \oo \bS_{\gamma}C$ in $D_5$ is zero. This proves (3).

(1) We first consider	$\bS_{(t-2,-t-2)}A\oo \bS_{(t-2,-t-2)}B \oo \bS_{(-1,-1,-2)}C$ for $t\geq 0$. If $J$ is equal to $\{1,2\}$ or $\{1,3\}$, then $((-2,-3,-3),2k,J)$ is not a partition, so 
$$
m_{\alpha,\beta,\gamma}=\lim_{k\to \infty}\big[S,\bS_{(2k+t-4,2k-t-4)}A\oo \bS_{(2k+t-4,2k-t-4)}B \oo \bS_{(2k-4,2k-4,0)}C \big].
$$
We apply \cite[Corollary 4.3b]{raicu2012secant}. Using notation there, we have $f=2k-4$, $e=6k-2t-12$, and $r=4k-8$. For $k\gg 0$ we have $e\geq r-1$ and $e$ even, so $m_{\alpha,\beta,\gamma}=\lfloor r/2\rfloor -f+1=1$. By Theorem \ref{thm:charMatrices} we have that $\bS_{(t-2,-t-2)}A\oo \bS_{(t-2,-t-2)}B \oo \bS_{(-1,-1,-2)}C$ belongs to $D_6$ if and only if $t=0$, and it does not belong to $D_0$. By (\ref{eqn:gglco6n3}) we conclude that $\bS_{(t-2,-t-2)}A\oo \bS_{(t-2,-t-2)}B \oo \bS_{(-1,-1,-2)}C$ has multiplicity one in $D_5$ if $t>0$, and multiplicity zero if $t=0$. Since $\mc{F}(D_5)=D_5$, and $\mc{F}$ swaps the representations in (1), we have completed the proof of (1), and proved that the representations in the first row of (4) do not appear in $D_5$.

(2) By symmetry, it suffices to prove the result for $\bS_{(t-2,-t-4)}A\oo \bS_{(t-3,-t-3)}B \oo \bS_{(-2,-2,-2)}C$. Again, if $J$ is equal to $\{1,2\}$ or $\{1,3\}$, then $((-2,-2,-2),2k,J)$ is not a partition, so 
$$
m_{\alpha,\beta,\gamma}=\lim_{k\to \infty}\big[S,\bS_{(2k+t-2,2k-t-4)}A\oo \bS_{(2k+t-3,2k-t-3)}B \oo \bS_{(2k-3,2k-3,0)}C \big].
$$
We apply \cite[Corollary 4.3b]{raicu2012secant}. Using notation there, we have $f=2k-3$, $e=6k-2t-10$, $r=4k-6$. Thus, for $k\gg 0$ we have $e\geq r-1$, and $e$ even, so $m_{\alpha,\beta,\gamma}=(2k-3)-(2k-3)+1=1$. Since $\bS_{(t-2,-t-4)}A\oo \bS_{(t-3,-t-3)}B \oo \bS_{(-2,-2,-2)}C$ does not appear in $D_6$ nor $D_0$, the result follows from (\ref{eqn:gglco6n3}).

(4) It remains to prove the assertion about the second and third rows of (4). For the second row, by symmetry, it suffices to consider $\bS_{(-1,-5)}A\oo \bS_{(-3,-3)}B \oo \bS_{(-2,-2,-2)}C$. Again, if $J$ is equal to $\{1,2\}$ or $\{1,3\}$, then $((-2,-2,-2),2k,J)$ is not a partition, so 
$$
m_{\alpha,\beta,\gamma}=\lim_{k\to \infty}\big[S,\bS_{(2k-1,2k-5)}A\oo \bS_{(2k-3,2k-3)}B \oo \bS_{(2k-3,2k-3,0)}C \big].
$$
By \cite[Corollary 4.3b]{raicu2012secant}, this is zero. A similar argument can be used to verify that $\bS_{(-3,-3)}A\oo \bS_{(-3,-3)}B \oo \bS_{(-2,-2,-2)}C$ does not appear in $D_5$.
\end{proof}

\begin{proof}[Proof of Theorem \ref{thm:witnessn3}]
By Theorem \ref{thm:charMatrices} we have  $(-6,-6)\times (-6,-6)\times (-4,-4,-4)$ appears with multiplicity one in $D_0$, and does not appear in $D_2$, $D_3$, $D_4$, $D_6$, $D_8$. By Lemma \ref{charSfn3}, this weight appears with multiplicity one in $S_f$, so it does not appear in $D_1$, $D_6'$, $D_7$. By Lemma \ref{lem:charD5n3} it does not appear in $D_5$. Therefore, it is a witness weight for $D_0$. Applying $\mc{F}$, we conclude that $(0,0)\times (0,0)\times (0,0,0)$ is a witness weight for $D_8$.

Let $t>0$. By Lemma \ref{lem:charD17n3} we have that $(t-6,-t-6)\times (t-6,-t-6)\times (-4,-4,-4)$ appears with multiplicity one in $D_1$, and does not appear in $D_7$. By Theorem \ref{thm:charMatrices}, these weights do not appear in $D_0$, $D_2$, $D_3$, $D_4$, $D_6$, $D_8$, and by Lemma \ref{lem:charD5n3} they do not appear in $D_5$. By Lemma \ref{charSfn3} they have multiplicity one in $S_f$, so they do not appear in $D_6'$. Therefore, $(t-6,-t-6)\times (t-6,-t-6)\times (-4,-4,-4)$ ($t>0$) are witness weights for $D_1$. Applying $\mc{F}$, we conclude also that $(t,-t)\times (t,-t)\times (0,0,0)$ ($t>0$) are witness weights for $D_7$. 

By Theorem \ref{thm:charMatrices} we have that $(-4,-4)\times (-4,-4)\times (-2,-3,-3)$ appears with multiplicity one in $D_2$, and does not appear in $D_0$, $D_3$, $D_4$, $D_6$, $D_8$. By Lemma \ref{lem:charD5n3}, this weight does not appear in $D_5$. By Lemma \ref{charSfn3} it does not appear in $S_f$, so it does not appear in $D_1$, $D_6'$, $D_7$. Therefore, $(-4,-4)\times (-4,-4)\times (-2,-3,-3)$ is a witness weight for $D_2$. Applying $\mc{F}$, we conclude also that $(-2,-2)\times (-2,-2)\times (-1,-1,-2)$ is a witness weight for $D_6$.

By Theorem \ref{thm:charMatrices} we have that $(-1,-5)\times (-3,-3)\times (-2,-2,-2)$ appears with multiplicity one in $D_3$, and does not appear in $D_0$, $D_2$, $D_4$, $D_6$, $D_8$. By Lemma \ref{lem:charD5n3}, this weight does not appear in $D_5$. By Lemma \ref{charSfn3}, this weight does not appear in $S_f$, so it does not appear in $D_1$, $D_6'$, $D_7$. Therefore, it is a witness weight for $D_3$. The case of $D_4$ is proved similarly.

Let $t>0$. By Lemma \ref{lem:charD5n3} we have that $(t-2,-t-4)\times (t-3,-t-3)\times (-2,-2,-2)$ and $(t-3,-t-3)\times (t-2,-t-4)\times (-2,-2,-2)$ appear with multiplicity one in $D_5$. By Theorem \ref{thm:charMatrices}, these weights do not appear in $D_0$, $D_2$, $D_3$, $D_4$, $D_6$, $D_8$. By Lemma \ref{charSfn3}, these weights do not appear in $S_f$, so they do not appear in $D_1$, $D_6'$, $D_7$. Therefore, they are witness weights for $D_5$.

By Lemma \ref{charSfn3}, the representation $(-3,-3)\times (-3,-3)\times (-2,-2,-2)$ appears in $S_f$ with multiplicity one (spanned by $1/f$). By Lemma \ref{lem:charD17n3}, this weight does not appear in $D_1$ nor $D_7$, and by Theorem \ref{thm:charMatrices} this representation does not appear in $D_0$, $D_2$, $D_3$, $D_4$, $D_6$, $D_8$. By Lemma \ref{lem:charD5n3}, this representation does not appear in $D_5$. Therefore, we have that $(-3,-3)\times (-3,-3)\times (-2,-2,-2)$ appears in $D_6'$ with multiplicity one, and does not appear in any other simple module.
\end{proof}

\subsection{Characters for $n\geq 4$}\label{sec:charactersn4}

\begin{theorem}\label{thm:witnessngeq4}
Let $n\geq 4$. The following is true about the witness weights for simple modules in $\tn{mod}_{\GL}(\D_V)$:
$$
(-2n,-2n)\times (-2n,-2n)\times (-4^n)\in \W(D_0),\quad (0,0)\times (0,0)\times (0^n)\in \W(D_9),
$$	
$$
(-n-1,-2n+1)\times (-n-1,-2n+1)\times (-3^n)\in \W(D_1),\quad (-1,1-n)\times (-1,1-n)\times (-1^n)\in \W(D_7),
$$
$$
(-n-2,-2n+2)\times (-n-2,-2n+2)\times (-3^n)\in \W(D_2),\quad (-2,2-n)\times (-2,2-n)\times (-1^n)\in \W(D_8),
$$
$$
(-1,1-2n)\times (-n,-n)\times (-2^n)\in \W(D_3),\quad (-n,-n)\times (-1,1-2n)\times (-2^n)\in \W(D_4),
$$
$$
(-2,2-2n)\times (-n,-n) \times (-2^n)\in \W(D_5),
$$
$$
(-3,3-2n)\times (-n,-n)\times (-2^n)\in \W(D_6').
$$
$$
(-4,4-2n)\times (-n,-n)\times (-2^n)\in \W(D_6),
$$
\end{theorem}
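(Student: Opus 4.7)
The plan is to verify, for each of the thirteen witness triples $\alpha \times \beta \times \gamma$ listed, that (a) the corresponding $\GL$-isotypic component has multiplicity exactly one in the asserted simple and (b) it vanishes in every other simple of $\tn{mod}_{\GL}(\D_V)$. The arguments split into three groups.

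The modules $D_0, D_2, D_6, D_8, D_9$ are intersection cohomology $\D$-modules of determinantal varieties via the $4 \times n$ flattening, and $D_3, D_4$ are such via the $2 \times 2n$ flattenings $A \oo (B \oo C)$ and $B \oo (A \oo C)$, respectively. For these seven modules, Theorem \ref{thm:charMatrices} delivers the complete $\GL$-character: each claimed witness weight corresponds to a unique $\lambda \in W^p$ in the notation there, and the multiplicity-one check reduces to decomposing $\bS_\lambda(A \oo B)$ (or $\bS_\lambda(A \oo C)$, $\bS_\lambda(B \oo C)$), which after twisting by a suitable power of the determinant becomes a two-rowed plethysm handled by \cite[Corollary 4.3b]{raicu2012secant}. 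The same character formula rules out appearance of any of these seven weights in the other six determinantal simples. What remains for these weights is to rule out appearance in $D_1, D_5, D_6', D_7$, which will be achieved by the bounds obtained in the next two paragraphs.

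For the pair $(D_1, D_7)$ with $\mc{F}(D_1) = D_7$, the plan is to compute $[\chi(\pi_{111+}\mc{O}_{Y_{111}}(\ast Y_1))]$ in the Grothendieck group of $\GL$-representations by combining Lemma \ref{pushdownlocallyclosed1}, Theorem \ref{BettiThm}, and the machinery of \cite[Lemma 2.3, Lemma 2.5, Proposition 2.10]{raicu2016characters}; this expresses the Euler characteristic as $[D_1]$ plus an explicit integer multiple of $[D_0]$. Evaluation on a specific $\GL$-isotypic component then reduces to Cauchy-formula bookkeeping plus \cite[Corollary 4.3b]{raicu2012secant}, exactly paralleling Lemma \ref{lem:charD17n3} (for $n=3$) and Lemma \ref{lem:wit17n4} (for $n=4$). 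This yields both the multiplicity-one statement for the claimed $D_1$-weight and sharp $\gamma$-constraints generalizing Corollary \ref{cor:weightBoundsn4} to all $n \geq 5$; applying $\mc{F}$ gives the corresponding statements for $D_7$. For $D_5$, the analogous pushdown along $\pi_{222}$ applies: by Theorem \ref{loc05} and Lemma \ref{pushdownlocallyclosed1}, $[\chi(\pi_{222+}\mc{O}_{Y_{222}}(\ast Y_5))]$ is a signed sum of classes of $D_0, D_2, D_5, D_6$, and the same technique isolates the multiplicity of $(-2, 2-2n) \times (-n,-n) \times (-2^n)$ in $D_5$, mirroring Lemma \ref{lem:charD5n3}.

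Finally, for $D_6'$ I would generalize Lemma \ref{lem:D6primeisH3} to all $n \geq 4$: the identification $H^{2n-5}_{\ol{O}_7}(S) = D_6'$ from Theorem \ref{loc07}, together with the spectral sequence $\mathbb{R}^i(\pi_{223})_*\mathscr{H}^1_{Y_7}(\mc{O}_{Y_{223}}) \Rightarrow H^{i+1}_{\ol{O}_7}(\C[\ol{O}_8])$ coming from the rational singularities of $\ol{O}_8$, shows that the kernel of the multiplication map $D_6' \oo \det \to D_6'$ embeds into an $\mathbb{R}^{n-4}$-pushforward whose admissible $\gamma$-shapes are pinned down by Bott (Lemma \ref{BottLemma}). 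The claimed weight $(-3, 3-2n) \times (-n,-n) \times (-2^n)$ is then extracted by checking that a single appropriate subbundle of $\mc{O}_{Y_{223}}(\ast Y_7)$ contributes, via the same Cauchy-plus-\cite[Corollary 4.3b]{raicu2012secant} technique used in Lemma \ref{charSfn3}. The principal obstacle is precisely $D_6'$: it has no direct character formula, and its witness weight must be singled out simultaneously against $D_5, D_6, D_7$ as well as against the determinantal simples; controlling this elimination by combining the Fourier self-duality of $D_6'$ with the Bott constraints from both resolutions $\pi_{223}$ and $\pi_{224}$ is the delicate step of the argument.
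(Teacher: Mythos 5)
Your reduction of the determinantal simples $D_0, D_2, D_3, D_4, D_6, D_8, D_9$ to Theorem \ref{thm:charMatrices} and \cite[Corollary 4.3b]{raicu2012secant} is sound, and your proposed Euler-characteristic computation of $[\chi(\pi_{111+}\mc{O}_{Y_{111}}(\ast Y_1))]$ for $D_1$ (and, similarly, $\pi_{222}$ for $D_5$) would in principle work for every $n$: the index set $\binom{[n]}{1}$ (resp.\ $\binom{[n]}{2}$) still collapses to a single contributing $K$ because the relevant $\gamma$ has all but one (resp.\ two) entries equal, and the long exact sequence \eqref{locallyclosedsequence} together with Theorems \ref{BettiThm} and \ref{loc05} determines the Euler characteristic as a combination of simples of known type. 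That is a genuinely different route from the paper, which instead computes everything explicitly only for $n=4$ (Lemmas \ref{lem:wit17n4}, \ref{lem:charD5D6ngeq4}, \ref{lem:witD_6primen4}) and then \emph{bootstraps} to $n\geq 5$ by pushing forward along $\pi_{224}$ and applying Bott's theorem (Lemmas \ref{lem:directimD1}, \ref{lem:pushD5}, \ref{lem:pushD6prime}, and Corollary \ref{cor:bottcorsimple}); the paper's bootstrap is cleaner because once the $n=4$ character data is in hand, Bott handles the passage to larger $n$ with no further plethysm work.

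However, your treatment of $D_6'$ has a genuine gap. You propose to ``generalize Lemma \ref{lem:D6primeisH3} to all $n \geq 4$'' using the spectral sequence $\mathbb{R}^i(\pi_{223})_*\mathscr{H}^1_{Y_7}(\mc{O}_{Y_{223}}) \Rightarrow H^{i+1}_{\ol{O}_7}(\C[\ol{O}_8])$ together with the kernel/cokernel of the multiplication map $D_6'\otimes\det\to D_6'$. But the link between that kernel and $H^{i}_{\ol{O}_7}(\C[\ol{O}_8])$ comes precisely from the two-term resolution \eqref{resDet}, $0 \to S\otimes\det \to S \to \C[\ol{O}_8]\to 0$, which exists \emph{only} when $n=4$, since only then is $\ol{O}_8$ a hypersurface cut out by $\det$. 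For $n\geq 5$, $\ol{O}_8=Z_3$ has codimension $n-3\geq 2$, its minimal free resolution is a longer (Eagon--Northcott type) complex, and there is simply no short exact sequence converting multiplication by $\det$ into a single connecting map to $H^{*}_{\ol{O}_7}(\C[\ol{O}_8])$. So the step ``kernel of $D_6'\otimes\det\to D_6'$ embeds into an $\mathbb{R}^{n-4}$-pushforward'' does not exist for $n\geq 5$. You gesture at the resolution $\pi_{224}$ in your final sentence, but never actually set up the needed argument: the paper's Lemma \ref{lem:pushD6prime} bypasses the multiplication-by-$\det$ trick entirely for $n\geq 5$ by (i) establishing the $n=4$ character of $D_6'$ via the twisted localization $\pi_{222+}(\mc{O}_Y(\ast Y_5)\otimes f^*\mc{L})$ (the relative $S_h\cdot\sqrt h$ of \cite{perlman2020equivariant}, which is not mentioned in your plan), and then (ii) identifying $\mathbb{R}^{2n-8}\pi'_*D_6'^{Y'}$ with $D_6'$ by a process of elimination against all the other simples. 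That elimination is exactly the ``delicate step'' you flag, and without it the bounds on the $D_6'$-character that your first paragraph relies upon are not available, so the plan as written does not close.
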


We note that, in addition to the calculations in this subsection, we found information about the characters of $D_1$ and $D_7$ in Lemma \ref{lem:wit17n4}.

We start by investigating the case $n=4$.

\begin{lemma}\label{lem:charD5D6ngeq4}
Let $n=4$ and $p\geq 4$, and let $\alpha,\beta \in \mathbb{Z}^2_{\dom}$ with $|\alpha|=|\beta|=-2p$. The following is true about $D_5$ and $D_6$.
\begin{enumerate}
\item Representations of the form $\bS_{\alpha}A\oo \bS_{\beta}B\oo \bS_{(-2,-2,2-p,2-p)}C$	appear in $D_6$ if and only if $\alpha_1+\beta_1\leq -4-p$ and $p-\alpha_1-\beta_1$ is even, in which case the multiplicity is one.
\item Representations of the form $\bS_{\alpha}A\oo \bS_{\beta}B\oo \bS_{(-2,-2,2-p,2-p)}C$	appear in $D_5$ if and only if $\alpha_1+\beta_1\geq -3-p$ and $p-\alpha_1-\beta_1$ is even, in which case the multiplicity is one.
\end{enumerate}
\end{lemma}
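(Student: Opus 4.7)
The plan is to establish (1) directly from Theorem \ref{thm:charMatrices}, and then to deduce (2) by combining (1) with an Euler characteristic computation for $\pi_+ \mc{O}_Y(\ast Y_5)$, where $Y = Y_{222}$, $\pi = \pi_{222}$, and $Y_5 = \pi^{-1}(\ol{O}_5)$.

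For (1), use the flattening to $4\times n$ matrices with $W_1 = C$, so that $D_6 = L_{Z_2}$. By Theorem \ref{thm:charMatrices}, $D_6 = \bigoplus_{\lambda \in W^2} \bS_\lambda C \oo \bS_\lambda(A\oo B)$, and one checks directly that $\gamma = (-2,-2,2-p,2-p) \in W^2$ for $p \geq 4$. Thus the multiplicity of $\bS_\alpha A\oo \bS_\beta B \oo \bS_\gamma C$ in $D_6$ equals $[\bS_\gamma(A\oo B) : \bS_\alpha A\oo \bS_\beta B]$. Writing $\gamma = (p-4, p-4, 0, 0) + (2-p)\cdot(1,1,1,1)$ and using $\det(A\oo B)^{2-p} = (\det A)^{4-2p}\oo(\det B)^{4-2p}$, this reduces to computing
\[ \big[\bS_{(p-4, p-4)}(A \oo B) : \bS_{\alpha'} A \oo \bS_{\beta'} B\big],\]
where $\alpha' = \alpha + (2p-4)\cdot(1,1)$ and $\beta' = \beta + (2p-4)\cdot(1,1)$. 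I would then apply \cite[Corollary 4.3b]{raicu2012secant} to this two-row plethysm: requiring $\alpha'$ and $\beta'$ to be partitions forces $\alpha_1, \beta_1 \leq -4$, and the Corollary's formula then yields multiplicity $1$ precisely when $\alpha_1 + \beta_1 \leq -4 - p$ and $p - \alpha_1 - \beta_1$ is even, and $0$ otherwise.

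For (2), by Lemma \ref{pushdownlocallyclosed1} together with Lemma \ref{lem:decompn4}, the only nonzero cohomology modules of $\pi_+ \mc{O}_Y(\ast Y_5)$ are $\mc{H}^0$, an extension of $D_5$ by $Q_2$, and $\mc{H}^4 = D_0$. Since $Q_2$ has composition factors $D_6, D_2, D_0$, we obtain in the Grothendieck group of $\tn{mod}_{\GL}(\D_V)$:
\[ \big[\chi(\pi_+ \mc{O}_Y(\ast Y_5))\big] = [D_5] + [D_6] + [D_2] + 2[D_0]. \]
By Theorem \ref{thm:charMatrices}, $\bS_\gamma C$ does not appear in $D_0$ or $D_2$ (as $\gamma \notin W^0$ since $\gamma_1 = -2 > -4$, and $\gamma \notin W^1$ since $\gamma_2 = -2 > -3$), so the multiplicity of $\bS_\alpha A\oo \bS_\beta B \oo \bS_\gamma C$ in $\big[\chi(\pi_+ \mc{O}_Y(\ast Y_5))\big]$ equals the sum of its multiplicities in $D_5$ and $D_6$.

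To compute the Euler characteristic multiplicity, I would follow the template of Lemma \ref{lem:charD5n3}: use \cite[Lemma 2.5, Proposition 2.10]{raicu2016characters} to write
\[ \big[\chi(\pi_+ \mc{O}_Y(\ast Y_5))\big] = \lim_{k \to \infty}\big[\bS_{(2k, 2k)}A \oo \bS_{(2k, 2k)}B \oo p_{2, 2k}(C) \oo S^* \oo \det(A^* \oo B^* \oo C^*)\big], \]
then apply \cite[Lemma 2.3]{raicu2016characters} to express the multiplicity as a sum over $K \in \binom{[4]}{2}$ of multiplicities in $S$. Only certain $K$ produce valid partitions after $\rho$-shifted sort; evaluating each via \cite[Corollary 4.3b]{raicu2012secant} yields the total Euler multiplicity. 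Subtracting the $D_6$ multiplicity obtained in (1) then isolates the $D_5$ multiplicity, producing the complementary condition $\alpha_1 + \beta_1 \geq -3 - p$ with the same parity. The main obstacle will be the case analysis in \cite[Corollary 4.3b]{raicu2012secant} and the book-keeping of contributions from the six subsets $K$, but the required techniques run parallel to those already developed in Lemmas \ref{lem:charD5n3}, \ref{lem:charD17n3}, and \ref{lem:wit17n4}.
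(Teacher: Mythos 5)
Your proposal matches the paper's proof in both structure and substance: part (1) is obtained from Theorem \ref{thm:charMatrices} plus the two-row plethysm formula \cite[Corollary 4.3b]{raicu2012secant}, and part (2) is obtained by computing $[\chi(\pi_+\mc{O}_Y(\ast Y_5))] = [D_6]+[D_5]+[D_2]+2[D_0]$ via Lemma \ref{lem:decompn4} and the character machinery of \cite[Section 2]{raicu2016characters}, then subtracting the $D_6$ contribution from (1) after noting that $D_0$ and $D_2$ contribute nothing to these weights. The only cosmetic difference is that the paper dualizes before applying Raicu's plethysm formula in (1), whereas you twist directly, but both arrive at the same reduced computation.
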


\begin{proof}
(1) By Theorem \ref{thm:charMatrices} we have that $\bS_{\alpha}A\oo \bS_{\beta}B\oo \bS_{(-2,-2,2-p,2-p)}C$ appears in $D_6$ if and only if $\bS_{\alpha}A\oo \bS_{\beta}B$ is a subrepresentation of $\bS_{(-2,-2,2-p,2-p)}(A\oo B)$. Dualizing and twisting by $\det(A^{\ast}\oo B^{\ast})^{\otimes -2}$, this is equivalent to $\bS_{(-\alpha_2-4,-\alpha_1-4)}A^{\ast}\oo \bS_{(-\beta_2-4,-\beta_1-4)}B^{\ast}$ being a subrepresentation of $\bS_{(p-4,p-4,0,0)}(A^{\ast}\oo B^{\ast})$. Using the notation of \cite[Corollary 4.3b]{raicu2012secant}, since $-\alpha_1\leq p$ and $-\beta_1\leq p$, we have $r=2p-8$, $f=p-4$, and $e=p-\alpha_1-\beta_1-12$. Then $e\geq 2f$ if and only if $\alpha_1+\beta_1\leq -4-p$. Thus, if $\alpha_1+\beta_1\geq -3-p$ then the desired multiplicity is zero. If $\alpha_1+\beta_1\leq -4-p$, then since $r=2f$, we have that the desired multiplicity is $r/2-f+1$ if $e$ is even, and it is $r/2-f$ if $e$ is odd. Thus, the multiplicity is one if and only if $p-\alpha_1-\beta_1$ is even, and zero otherwise.

(2) Let $\pi=\pi_{222}$, $Y=Y_{222}$, and $Y_5=\pi^{-1}(\ol{O}_5)$. By Lemma \ref{lem:decompn4} we have the following in the Grothendieck group of representations of $\GL$ (see Section \ref{sec:relmatrices}):
\begin{equation}\label{eq:ggpusheuler}
\big[\chi(\pi_{+}\mc{O}_Y(\ast Y_5))\big]=[D_6]+[D_5]+[D_2]+2[D_0].
\end{equation}
Let $\alpha,\beta\in \mathbb{Z}^2_{\dom}$ and $\gamma=(-2,-2,2-p,2-p)$ with $|\alpha|=|\beta|=-2p$, and define
$$
m_{\alpha,\beta,\gamma}=\big[ \chi(\pi_+ \mc{O}_Y(\ast Y_5)):\bS_{\alpha}A\oo \bS_{\beta}B \oo \bS_{\gamma}C\big].
$$
By \cite[Lemma 2.5, Proposition 2.10]{raicu2016characters} we have that 
$$
\big[\chi(\pi_+ \mc{O}_Y(\ast Y_5))\big]=\lim_{k\to \infty} \big[\bS_{(2k,2k)}A\oo \bS_{(2k,2k)}B\oo p_{2,2k}(C)\oo 
 S^{\ast}\otimes \det(A^{\ast}\oo B^{\ast}\oo C^{\ast})\big].
$$
Thus, by \cite[Lemma 2.3]{raicu2016characters} we have that
$$
m_{\alpha,\beta,\gamma}=\lim_{k\to \infty}\left(\sum_{J\in \binom{[4]}{2}}\big[S:\bS_{(2k-\alpha_2-8,2k-\alpha_1-8)}A\oo \bS_{(2k-\beta_2-8,2k-\beta_1-8)}B \oo \bS_{((p-6,p-6,-2,-2),2k,J)}C \big]\right).
$$
Note $\bS_{\lambda}A\oo \bS_{\mu}B\oo \bS_{\nu}C$ appears in $S$ only if $\lambda,\mu,\nu$ are partitions. We have for $k\gg 0$ that $((p-6,p-6,-2,-2),2k,J)$ is a partition if and only if $J=\{3,4\}$, in which case
$$
((p-6,p-6,-2,-2),2k,\{3,4\})=(2k-4,2k-4, p-4, p-4).
$$
By the Cauchy Formula applied to $S=\Sym((A\oo B)\oo C)$ we have
$$
m_{\alpha,\beta,\gamma}=\lim_{k\to \infty}\big[\bS_{(2k-4,2k-4, p-4, p-4)}(A\oo B):\bS_{(2k-\alpha_2-8,2k-\alpha_1-8)}A\oo \bS_{(2k-\beta_2-8,2k-\beta_1-8)}B]
$$
Twisting by $\det(A\oo B)^{\otimes 4-p}$ we want the multiplicity of $\bS_{(2k-\alpha_2-2p,2k-\alpha_1-2p)}A\oo \bS_{(2k-\beta_2-2p,2k-\beta_1-2p)}B$ in $\bS_{(2k-p,2k-p, 0, 0)}(A\oo B)$ for $k\gg 0$. We apply \cite[Corollary 4.3b]{raicu2012secant}, noting that $-\alpha_1\leq p$ and $-\beta_1\leq p$. Using the notation there, for $k\gg 0$ we have $r=4k-2p$, $f=2k-p$, and $e=6k-5p-\alpha_1-\beta_1$. Thus, for $k\gg 0$ we have $e\geq r-1$, $e\geq 2f$, $r$ even. Then $e$ is even if and only if $p-\alpha_1-\beta_1$ is even, in which case $m_{\alpha,\beta,\gamma}=r/2-f+1=1$. Otherwise, $m_{\alpha,\beta,\gamma}=r/2-f=0$. By part (1) and (\ref{eq:ggpusheuler}) we obtain (2), using that  no representation in the statement of the lemma belongs to $D_0$ or $D_2$ (Theorem \ref{thm:charMatrices}).
\end{proof}

\begin{lemma}\label{lem:witD_6primen4}
Let $n=4$ and $p\geq 4$. The following is true about weights of $D_6'$.
\begin{enumerate}
\item The representation $\bS_{(-3,3-2p)}A\oo \bS_{(-p,-p)}B\oo \bS_{(-2,-2,2-p,2-p)}C$ appears in $D_6'$ with multiplicity one.
\item The following representations do not appear in $D_6'$:
$$
\bS_{(-1,1-2p)}A\oo \bS_{(-p,-p)}B\oo \bS_{(-2,-2,2-p,2-p)}C,\quad \bS_{(-p,-p)}A\oo \bS_{(-1,1-2p)}B\oo \bS_{(-2,-2,2-p,2-p)}C,
$$ 
$$
\bS_{(-2,2-2p)}A\oo \bS_{(-p,-p)}B\oo \bS_{(-2,-2,2-p,2-p)}C,\quad \bS_{(-4,4-2p)}A\oo \bS_{(-p,-p)}B\oo \bS_{(-2,-2,2-p,2-p)}C.
$$
\end{enumerate}

\end{lemma}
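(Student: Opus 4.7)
The argument extends the template of Lemma~\ref{lem:D6primeisH3}(2) (which settles the case $p=4$ of part~(1)) to general $p\geq 4$. Applying $H^\bullet_{\ol{O}_7}(-)$ to~\eqref{resDet} and using Lemma~\ref{lem:D6primeisH3}(1) together with Theorem~\ref{loc07}(2) gives the long exact sequence
\[H^2_{\ol{O}_7}(\C[\ol{O}_8]) \longrightarrow D_6' \oo \det \xrightarrow{\,\cdot\det\,} D_6' \longrightarrow H^3_{\ol{O}_7}(\C[\ol{O}_8]) \longrightarrow D_1 \oo \det.\]
Via~\eqref{pushDown223} and Bott's Theorem (Lemma~\ref{BottLemma}) applied to $\pi = \pi_{223}$, representations in $H^2_{\ol{O}_7}(\C[\ol{O}_8]) \cong \mathbb{R}^1\pi_\ast\mathscr{H}^1_{Y_7}(\mc{O}_Y)$ have $C$-weight with $\gamma_3 = -1$, while those in $H^3_{\ol{O}_7}(\C[\ol{O}_8]) \cong \mathbb{R}^2\pi_\ast\mathscr{H}^1_{Y_7}(\mc{O}_Y)$ have $\gamma_2 = -2$.

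All five weights in the statement share the $C$-weight $\gamma = (-2,-2,2-p,2-p)$, which has $\gamma_2 = -2$ and $\gamma_3 = 2-p \leq -2 \neq -1$. Hence they are absent from the kernel of $\cdot\det$; moreover, the pre-image weight $\gamma - (1,1,1,1) = (-3,-3,1-p,1-p)$ violates the lower bound $\gamma_2 \geq -2$ of Lemma~\ref{lem:D6primeisH3}(3), so $D_6' \oo \det$ contributes nothing at any of these weights. Consequently, the multiplicity of each of the five weights in $D_6'$ equals the multiplicity in the cokernel of $\cdot\det$, which embeds as a subrepresentation of $H^3_{\ol{O}_7}(\C[\ol{O}_8])$.

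It remains to compute the cokernel contributions. By Lemma~\ref{BottLemma} applied with $k=3$, $m=4$ and position index $1$, a representation $\bS_\alpha A \oo \bS_\beta B \oo \bS_{(-2,-2,2-p,2-p)}C$ in $H^3_{\ol{O}_7}(\C[\ol{O}_8])$ corresponds exactly to the subbundle $\bS_\alpha A \oo \bS_\beta B \oo \bS_{(-2, 1-p, 1-p)}\mc{Q}$ of $\mathscr{H}^1_{Y_7}(\mc{O}_Y)$. After an appropriate determinantal twist, multiplicities of such subbundles reduce to the $\GL(A)\times\GL(B)$-decomposition of $\bS_\mu(A\oo B)$ for a two-row partition $\mu$, handled by \cite[Corollary~4.3b]{raicu2012secant} in the spirit of Lemma~\ref{charSfn3}. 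For~(1), this computation yields multiplicity one at $(\alpha, \beta) = ((-3, 3-2p), (-p,-p))$; the character bounds of Lemma~\ref{lem:wit17n4} and Corollary~\ref{cor:weightBoundsn4} exclude this weight from the image of $H^3_{\ol{O}_7}(\C[\ol{O}_8]) \to D_1 \oo \det$, so the contribution survives to give multiplicity one in $D_6'$. For~(2), the same plethysm computation returns multiplicity zero for each of the four excluded $(\alpha, \beta)$-patterns (with the $A \leftrightarrow B$ swap handled by the symmetry of $V = A\oo B\oo C$ interchanging the two two-dimensional factors).

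The main technical obstacle is the careful plethysm bookkeeping, and in particular distinguishing the $D_6'$-contribution to $H^3_{\ol{O}_7}(\C[\ol{O}_8])$ from the $D_1$-contribution coming through the connecting map. This is controlled by the character constraints on $D_1$ and $D_7$ developed in the preceding lemmas of Section~\ref{sec:o7n4}, which pin down the relevant weight multiplicities in these simples.
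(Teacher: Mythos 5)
The route you take is genuinely different from the paper's. Where the paper works on $Y_{222}$ with the Euler characteristic of $\pi_+(\mc{O}_Y(\ast Y_5)\otimes f^{\ast}\mc{L})$ (whose composition factors $D_1^Y,D_2^Y,D_3^Y,D_4^Y,D_6'^Y$ are known from \cite{perlman2020equivariant}), you instead push the determinant resolution $0\to S\otimes\det\to S\to\C[\ol{O}_8]\to 0$ through $H^\bullet_{\ol{O}_7}(-)$ on $Y_{223}$ and read off the cokernel of $\cdot\det$ on $D_6'$. For part~(2) this is a sound idea: since the pre-image weight $\gamma-(1^4)=(-3,-3,1-p,1-p)$ violates $\gamma_2\geq -2$ from Lemma~\ref{lem:D6primeisH3}(3), multiplication by $\det$ vanishes on the relevant isotypic pieces, and $D_6'$ embeds into $H^3_{\ol{O}_7}(\C[\ol{O}_8])$ there; vanishing on the right forces vanishing in $D_6'$.

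However, part~(1) has a genuine gap when $p>4$. Your exact sequence gives
\[0\to\coker\bigl(D_6'\otimes\det\to D_6'\bigr)\to H^3_{\ol{O}_7}(\C[\ol{O}_8])\to\ker\bigl(D_1\otimes\det\to D_1\bigr)\to 0,\]
so knowing $[H^3_{\ol{O}_7}(\C[\ol{O}_8]):\lambda]=1$ only gives $[D_6':\lambda]\leq 1$; equality requires showing $\lambda$ is absent from $\ker(D_1\otimes\det\to D_1)$, i.e. that $\lambda\otimes\det^{-1}=\bS_{(-4,2-2p)}A\oo\bS_{(-p-1,-p-1)}B\oo\bS_{(-3,-3,1-p,1-p)}C$ does not appear in $D_1$. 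But neither of the cited results rules this out: Corollary~\ref{cor:weightBoundsn4}(1) only asserts $\gamma_1\geq -3$ and $\gamma_2\leq -3$, and here $\gamma_1=\gamma_2=-3$ satisfies both; Lemma~\ref{lem:wit17n4}(1) applies only when $\gamma_2=\gamma_3=\gamma_4$, which here means $-3=1-p$, i.e. $p=4$. So for $p>4$ the available character information on $D_1$ simply does not pin down the kernel of $\cdot\det$ at this weight. This is exactly the bottleneck the paper avoids by its choice of resolution: in the Euler-characteristic decomposition the potential $D_1$-contamination occurs at the untwisted weight $\lambda$ itself, and there Corollary~\ref{cor:weightBoundsn4} does apply ($\gamma_2=-2>-3$ fails the constraint), killing the $D_1$ coefficient. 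In short, the paper's choice of comparison module confronts $D_1$ at a weight where the bounds bite, while yours confronts $D_1$ at a $\det$-shifted weight where they do not. To repair your argument you would need additional information on the character of $D_1$ (for example, a version of Lemma~\ref{lem:wit17n4} at non-rectangular $\gamma$), which the surrounding lemmas do not supply.
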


\begin{proof}
Let $\pi=\pi_{222}$, $Y=Y_{222}$, and $Y_5=\pi^{-1}(\ol{O}_5)$. Let $\mc{L}=\bS_{(1,1)}A\oo \bS_{(1,1)}B\oo \bS_{(1,1)}\mc{Q}$ and let $f:Y\to \mathbb{G}(2;C)$ be the structure map for the vector bundle $Y$. Then $\mc{O}_Y(\ast Y_5)\otimes f^{\ast}(\mc{L})$ is the relative version of the $\D$-module $S_h\cdot \sqrt{h}$ studied in \cite{perlman2020equivariant}, so that $\mc{O}_Y(\ast Y_5)\otimes f^{\ast}(\mc{L})$ has composition factors $D_1^Y$, $D_2^Y$, $D_3^Y$, $D_4^Y$, $D_6'^Y$, each with multiplicity one. Since $D_6'$ is supported on $\ol{O}_6$ and $\pi$ is an isomorphism on $O_6$, we have the following in the Grothendieck group of $\GL$-representations:
\begin{equation}\label{eq:eulerD6prime}
\big[\chi(\pi_+(\mc{O}_Y(\ast Y_5)\otimes f^{\ast}(\mc{L})))\big]=c_0[D_0]+c_1[D_1]+c_2[D_2]+c_3[D_3]+c_4[D_4]+c_5[D_5]+[D_6'],
\end{equation}
for some $c_0,c_1,c_2,c_3,c_4,c_5\in \mathbb{Z}$.
Let $\alpha,\beta\in \mathbb{Z}^2_{\dom}$ and $\gamma \in \mathbb{Z}^4_{\dom}$ with $|\alpha|=|\beta|=|\gamma|$, and define
$$
m_{\alpha,\beta,\gamma}=\big[ \chi(\pi_+(\mc{O}_Y(\ast Y_5)\otimes f^{\ast}(\mc{L}))):\bS_{\alpha}A\oo \bS_{\beta}B \oo \bS_{\gamma}C\big].
$$
By \cite[Lemma 2.5, Proposition 2.10]{raicu2016characters} we have that 
$$
\big[\chi(\pi_+(\mc{O}_Y(\ast Y_5)\otimes f^{\ast}(\mc{L})))\big]=\lim_{k\to \infty} \big[\bS_{(2k+1,2k+1)}A\oo \bS_{(2k+1,2k+1)}B\oo p_{2,2k+1}(C)\oo 
 S^{\ast}\otimes \det(A^{\ast}\oo B^{\ast}\oo C^{\ast})\big].
$$
Let $\gamma=(-2,-2,2-p,2-p)$, so that $|\alpha|=|\beta|=-2p$. By \cite[Lemma 2.3]{raicu2016characters} we have that
$$
m_{\alpha,\beta,\gamma}=\lim_{k\to \infty}\left(\sum_{J\in \binom{[4]}{2}}\big[S:\bS_{(2k-\alpha_2-7,2k-\alpha_1-7)}A\oo \bS_{(2k-\beta_2-7,2k-\beta_1-7)}B \oo \bS_{((p-6,p-6,-2,-2),2k+1,J)}C \big]\right).
$$
Note $\bS_{\lambda}A\oo \bS_{\mu}B\oo \bS_{\nu}C$ appears in $S$ only if $\lambda,\mu,\nu$ are partitions. For $k\gg 0$ we have that $((p-6,p-6,-2,-2),2k,J)$ is a partition if and only if $J=\{3,4\}$, in which case
$$
((p-6,p-6,-2,-2),2k,\{3,4\})=(2k-3,2k-3, p-4, p-4).
$$
By the Cauchy Formula applied to $S=\Sym((A\oo B)\oo C)$ we have
$$
m_{\alpha,\beta,\gamma}=\lim_{k\to \infty}\big[\bS_{(2k-3,2k-3, p-4, p-4)}(A\oo B):\bS_{(2k-\alpha_2-7,2k-\alpha_1-7)}A\oo \bS_{(2k-\beta_2-7,2k-\beta_1-7)}B].
$$
Twisting by $\det(A\oo B)^{\otimes 4-p}$ we have
$$ 
m_{\alpha,\beta,\gamma}=\lim_{k\to \infty}\big[\bS_{(2k-p+1,2k-p+1)}(A\oo B):\bS_{(2k-2p-\alpha_2+1,2k-2p-\alpha_1+1)}A\oo \bS_{(2k-2p-\beta_2+1,2k-2p-\beta_1+1)}B].
$$
We apply \cite[Corollary 4.3b]{raicu2012secant}, noting that $-\alpha_1\leq p$ and $-\beta_1\leq p$. Using the notation there, for $k\gg 0$ we have $r=4k-2p+2$, $e=6k-5p-\alpha_1-\beta_1+3$, $f=2k-p+1$. Thus, $e\geq 2f$, $e\geq r-1$, and $r$ is even. Therefore, $e$ is even if and only if $p-\alpha_1-\beta_1$ is odd, in which case, $m_{\alpha, \beta, \gamma}=r/2-f+1=1$. Otherwise, when $p-\alpha_1-\beta_1$ is even, we have $m_{\alpha, \beta, \gamma}=r/2-f=0$.

By Theorem \ref{thm:charMatrices} we have that none of the representations in the statement of the lemma belong to $D_0$, $D_2$, and by Corollary \ref{cor:weightBoundsn4} none of these representations appear in $D_1$. If $p=4$ and $\alpha=(1,-7)$ and $\beta=(-4,-4)$ or vice versa we have  that $p-\alpha_1-\beta_1=7$, which is odd, so $m_{\alpha,\beta, \gamma}=1$. By Lemma \ref{lem:D6primeisH3} the representations $\bS_{(-1,-7)}A\oo \bS_{(-4,-4)}B\oo \bS_{(-2,-2,-2,-2)}C$, $\bS_{(-4,-4)}A\oo \bS_{(-1,-7)}B\oo \bS_{(-2,-2,-2,-2)}C$ do not appear in $D_6'$, and by Lemma \ref{lem:charD5D6ngeq4} they do not appear in $D_5$. Therefore, $c_3=c_4=1$ in (\ref{eq:eulerD6prime}). Similarly, by Lemma \ref{lem:D6primeisH3} and Theorem \ref{thm:charMatrices} the representation $\bS_{(-2,-6)}A\oo \bS_{(-4,-4)}B\oo \bS_{(-2,-2,-2,-2)}C$ ($p=4$) does not appear in $D_6'$, $D_3$, nor $D_4$. Since $p-\alpha_1-\beta_1=10$ is even, we conclude that $c_5=0$.

Therefore, given $\alpha,\beta, \gamma$ as in the statement of the lemma, we have
$$
\big[ D_6':\bS_{\alpha}A\oo \bS_{\beta}B\oo \bS_{\gamma}C\big]=m_{\alpha,\beta, \gamma}-\big[ D_3:\bS_{\alpha}A\oo \bS_{\beta}B\oo \bS_{\gamma}C\big]-\big[ D_4:\bS_{\alpha}A\oo \bS_{\beta}B\oo \bS_{\gamma}C\big].
$$
The proof of (1) and (2) then follows from a case-by-case analysis of the parity of $p-\alpha_1-\beta_1$. The representations in the first row of (2) belong to $D_3$ (resp. $D_4$), and the representations in the second row of (2) do not belong to $D_3$ nor $D_4$.
\end{proof}

\begin{lemma}\label{lem:directimD1}
Let $n\geq 4$. The following is true about $D_1$ and $D_7$.
\begin{enumerate}
\item $\mathbb{R}\pi'_{\ast}D_1^{Y'}$ has only cohomology in degree $3n-12$, isomorphic as representations of $\GL$ to $D_1$.
\item $\mathbb{R}\pi'_{\ast}D_7^{Y'}$ has only cohomology in degree $n-4$, isomorphic as representations of $\GL$ to $D_7$.	

\item The representation $\bS_{(-n-1,-2n+1)}A\oo \bS_{(-n-1,-2n+1)}B \oo \bS_{(-3^n)}C$ appears with multiplicity one in $D_1$, and does not appear in $D_2$.
\item The representation $\bS_{(-n-2,-2n+2)}A\oo \bS_{(-n-2,-2n+2)}B \oo \bS_{(-3^n)}C$ does not appear in $D_1$, and appears with multiplicity one in $D_2$.

\item The representation $\bS_{(-1,1-n)}A\oo \bS_{(-1,1-n)}B \oo \bS_{(-1^n)}C$ appears with multiplicity one in $D_7$, and does not appear in $D_8$.
\item The representation $\bS_{(-2,2-n)}A\oo \bS_{(-2,2-n)}B \oo \bS_{(-1^n)}C$ does not appear in $D_7$, and appears with multiplicity one in $D_8$.		
\end{enumerate}

\end{lemma}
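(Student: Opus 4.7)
The base case $n = 4$ is immediate: $\mathbb{G}_{224}$ is a point, so $Y' = V$, $\pi'$ is the identity, $D_i^{Y'} = D_i$, and $3n - 12 = n - 4 = 0$. Statements (3)--(6) at $n=4$ reduce to direct character computations from Lemma \ref{lem:wit17n4} and Theorem \ref{thm:charMatrices}. The substantive work is for $n \geq 5$.

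The plan for (1) is to combine the character constraints on $D_1^{Y'}$ inherited from the $n=4$ case with Bott's theorem. Locally over $\mathbb{G} = \mathbb{G}(4;C)$, the module $D_1^{Y'}$ decomposes $\GL(A) \times \GL(B) \times \GL(\mathcal{Q})$-equivariantly as a direct sum of sheaves $\bS_{\lambda}A \boxtimes \bS_{\mu}B \boxtimes \bS_{\nu}\mathcal{Q}$, with multiplicities agreeing with those of $\bS_{\lambda}\C^2 \oo \bS_{\mu}\C^2 \oo \bS_{\nu}\C^4$ in the $n=4$ module $D_1$. By Corollary \ref{cor:weightBoundsn4}, all appearing $\nu$ satisfy $\nu_1 \geq -3$ and $\nu_2 \leq -3$; checking the hypotheses of Lemma \ref{BottLemma} against these bounds, only the case $p = 1$ can yield nonzero cohomology (the case $p=0$ requires $\nu_1 \leq -n$, and $p = 2,3,4$ require $\nu_p \geq p - 4 \geq -2$, each contradicting the constraints when $n \geq 5$). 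Thus $\mathbb{R}\pi'_{\ast}D_1^{Y'}$ is concentrated in degree $(n-4)(4-1) = 3n-12$.

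To identify $M := \mathbb{R}^{3n-12}\pi'_{\ast}D_1^{Y'}$ with $D_1$, first note that $M$ is supported on $\ol{O}_1$, so its composition factors lie in $\{D_0, D_1\}$. By Theorem \ref{quiverN5}, vertex $(0)$ is isolated in the quiver, so $\tn{Ext}^1(D_0, D_1) = \tn{Ext}^1(D_1, D_0) = 0$, giving $M \cong D_0^{\oplus b} \oplus D_1^{\oplus a}$. Every $C$-weight of $D_0 \cong \mc{F}(S)$ satisfies $\gamma_1 \leq -4$ (Fourier transform sends an $S$-weight $\gamma$ with $\gamma_i \geq 0$ to the weight with first entry $-\gamma_n - 4 \leq -4$), whereas every $C$-weight appearing in $M$ through Bott at $p = 1$ has first entry $\gamma_1 = \nu_1 \geq -3$; therefore $b = 0$. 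The unique $\nu$ with $\nu(1) = (-3^n)$ is $\nu = (-3, 1-n, 1-n, 1-n)$, and Lemma \ref{lem:wit17n4}(1) applied with $\gamma_2 = 1-n$ and $\alpha = (-n-1, -2n+1)$ (so $\alpha_2 = 1-2n = 2\nu_2 - 1$) gives multiplicity one for the corresponding isotypic component in $D_1^{Y'}$. Hence $\bS_{(-n-1,-2n+1)}A \oo \bS_{(-n-1,-2n+1)}B \oo \bS_{(-3^n)}C$ appears in $M$ with multiplicity one, forcing $a = 1$ and proving (1). Statement (2) follows analogously from the Fourier-dual constraints on $D_7^{Y'}$ (namely $\nu_3 \geq -1$ and $\nu_4 \leq -1$, so only Bott $p=3$ contributes, in degree $n-4$), or simply by applying $\mc{F}$ to (1).

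Statements (3) and (4) are then a character calculation. Multiplicities of $\bS_{\alpha}A \oo \bS_{\alpha}B \oo \bS_{(-3^n)}C$ in $D_1$ equal those of $\bS_{\alpha}\C^2 \oo \bS_{\alpha}\C^2 \oo \bS_{(-3, 1-n, 1-n, 1-n)}\C^4$ in the $n=4$ module $D_1$, which by Lemma \ref{lem:wit17n4}(1) is one when $\alpha_2 \leq 1-2n$ and zero otherwise; this gives multiplicity one at $\alpha = (-n-1, -2n+1)$ and zero at $\alpha = (-n-2, -2n+2)$ in $D_1$. For $D_2$, by Theorem \ref{thm:charMatrices} the only $\lambda \in W^1$ with $\lambda(1) = (-3^n)$ is $\lambda = (-3, 1-n, 1-n, 1-n)$, and computing $\bS_{\lambda}(A \oo B)$ via Cauchy's formula applied to $\tn{Sym}^{n-4}(A \oo B)$ after twisting by $\det(A \oo B)^{-(n-1)} = \bS_{(-2n+2,-2n+2)}A \oo \bS_{(-2n+2,-2n+2)}B$ yields the summands $\bS_{(-n-2-k,\, k-2n+2)}A \oo \bS_{(-n-2-k,\, k-2n+2)}B$ for $k = 0, \ldots, \lfloor (n-4)/2 \rfloor$; so $\alpha = (-n-2, -2n+2)$ appears at $k=0$ with multiplicity one while $\alpha = (-n-1, -2n+1)$ (requiring $k=-1$) does not appear. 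Statements (5), (6) follow from (3), (4) by applying $\mc{F}$. The principal obstacle throughout is careful bookkeeping of the Bott shift $\nu \mapsto \nu(1)$ and of the plethysm $\bS_{\lambda}(A \oo B)$ at the specific weights in question; I do not expect any new ideas beyond Corollary \ref{cor:weightBoundsn4}, Lemma \ref{lem:wit17n4}, Theorem \ref{quiverN5}, and Bott's theorem.
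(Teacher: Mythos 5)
Your approach to parts (3)--(6) is essentially the paper's: the Bott transfer of Lemma~\ref{lem:wit17n4}(1) to conclude about $D_1$, and Theorem~\ref{thm:charMatrices} plus the Cauchy formula for $\bS_{(-3,1-n,1-n,1-n)}(A\oo B)$ for $D_2$. Your Cauchy computation producing the summands $\bS_{(-n-2-k,\,k-2n+2)}A\oo \bS_{(-n-2-k,\,k-2n+2)}B$ is correct.

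For part (1) you take a genuinely different route. The paper runs the spectral sequence
$E_2^{i,j}=\mathbb{R}^i\pi'_{\ast}\mathscr{H}^j_{Y_1'}(\mc{O}_{Y'})\Rightarrow H^{i+j}_{\ol{O}_1}(S)$
and observes that the columns are direct images of $D_1^{Y'}$ and of $D_0^{Y'}$, that $\mathbb{R}\pi'_{\ast}D_0^{Y'}$ sits only in degree $4n-16$, and that comparison with the known answer (Theorem~\ref{BettiThm}(2) for $n\geq 5$) forces $\mathbb{R}\pi'_{\ast}D_1^{Y'}$ to be $D_1$ concentrated in degree $3n-12$. You instead apply Bott's theorem directly to $D_1^{Y'}$ using the weight bounds of Corollary~\ref{cor:weightBoundsn4} to get the degree, then identify the direct image as a $\D_V$-module using Theorem~\ref{quiverN5}. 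Your Bott bookkeeping is right (I checked that $p=0,2,3,4$ are excluded and that the non-contributing $\nu$ vanish entirely, so concentration in degree $3n-12$ indeed holds). Interestingly, your route for (1) is closer in spirit to what the paper actually does for (2) in Section~\ref{sec:lcO7ngeq5} (there, Corollary~\ref{cor:weightBoundsn4} plus Bott give the degree, then quiver considerations give the identification).

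The one real gap is the step where you decompose $M:=\mathbb{R}^{3n-12}\pi'_{\ast}D_1^{Y'}\cong D_0^{\oplus b}\oplus D_1^{\oplus a}$ using Theorem~\ref{quiverN5}. That theorem applies to objects of $\tn{mod}_{\GL}(\D_V)$, but $\mathbb{R}\pi'_{\ast}$ is the sheaf-theoretic pushforward of a graded $\Sym(\mc{Q}_A\boxtimes\mc{Q}_B\boxtimes\mc{Q}_C)$-module along $\mathbb{G}_{224}\to \tn{pt}$, not the $\D$-module pushforward $\pi'_+$; a priori $M$ is a $\GL$-equivariant $S$-module, not a $\D_V$-module. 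This is why the paper is careful to say ``isomorphic as representations of $\GL$'' and pins down the $\GL$-structure only by matching against the known abutment $H^{3n-2}_{\ol{O}_1}(S)=D_1$. You can repair your argument by invoking the degeneration of the spectral sequence (so that $M$ is $\GL$-isomorphic to a $\D_V$-module subquotient of $H^{3n-2}_{\ol{O}_1}(S)$, which is already known to be $D_1$), or by invoking the decomposition theorem for $\pi'_+D_1^{Y'}$ together with the comparison between $\pi'_+$ and $\mathbb{R}\pi'_\ast$; but as written the justification is missing. A second, smaller caveat: deriving (2) by ``applying $\mc{F}$ to (1)'' is not automatic, since $\mc{F}$ on $\tn{mod}_{\GL}(\D_V)$ need not commute with $\mathbb{R}\pi'_\ast$ of a relative module; the ``analogous'' Bott argument with the Fourier-dual bounds $\nu_3\geq -1$, $\nu_4\leq -1$ is the safe route.
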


\begin{proof}
(1) We consider the spectral sequence	
$$
\mathbb{R}^i\pi'_{\ast}\mathscr{H}^j_{Y_1'}(\mc{O}_{Y'})\implies H^{i+j}_{\ol{O}_1}(S).
$$
By Theorem \ref{BettiThm}(2) for $n=4$, the $E_2$ page of this spectral sequence has only four nonzero columns, corresponding to $j=10$, $11$, $13$, $15$. Note that column ten corresponds to the cohomology of $\mathbb{R}\pi'_{\ast}D_1^{Y'}$, whereas the other columns correspond to direct images of (direct sums of) $D_0^{Y'}$. We recall that $\mathbb{R}\pi'_{\ast}D_0^{Y'}$ has only cohomology in degree $4n-16$, the highest possible degree. Thus, by Theorem \ref{BettiThm}(2) for $n>4$, the spectral sequence does not converge properly unless the cohomology of $\mathbb{R}\pi'_{\ast}D_1^{Y'}$ is as claimed.

(2) Proven in Section \ref{sec:lcO7ngeq5}.

By application of the Fourier transform, it remains to prove (3) and (4). Let $\pi'=\pi_{224}$ and $Y'=Y_{224}$. By Lemma \ref{lem:wit17n4} we have that $\bS_{(-n-1,-2n+1)}A\oo \bS_{(-n-1,-2n+1)}B \oo \bS_{(-3,1-n,1-n,1-n)}\mc{Q}$ belongs to $D_1^{Y'}$ and  $\bS_{(-n-2,-2n+2)}A\oo \bS_{(-n-2,-2n+2)}B \oo \bS_{(-3,1-n,1-n,1-n)}\mc{Q}$ does not belong to $D_1^{Y'}$. By (1) and Lemma \ref{BottLemma} we have that $\bS_{(-n-1,-2n+1)}A\oo \bS_{(-n-1,-2n+1)}B \oo \bS_{(-3^n)}C$ belongs to $D_1$ with multiplicity one and $\bS_{(-n-2,-2n+2)}A\oo \bS_{(-n-2,-2n+2)}B \oo \bS_{(-3^n)}C$ does not appear in $D_1$.

Next, we consider the character of $D_2$. By Theorem \ref{thm:charMatrices} a representation of the form $\bS_{\lambda}(A\oo B)\oo \bS_{(-3^n)}C$ appears in $D_2$ if and only if $\lambda=(-3,1-n,1-n,1-n)$. By Cauchy's formula applied to $\bS_{(-3,1-n,1-n,1-n)}(A\oo B)=\bS_{(n-4,0,0,0)}(A\oo B)\oo \det(A\oo B)^{\otimes 2-2n}$ we have that $\bS_{(-n-2,-2n+2)}A\oo \bS_{(-n-2,-2n+2)}B$ appears in $\bS_{(-3,1-n,1-n,1-n)}(A\oo B)$ with multiplicity one, and $\bS_{(-n-1,-2n+1)}A\oo \bS_{(-n-1,-2n+1)}B$ does not appear. \end{proof}

\begin{lemma}\label{lem:pushD5}
Let $n\geq 4$. The following is true about $D_5$ and $D_6$.
\begin{enumerate}
\item $\mathbb{R}\pi'_{\ast}D_5^{Y'}$ has only cohomology in degree $2n-8$, isomorphic as representations of $\GL$ to $D_5$.
\item  $\mathbb{R}\pi'_{\ast}D_6^{Y'}$ has only cohomology in degree $2n-8$, isomorphic as representations of $\GL$ to $D_6$.

\item Let $\alpha,\beta \in \mathbb{Z}^2_{\dom}$ with $|\alpha|=|\beta|=|\gamma|=-2n$. Representations of the form $\bS_{\alpha}A\oo \bS_{\beta}B\oo \bS_{(-2^n)}C$	appear in $D_6$ if and only if $\alpha_1+\beta_1\leq -4-n$ and $n-\alpha_1-\beta_1$ is even, in which case the multiplicity is one.
\item Let $\alpha,\beta \in \mathbb{Z}^2_{\dom}$ with $|\alpha|=|\beta|=|\gamma|=-2n$. Representations of the form $\bS_{\alpha}A\oo \bS_{\beta}B\oo \bS_{(-2^n)}C$	appear in $D_5$ if and only if $\alpha_1+\beta_1\geq -3-n$ and $n-\alpha_1-\beta_1$ is even, in which case the multiplicity is one.\end{enumerate}
		
\end{lemma}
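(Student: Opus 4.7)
Parts (1) and (2) follow parallel strategies. For (1), concentration of $\mathbb{R}\pi'_* D_5^{Y'}$ in cohomological degree $2n-8$ is forced by Corollary \ref{cor:weightboundsD5n4} (which constrains the bundles $\bS_\nu \mc{Q}$ appearing in $D_5^{Y'}$ to satisfy $\nu_2 \geq -2$ and $\nu_3 \leq -2$) together with Lemma \ref{BottLemma} (which then leaves only the index $q=2$ available); the identification $\mathbb{R}^{2n-8}\pi'_* D_5^{Y'} \cong D_5$ was already established during the proof of Theorem \ref{loc05}(3) in Section \ref{sec:o5ngeq5} via a spectral-sequence argument together with the quiver structure of Theorem \ref{quiverN5}. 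For (2), apply the same Bott analysis: by Theorem \ref{thm:charMatrices} for $D_6 = L_{Z_2}$ in the fiberwise $4\times 4$ matrix case, the bundles of $D_6^{Y'}$ satisfy the same inequalities $\nu_2 \geq -2$ and $\nu_3 \leq -2$, again forcing concentration in degree $2n-8$. Matching characters via Theorem \ref{thm:charMatrices} in the $n \times 4$ flattening shows that the Grothendieck class of $\mathbb{R}^{2n-8}\pi'_* D_6^{Y'}$ coincides with $[D_6]$; since composition-factor multiplicities are determined by this class, $\mathbb{R}^{2n-8}\pi'_* D_6^{Y'} \cong D_6$.

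For (3), apply Theorem \ref{thm:charMatrices} to $D_6 = L_{Z_2}$ with $W_1 = C$ and $W_2 = A \otimes B$. The weight $\gamma = (-2^n)$ forces $\lambda = (-2,-2,2-n,2-n)$ as the unique $\lambda \in W^2$ with $\lambda(2) = \gamma$, reducing the problem to computing the multiplicity of $\bS_\alpha A \otimes \bS_\beta B$ in the plethysm $\bS_{(-2,-2,2-n,2-n)}(A \otimes B)$. This multiplicity is handled as in the proof of Lemma \ref{lem:charD5D6ngeq4}(1): after twisting by a suitable power of $\det(A \otimes B)$ and dualizing, apply \cite[Corollary 4.3b]{raicu2012secant} (with parameters $r = 2n-8$, $f = n-4$, $e = n - \alpha_1 - \beta_1 - 12$), from which the claimed inequalities and parity conditions fall out.

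Part (4) combines (1) with Lemma \ref{BottLemma}. A direct enumeration over $q = 0, 1, 2, 3, 4$ shows that $\nu = (-2,-2,2-n,2-n)$ is the only 4-tuple for which $\mathbb{R}^* \pi'_* \bS_\nu \mc{Q}$ contains $\bS_{(-2^n)}C$, and it contributes only in degree $2n-8$. The multiplicity of $\bS_\alpha A \otimes \bS_\beta B \otimes \bS_\nu \mc{Q}$ in $D_5^{Y'}$ is provided by Lemma \ref{lem:charD5D6ngeq4}(2) (applied with the parameter $p$ there taken to equal $n$), yielding exactly the conditions $\alpha_1 + \beta_1 \geq -3-n$ and $n - \alpha_1 - \beta_1$ even. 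The main subtlety lies in the $\D_V$-module identification of part (2): character equality only determines the Grothendieck class, but since $D_6$ appears with multiplicity one and no other composition factor is present in the class, the module must be isomorphic to $D_6$ itself.
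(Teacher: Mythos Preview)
Your proposal is correct and follows essentially the same approach as the paper: part (1) via Section~\ref{sec:o5ngeq5}, part (2) via the known determinantal case (the paper simply cites \cite{raicu2016characters,raicuWeymanLocal} and \cite[Lemma~3.4]{socledegs} rather than sketching the character match), and parts (3)--(4) via Lemma~\ref{BottLemma} combined with Lemma~\ref{lem:charD5D6ngeq4} at $p=n$. One harmless deviation: for (3) you compute directly from Theorem~\ref{thm:charMatrices} for general $n$ rather than pushing forward the $n=4$ answer, but the underlying plethysm calculation is identical; also note the lemma only claims an isomorphism \emph{as $\GL$-representations}, so the ``$\D$-module subtlety'' you flag for (2) is not actually at issue.
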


\begin{proof}
By discussion in Section \ref{sec:o5ngeq5} we have (1). Using that $D_6=L_{Z_2}$ we have that (2) can be deduced from \cite{raicu2016characters, raicuWeymanLocal} (see \cite[Lemma 3.4]{socledegs} for the explicit statement). By (1), (2), Lemma \ref{BottLemma}, and Lemma \ref{lem:charD5D6ngeq4} we obtain (3),(4). \end{proof}

\begin{lemma}\label{lem:pushD6prime}
Let $n\geq 4$. The following is true about $D_6'$.
\begin{enumerate}
\item $\mathbb{R}\pi'_{\ast}D_6'^{Y'}$ has only cohomology in degree $2n-8$, isomorphic as representations of $\GL$ to $D_6'$. 

\item The representation $\bS_{(-3,3-2n)}A\oo \bS_{(-n,-n)}B\oo \bS_{(-2^n)}C$ appears in $D_6'$ with multiplicity one.	
\item The following representations do not appear in $D_6'$:
$$
\bS_{(-1,1-2n)}A\oo \bS_{(-n,-n)}B\oo \bS_{(-2^n)}C,\quad \bS_{(-n,-n)}A\oo \bS_{(-1,1-2n)}B\oo \bS_{(-2^n)}C,
$$ 
$$
\bS_{(-2,2-2n)}A\oo \bS_{(-n,-n)}B\oo \bS_{(-2^n)}C,\quad \bS_{(-4,4-2n)}A\oo \bS_{(-n,-n)}B\oo \bS_{(-2^n)}C.
$$

\end{enumerate}

\end{lemma}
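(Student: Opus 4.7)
The plan is to transfer information from Lemma \ref{lem:witD_6primen4} (the $n=4$ case) via Bott's theorem on the Grassmannian factor of $Y'=Y_{224}$, paralleling how Lemmas \ref{lem:directimD1} and \ref{lem:pushD5} handle $D_1$ and $D_5,D_6$. The three parts are interlinked: I will first establish the concentration of $\mathbb{R}\pi'_{\ast}D_6'^{Y'}$ in degree $2n-8$, then use parts (2) and (3) to pin down the witness weights, and finally identify the pushforward as $D_6'$.

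For the concentration in part (1): I would invoke Lemma \ref{lem:D6primeisH3}(3) to constrain all bundle summands $\bS_\alpha\mc{Q}_A\oo\bS_\beta\mc{Q}_B\oo\bS_\mu\mc{Q}_C$ of $D_6'^{Y'}$ over $\mathbb{G}(2;A)\times\mathbb{G}(2;B)\times\mathbb{G}(4;C)$ to have $\mu\in\mathbb{Z}^4_\dom$ with $\mu_2\geq-2$ and $\mu_3\leq-2$. Since $\dim A=\dim B=2$ makes the first two Grassmannians singletons, only the $\mathbb{G}(4;C)$-pushforward is non-trivial. A case check of Lemma \ref{BottLemma} excludes $p=0$ (as $\mu_1\geq -2>-n$), $p=1$ (as $\mu_2\geq -2>1-n$ for $n\geq 4$), and $p=3,4$ (as $\mu_3\leq -2<-1$ and $\mu_4<0$); only $p=2$ survives, producing cohomology in degree $2(n-4)=2n-8$ exactly when $\mu_3\leq 2-n$, under which $\mu(2)$ carries the middle block $(-2)^{n-4}$.

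For parts (2) and (3): I will apply Lemma \ref{lem:witD_6primen4} in the fiberwise $n=4$ theory on $Y'$, with its parameter set to $p=n$. Lemma \ref{lem:witD_6primen4}(1) places the bundle $\bS_{(-3,3-2n)}\mc{Q}_A\oo\bS_{(-n,-n)}\mc{Q}_B\oo\bS_{(-2,-2,2-n,2-n)}\mc{Q}_C$ in $D_6'^{Y'}$ with multiplicity one; as $\mu_3=2-n$, the Bott pushforward with $p=2$ sends it to $\bS_{(-3,3-2n)}A\oo\bS_{(-n,-n)}B\oo\bS_{(-2^n)}C$ with multiplicity one in $\mathbb{R}^{2n-8}\pi'_{\ast}D_6'^{Y'}$, which establishes (2) once the pushforward is identified as $D_6'$. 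Analogously, Lemma \ref{lem:witD_6primen4}(2) rules out the four bundles on $Y'$ whose Bott images realize the four representations in (3), yielding (3) by the same reasoning.

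For the identification in part (1): the spectral sequence of Section \ref{sec:lcO7ngeq5} (which degenerates at $E_2$) gives $\mathbb{R}^{2n-8}\pi'_{\ast}D_6'^{Y'}\cong H^{2n-5}_{\ol{O}_7}(S)$, a module supported in $\ol{O}_6$. By (2) and (3), this module contains the witness weight of (2) and excludes the forbidden weights of (3). Comparing against Theorem \ref{thm:charMatrices} for $D_0,D_2,D_6=L_{Z_2}$, the $2\times 2n$-matrix descriptions of $D_3,D_4$, Lemma \ref{lem:directimD1}(3,4) for $D_1$, and Lemma \ref{lem:pushD5}(4) for $D_5$, the witness weight distinguishes $D_6'$ from all other simples supported in $\ol{O}_6$; combined with the Fourier self-duality $\mc{F}(D_6')=D_6'$ (Proposition \ref{prop:fourier}) and the characteristic cycle data of Proposition \ref{prop:char566}, no other simple can contribute, forcing the pushforward to equal $D_6'$. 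The principal obstacle is verifying that the witness weight does not occur in $D_6$: although $\bS_{(-2^n)}C$ does occur there paired with $\bS_{(-2,-2,2-n,2-n)}(A\oo B)$, the specific summand $\bS_{(-3,3-2n)}A\oo\bS_{(-n,-n)}B$ must be shown absent from that plethysm, which I would handle via \cite[Corollary 4.3b]{raicu2012secant} after a suitable $\det(A\oo B)$ twist.
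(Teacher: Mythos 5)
Your overall strategy is the same as the paper's: restrict the bundle summands of $D_6'^{Y'}$ via Lemma \ref{lem:D6primeisH3}(3), run Bott's theorem to get concentration in degree $2n-8$, push down the $p=n$ case of Lemma \ref{lem:witD_6primen4} to get parts (2) and (3), and use the degenerate spectral sequence of Section \ref{sec:lcO7ngeq5} to identify the pushforward with an equivariant holonomic $\D_V$-module. Your Bott case check ($p=2$ being the only surviving case) is correct, and your observation that the pushforward is $H^{2n-5}_{\ol{O}_7}(S)$, supported in $\ol{O}_6$, is a valid alternative to the paper's character-based exclusion of $D_7, D_8, D_9$.

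However, the identification step as you present it has a genuine logical gap. You assert that the witness weight of (2), together with Fourier self-duality $\mc{F}(D_6')=D_6'$ and the characteristic cycle data, forces the pushforward to equal $D_6'$. This does not follow: showing the weight in (2) appears only in $D_6'$ among simples supported in $\ol{O}_6$ tells you that $D_6'$ is a composition factor of the pushforward (with controlled multiplicity), but it does not rule out additional composition factors such as $D_3$ or $D_5$. Neither Fourier self-duality of $D_6'$ nor Proposition \ref{prop:char566} supplies the missing exclusion — the pushforward is not a priori Fourier-fixed, and you have no independent computation of its characteristic cycle. What actually rules out $D_3, D_4, D_5, D_6$ is that each of the four weights in (3) \emph{does} appear in one of these simples — the first two by Theorem \ref{thm:charMatrices} applied to the $2\times 2n$ flattenings, the last two by Lemma \ref{lem:pushD5}(4) and (3) respectively — while you have already shown (via Lemma \ref{lem:witD_6primen4}(2) and Bott) that none of those weights appear in the pushforward. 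This is the step the paper carries out explicitly; you have the ingredients for it but do not draw the inference. Once it is made, combined with the Bott constraint $\gamma_3 = \cdots = \gamma_{n-2} = -2$ (excluding $D_0,D_2,D_8,D_9$ via Theorem \ref{thm:charMatrices} and $D_1,D_7$ via Lemma \ref{lem:directimD1}), process of elimination and the multiplicity-one witness weight yield $\mathbb{R}^{2n-8}\pi'_{\ast}D_6'^{Y'}\cong D_6'$.

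A smaller point: your ``principal obstacle'' of checking that the weight $(-3,3-2n)\times(-n,-n)\times(-2^n)$ is absent from $D_6$ does not require a new plethysm computation — Lemma \ref{lem:pushD5}(3) already records the exact criterion for $\bS_\alpha A\oo\bS_\beta B\oo\bS_{(-2^n)}C$ to appear in $D_6$, namely $\alpha_1+\beta_1\leq -4-n$ with $n-\alpha_1-\beta_1$ even, and for this weight $\alpha_1+\beta_1=-3-n$, so the condition fails.
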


\begin{proof}
 For $n=4$ we have that (2) and (3) are the case $p=4$ of Lemma \ref{lem:witD_6primen4}, so we assume $n\geq 5$. Let $\pi'=\pi_{224}$ and $Y'=Y_{224}$. By Lemma \ref{lem:D6primeisH3} we have that if $\bS_{\alpha}A\oo \bS_{\beta}B\oo \bS_{\gamma}\mc{Q}$ belongs to $D_6'^{Y'}$ then $\gamma_2\geq -2$ and $\gamma_3\leq -2$. Thus, by Bott's Theorem (Lemma \ref{BottLemma}) we have that $\mathbb{R}\pi'_{\ast}D_6'^{Y'}$ has only cohomology in degree $2n-8$. By the discussion in Section \ref{sec:lcO7ngeq5}, since the spectral sequence $E_r^{i,j}$ there is degenerate, it follows that $\mathbb{R}^{2n-8}\pi'_{\ast}D_6'^{Y'}$ is isomorphic as representation of $\GL$ to an equivariant holonomic $\D_V$-module. 

By Lemma \ref{BottLemma} it follows that if $\bS_{\alpha}A\oo \bS_{\beta}B\oo \bS_{\gamma}C$ belongs to $\mathbb{R}^{2n-8}\pi'_{\ast}D_6'^{Y'}$ then 
$$
\gamma_3=\cdots=\gamma_{n-2}=-2.
$$
Thus, by Theorem \ref{thm:charMatrices} we have that none of $D_0$, $D_2$, $D_8$, $D_9$ are subrepresentations of $\mathbb{R}^{2n-8}\pi'_{\ast}D_6'^{Y'}$. By Lemma \ref{lem:directimD1}(1)(2) we have that subrepresentations $\bS_{\alpha}A\oo \bS_{\beta}B\oo \bS_{\gamma}C$ of $D_1$ satisfy $\gamma_{n-2}\leq -3$, and those of $D_7$ satisfy $\gamma_3\geq -1$. Therefore, neither $D_1$ nor $D_7$ are subrepresentations of $\mathbb{R}^{2n-8}\pi'_{\ast}D_6'^{Y'}$.

  By Lemma \ref{lem:witD_6primen4} and Bott's Theorem, the representation of Lemma \ref{lem:pushD6prime}(2) appears in $\mathbb{R}^{2n-8}\pi'_{\ast}D_6'^{Y'}$ with multiplicity one, and the representations of Lemma \ref{lem:pushD6prime}(3) do not appear. By Theorem \ref{thm:charMatrices} the representations in the first row of Lemma \ref{lem:pushD6prime}(3) belong to $D_3$ resp. $D_4$, and by Lemma \ref{lem:pushD5} the representations in the second row of Lemma \ref{lem:pushD6prime}(3) belong to $D_5$ resp. $D_6$. Thus, none of $D_3$, $D_4$, $D_5$, $D_6$ are subrepresentations of $\mathbb{R}^{2n-8}\pi'_{\ast}D_6'^{Y'}$, so by process of elimination and Lemma \ref{lem:witD_6primen4}, we have $\mathbb{R}^{2n-8}\pi'_{\ast}D_6'^{Y'}\cong D_6'$ and (2) and (3) hold.
\end{proof}

\begin{corollary}\label{cor:bottcorsimple}
Let $n\geq 4$. The following is true about the characters of $D_1$, $D_5$, $D_6'$, $D_7$. 
\begin{enumerate}
\item if $\bS_{\alpha}A\oo \bS_{\beta}B\oo \bS_{\gamma}C$ belongs to $D_1$ then $\gamma_1\geq -3$, $\gamma_{n-2}\leq -3$ and
$$
\gamma_2=\cdots =\gamma_{n-3}=-3.
$$
\item if $\bS_{\alpha}A\oo \bS_{\beta}B\oo \bS_{\gamma}C$ belongs to $D_5$ or $D_6'$ then $\gamma_2\geq -2$, $\gamma_{n-1}\leq -2$ and 
$$
\gamma_3=\cdots=\gamma_{n-2}=-2.
$$
\item if $\bS_{\alpha}A\oo \bS_{\beta}B\oo \bS_{\gamma}C$ belongs to $D_7$ then $\gamma_3\geq -1$, $\gamma_n\leq -1$ and
$$
\gamma_4=\cdots =\gamma_{n-1}=-1.
$$
\end{enumerate}

\end{corollary}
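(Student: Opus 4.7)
The plan is to reduce the statement to a direct application of Bott's theorem (Lemma \ref{BottLemma}) applied to the pushforward isomorphisms established in Lemmas \ref{lem:directimD1}, \ref{lem:pushD5}, and \ref{lem:pushD6prime}.

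First I would observe that for $n=4$ the ``middle'' conditions $\gamma_2 = \cdots = \gamma_{n-3} = -3$, $\gamma_3 = \cdots = \gamma_{n-2} = -2$, and $\gamma_4 = \cdots = \gamma_{n-1} = -1$ are vacuous (the ranges are empty), so the statement in this case reduces to: Corollary \ref{cor:weightBoundsn4} for $D_1$ and $D_7$, Corollary \ref{cor:weightboundsD5n4} for $D_5$, and Lemma \ref{lem:D6primeisH3}(3) for $D_6'$. Hence we may assume $n \geq 5$.

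Next I would invoke the pushforward identifications
\[ D_1 \,\cong\, \mathbb{R}^{3n-12}\pi'_\ast D_1^{Y'}, \quad D_5 \,\cong\, \mathbb{R}^{2n-8}\pi'_\ast D_5^{Y'}, \quad D_6' \,\cong\, \mathbb{R}^{2n-8}\pi'_\ast D_6'^{Y'}, \quad D_7 \,\cong\, \mathbb{R}^{n-4}\pi'_\ast D_7^{Y'}, \]
where $\pi' = \pi_{224}$ and $Y' = Y_{224}$. Since $\dim A = \dim B = 2$, the Grassmannians $\mathbb{G}(2;A)$ and $\mathbb{G}(2;B)$ are points, so all the cohomology comes from $\mathbb{G}(4;C)$. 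Each summand $\mathbb{S}_\alpha A \otimes \mathbb{S}_\beta B \otimes \mathbb{S}_\delta \mathcal{Q}_C$ appearing in $D_i^{Y'}$ contributes to the pushforward only in the single cohomological degree prescribed by Bott's theorem, so we may read off the surviving $\delta$'s from the target degree.

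Then I would apply Lemma \ref{BottLemma}(2) with $k=4$ and $m = n$. For the cohomological degree to equal $(n-4)(4-p)$ we must have $p = 1$ (for $D_1$, giving $3n-12$), $p=2$ (for $D_5$ and $D_6'$, giving $2n-8$), or $p=3$ (for $D_7$, giving $n-4$). In each case Bott's theorem forces $\delta_p \geq p-4$ and $\delta_{p+1} \leq p-n$, and the resulting $GL(C)$-weight is
\[ \delta(p) = (\delta_1, \ldots, \delta_p, (p-4)^{n-4}, \delta_{p+1}+(n-4), \ldots, \delta_4+(n-4)). \]
Substituting $p=1,2,3$ yields precisely the three statements of the corollary: the lower bounds $\gamma_1 \geq -3$, $\gamma_2 \geq -2$, $\gamma_3 \geq -1$ come from $\delta_p \geq p-4$; the block of $n-4$ equal middle entries equal to $p-4$ is built into $\delta(p)$; and the upper bounds $\gamma_{n-2} \leq -3$, $\gamma_{n-1} \leq -2$, $\gamma_n \leq -1$ come from $\delta_{p+1}+(n-4) \leq (p-n)+(n-4) = p-4$.

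There is no real obstacle here, as all of the hard work --- establishing the concentration of the derived pushforward in a single cohomological degree --- has already been carried out in Lemmas \ref{lem:directimD1}, \ref{lem:pushD5}, and \ref{lem:pushD6prime}; the present corollary is a bookkeeping consequence of Bott's theorem.
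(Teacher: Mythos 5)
Your proposal is correct and takes essentially the same approach as the paper, whose proof of this corollary is exactly a terse citation of Corollary \ref{cor:weightboundsD5n4}, Corollary \ref{cor:weightBoundsn4}, and Lemma \ref{lem:D6primeisH3} for $n=4$, together with Lemma \ref{BottLemma} and Lemmas \ref{lem:directimD1}, \ref{lem:pushD5}, \ref{lem:pushD6prime} for $n\geq 5$. Your expansion of the Bott computation (identifying the unique $p$ from the cohomological degree $(n-4)(4-p)$, reading off $\delta_p\geq p-4$, $\delta_{p+1}\leq p-n$, and substituting into $\delta(p)$) correctly supplies the bookkeeping that the paper leaves implicit.
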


\begin{proof}
The case $n=4$ follows from Corollary \ref{cor:weightboundsD5n4}, Corollary \ref{cor:weightBoundsn4}, and Lemma \ref{lem:D6primeisH3}. The case $n\geq 5$ follows from Lemma \ref{BottLemma}, Lemma \ref{lem:directimD1}, Lemma \ref{lem:pushD5}, Lemma \ref{lem:pushD6prime}.	
\end{proof}

\begin{proof}[Proof of Theorem \ref{thm:witnessngeq4}]
By Theorem \ref{thm:charMatrices} the weight $(-2n,-2n)\times (-2n,-2n)\times (-4^n)$ appears with multiplicity one in $D_0$, and does not appear in $D_2$, $D_3$, $D_4$, $D_6$, $D_8$, $D_9$.	By Corollary \ref{cor:bottcorsimple} this weight does not appear in $D_1$, $D_5$, $D_6'$, $D_7$. Therefore, it is a witness weight for $D_0$. Applying $\mc{F}$ we conclude that $(0,0)\times (0,0)\times (0^n)$ is a witness weight for $D_9$.

By Lemma \ref{lem:directimD1} the weight $(-n-1,-2n+1)\times (-n-1,-2n+1)\times (-3^n)$ appears with multiplicity one in $D_1$, and does not appear in $D_2$. By Theorem \ref{thm:charMatrices} this weight does not appear in $D_0$, $D_3$, $D_4$, $D_6$, $D_8$, $D_9$. By Corollary \ref{cor:bottcorsimple} this weight does not appear in $D_5$, $D_6'$, $D_7$. Therefore, it is a witness weight for $D_1$. Applying $\mc{F}$ we conclude that $(-1,1-n)\times (-1,1-n)\times (-1^n)$ is a witness weight for $D_7$.

By Lemma \ref{lem:directimD1}(4) the weight $(-n-2,-2n+2)\times (-n-2,-2n+2)\times (-3^n)$ appears with multiplicity one in $D_2$, and by Theorem \ref{thm:charMatrices} it does not appear in $D_0$, $D_3$, $D_4$, $D_6$, $D_8$, $D_9$. By Lemma \ref{lem:directimD1} it does not appear in $D_1$, and by Corollary \ref{cor:bottcorsimple} it does not appear in $D_5$, $D_6'$, $D_7$. Therefore, it is a witness weight for $D_2$. Applying $\mc{F}$ we conclude that $(-2,2-n)\times (-2,2-n)\times (-1^n)$ is a witness weight for $D_8$.

By Theorem \ref{thm:charMatrices} the weight $(-1,1-2n)\times (-n,-n)\times (-2^n)$ appears with multiplicity one in $D_3$, and does not appear in $D_0$, $D_2$, $D_4$, $D_8$, $D_9$. By Lemma \ref{lem:pushD5} it does not appear in $D_5$, $D_6$, and by Corollary \ref{cor:bottcorsimple} it does not appear in $D_1$, $D_7$. By Lemma \ref{lem:pushD6prime} it does not appear in $D_6'$. Therefore, it is a witness weight for $D_3$. By symmetry, we also obtain the witness weight for $D_4$.

By Lemma \ref{lem:pushD5} the weight $(-2,2-2n)\times (-n,-n) \times (-2^n)$ appears with multiplicity one in $D_5$, and does not appear in $D_6$. By Lemma \ref{lem:pushD6prime} it does not appear in $D_6'$, and by Corollary \ref{cor:bottcorsimple} it does not appear in $D_1$, $D_7$. By Theorem \ref{thm:charMatrices} it does not appear in $D_0$, $D_2$, $D_3$, $D_4$, $D_8$, $D_9$. Therefore, it is a witness weight for $D_5$. By an identical argument, $(-4,4-2n)\times (-n,-n)\times (-2^n)$ is a witness weight for $D_6$.

By Lemma \ref{lem:pushD6prime} the weight $(-3,3-2n)\times (-n,-n)\times (-2^n)$ appears with multiplicity one in $D_6'$. By Theorem \ref{thm:charMatrices} it does not appear in $D_0$, $D_2$, $D_3$, $D_4$, $D_8$, $D_9$. By Lemma \ref{lem:pushD5} it does not appear in $D_5$, $D_6$, and by Corollary \ref{cor:bottcorsimple} it does not appear in $D_1$, $D_7$. Therefore it is a witness weight for $D_6'$.
\end{proof}

\section{Lyubeznik numbers and intersection cohomology}\label{sec:lyub}

In this section we explain how to obtain the Lyubeznik number of orbit closures, and $H^i_{\{0\}}(M)$ for each simple equivariant $\D$-module $M$. By Proposition \ref{prop:intcoho}, we thus also obtain all the intersection cohomology groups for each orbit closure.

As the determinantal case is done in \cite{lorcla}, we only need to deal with $\ol{O}_i$ for $i=1,5,7$, as well as calculate $H^j_{\{0\}}(D_6')$, for $j\geq 0$. For $i=1,5,7$, let $R_i=\C[\ol{O}_i]_{\mathfrak{m}}$, where $\mathfrak{m}$ denotes the homogeneous maximal ideal.

\begin{prop}\label{prop:loc0D1}
For all $n\geq 3$ the nonzero local cohomology modules of $D_1$ with support in $\{0\}$ are given by	
$$
H^{n-2}_{\{0\}}(D_1)=E,\quad H^{n}_{\{0\}}(D_1)=E^{\oplus 2},\quad H^{n+2}_{\{0\}}(D_1)=E.
$$
The nonzero Lyubeznik numbers of $R_1$ are:
\begin{itemize}
\item[(a)] $n=3$: $\lambda_{0,3}(R_1)= 2, \, \lambda_{3,5}(R_1)=2, \, \lambda_{5,5}(R_1)=1$; 
\item[(b)] $n\geq 4$: $\lambda_{0,3}(R_1) =2 , \lambda_{0,5}(R_1) = 1, \lambda_{n-2,n+2}(R_1)=1, \lambda_{n,n+2}(R_1)=2, \lambda_{n+2,n+2}(R_1)=1.$
\end{itemize}
\end{prop}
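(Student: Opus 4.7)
The plan is to use the composition-of-functors spectral sequence
$$E_2^{p,q} = H^p_{\{0\}}(H^q_{\ol{O}_1}(S)) \Rightarrow H^{p+q}_{\{0\}}(S)$$
arising from $\{0\} \subset \ol{O}_1$, together with the description of $H^j_{\ol{O}_1}(S)$ provided by Theorem \ref{BettiThm}. The abutment vanishes in every degree except $4n$, where it equals $E$. Since each nonzero $H^j_{\ol{O}_1}(S)$ is either $D_1$ or a direct sum of copies of $D_0 = E$, and since $H^p_{\{0\}}(E^{\oplus k}) = E^{\oplus k}$ for $p = 0$ and vanishes otherwise, the $E_2$-page is supported on the column $q = 3n-2$ together with point contributions at $(0, 4n-5)$ (when $n \geq 4$) and $(0, 4n-3)$.

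For $n \geq 4$, straightforward bookkeeping of total degrees and potential differentials forces the only two non-trivial ones, namely $d_{n-2}\colon E_{n-2}^{0, 4n-5} \to E_{n-2}^{n-2, 3n-2}$ and $d_n\colon E_n^{0, 4n-3} \to E_n^{n, 3n-2}$, to be isomorphisms, while the position $(n+2, 3n-2)$ must contribute $E$ to the degree-$4n$ abutment. Reading off column $q = 3n-2$ immediately yields $H^{n-2}_{\{0\}}(D_1) = E$, $H^n_{\{0\}}(D_1) = E^{\oplus 2}$, $H^{n+2}_{\{0\}}(D_1) = E$, with all other $H^p_{\{0\}}(D_1)$ vanishing. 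For $n=3$, the degrees $3n-2 = 4n-5 = 7$ coincide and $H^7_{\ol{O}_1}(S)$ is the non-split extension $0 \to D_1 \to H^7_{\ol{O}_1}(S) \to D_0 \to 0$ from Theorem \ref{BettiThm}(1). Analogous spectral-sequence bookkeeping forces $H^p_{\{0\}}(H^7_{\ol{O}_1}(S)) = 0$ for $p = 0, 1$ (no incoming or outgoing differentials can save these entries). The long exact sequence of $H^*_{\{0\}}$ associated to this extension, combined with $H^0_{\{0\}}(D_1) = 0$, then forces the connecting map $E = H^0_{\{0\}}(D_0) \to H^1_{\{0\}}(D_1)$ to be an isomorphism; the higher-degree analysis then proceeds as in the $n \geq 4$ case, yielding the stated local cohomology.

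Finally, the Lyubeznik numbers are read off from $\lambda_{p,q}(R_1) = $ multiplicity of $E$ in $H^p_{\{0\}}(H^{4n-q}_{\ol{O}_1}(S))$. The origin-supported pieces $H^{4n-3}_{\ol{O}_1}(S) = D_0^{\oplus 2}$ (all $n$) and $H^{4n-5}_{\ol{O}_1}(S) = D_0$ (for $n \geq 4$) give $\lambda_{0, 3}(R_1) = 2$ and $\lambda_{0, 5}(R_1) = 1$, while the values $H^p_{\{0\}}(D_1)$ computed above contribute $\lambda_{n-2, n+2} = 1$, $\lambda_{n, n+2} = 2$, $\lambda_{n+2, n+2} = 1$ for $n \geq 4$, and similarly $\lambda_{3, 5} = 2$, $\lambda_{5, 5} = 1$ when $n = 3$. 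The main delicate point is the handling of the non-split extension in the $n = 3$ case, but the rigidity of having $H^i_{\{0\}}(S) = 0$ for $i \neq 4n$ pins down all multiplicities uniquely.
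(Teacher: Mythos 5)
Your proof is correct and follows the same route as the paper's: the spectral sequence $H^p_{\{0\}}(H^q_{\ol{O}_1}(S)) \Rightarrow H^{p+q}_{\{0\}}(S)$ together with Theorem \ref{BettiThm}, and for $n=3$ the long exact sequence obtained by applying $H^\bullet_{\{0\}}(-)$ to the non-split extension $0 \to D_1 \to H^7_{\ol{O}_1}(S) \to D_0 \to 0$. Your version spells out the bookkeeping that the paper calls "straightforward" (and correctly identifies the abutment as $H^{p+q}_{\{0\}}(S)$, silently fixing a typo in the paper's displayed formula).
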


\begin{proof}
The claim about the Lyubeznik numbers follows readily from Theorem \ref{BettiThm} by looking at the spectral sequence 
\[H^i_{\{0\}} H^j_{\ol{O}_1} (S) \implies H^{i+j}_{\ol{O}_1}(S).\]
When $n\geq 4$, this also yields the first part, while for $n=3$ it follows from the long exact sequence obtained by applying the functor $H_{\{0\}}^0(-)$ to the short exact sequence in Theorem \ref{BettiThm}(1).
\end{proof}

As $\mc{F}(D_1)\cong D_7$, we readily obtain the following by Proposition \ref{prop:intcoho}.

\begin{corollary}\label{cor:loc0D7}
The nonzero local cohomology modules of $D_7$ with support in $\{0\}$ are given by	
$$
H^{3n-2}_{\{0\}}(D_7)=E,\quad H^{3n}_{\{0\}}(D_7)=E^{\oplus 2},\quad H^{3n+2}_{\{0\}}(D_7)=E.
$$
\end{corollary}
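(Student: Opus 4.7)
The plan is to apply Proposition \ref{prop:intcoho} twice, once to $D_1 = L_{\ol{O}_1}$ and once to $D_7 = \mc{F}(D_1)$ (using Proposition \ref{prop:fourier}), and then match the two resulting formulas. This is exactly the pattern suggested by the phrase ``we readily obtain the following'' in the passage.

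First I would note that $\ol{O}_1$ is the affine cone over the smooth projective Segre variety $X = \mathbb{P}(A) \times \mathbb{P}(B) \times \mathbb{P}(C)$, which is irreducible, so Proposition \ref{prop:intcoho} applies with $Z = \ol{O}_1$. Here $N = \dim V = 4n$, $d = \dim \ol{O}_1 = n+2$, and $c = N - d = 3n-2$. The proposition then gives, for every $i \in \mathbb{Z}$,
\[ H_{\{0\}}^{(n+2)-i}(D_1) \;=\; E^{\oplus \dim IH^i(\ol{O}_1)} \;=\; H_{\{0\}}^{i + 3n - 2}(\mc{F}(D_1)). \]
Since Proposition \ref{prop:fourier} gives $\mc{F}(D_1) \cong D_7$, the right-hand side equals $H_{\{0\}}^{i+3n-2}(D_7)$.

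Next I would read off the nonzero values of $\dim IH^i(\ol{O}_1)$ from Proposition \ref{prop:loc0D1}: the three nonzero groups $H^{n-2}_{\{0\}}(D_1) = E$, $H^{n}_{\{0\}}(D_1) = E^{\oplus 2}$, and $H^{n+2}_{\{0\}}(D_1) = E$ correspond respectively to $i=4,\,2,\,0$ on the left-hand side of the displayed equation. Substituting these three values of $i$ into the right-hand side yields $H^{3n+2}_{\{0\}}(D_7) = E$, $H^{3n}_{\{0\}}(D_7) = E^{\oplus 2}$, and $H^{3n-2}_{\{0\}}(D_7) = E$, and all other local cohomology groups vanish. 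This gives exactly the claimed formulas.

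There is no real obstacle: every ingredient is already in place. The only thing to keep straight is the index bookkeeping (the involution $i \mapsto (n+2)-i$ on the $D_1$-side versus $i \mapsto i + 3n-2$ on the $D_7$-side), which is why the three nonzero cohomological degrees $n-2,\,n,\,n+2$ for $D_1$ transform into the symmetric triple $3n-2,\,3n,\,3n+2$ for $D_7$.
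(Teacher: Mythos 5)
Your proof is correct and matches the paper's own argument, which is exactly the one-line remark ``As $\mc{F}(D_1)\cong D_7$, we readily obtain the following by Proposition \ref{prop:intcoho}'' — you have simply spelled out the index bookkeeping that the authors left implicit.
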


\begin{theorem}
For all $n\geq 3$ the nonzero local cohomology modules of $D_5$ with support in $\{0\}$ are given by	
$$
H^{2n-3}_{\{0\}}(D_5)=E,\quad H^{2n-1}_{\{0\}}(D_5)=E,\quad H^{2n+1}_{\{0\}}(D_5)=E,\quad H^{2n+3}_{\{0\}}(D_5)=E.
$$
The nonzero Lyubeznik numbers of $R_5$ are:
\begin{itemize}
\item[(a)] $n=3$: $\lambda_{9,9}(R_5)=1$; 
\item[(b)] $n=4$: $\lambda_{3,10}(R_5) = \lambda_{5,10}(R_5) = \lambda_{7,10}(R_5)= \lambda_{5,11}(R_5)=\lambda_{7,11}(R_5)= \lambda_{9,11}(R_5)= \lambda_{11,11}(R_5)=1$;
\item[(c)] $n\geq 5$: $\lambda_{0,10}(R_5) = \lambda_{n-3,n+6}(R_5) = \lambda_{n-1,n+6}(R_5)= \lambda_{n+1,n+6}(R_5)=\lambda_{n+3,n+6}(R_5)= \lambda_{2n-3,2n+3}(R_5)$ $= \lambda_{2n-1,2n+3}(R_5)=\lambda_{2n+1,2n+3}(R_5)=\lambda_{2n+3,2n+3}(R_5)=1$.
\end{itemize}
\end{theorem}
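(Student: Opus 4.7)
The plan is to extract $H^*_{\{0\}}(D_5)$ from the convergent spectral sequence
\[E_2^{p,q}=H^p_{\{0\}}(H^q_{\ol{O}_5}(S))\Longrightarrow H^{p+q}_{\{0\}}(S),\]
whose abutment equals $E$ concentrated in total degree $4n$, using Theorem~\ref{loc05} for the entries of the $E_2$-page. By Proposition~\ref{prop:intcoho} together with $\mc{F}(D_5)\cong D_5$, each $H^j_{\{0\}}(D_5)$ is already a direct sum of copies of $E$ with multiplicities symmetric under $j\mapsto 4n-j$, and this rigidity will be used throughout. The analogous spectral sequence for $\ol{O}_2$, fed by Theorem~\ref{localSegre}, supplies the required values of $H^p_{\{0\}}(D_2)$.

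For $n\geq 5$ the nonzero rows of the $\ol{O}_5$-sequence are $q=2n-3,\,3n-6,\,4n-10$ with entries $D_5,\,D_2,\,D_0=E$, and the preliminary computation yields $H^p_{\{0\}}(D_2)=E$ at $p\in\{n-3,n-1,n+1,n+3\}$ and zero elsewhere. The only possibly nonzero differentials in the $\ol{O}_5$-sequence are $d_{n-3}\colon(0,4n-10)\to(n-3,3n-6)$ and the three instances $d_{n-2}\colon(n-1,3n-6)\to(2n-3,2n-3)$, $(n+1,3n-6)\to(2n-1,2n-3)$, $(n+3,3n-6)\to(2n+1,2n-3)$. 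Each is forced to be an isomorphism so that the $E$-classes from the higher rows do not survive at the wrong total degrees; this pins down $H^p_{\{0\}}(D_5)=E$ for $p=2n-3,\,2n-1,\,2n+1$, while the surviving slot $(2n+3,2n-3)$ at total degree $4n$ supplies $H^{2n+3}_{\{0\}}(D_5)=E$. For $p$ outside $\{2n\pm 1,2n\pm 3\}$ the slot $(p,2n-3)$ admits no nonzero incoming differentials, so any nontrivial $H^p_{\{0\}}(D_5)$ there would survive to $E_\infty$ at an incorrect total degree and must vanish.

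The cases $n=3$ and $n=4$ follow the same template but require an extra long exact sequence to handle the non-split extensions in Theorem~\ref{loc05}(1),(2). For $n=3$ the only nonzero row is $q=3$, whence $H^p_{\{0\}}(H^3_{\ol{O}_5}(S))=E$ at $p=9$ and zero elsewhere; combined with $H^p_{\{0\}}(D_2)=E$ at $p=2,4,6$ and the long exact sequence of $0\to D_5\to H^3_{\ol{O}_5}(S)\to D_2\to 0$, this yields $H^p_{\{0\}}(D_5)=E$ at $p=3,5,7,9$. For $n=4$ one first derives $H^p_{\{0\}}(D_2)=E$ at $p=1,3,5,7$ from Theorem~\ref{localSegre}(2) (the extra work lies in analyzing $Q_1$, itself a non-split extension), then reads $H^p_{\{0\}}(H^6_{\ol{O}_5}(S))=E$ at $p=3,5,7$ from the non-split sequence $0\to D_2\to H^6_{\ol{O}_5}(S)\to D_0\to 0$, and finally runs the two-row $\ol{O}_5$-spectral sequence whose only possibly nonzero differential $d_2\colon(p,6)\to(p+2,5)$ must be an isomorphism on the three $E$-generators of row $q=6$, producing $H^p_{\{0\}}(D_5)=E$ at $p=5,7,9,11$.

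Finally, the Lyubeznik numbers $\lambda_{p,i}(R_5)$ follow directly: by their definition each one counts the number of copies of $E$ inside $H^p_{\{0\}}(H^{4n-i}_{\ol{O}_5}(S))$, and these multiplicities are precisely the values computed above. I expect the main obstacle to be the low cases $n=3,4$, where I must verify that the connecting maps in the long exact sequences are nonzero; this will follow from the non-splittings already recorded in Theorems~\ref{localSegre} and~\ref{loc05} together with the constraint that the various spectral sequences abut to $E$ only at total degree $4n$.
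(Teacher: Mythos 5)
Your approach is the same as the paper's: run the composition-of-functors spectral sequence $E_2^{p,q}=H^p_{\{0\}}(H^q_{\ol{O}_5}(S))\Rightarrow H^{p+q}_{\{0\}}(S)$, feed it with Theorem~\ref{loc05} and the known local cohomology of $D_2$ supported at the origin, then handle $n=3,4$ via the long exact sequences coming from the non-split extensions. The reconstruction of $H^*_{\{0\}}(Q_1)$ from $0\to D_2\to Q_1\to D_0\to 0$ rather than by citing \cite{lorcla} is fine and correct (indecomposability of $Q_1$ forces the relevant connecting map to be an isomorphism). The Lyubeznik numbers are read off correctly.

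However, for $n\geq 5$ there is a real gap: you claim ``the only possibly nonzero differentials in the $\ol{O}_5$-sequence are $d_{n-3}\colon(0,4n-10)\to(n-3,3n-6)$ and the three instances of $d_{n-2}$ landing at $(2n-3,2n-3)$, $(2n-1,2n-3)$, $(2n+1,2n-3)$.'' This list silently omits $d_{2n-6}\colon(0,4n-10)\to(2n-6,2n-3)$ and $d_{n-2}\colon(n-3,3n-6)\to(2n-5,2n-3)$, both of which are a priori possible and both of which would alter the conclusion (the first is precisely the alternative cancellation the paper names). To exclude them you must know $H^{2n-5}_{\{0\}}(D_5)=H^{2n-6}_{\{0\}}(D_5)=0$. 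Proposition~\ref{prop:intcoho} does give you this — it implies $H^j_{\{0\}}(D_5)=0$ for all $j<c=\op{codim}(\ol{O}_5,V)=2n-3$, because intersection cohomology $IH^i(\ol{O}_5)$ vanishes for $i<0$ — but you invoked the proposition only for the Fourier-self-dual symmetry $j\mapsto 4n-j$, not for this vanishing-below-codimension bound. The paper makes the point explicit: ``the term $H^0_{\{0\}}H^{4n-10}_{\ol{O}_5}(S)$ can be canceled out by either $H^{n-3}_{\{0\}}H^{3n-6}_{\ol{O}_5}(S)$ or by $H^{2n-6}_{\{0\}}H^{2n-3}_{\ol{O}_5}(S)$, in principle. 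To see that the latter is not possible, it is enough to see that $H^{2n-6}_{\{0\}}(D_5)=0$.'' You should add this observation before asserting which differentials are nonzero; without it, the ``forced to be an isomorphism'' step is not justified. (One could instead squeeze the conclusion out of the symmetry alone, by showing $H^{2n+6}_{\{0\}}(D_5)$ and $H^{2n+5}_{\{0\}}(D_5)$ vanish because the slots $(2n+6,2n-3)$ and $(2n+5,2n-3)$ admit no incoming or outgoing differentials and sit at total degrees $\neq 4n$, and then reflecting; but you did not spell that out either.)
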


\begin{proof}
We analyze the spectral sequence 
\[H^i_{\{0\}} H^j_{\ol{O}_5} (S) \implies H^{i+j}_{\ol{O}_5}(S).\]
First, let $n=3$. By Theorem \ref{loc05}, the spectral sequence degenerates, and the claim about the Lyubeznik numbers follows. By \cite[Theorem 1.1]{lorcla}, the only non-zero local cohomology modules of $D_2$ supported in $\{0\}$ are in degrees $2, 4, 6$ (in which case they are all equal to $E$). From the long exact sequence obtained by applying the functor $H_{\{0\}}^0(-)$ to the short exact sequence in Theorem \ref{loc05}(1), we obtain the first part of the statement in this case.

Next, let $n=4$. By Theorem \ref{loc05}, the module $H^6_{\ol{O}_5}(S)$ is isomorphic to what is called $Q_1$ in \cite[Theorem 1.6]{lorcla}, therefore its only non-zero local cohomology modules supported in $\{0\}$ are in degrees $3, 5, 7$ (in which case they are all equal to $D_0$). The spectral sequence now gives the result.

Lastly, let $n\geq 5$. Again, by \cite[Theorem 1.1]{lorcla}, the only non-zero local cohomology modules of $D_2$ supported in $\{0\}$ are in degrees $n-3, n-1, n+1, n+3$ (in which case they are all equal to $E$). In the spectral sequence, the term $H^0_{\{0\}} H^{4n-10}_{\ol{O}_5}(S)$ can be canceled out by either the term $H^{n-3}_{\{0\}} H^{3n-6}_{\ol{O}_5}(S)$ or by the term $H^{2n-6}_{\{0\}} H^{2n-3}_{\ol{O}_5}(S)$, in principle. To see that the latter is not possible, it is enough to see that  $H^{2n-6}_{\{0\}}(D_5) = 0$, which follows from Proposition \ref{prop:intcoho} as $c= 2n-3$. The spectral sequence gives now the required result.
\end{proof}

\begin{theorem}
For $n\geq 3$, the nonzero local cohomology modules of $D_6'$ with support in $\{0\}$ are given by	
$$
H^{2n-2}_{\{0\}}(D_6')=E^{\oplus 2},\quad H^{2n}_{\{0\}}(D_6')=E^{\oplus 2},\quad H^{2n+2}_{\{0\}}(D_6')=E^{\oplus 2}.
$$
The nonzero Lyubeznik numbers of $R_7$ are
\item[(a)] $n=3$: $\lambda_{11,11}(R_7)= 1$; 
\item[(b)] $n\geq 4$: $\lambda_{0,11}(R_7) =1 , \lambda_{n-2,n+2}(R_7) = 1, \lambda_{n,n+2}(R_7)=2, \lambda_{n+2,n+2}(R_7)=1, \lambda_{2n-2,2n+5}(R_7)=2,$ 

$\lambda_{2n,2n+5}(R_7)=2, \lambda_{2n+2,2n+5}(R_7)=2, \lambda_{3n-2,3n+2}(R_7)=1, \lambda_{3n,3n+2}(R_7)=2, \lambda_{3n+2,3n+2}(R_7)=1.$
\end{theorem}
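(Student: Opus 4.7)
The plan is to compute the local cohomology modules $H^i_{\{0\}}(D_6')$ first, and then read off the Lyubeznik numbers of $R_7$ from this together with the already known modules $H^\bullet_{\{0\}}(D_p)$ for $p\in\{0,1,7\}$ (Proposition~\ref{prop:loc0D1}, Corollary~\ref{cor:loc0D7}, and $D_0=E$).

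For $n=3$, the closure $\ol{O}_7 = V(f)$ is a hypersurface and $H^i_{\ol{O}_7}(S)$ vanishes except at $i=1$, where it equals $S_f/S$. Since $f\in\mathfrak{m}$ we have $\mathfrak{m}\cdot S_f = S_f$, hence $H^i_{\{0\}}(S_f)=0$ for all $i$; the long exact sequence attached to $0\to S\to S_f\to S_f/S\to 0$ therefore concentrates $H^\bullet_{\{0\}}(S_f/S)$ in degree $4n-1=11$, where it equals $E$. Feeding this into the long exact sequences attached to the filtration of $S_f/S$ recorded in Theorem~\ref{loc07}(1), namely the non-split extensions $0\to D_7\to \D_V f^{-1}/S\to D_6'\to 0$ and $0\to D_1\to S_f/\D_V f^{-1}\to D_0\to 0$ together with $0\to \D_V f^{-1}/S\to S_f/S\to S_f/\D_V f^{-1}\to 0$, and comparing with the known $H^\bullet_{\{0\}}(D_p)$ for $p=0,1,7$, yields the three copies $E^{\oplus 2}$ in degrees $2n-2,2n,2n+2$ for $H^\bullet_{\{0\}}(D_6')$ and vanishing elsewhere.

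For $n\geq 4$ we use the Grothendieck composition spectral sequence
\[E_2^{i,j} = H^i_{\{0\}} H^j_{\ol{O}_7}(S) \Longrightarrow H^{i+j}_{\{0\}}(S),\]
whose abutment is $E$ concentrated in degree $4n$. By Theorem~\ref{loc07}(2) the only nonzero rows are $j\in\{n-2,\,2n-5,\,3n-8,\,4n-11\}$, corresponding respectively to $D_7,D_6',D_1,D_0$; three of these rows are completely known. Since consecutive rows are uniformly separated by $n-3$, the only differentials that can be nonzero on any $E_r$-page are $d_{n-2}$, linking adjacent rows. The Fourier self-duality $\mc{F}(D_6')\cong D_6'$ (Proposition~\ref{prop:fourier}), combined with Proposition~\ref{prop:intcoho}, imposes the symmetry $H^k_{\{0\}}(D_6') = H^{4n-k}_{\{0\}}(D_6')$. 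Requiring the sequence to abut to $E$ in degree $4n$ and to zero in all other degrees then pins down the $D_6'$-row: the surviving copy $H^{3n+2}_{\{0\}}(D_7)=E$ must hit the abutment, while the contributions $H^{3n-2}_{\{0\}}(D_7),H^{3n}_{\{0\}}(D_7)$ must be killed by incoming $d_{n-2}$'s from the $D_6'$-row, and the $d_{n-2}$'s out of $D_1$- and $D_0$-rows force the $D_6'$-row to also absorb their remainders. Careful bookkeeping of these cancellations, together with the self-dual symmetry and the support constraint $\dim \mathrm{supp}(D_6')=2n+4$, yields $H^{2n-2}_{\{0\}}(D_6')=H^{2n}_{\{0\}}(D_6')=H^{2n+2}_{\{0\}}(D_6')=E^{\oplus 2}$ and vanishing of the rest.

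The Lyubeznik numbers are then obtained from
\[ \lambda_{p,q}(R_7) = \dim_{\C} \Ext_T^p(k, H^{N-q}_{\ol{O}_7}(S)), \qquad N=\dim T = 4n, \]
with $T$ the localization of $S$ at the maximal ideal. Each $H^{N-q}_{\ol{O}_7}(S)$ equals one of the simple modules $D_0,D_1,D_6',D_7$ (or is zero), and its iterated local cohomology at the origin is a sum of copies of $E$, so the Lyubeznik number coincides with the multiplicity of $E$ in $H^p_{\{0\}}(H^{N-q}_{\ol{O}_7}(S))$, read off from the preceding computations. The main technical obstacle is the bookkeeping of the $d_{n-2}$-differentials for $n\geq 4$; without the Fourier self-duality of $D_6'$ (which halves the number of unknown multiplicities) the cancellation analysis would be considerably more involved.
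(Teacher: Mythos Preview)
Your approach matches the paper's: filter $S_f/S$ for $n=3$, and for $n\geq 4$ run the spectral sequence $H^i_{\{0\}}H^j_{\ol{O}_7}(S)\Rightarrow H^{i+j}_{\{0\}}(S)$ with the known rows $D_0,D_1,D_7$ as input.

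There is, however, a genuine gap in your $n=3$ argument. Tracing the long exact sequences as you describe does give $H^{4}_{\{0\}}(D_6')=H^{6}_{\{0\}}(D_6')=H^{8}_{\{0\}}(D_6')=E^{\oplus 2}$, but at the top it leaves an ambiguity: from $0\to D_7\to \D_V f^{-1}/S\to D_6'\to 0$ one obtains
\[
0\to H^{10}_{\{0\}}(D_6')\to H^{11}_{\{0\}}(D_7)\to H^{11}_{\{0\}}(\D_V f^{-1}/S)\to H^{11}_{\{0\}}(D_6')\to 0,
\]
which is $0\to\,?\to E\to E\to\,?\to 0$, and the middle map is not determined by the filtration data alone. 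The paper resolves this exactly the way you do for $n\geq 4$: since $D_6'$ is self-dual and $\mc{F}(D_6')=D_6'$, the analogue of Proposition~\ref{prop:intcoho} gives $H^{11}_{\{0\}}(D_6')\cong H^{1}_{\{0\}}(D_6')=0$, forcing $H^{10}_{\{0\}}(D_6')=0$ as well. You should invoke this symmetry here too, not only for $n\geq 4$.

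A minor point for $n\geq 4$: your claim that \emph{only} $d_{n-2}$ can be nonzero is too strong as stated, since rows also differ by $2(n-3)$ and $3(n-3)$, so $d_{2n-5}$ and $d_{3n-8}$ are a priori possible. They are in fact zero (e.g.\ $d_{2n-5}$ from $(0,4n-11)$ lands in $H^{2n-5}_{\{0\}}(D_6')$, which vanishes since $2n-5<\op{codim}(\ol{O}_6,V)=2n-4$; this is precisely where your symmetry/support argument is needed). The paper isolates exactly this vanishing as the one non-automatic step before declaring ``the rest of the cancellations are straightforward''.
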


\begin{proof}
We analyze the spectral sequence 
\[H^i_{\{0\}} H^j_{\ol{O}_7} (S) \implies H^{i+j}_{\ol{O}_7}(S).\]
First, let $n=3$. Then the spectral sequence degenerates and the claim on the Lyubeznik numbers is clear. The claim on the local cohomology modules of $D_6'$ with support in $\{0\}$ follows by tracing the long exact sequences associated to the short exact sequences in Theorem \ref{loc07} (1). Namely, by Proposition \ref{prop:loc0D1} we get that nonzero local cohomology modules of $S_f/\D_Vf^{-1}$ with support in $\{0\}$ are $H^3_{\{0\}}(S_f/\D_Vf^{-1})=E^{\oplus 2}$ and $H^5_{\{0\}}(S_f/\D_Vf^{-1})=E$. Then the nonzero local cohomology modules of $\D_Vf^{-1}/S$ with support in $\{0\}$ are $H^4_{\{0\}}(\D_Vf^{-1}/S)=E^{\oplus 2}$, $H^6_{\{0\}}(\D_Vf^{-1}/S)=H^{11}_{\{0\}}(\D_Vf^{-1}/S)=E$. By the analogue of Proposition \ref{prop:intcoho} for non-trivial local systems, since $D_6'$ is self-dual and $\mc{F}(D_6')=D_6'$, we see that $H^{11}(D_6')=0$. The claim now follows from the long exact sequence induced by $0\to D_7 \to \D_Vf^{-1}/S \to D_6'\to 0$ and Corollary \ref{cor:loc0D7}.

Now let $n\geq 4$. Again, by (the analogue of) Proposition \ref{prop:intcoho} we see that $H_{\{0\}}^{2n-5}(D_6') = 0$ as $2n-5 < \op{codim}(O_6, V)=2n-4$. Therefore, by Proposition \ref{prop:loc0D1} and Corollary \ref{cor:loc0D7} in the spectral sequence the term $E=H^0_{\{0\}} H^{4n-11}_{\ol{O}_7}(S)$ must be canceled out by $E=H^{n-2}_{\{0\}} H^{3n-8}_{\ol{O}_7}(S)$. The rest of the cancelations are straightforward, which yields the local cohomology modules of $D_6'$, and therefore the Lyubeznik numbers of $R_7$.
\end{proof}

\section*{Acknowledgments}
We thank Vic Reiner for a helpful conversation regarding the character calculations in Section \ref{sec:characters}.

	\begin{bibdiv}
		\begin{biblist}
		
\bib{achar}{book}{
   author={Achar, Pramod N.},
   title={Perverse sheaves and applications to representation theory},
   series={Mathematical Surveys and Monographs},
   volume={258},
   publisher={American Mathematical Society, Providence, RI},
   date={2021},
   pages={xii+562},
   isbn={978-1-4704-5597-2}
}

\bib{3264}{book}{
   author={Eisenbud, David},
   author={Harris, Joe},
   title={3264 and all that---a second course in algebraic geometry},
   publisher={Cambridge University Press, Cambridge},
   date={2016},
   pages={xiv+616},
   isbn={978-1-107-60272-4},
   isbn={978-1-107-01708-5},

}

\bib{gkz}{book}{
   author={Gelfand, Israel M.},
   author={Kapranov, Mikhail M.},
   author={Zelevinsky, Andrei V.},
   title={Discriminants, resultants and multidimensional determinants},
   series={Modern Birkh\"{a}user Classics},
   note={Reprint of the 1994 edition},
   publisher={Birkh\"{a}user Boston, Inc., Boston, MA},
   date={2008},
   pages={x+523},
   isbn={978-0-8176-4770-4},
   review={\MR{2394437}},
}

\bib{garsab}{article}{
    AUTHOR = {Garc\'{\i}a L\'{o}pez, R.},
    author= {Sabbah, C.},
     TITLE = {Topological computation of local cohomology multiplicities},
      NOTE = {Dedicated to the memory of Fernando Serrano},
   JOURNAL = {Collect. Math.},
    VOLUME = {49},
      YEAR = {1998},
    NUMBER = {2-3},
     PAGES = {317--324},
      ISSN = {0010-0757},
   MRCLASS = {13D45 (14B05)},
  MRNUMBER = {1677136},
MRREVIEWER = {Gennady Lyubeznik},
}
		
\bib{M2}{misc}{
          author = {Grayson, Daniel R},
          author = {Stillman, Michael E.},
          title = {Macaulay2, a software system for research in algebraic geometry},
          note = {Available at \url{http://www.math.uiuc.edu/Macaulay2/}}
        }		
        
\bib{hartshorne}{book}{
   author={Hartshorne, Robin},
   title={Algebraic Geometry},
   note={Graduate Texts in Mathematics, No. 52},
   publisher={Springer-Verlag, New York-Heidelberg},
   date={1977},
   pages={xvi+496},
}        
		
\bib{hartshornelocal}{book}{
  title={Local Cohomology: A Seminar Given by A. Groethendieck, Harvard University. Fall, 1961},
  author={Hartshorne, Robin},
  volume={41},
  year={2006},
  publisher={Springer}
}
		
\bib{hartshorne2018simple}{article}{
  author={Hartshorne, Robin},
  author={Polini, Claudia},
  title={Simple $\mathscr{D}$-module components of local cohomology modules},
  journal={Journal of Algebra},
  volume={571},
  number={1},
  pages={232--257},
  year={2018}
}

\bib{htt}{book}{
   author={Hotta, Ryoshi},
   author={Takeuchi, Kiyoshi},
   author={Tanisaki, Toshiyuki},
   title={$D$-modules, perverse sheaves, and representation theory},
   series={Progress in Mathematics},
   volume={236},
   note={Translated from the 1995 Japanese edition by Takeuchi},
   publisher={Birkh\"{a}user Boston, Inc., Boston, MA},
   date={2008},
   pages={xii+407},
}

\bib{kac}{article}{
author = {Kac, Victor G.},
title = {Some remarks on nilpotent orbits},
journal = {Journal of Algebra},
volume = {64},
pages = {190--213},
year = {1980}
}

\bib{kk}{article}{
author = {Kashiwara, Masaki},
author = {Kawai, Takahiro},
title = {On holonomic systems of micro-differential equations. {III} - systems with regular singularities},
journal = {Publ. RIMS. Kyoto Univ.},
volume = {17},
year = {1981},
pages = {813--979}
}

\bib{node}{article}{
   author={Kinser, Ryan},
   author={L\H{o}rincz, Andr\'{a}s C.},
   title={Representation varieties of algebras with nodes},
   journal={J. Inst. Math. Jussieu},
   volume={21},
   date={2022},
   number={6},
   pages={2215--2245}
   }

\bib{landsberg2012tensors}{book}{
    AUTHOR = {Landsberg, J. M.},
     TITLE = {Tensors: geometry and applications},
    SERIES = {Graduate Studies in Mathematics},
    VOLUME = {128},
 PUBLISHER = {American Mathematical Society, Providence, RI},
      YEAR = {2012},
     PAGES = {xx+439},
}

\bib{lovett}{article}{
author= {Lovett, Stephen},
title = {Resolutions of orthogonal and symplectic analogues of determinantal ideals},
journal = {Journal of Algebra},
volume = {311},
date = {2007},
pages = {282--298}
}

\bib{decomp}{article}{
   author={L\H{o}rincz, Andr\'{a}s Cristian},
   title={Decompositions of Bernstein-Sato polynomials and slices},
   journal={Transform. Groups},
   volume={25},
   date={2020},
   number={2},
   pages={577--607}
   }
   
\bib{collapse}{article}{
 author={L\H{o}rincz, Andr\'{a}s Cristian},
 title = {On the collapsing of homogeneous bundles in arbitrary characteristic},
 journal = {arXiv preprint arXiv:2008.08270},
 year = {2021}
 note = {To appear in Ann. Sci. {\'E}c. Norm. Sup{\'e}r.}
}

\bib{senary}{article}{
   author={L\H{o}rincz, Andr\'{a}s C.},
   author={Perlman, Michael},
   title={Equivariant $\mathcal{D}$-modules on alternating senary 3-tensors},
   journal={Nagoya Math. J.},
   volume={243},
   date={2021},
   pages={61--82},
   issn={0027-7630},
}

\bib{socledegs}{article}{
  title={Socle degrees for local cohomology modules of thickenings of maximal minors and sub-maximal Pfaffians},
  author={Li, Jiamin},
  author={Perlman, Michael},
  journal={arXiv preprint arXiv:2212.06899},
  year={2022}
}

\bib{lorcla}{article}{
   author={L\H{o}rincz, Andr\'{a}s C.},
   author={Raicu, Claudiu},
   title={Iterated local cohomology groups and Lyubeznik numbers for
   determinantal rings},
   journal={Algebra Number Theory},
   volume={14},
   date={2020},
   number={9},
   pages={2533--2569}
}

\bib{binary}{article}{
   author={L\H{o}rincz, Andr\'{a}s C.},
   author={Raicu, Claudiu},
   author={Weyman, Jerzy},
   title={Equivariant {$\mathcal{D}$}-modules on binary cubic forms},
   journal={Comm. Algebra},
   volume={47},
   date={2019},
   number={6},
   pages={2457--2487}
}

\bib{LSW}{article}{
    AUTHOR = {Lyubeznik, Gennady},
    author= {Singh, Anurag K.},
    author= {Walther, Uli},
     TITLE = {Local cohomology modules supported at determinantal ideals},
   JOURNAL = {J. Eur. Math. Soc. (JEMS)},
  FJOURNAL = {Journal of the European Mathematical Society (JEMS)},
    VOLUME = {18},
      YEAR = {2016},
    NUMBER = {11},
     PAGES = {2545--2578},
      ISSN = {1435-9855},
   MRCLASS = {13D45 (13A35 13A50 13C40 13F20)},
  MRNUMBER = {3562351},
MRREVIEWER = {Linquan Ma},
       DOI = {10.4171/JEMS/648},
       URL = {https://doi.org/10.4171/JEMS/648},
}

\bib{luna}{article}{
    AUTHOR = {Luna, Domingo},
     TITLE = {Slices \'{e}tales},
 BOOKTITLE = {Sur les groupes alg\'{e}briques},
    SERIES = {Suppl\'{e}ment au Bull. Soc. Math. France},
    VOLUME = {Tome 101},
     PAGES = {81--105},
 PUBLISHER = {Soc. Math. France, Paris},
      YEAR = {1973},
}

\bib{lHorincz2019categories}{article}{
    AUTHOR = {L\H{o}rincz, Andr\'{a}s C.},
    author={Walther, Uli},
     TITLE = {On categories of equivariant $\D$-modules},
   JOURNAL = {Advances in Mathematics},
    VOLUME = {351},
      YEAR = {2019},
     PAGES = {429--478},
}

\bib{subexceptional}{article}{
  title={Local cohomology on a subexceptional series of representations},
  author={L\H{o}rincz, Andr\'{a}s C.},
  author={Weyman, Jerzy},
  journal={arXiv preprint arXiv:1910.13820},
  year={2021}
}



\bib{lyubeznik}{article}{
  title={Finiteness properties of local cohomology modules (an application of {D}-modules to commutative algebra)},
  author={Lyubeznik, Gennady},
  journal={Inventiones mathematicae},
  volume={113},
  number={1},
  pages={41--55},
  year={1993},
  publisher={Springer}
}

\bib{macvil}{article}{
author = {Mac{P}herson, Robert}
author = {Vilonen, Kari},
title = {Elementary construction of perverse sheaves},
journal  = {Invent. Math.},
volume = {84},
pages = {403--435},
year = {1986}
}

\bib{parfenov}{article}{
   author={Parfenov, P. G.},
   title={Orbits and their closures in the spaces ${\Bbb
   C}^{k_1}\otimes\dots\otimes{\Bbb C}^{k_r}$},
   language={Russian, with Russian summary},
   journal={Mat. Sb.},
   volume={192},
   date={2001},
   number={1},
   pages={89--112},
   translation={
      journal={Sb. Math.},
      volume={192},
      date={2001},
      number={1-2},
      pages={89--112} }
}

\bib{perlman2020equivariant}{article}{
  title={Equivariant $\D$-modules on $2\times 2\times 2$ hypermatrices},
  author={Perlman, Michael},
  journal={Journal of Algebra},
  volume={544},
  number={15},
  pages={391--416},
  year={2020}
}

\bib{perlmancorrected}{article}{
  title={Equivariant $\D$-modules on $2\times 2\times 2$ hypermatrices},
  author={Perlman, Michael},
  journal={arXiv preprint arXiv:1809.00352},
  year={2021}
}

\bib{raicu2012secant}{article}{
  title={Secant varieties of {S}egre--{V}eronese varieties},
  author={Raicu, Claudiu},
  journal={Algebra \& Number Theory},
  volume={6},
  number={8},
  pages={1817--1868},
  year={2012},
  }

\bib{raicu2016characters}{article}{
  title={Characters of equivariant $\D$-modules on spaces of matrices},
  author={Raicu, Claudiu},
  journal={Compositio Mathematica},
  volume={152},
  number={9},
  pages={1935--1965},
  year={2016},
}

\bib{raicuWeymanLocal}{article}{
  title={Local cohomology with support in generic determinantal ideals},
  author={Raicu, Claudiu},
  author={Weyman, Jerzy},
  journal={Algebra \& Number Theory},
  volume={8},
  number={5},
  pages={1231--1257},
  year={2014},
  publisher={Mathematical Sciences Publishers}
}

  \bib{raicu2016ocal}{article}{
    AUTHOR = {Raicu, Claudiu},
    Author={Weyman, Jerzy},
     TITLE = {Local cohomology with support in ideals of symmetric minors
              and {P}faffians},
   JOURNAL = {Journal of the London Mathematical Society. Second Series},
  FJOURNAL = {Journal of the London Mathematical Society. Second Series},
    VOLUME = {94},
      YEAR = {2016},
    NUMBER = {3},
     PAGES = {709--725},
}

\bib{saito}{article}{
author ={Saito, Morihiko},
title = {On {$b$}-function, spectrum and rational singularity},
journal = {Math. Ann.},
volume = {295},
pages = {51--74},
year = {1993}
}

\bib{satokimura}{article}{
    AUTHOR = {Sato, Mikio}, 
    Author={ Kimura, Tatsuo},
     TITLE = {A classification of irreducible prehomogeneous vector spaces
              and their relative invariants},
   JOURNAL = {Nagoya Mathematical Journal},
  FJOURNAL = {Nagoya Mathematical Journal},
    VOLUME = {65},
      YEAR = {1977},
     PAGES = {1--155},
}

\bib{microlocal}{article}{
   author={Sato, Mikio},
   author={Kashiwara, Masaki},
   author={Kimura, Tatsuo},
   author={\={O}shima, Toshio},
   title={Microlocal analysis of prehomogeneous vector spaces},
   journal={Invent. Math.},
   volume={62},
   date={1980/81},
   number={1},
   pages={117--179}
}

\bib{strickland}{article}{
   author={Strickland, Elisabetta},
   title={On the conormal bundle of the determinantal variety},
   journal={J. Algebra},
   volume={75},
   date={1982},
   number={2},
   pages={523--537},
   issn={0021-8693},
}

\bib{switala}{article}{
  title={Lyubeznik numbers for nonsingular projective varieties},
  author={Switala, Nicholas},
  journal={Bulletin of the London Mathematical Society},
  volume={47},
  number={1},
  pages={1--6},
  year={2015},
  publisher={Wiley Online Library}
}

\bib{varbaro}{article}{
   author={Varbaro, Matteo},
   title={Cohomological and projective dimensions},
   journal={Compos. Math.},
   volume={149},
   date={2013},
   number={7},
   pages={1203--1210},
   issn={0010-437X},
}

\bib{vilonen}{article}{
  author = {Vilonen, Kari},
  title = {Perverse sheaves and finite dimensional algebras},
  journal = {Trans. Amer. Math. Soc.},
  volume = {341},
  number = {2},
  pages = {665--676},
  year = {1994}
}

\bib{weyman}{book}{
   author={Weyman, Jerzy},
   title={Cohomology of vector bundles and syzygies},
   series={Cambridge Tracts in Mathematics},
   volume={149},
   publisher={Cambridge University Press, Cambridge},
   date={2003},
   pages={xiv+371},
}

		\end{biblist}
	\end{bibdiv}

\end{document}